\numberwithin{equation}{section}
\tikzstyle{treenode}=[circle, draw, inner sep=0pt, minimum size=12pt]
\newcommand{\treenode}{\node[treenode]}
\tikzstyle{leafnode}=[fill, circle, draw, inner sep=0pt, minimum size=5pt]
\newcommand{\leafnode}{\node[leafnode]}
\tikzstyle{vertex}=[circle, draw, inner sep=0pt, minimum size=4pt]
\newcommand{\vol}{\operatorname{vol}}
\definecolor{sienna}{RGB}{136,45,23}
\newcommand\bbinom[2]
\newtheorem{theorem}{Theorem}[section] 
\newtheorem{proposition}[theorem]{Proposition} 
\newtheorem{lemma}[theorem]{Lemma} 
\newtheorem{corollary}[theorem]{Corollary}
\newtheorem{conjecture}[theorem]{Conjecture}
\newtheorem*{theorem*}{Theorem}
\theoremstyle{remark}
\newtheorem{remark}[theorem]{Remark}
\newtheorem{example}[theorem]{Example}
\theoremstyle{definition}
\newtheorem{definition}[theorem]{Definition}
\newcommand{\defn}[1]{\emph{\color{blue} #1}}
\def\RR{\mathbb{R}}
\def\cA{\mathcal{A}}
\def\cC{\mathcal{C}}
\def\cD{\mathcal{D}}
\def\cF{\mathcal{F}}
\def\cH{\mathcal{H}}
\def\cI{\mathcal{I}}
\def\cN{\mathcal{N}}
\def\cO{\mathcal{O}}
\def\cS{\mathcal{S}}
\def\cT{\mathcal{T}}
\def\cW{\mathcal{W}}
\DeclareMathOperator{\ASC}{ASC}
\DeclareMathOperator{\s}{s}
\DeclareMathOperator{\car}{car}
\DeclareMathOperator{\oru}{oru}
\newcommand{\sizes}[1][\s]{|#1|}
\newcommand{\settrees}[1][\s]{\cT_{#1}}
\newcommand{\spermcombi}[1][\s]{\text{Perm}_{#1}} 
\newcommand{\setperms}[1][\s]{\cW_{#1}} 
\newcommand{\fpol}[1][G]{\cF_{#1}} \newcommand{\oruga}[1][n]{\oru_{#1}} 
\newcommand{\Gs}[1][\s]{\oru(#1)} 
\DeclareMathOperator{\route}{R}
\newcommand{\routep}[1]{\route[#1]} 
\newcommand{\prefix}[2]{{#1}_{[{#2}]}} 
\newcommand{\conv}{\text{conv}}
\DeclareMathOperator{\supp}{\text{supp}} 
\DeclareMathOperator {\inv}{inv}
\DeclareMathOperator {\indeg}{indeg}
\DeclareMathOperator {\outdeg}{outdeg}
\DeclareMathOperator{\height}{h}
\newcommand{\triangDKK}[1][G]{\text{Triang}_{DKK}({#1} , \preceq\nolinebreak)} 
\newcommand{\cliques}[1][G]{\text{Cliques}(#1, \preceq)}
\newcommand{\maxcliques}[1][G]{\text{MaxCliques}(#1, \preceq)} 
\newcommand{\subdivCay}[1][\s] {\text{Subdiv}_{\square}(#1)} 
\newcommand{\spermgeom}[1][\height]{\text{Perm}_{\s}(#1)} 
\newcommand{\troparr}[1][\height]{\cH_{\s}(#1)} 
\newcommand{\Gale}[1][k,m]{\textit{Gale}(#1)} 
\title[Realizing the $\s$-Permutahedron via Flow Polytopes]{Realizing the $\s$-Permutahedron via Flow Polytopes}
\author[Gonz\'{a}lez D'Le\'{o}n]{Rafael S. Gonz\'alez D'Le\'on}
\address[R. S. Gonz\'{a}lez D'Le\'{o}n]{Department of Mathematics and Statistics, Loyola University of Chicago, Chicago, USA}
\email{rgonzalezdleon@luc.edu}
\urladdr{https://dleon.combinatoria.co/}
\author[Morales]{Alejandro H. Morales}
\address[A. H. Morales]{Department of Mathematics and Statistics, UMass Amherst, U.S.A}
\email{ahmorales@math.umass.edu}
\urladdr{https://people.math.umass.edu/~ahmorales/}
\author[Philippe]{Eva Philippe}
\address[E. Philippe]{Sorbonne Université and Université de Paris, CNRS, IMJ-PRG, F-75005 Paris, France }
\email{eva.philippe@imj-prg.fr}
\urladdr{https://perso.imj-prg.fr/eva-philippe/}
\author[Tamayo Jim\'enez]{Daniel Tamayo Jim\'enez}
\address[D. Tamayo Jim\'enez]{Universit\'e Paris-Saclay, GALaC, Gif-sur-Yvette, France.}
\urladdr{https://sites.google.com/view/danieltamayo22/}
\author[Yip]{Martha Yip}
\address[M. Yip]{Department of Mathematics, University of Kentucky, Lexington KY, USA}
\email{martha.yip@uky.edu}
\urladdr{https://www.ms.uky.edu/~myip/}
\thanks{Partially supported by the MATH-AMSUD project 22-MATH-01 ALGonCOMB. AHM is partially supported by NSF grants DMS-1855536 and DMS-22030407, EP is supported by grants ANR-17-CE40-0018 and ANR-21-CE48-0020 of the French National Research Agency ANR (projects CAPPS and PAGCAP), DTJ is supported by grant ANR-21-CE48-0020 of the French National Research Agency ANR (project PAGCAP), and MY is partially supported by Simons collaboration grant 964456.}
\keywords{$\s$-weak order, $\s$-decreasing trees, Stirling $\s$-permutations, flow polytopes, geometric realization, polyhedral subdivision, Cayley trick.}
\begin{document}

\maketitle

\begin{abstract}
Ceballos and Pons introduced the $\s$-weak order on $\s$-decreasing trees, for any weak composition $\s$. They proved that it has a lattice structure and further conjectured that it can be realized as the $1$-skeleton of a polyhedral subdivision of a polytope. 
We answer their conjecture in the case where $\s$ is a strict composition by providing three geometric realizations of the $\s$-permutahedron. 
The first one is the dual graph of a triangulation of a flow polytope of high dimension. The second one, obtained using the Cayley trick, is the dual graph of a fine mixed subdivision of a sum of hypercubes that has the conjectured dimension. The third one, obtained using tropical geometry, is the $1$-skeleton of a polyhedral complex for which we can provide explicit coordinates of the vertices and whose support is a permutahedron as conjectured.
\end{abstract}

\tableofcontents

\section{Introduction}
The starting point of this work is a conjecture of Ceballos and Pons (\cite[Conjecture 1]{CP20}, also Conjecture~\ref{conj:s-permutahedron} below) stating that a certain combinatorial complex on $\s$-decreasing trees can be geometrically realized as a polyhedral subdivision of a polytope. 
The family of $\s$-decreasing trees is parameterized by \defn{weak compositions} $\s=(s_1, \ldots, s_n)$ where $s_i$ are nonnegative integers for $i=1,2,\dots, n$.
Ceballos and Pons~\cite{CP20, CP22} showed that for every $\s$, the set of $\s$-decreasing trees admits a lattice structure called the $\s$-weak order. 
In the special case when $\s=(1, \ldots, 1)$, the set of $\s$-decreasing trees is in bijection with the set of permutations of $[n]:=\{1,\ldots,n\}$, and the $\s$-weak order is the classical (right) weak order on the permutations of $[n]$.  
In the same way that the weak order on permutations restricts to the lattice on Catalan objects introduced by Tamari in \cite{T62}  (as implied by a classical bijection of Stanley \cite[Section 1.5]{S12}), Ceballos and Pons show in \cite[Theorem 2.2]{CP22} that the $\s$-weak order restricts to the $\s$-Tamari lattice. 
The $\s$-Tamari lattice was first introduced by Pr\'eville--Ratelle and Viennot \cite{PRV17} as the $\nu$-Tamari lattice on grid paths weakly above the path $\nu=NE^{s_n}\ldots NE^{s_1}$  
(see \cite[Theorem 3.5]{CP20} for the isomorphism between the $\nu$-Tamari and the $\s$-Tamari lattices). It is a further generalization of the $m$-Tamari lattice, that is the case when $\s=(m, \dots, m)$, introduced by Bergeron and Pr\'eville--Ratelle in \cite{BPR12} to study the Frobenius characteristic of the space of higher diagonal coinvariant spaces.

\subsection{\texorpdfstring{$\s$}{s}-decreasing trees, \texorpdfstring{$\s$}{s}-weak order and the \texorpdfstring{$\s$}{s}-permutahedron}

Let  $\s=(s_1,\ldots,s_n)$ be a weak composition. An \defn{$\s$-decreasing tree} $T$ is a planar rooted tree on $n$ internal vertices (called nodes), labeled by $[n]$, such that the node labeled $i$ has $s_i+1$ children and any descendant $j$ of $i$ satisfies $j<i$. We denote by \defn{$T_0^i,\ldots,T_{s_i}^i$} the subtrees of node $i$ from left to right.

We denote by \defn{$\settrees$} the set of all $\s$-decreasing trees. 
Note that the value of $s_1$ is inconsequential for determining the combinatorial properties of $\settrees$, so without loss of generality we may assume that $s_1=1$ throughout this article.
It is known (see for example \cite[Section 5.1]{CGDL19} and Corollary~\ref{cor:volume is number of trees}) that the number of $\s$-decreasing trees is given by 
\begin{align}
    \# \settrees &= (1+s_n)(1+s_n+s_{n-1})\cdots  (1+s_n+s_{n-1}+\cdots + s_2), \label{eq: formula s decreasing trees} 
\end{align}
which can be viewed as a generalization of the factorial numbers since this formula reduces to $n!$ in the case $\s=(1, \ldots, 1)$. 

Let $T$ be an $\s$-decreasing tree. We denote by \defn{$\inv(T)$} the multiset of \defn{tree-inversions} of $T$ formed by pairs $(c,a)$ with multiplicity (also called cardinality)
\[\defn{$\#_T(c,a)$} =\left\{\begin{array}{ll}
    0, & \text{ if } a \text{ is left of } c, \\
    i, & \text{ if } a \in T^c_{i}, \\
    s_c, & \text{ if } a \text{ is right of } c,
    \end{array}\right.\]
for all $1\leq a < c \leq n$.

In \cite[Definition 2.5]{CP20} Ceballos and Pons introduced the \defn{$\s$-weak order $\trianglelefteq$} on $\settrees$  and showed in \cite[Theorem 1.21]{CP22} that it has the structure of a lattice. For $\s$-decreasing trees $R$ and $T$ we define $R \trianglelefteq T$ if $\inv(R)\subseteq \inv(T)$.

To understand the cover relations in the $\s$-weak order we define the notion of ascents. An \defn{ascent} on an $\s$-decreasing tree $T$ is a pair $(a,c)$ satisfying
\begin{enumerate}
    \item $a\in T^c_{i}$ for some $0\leq i < s_c$,
    \item if $a < b < c$ and $a\in T^b_{i}$, then $i=s_b$,
    \item if $s_a>0$, then $T^a_{s_a}$ consists of only one leaf.
\end{enumerate}

Similarly, a \defn{descent} of $T$ is a pair $(a,c)$ such that $a\in T_i^c$ for some $0 < i \leq s_c$, if $a<b<c$ and $a\in T_j^b$ then $j=0$, and if $s_a>0$ then $T_0^a$  consists of only one leaf. 
The notions of ascents and descents on $\s$-decreasing trees generalize the same concepts from classical permutations, see Lemma~\ref{lem:ascdesc}.

A cover relation of the $\s$-weak order can be seen either as an \defn{$\s$-tree rotation} along an ascent $(a,c)$ (see~\cite[Definition 1.30]{CP22}) or equivalently as taking the transitive closure of the multiset of inversions obtained from $\inv(T)$ after increasing $\#_T(c,a)$ by 1 (\cite[Theorem~1.32]{CP22}). If $A$ is a subset of ascents of $T$, we denote by \defn{$T+A$} the $\s$-decreasing tree whose inversion set is given by the transitive closure of $\inv(T)+A$.

\begin{definition}[Definition 4.1 \cite{CP20}]
\label{def:s-permutahedron}
The (combinatorial) \defn{$\s$-permutahedron}, denoted $\spermcombi$, is the combinatorial complex with faces $(T,A)$ where $T$ is an $\s$-decreasing tree and $A$ is a subset of ascents of $T$. 
\end{definition}
In the $\s$-permutahedron, the face $(T,A)$ is contained in $(T', A')$ if and only if $[T, T+A]\subseteq [T',T'+A']$ as intervals in the $\s$-weak order.
In particular, the vertices of $\spermcombi$ are the $\s$-decreasing trees and the edges correspond to $\s$-tree rotations. \\

It turns out that when $\s$ is a (strict) \defn{composition}, that is $s_i>0$ for all $i$, the properties of the $\s$-weak order and the $\s$-permutahedron can also be described in terms of \defn{Stirling $\s$-permutations}. 
These are multipermutations of $[n]$ avoiding the pattern $121$ (a number $j$ somewhere in between two occurrences $i$ with $i<j$) and with $s_i$ occurrences of $i$ for each $i\in [n]$. These multipermutations generalize the family of permutations (the case when $\s=(1,\dots,1)$) and the family of Stirling permutations (the case when $\s=(2,\dots, 2)$) initially introduced by Gessel and Stanley in \cite{SG78}. A further generalization of Stirling permutations (to the case when $\s=(m,\dots,m)$) was studied by Park in \cite{P94-1,P94-2,P94-3}. Further study of combinatorial formulas and statistics on $\s$-Stirling permutations such as descents, ascents and plateaux have been carried out by many other authors (see for example \cite{B09,JKP11,KP11}). We refer the reader to Gessel's note in \cite{G20} which includes a  list of articles on the family of $\s$-Stirling permutations.

Figure \ref{fig:adjacency_graph_121_classic} shows the Hasse diagram of the $\s$-weak order for the case $\s=(1,2,1)$.
The vertices are indexed by $\s$-decreasing trees and Stirling $\s$-permutations.

\begin{figure}[ht!]
    \centering
    \includegraphics[scale=0.7]{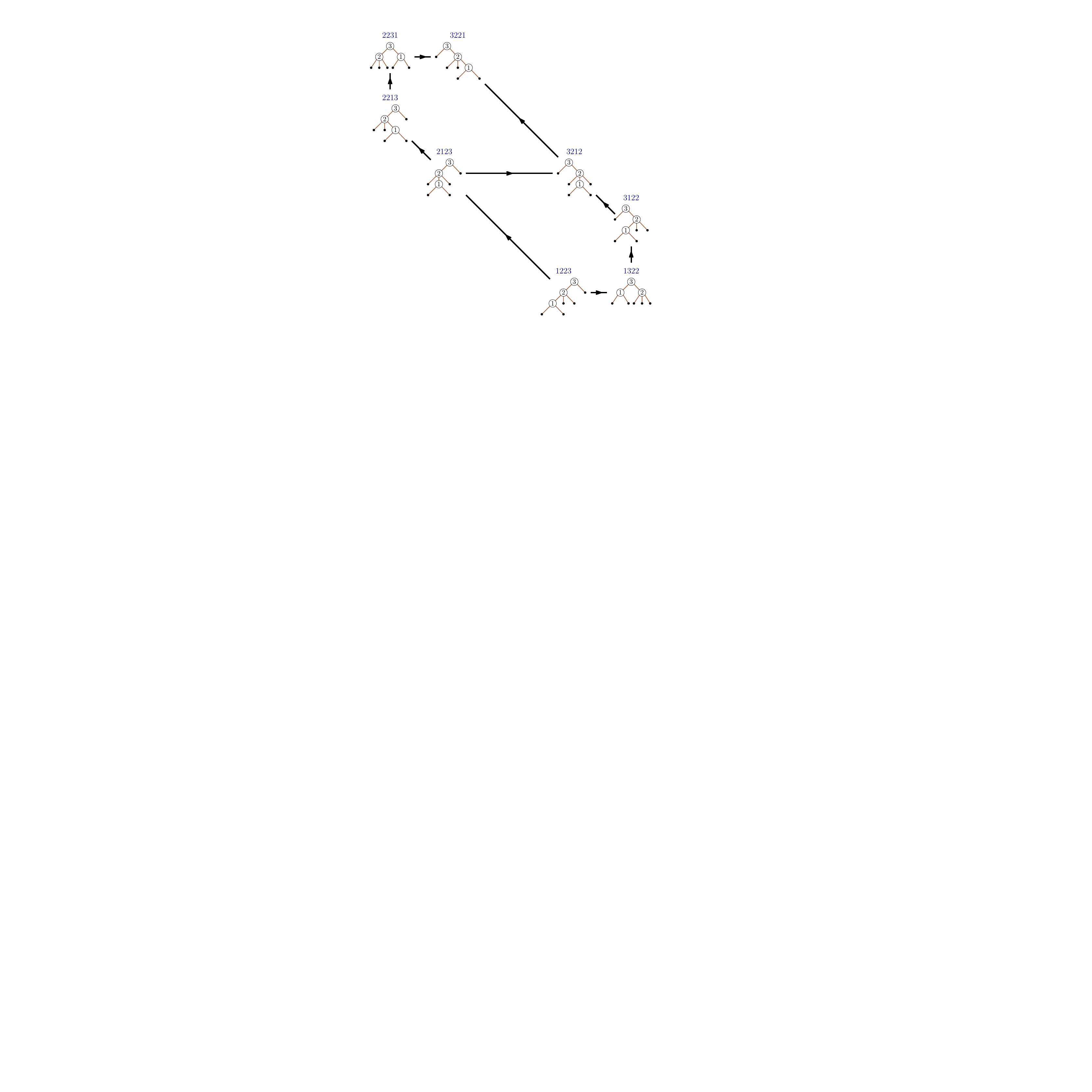}
    \caption{The $\s$-weak order ($1$-skeleton of the $\s$-permutahedron) for the case $\s=(1,2,1)$. The vertices are indexed by $\s$-decreasing trees and Stirling $\s$-permutations.}
 \label{fig:adjacency_graph_121_classic}
\end{figure}

\subsection{Geometric realizations of the \texorpdfstring{$\s$}{s}-permutahedron}

From Figure \ref{fig:adjacency_graph_121_classic} the reader can already appreciate how the $\s$-permutahedron may be geometrically realizable. 
Ceballos and Pons posed the following conjecture on realizations of $\spermcombi$.

\begin{conjecture}[{\cite[Conjecture 1]{CP20}}]\label{conj:s-permutahedron}
Let $\s$ be a weak composition. 
The $\s$-permutahedron $\spermcombi$ can be realized as a polyhedral subdivision of a polytope which is combinatorially isomorphic to the zonotope $\sum_{1\leq i < j \leq n} s_j [\mathbf{e_i},\mathbf{e_j}]$, where $(\mathbf{e_i})_{1\leq i\leq n}$ is the canonical basis of $\RR^n$ and $[\mathbf{e_i},\mathbf{e_j}]$ indicates the convex hull of $\mathbf{e_i}$ and $\mathbf{e_j}$.
\end{conjecture}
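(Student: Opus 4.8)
The plan is to prove Conjecture~\ref{conj:s-permutahedron} for a strict composition $\s$ by constructing the three realizations announced in the abstract, each one refining the previous. The backbone is a flow polytope. First I would single out a directed acyclic graph $G=\oruga[n]$ --- a caterpillar-type graph whose edge multiplicities encode the entries $s_2,\dots,s_n$ (recall $s_1$ is irrelevant) --- such that the flow polytope $\fpol[G]$ has normalized volume exactly $\#\settrees$; together with Eq.~\eqref{eq: formula s decreasing trees} and Corollary~\ref{cor:volume is number of trees}, this is the numerical shadow of the bijection we are after. The dimension of $\fpol[G]$ is in general larger than the conjectured $n-1$, so this first realization captures the combinatorics of $\spermcombi$ but not yet the geometry of the prescribed zonotope.

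Second, I would fix a framing (a linear order $\preceq$ on the edges leaving each vertex, in the sense of Danilov--Karzanov--Koshevoy) and study the resulting regular triangulation $\triangDKK$. The technical core is a bijection between the cliques of this framing --- equivalently the faces of $\triangDKK$ --- and the faces $(T,A)$ of $\spermcombi$, under which maximal simplices correspond to $\s$-decreasing trees, coherently with containment, so that the dual graph of $\triangDKK$ is the Hasse diagram of the $\s$-weak order. Concretely, one encodes an $\s$-decreasing tree as a maximal set of pairwise coherent routes of $G$, recovers its Stirling $\s$-permutation and its inversion multiset $\inv(T)$ from the same data, and checks that a flip of $\triangDKK$ is exactly an $\s$-tree rotation along an ascent, i.e.\ the cover relation obtained by incrementing one tree-inversion multiplicity $\#_T(c,a)$. \emph{This bijection, and the verification that it intertwines the face poset of the triangulation with the combinatorial complex $\spermcombi$, is the step I expect to be the main obstacle}: it requires matching the recursive anatomy of $\s$-decreasing trees --- the subtrees $T^i_j$ and the ascent conditions (1)--(3) --- with the combinatorics of routes and coherence in $G$, and tracking how an ascent set $A$ corresponds to a lower-dimensional clique.

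Third, I would invoke the Cayley trick. One can arrange that $\fpol[G]$ is affinely the Cayley polytope of a family of standard hypercubes whose Minkowski sum is combinatorially the target zonotope $\sum_{1\le i<j\le n}s_j[\mathbf{e_i},\mathbf{e_j}]$; the Cayley trick then transports the DKK triangulation to a fine mixed subdivision of this sum of cubes. Since the Cayley trick is an isomorphism between the poset of (regular) triangulations of the Cayley polytope and the poset of (regular) fine mixed subdivisions of the Minkowski sum, the combinatorial isomorphism from the second step descends verbatim, and the carrier now has exactly the conjectured dimension $n-1$ and is combinatorially the prescribed zonotope --- which, $\s$ being strict, is a permutahedron. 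At this point Conjecture~\ref{conj:s-permutahedron} is proved for strict $\s$.

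Finally, to obtain explicit coordinates I would use that $\triangDKK$ is \emph{regular}, induced by an explicit height function $\heightDKK$ on the edges. Passing to the tropical dual --- the tropical hyperplane arrangement $\troparr$ whose induced subdivision is the mixed subdivision above, via the tropical version of the Cayley trick --- produces a polyhedral complex whose $1$-skeleton is again the $\s$-weak order and whose vertices are given by explicit min-plus expressions in the chosen heights; a direct computation then identifies its support with the permutahedron, as conjectured. The outstanding work here is essentially bookkeeping: pinning down a clean closed form for $\heightDKK$, verifying regularity, and computing the support --- delicate to make fully explicit but routine in spirit.
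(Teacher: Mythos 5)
Your overall plan tracks the paper's architecture closely --- an oruga-type flow polytope, a DKK framed triangulation dual to the $\s$-weak order, the Cayley trick passing to a mixed subdivision of a Minkowski sum of cubes, and finally a tropical dualization --- and you correctly identify the clique/face correspondence as the technical core. However, there is a genuine logical gap at the end of your third step. You declare the conjecture proved once the Cayley trick yields a fine mixed subdivision of $\square_n+\sum_i(s_i-1)\square_{i-1}$, but this is not the realization the conjecture asks for, for two reasons. First, the correspondence between faces of $\spermcombi$ and cells of the mixed subdivision is order-\emph{reversing}: the $\s$-decreasing trees (vertices of $\spermcombi$) index the \emph{maximal} interior cells of the subdivision, not its vertices, whereas Conjecture~\ref{conj:s-permutahedron} requires a polyhedral subdivision of a polytope whose vertices are the $\s$-decreasing trees. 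Second, the carrier of that mixed subdivision is a Minkowski sum of axis-aligned cubes, i.e.\ a zonotope generated by segments $[\mathbf{0},\mathbf{e}_k]$ --- combinatorially a box, not the conjectured zonotope $\sum_{i<j}s_j[\mathbf{e}_i,\mathbf{e}_j]$, whose generators point in the root directions $\mathbf{e}_j-\mathbf{e}_i$ and whose combinatorial type is the $(n-1)$-permutahedron. So tropical dualization is not merely ``bookkeeping for explicit coordinates''; it is the step that reverses the face-poset isomorphism and produces a complex whose support is combinatorially the permutahedron (this is exactly Lemma~\ref{lem:support} in the paper), and only after it is the conjecture actually established for strict compositions. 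A smaller point: what you denote $\oruga[n]$ is the source-free subgraph in the paper; the relevant graph carrying the edge multiplicities $s_i-1$ via source-edges is $\Gs=\oru(\s)$, whose flow polytope has dimension $\sum_i s_i$.
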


Our goal in the present article is to provide solutions to Conjecture \ref{conj:s-permutahedron} in the case when $\s$ is a strict composition (see Theorem~\ref{thm:all_realizations}). 
We will use techniques similar to those that were previously employed for realizing the $\s$-associahedron, which is a closely related combinatorial complex whose $1$-skeleton is the (undirected) Hasse diagram of the $\s$-Tamari lattice.

Ceballos, Padrol and Sarmiento~\cite{CPS19} realized the Hasse diagram of the $\s$-Tamari lattice as the edge graph of a polyhedral complex which is dual to a subdivision of a subpolytope of a product of simplices and to a fine mixed subdivision of a generalized permutahedron. 

A different realization of the $\s$-Tamari lattice was given by Bell, Gonz\'alez D'Le\'on, Mayorga Cetina and Yip~\cite{vBGDLMCY21} via flow polytopes.

A flow polytope $\fpol[G]$ is the set of valid flows on a directed acyclic graph $G$.
Geometric information about this polytope can be recovered from combinatorial information of the graph, for example computing the volume or constructing certain triangulations.
In the recent literature there has been an increased interest in developing techniques in this direction, see for example \cite{CKM17, JK19, MM15,MM19,MMS19}.
In particular, Danilov, Karzanov and Koshevoy~\cite{DKK12} described a method for obtaining regular unimodular triangulations for $\fpol[G]$ by placing a structure on $G$ called a \defn{framing} (see Section \ref{subsec:flowbackground}). 
Bell et al.~\cite{vBGDLMCY21} used the method of Danilov, Karzanov and Koshevoy to show that the flow polytope on the \defn{$\s$-caracol graph $\car(\s)$} (see the left side of Figure~\ref{fig:s-car_and_oru}) has a framed triangulation whose dual graph is the $\s$-Tamari lattice. 

In a similar vein, we introduce the graph \defn{$\Gs$} which we call the \defn{$\s$-oruga graph} (see Definition \ref{def:Gs} and the right side of Figure \ref{fig:s-car_and_oru}).
Its associated flow polytope possesses a framed triangulation which allows us to answer Conjecture \ref{conj:s-permutahedron} as follows.

\begin{theorem}[{\bf Geometric realizations}]\label{thm:all_realizations}
Let $\s=(s_1,\dots, s_n)$ 
 be a composition. The face poset of the (combinatorial) $\s$-permutahedron $\spermcombi$ is isomorphic to
    \begin{enumerate}
        \item {\bf (Theorem \ref{thm:bij_interiorfacesDKK_facessperm})} The dual of the poset of internal faces of a framed triangulation of a flow polytope $\mathcal{F}_{\Gs}$ of dimension ${\sum_{i=1}^ns_i}$.
        \item {\bf (Theorem \ref{thm:bij_mixed_subdiv})}  The dual of the poset of internal faces of a mixed subdivision of a sum of cubes in $\RR^n$.

        \item {\bf (Theorem \ref{thm:bij_trop_arr})}
        The poset of bounded faces of a polyhedral complex induced by an arrangement of tropical hypersurfaces in $\RR^{n}$. The support of the polyhedral complex is combinatorially equivalent to the $(n-1)$-dimensional permutahedron.
    \end{enumerate}
\end{theorem}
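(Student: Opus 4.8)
The plan is to establish the three isomorphisms in sequence, with the first item carrying the main technical content and the latter two following by now-standard transfer principles (the Cayley trick and tropicalization). First I would analyze the combinatorial structure of the $\s$-oruga graph $\oru(\s)$ and its flow polytope $\cF_{\oru(\s)}$, verifying that its dimension equals $\sum_{i=1}^n s_i$ by a routine count of edges minus vertices plus one (i.e.\ the first Betti number of the graph). The key step is to exhibit an explicit framing on $\oru(\s)$ in the sense of Danilov--Karzanov--Koshevoy so that the resulting regular unimodular triangulation $\triangDKK[\oru(\s)]$ has a dual graph matching the Hasse diagram of the $\s$-weak order. Concretely, I would set up a bijection between the maximal cliques of the framed order (equivalently, the maximal simplices of the triangulation) and the $\s$-decreasing trees $\settrees$, then check that two maximal simplices share a facet precisely when the corresponding trees differ by an $\s$-tree rotation along an ascent. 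This mirrors the strategy of Bell--Gonz\'alez D'Le\'on--Mayorga Cetina--Yip for the $\s$-caracol graph and the $\s$-Tamari lattice, but now the larger graph $\oru(\s)$ must recover the full $\s$-weak order rather than its Tamari sublattice. To upgrade from the $1$-skeleton to the entire face poset, I would show that an internal face of the triangulation of codimension $k$ corresponds to a pair $(T,A)$ where $T$ is an $\s$-decreasing tree and $A$ is a $k$-element set of ascents of $T$, with face containment matching the interval containment $[T,T+A]\subseteq[T',T'+A']$ from Definition~\ref{def:s-permutahedron}; the combinatorial bookkeeping here is that routes through $\oru(\s)$ encode tree-inversions, and coherent sets of routes that can be dropped correspond to compatible ascents.

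For item~(2), I would invoke the Cayley trick, which identifies the triangulation of $\cF_{\oru(\s)}$ — realized as a subpolytope of a product of simplices after the standard flow-polytope-to-product-of-simplices correspondence — with a fine mixed subdivision of the corresponding Minkowski sum. Because the summands attached to each internal vertex of $\oru(\s)$ are segments whose Minkowski sum is a sum of cubes, the Cayley trick yields a fine mixed subdivision of $\sum [\text{cubes}]$ in $\RR^n$; its poset of internal faces is anti-isomorphic to that of the triangulation, hence isomorphic to the face poset of $\spermcombi$ by item~(1). The dimension drop from $\sum_i s_i$ down to the conjectured $n$ (really $n-1$ on the relevant affine subspace) is exactly the content of the Cayley trick passing from the high-dimensional triangulation to the ambient sum of cubes.

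For item~(3), I would tropicalize: the regular triangulation $\triangDKK[\oru(\s)]$ is induced by a height function $\heightDKK$, and the dual of a regular subdivision of a product of simplices is realized by an arrangement of tropical hyperplanes (a tropical pseudohyperplane arrangement / tropical oriented matroid in the sense of Develin--Sturmfels and Ardila--Develin). I would write down the tropical polynomials whose coefficients are read off from $\heightDKK$, take the arrangement of their tropical hypersurfaces in $\RR^n$, and identify the bounded cells of the induced polyhedral complex with the faces $(T,A)$ of $\spermcombi$, giving explicit vertex coordinates as the apices determined by the heights. Finally I would check that the union (support) of this bounded complex is combinatorially the $(n-1)$-permutahedron, e.g.\ by matching its normal fan or its vertex set with that of $\sum_{1\le i<j\le n} s_j[\mathbf{e_i},\mathbf{e_j}]$ — but note $\sum_{1\le i<j\le n}[\mathbf{e_i},\mathbf{e_j}]$ is already the standard permutahedron, so the support is a dilated/translated permutahedron as the conjecture predicts.

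The main obstacle I anticipate is item~(1), specifically the precise choice of framing on $\oru(\s)$ and the verification that the DKK-compatible cliques are in the right bijection with $\s$-decreasing trees together with the matching of cover relations to $\s$-tree rotations; getting the framing wrong gives a triangulation whose dual graph is some other lattice quotient or extension. The bijection on higher faces (ascent sets $\leftrightarrow$ internal faces) is then a careful but essentially formal extension once the vertex-and-edge level is nailed down, and items~(2) and~(3) are applications of established machinery whose only real work is bookkeeping the explicit polytopes and coordinates.
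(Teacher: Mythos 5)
Your outline matches the paper's actual proof closely: item (1) is established by giving $\Gs$ the framing of Definition~\ref{def:Gs}, indexing the DKK maximal cliques by Stirling $\s$-permutations (Lemma~\ref{lem:max_clique}), matching shared facets to ascent transpositions (Theorem~\ref{thm:cover_relations}), and extending to all interior faces via the map $(w,A)\mapsto\Delta_{(w,A)}$ (Theorem~\ref{thm:bij_interiorfacesDKK_facessperm}); items (2) and (3) then follow by the Cayley trick and tropical dualization essentially as you describe. One caveat on terminology: the Cayley factors of $\fpol[\Gs]$ are hypercubes rather than simplices (the paper identifies the Cayley structure directly, without passing through a product-of-simplices embedding), so the tropical duals of the individual factors are tropical hypersurfaces with cube-shaped Newton polytopes --- \emph{not} tropical hyperplanes --- and the Develin--Sturmfels/Ardila--Develin tropical-oriented-matroid framework you cite does not directly apply; the correct tool, and the one the paper uses, is Joswig's general tropical duality for regular subdivisions of arbitrary point configurations (Theorem~\ref{thm:tropical_dual}).
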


\begin{figure}[ht!]
    \centering
    \tikzset{every picture/.style={line width=0.75pt}} %set default line width to 0.75pt        

\begin{tikzpicture}[x=0.75pt,y=0.75pt,yscale=-.75,xscale=.75]
%uncomment if require: \path (0,300); %set diagram left start at 0, and has height of 300

%Straight Lines [id:da134250887734219] 
\draw    (38,148) -- (83.8,148) ;
\draw [shift={(83.8,148)}, rotate = 0] [color={rgb, 255:red, 0; green, 0; blue, 0 }  ][fill={rgb, 255:red, 0; green, 0; blue, 0 }  ][line width=0.75]      (0, 0) circle [x radius= 3.35, y radius= 3.35]   ;
\draw [shift={(38,148)}, rotate = 0] [color={rgb, 255:red, 0; green, 0; blue, 0 }  ][fill={rgb, 255:red, 0; green, 0; blue, 0 }  ][line width=0.75]      (0, 0) circle [x radius= 3.35, y radius= 3.35]   ;
%Straight Lines [id:da9589184807669273] 
\draw    (83.8,148) -- (129.6,148) ;
\draw [shift={(129.6,148)}, rotate = 0] [color={rgb, 255:red, 0; green, 0; blue, 0 }  ][fill={rgb, 255:red, 0; green, 0; blue, 0 }  ][line width=0.75]      (0, 0) circle [x radius= 3.35, y radius= 3.35]   ;
%Straight Lines [id:da3272599600655821] 
\draw    (129.6,148) -- (175.4,148) ;
\draw [shift={(175.4,148)}, rotate = 0] [color={rgb, 255:red, 0; green, 0; blue, 0 }  ][fill={rgb, 255:red, 0; green, 0; blue, 0 }  ][line width=0.75]      (0, 0) circle [x radius= 3.35, y radius= 3.35]   ;
%Straight Lines [id:da21070284549964735] 
\draw  [dash pattern={on 0.84pt off 2.51pt}]  (175.4,148) -- (221.2,148) ;
\draw [shift={(221.2,148)}, rotate = 0] [color={rgb, 255:red, 0; green, 0; blue, 0 }  ][fill={rgb, 255:red, 0; green, 0; blue, 0 }  ][line width=0.75]      (0, 0) circle [x radius= 3.35, y radius= 3.35]   ;
%Curve Lines [id:da7550829236770258] 
\draw [color={rgb, 255:red, 0; green, 0; blue, 255 }  ,draw opacity=1 ]   (38,148) .. controls (46.8,111) and (124.8,115) .. (129.6,148) ;
%Curve Lines [id:da3093496242298772] 
\draw [color={rgb, 255:red, 0; green, 0; blue, 255 }  ,draw opacity=1 ]   (38,148) .. controls (51.8,54) and (161.8,56) .. (175.4,148) ;
%Curve Lines [id:da5987572778048731] 
\draw [color={rgb, 255:red, 0; green, 0; blue, 255 }  ,draw opacity=1 ]   (38,148) .. controls (38.8,19) and (216.8,36) .. (221.2,148) ;
%Straight Lines [id:da15731942572142343] 
\draw    (221.2,148) -- (267,148) ;
\draw [shift={(267,148)}, rotate = 0] [color={rgb, 255:red, 0; green, 0; blue, 0 }  ][fill={rgb, 255:red, 0; green, 0; blue, 0 }  ][line width=0.75]      (0, 0) circle [x radius= 3.35, y radius= 3.35]   ;
%Curve Lines [id:da8953106795623991] 
\draw    (83.8,148) .. controls (91.8,220) and (263.8,239) .. (267,148) ;
%Curve Lines [id:da7157382342556469] 
\draw    (129.6,148) .. controls (145.8,207) and (262.8,205) .. (267,148) ;
%Curve Lines [id:da8238814477231702] 
\draw    (175.4,148) .. controls (181.8,187) and (266.8,191) .. (267,148) ;
%Curve Lines [id:da009555864906430678] 
\draw [color={rgb, 255:red, 0; green, 0; blue, 255 }  ,draw opacity=1 ]   (38,148) .. controls (45.8,105) and (117.8,104) .. (129.6,148) ;
%Curve Lines [id:da6683184483793485] 
\draw [color={rgb, 255:red, 0; green, 0; blue, 255 }  ,draw opacity=1 ]   (38,148) .. controls (44.8,92) and (121.8,101) .. (129.6,148) ;
%Curve Lines [id:da014684995008081625] 
\draw [color={rgb, 255:red, 0; green, 0; blue, 255 }  ,draw opacity=1 ]   (38,148) .. controls (56.8,66) and (147.8,63) .. (175.4,148) ;
%Curve Lines [id:da2500473153045426] 
\draw [color={rgb, 255:red, 0; green, 0; blue, 255 }  ,draw opacity=1 ]   (38,148) .. controls (50.8,37) and (174.8,59) .. (175.4,148) ;
%Curve Lines [id:da10548750718663591] 
\draw [color={rgb, 255:red, 0; green, 0; blue, 255 }  ,draw opacity=1 ]   (38,148) .. controls (38.8,-1) and (231.8,33) .. (221.2,148) ;
%Curve Lines [id:da565249776333498] 
\draw [color={rgb, 255:red, 0; green, 0; blue, 255 }  ,draw opacity=1 ]   (38,148) .. controls (30.8,-6) and (234.8,16) .. (221.2,148) ;
%Straight Lines [id:da9094070659405993] 
\draw  [dash pattern={on 0.84pt off 2.51pt}]  (172,99) -- (195.8,99) ;
%Straight Lines [id:da10397933985189645] 
\draw    (377,148) -- (422.8,148) ;
\draw [shift={(422.8,148)}, rotate = 0] [color={rgb, 255:red, 0; green, 0; blue, 0 }  ][fill={rgb, 255:red, 0; green, 0; blue, 0 }  ][line width=0.75]      (0, 0) circle [x radius= 3.35, y radius= 3.35]   ;
\draw [shift={(377,148)}, rotate = 0] [color={rgb, 255:red, 0; green, 0; blue, 0 }  ][fill={rgb, 255:red, 0; green, 0; blue, 0 }  ][line width=0.75]      (0, 0) circle [x radius= 3.35, y radius= 3.35]   ;
%Straight Lines [id:da7793685158312091] 
\draw    (422.8,148) -- (468.6,148) ;
\draw [shift={(468.6,148)}, rotate = 0] [color={rgb, 255:red, 0; green, 0; blue, 0 }  ][fill={rgb, 255:red, 0; green, 0; blue, 0 }  ][line width=0.75]      (0, 0) circle [x radius= 3.35, y radius= 3.35]   ;
%Straight Lines [id:da5165043292274463] 
\draw    (468.6,148) -- (514.4,148) ;
\draw [shift={(514.4,148)}, rotate = 0] [color={rgb, 255:red, 0; green, 0; blue, 0 }  ][fill={rgb, 255:red, 0; green, 0; blue, 0 }  ][line width=0.75]      (0, 0) circle [x radius= 3.35, y radius= 3.35]   ;
%Straight Lines [id:da33034945613502864] 
\draw  [dash pattern={on 0.84pt off 2.51pt}]  (514.4,148) -- (560.2,148) ;
\draw [shift={(560.2,148)}, rotate = 0] [color={rgb, 255:red, 0; green, 0; blue, 0 }  ][fill={rgb, 255:red, 0; green, 0; blue, 0 }  ][line width=0.75]      (0, 0) circle [x radius= 3.35, y radius= 3.35]   ;
%Curve Lines [id:da7543550812213977] 
\draw [color={rgb, 255:red, 0; green, 0; blue, 255 }  ,draw opacity=1 ]   (377,148) .. controls (385.8,111) and (463.8,115) .. (468.6,148) ;
%Curve Lines [id:da07551819405081472] 
\draw [color={rgb, 255:red, 0; green, 0; blue, 255 }  ,draw opacity=1 ]   (377,148) .. controls (390.8,54) and (500.8,56) .. (514.4,148) ;
%Curve Lines [id:da010950543229325094] 
\draw [color={rgb, 255:red, 0; green, 0; blue, 255 }  ,draw opacity=1 ]   (377,148) .. controls (377.8,19) and (555.8,36) .. (560.2,148) ;
%Straight Lines [id:da4937233254243034] 
\draw    (560.2,148) -- (606,148) ;
\draw [shift={(606,148)}, rotate = 0] [color={rgb, 255:red, 0; green, 0; blue, 0 }  ][fill={rgb, 255:red, 0; green, 0; blue, 0 }  ][line width=0.75]      (0, 0) circle [x radius= 3.35, y radius= 3.35]   ;
%Curve Lines [id:da7392584988432751] 
\draw    (422.8,148) .. controls (437.8,124) and (460.8,133) .. (468.6,148) ;
%Curve Lines [id:da17515765873381794] 
\draw [color={rgb, 255:red, 0; green, 0; blue, 255 }  ,draw opacity=1 ]   (377,148) .. controls (384.8,105) and (456.8,104) .. (468.6,148) ;
%Curve Lines [id:da5643496522358944] 
\draw [color={rgb, 255:red, 0; green, 0; blue, 255 }  ,draw opacity=1 ]   (377,148) .. controls (383.8,92) and (460.8,101) .. (468.6,148) ;
%Curve Lines [id:da8555486684990354] 
\draw [color={rgb, 255:red, 0; green, 0; blue, 255 }  ,draw opacity=1 ]   (377,148) .. controls (395.8,66) and (486.8,63) .. (514.4,148) ;
%Curve Lines [id:da3176225545823246] 
\draw [color={rgb, 255:red, 0; green, 0; blue, 255 }  ,draw opacity=1 ]   (377,148) .. controls (389.8,37) and (513.8,59) .. (514.4,148) ;
%Curve Lines [id:da2676110038463002] 
\draw [color={rgb, 255:red, 0; green, 0; blue, 255 }  ,draw opacity=1 ]   (377,148) .. controls (377.8,-1) and (570.8,33) .. (560.2,148) ;
%Curve Lines [id:da5769991308837654] 
\draw [color={rgb, 255:red, 0; green, 0; blue, 255 }  ,draw opacity=1 ]   (377,148) .. controls (369.8,-6) and (573.8,16) .. (560.2,148) ;
%Straight Lines [id:da7668219895459247] 
\draw  [dash pattern={on 0.84pt off 2.51pt}]  (511,99) -- (534.8,99) ;
%Curve Lines [id:da2508635449945218] 
\draw    (468.6,148) .. controls (483.6,124) and (506.6,133) .. (514.4,148) ;
%Curve Lines [id:da4405637271935041] 
\draw  [dash pattern={on 0.84pt off 2.51pt}]  (514.4,148) .. controls (529.4,124) and (552.4,133) .. (560.2,148) ;
%Curve Lines [id:da08435119940313807] 
\draw    (560.2,148) .. controls (575.2,124) and (598.2,133) .. (606,148) ;
%Curve Lines [id:da3504272952821499] 
\draw    (221.2,148) .. controls (231.8,170) and (256.8,170) .. (267,148) ;

% Text Node
\draw (130.91,11.4) node [anchor=north west][inner sep=0.75pt]    {$\mathrm{car}( s)$};
% Text Node
\draw (468.41,9.4) node [anchor=north west][inner sep=0.75pt]    {$\mathrm{oru}( s)$};
% Text Node
\draw (95,95.4) node [anchor=north west][inner sep=0.75pt]  [font=\footnotesize]  {$\textcolor[rgb]{0,0,1}{s}\textcolor[rgb]{0,0,1}{_{n}}$};
% Text Node
\draw (131,62.4) node [anchor=north west][inner sep=0.75pt]  [font=\footnotesize]  {$\textcolor[rgb]{0,0,1}{s}\textcolor[rgb]{0,0,1}{_{n-1}}$};
% Text Node
\draw (174,35.4) node [anchor=north west][inner sep=0.75pt]  [font=\footnotesize]  {$\textcolor[rgb]{0,0,1}{s}\textcolor[rgb]{0,0,1}{_{2}}$};
% Text Node
\draw (430,94.4) node [anchor=north west][inner sep=0.75pt]  [font=\footnotesize]  {$\textcolor[rgb]{0,0,1}{s}\textcolor[rgb]{0,0,1}{_{n} -1}$};
% Text Node
\draw (464,64.4) node [anchor=north west][inner sep=0.75pt]  [font=\footnotesize]  {$\textcolor[rgb]{0,0,1}{s}\textcolor[rgb]{0,0,1}{_{n-1} -1}$};
% Text Node
\draw (518,41.4) node [anchor=north west][inner sep=0.75pt]  [font=\footnotesize]  {$\textcolor[rgb]{0,0,1}{s}\textcolor[rgb]{0,0,1}{_{2} -1}$};

\end{tikzpicture}
    \caption{The $\s$-caracol and $\s$-oruga graphs.}
    \label{fig:s-car_and_oru}
\end{figure}
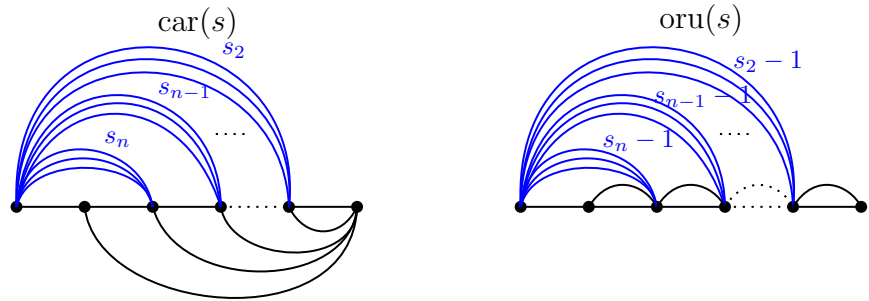

\noindent{\bf Structure of the paper.}
In Section \ref{sec:combinatorics_of_the_s_permutahedron_stirling_permutations} we translate the combinatorics of $\s$-decreasing trees, and the definitions of the $\s$-weak order and the $\s$-permutahedron to the language of Stirling $\s$-permutations which we will use extensively throughout this paper. 
In Section~\ref{sec:subdivisions_flow_polytopes} we provide the necessary background on flow polytopes which we will need for our geometric constructions and present our first geometric realization of the $\s$-permutahedron as the dual of a framed triangulation of the flow polytope $\fpol[\Gs]$. 
This first realization provides a formula for the $h^*$-polynomial of $\fpol[\Gs]$ by enumerating Stirling $\s$-permutations with respect to descents.
In Section \ref{sec:Cayley_trick} we provide the background on the Cayley trick and mixed subdivisions, and present our second geometric realization of the $\s$-permutahedron as the dual of a fine mixed subdivision of a sum of hypercubes. 
In Section \ref{sec:realization_polyhedral_complex} we provide background on tropical dualization in the Cayley trick setting, and present our third realization of the $\s$-permutahedron, as the collection of bounded faces of an arrangement of tropical hypersurfaces. This third realization provides a complete answer to Ceballos and Pons conjecture in the case where $\s$ is a composition. Examples of this realization are available on this \href{https://sites.google.com/view/danieltamayo22/gallery-of-s-permutahedra}{webpage}\footnote{{https://sites.google.com/view/danieltamayo22/gallery-of-s-permutahedra}} and code can be found on this \href{https://cocalc.com/ahmorales/s-permutahedron-flows/demo-realizations}{webpage}\footnote{{https://cocalc.com/ahmorales/s-permutahedron-flows/demo-realizations}}.
In Section \ref{sec:lidskii_main} we present formulas enumerating $\s$-decreasing trees and Stirling $\s$-permutations. These formulas follow from applications of the Lidskii formulas. 

We leave an open question on realizing those formulas as a geometric Lidskii-type decomposition of  $\mathcal{F}_{\Gs}$.
Finally, in Section~\ref{sec:further} we discuss ongoing work and some ramifications of our results.

\section{Combinatorics of the \texorpdfstring{$\s$}{s}-permutahedron in the language of Stirling \texorpdfstring{$\s$}{s}-permutations}\label{sec:combinatorics_of_the_s_permutahedron_stirling_permutations}

The $\s$-weak order can also be described by Stirling $\s$-permutations, which we now define.
Throughout the remainder of this article, unless specified, $\s$ is assumed to be a composition (i.e.\ a vector with positive integer entries).  This restriction is necessary for us to connect the combinatorics of $\s$-decreasing trees to the underlying geometry.

Let $\s=(s_1,\ldots, s_n)$ be a composition. 
A \defn{Stirling $\s$-permutation} is a permutation of the word $1^{s_1}2^{s_2}\ldots n^{s_n}$ that avoids the pattern $121$, which means that there is never a letter $j$ in between two occurrences of $i$ with $i<j$.
We denote by $\setperms$ the set of all Stirling $\s$-permutations.

The set of Stirling $\s$-permutations is in bijection with the set of $\s$-decreasing trees.
This bijection is obtained by reading nodes along the in-order traversal of the \defn{caverns} (spaces between 
 consecutive siblings) of an $\s$-decreasing trees (see Figure~\ref{fig:2112-decreasing-inordered-tree}).
Note that this bijection induces a correspondence between the prefixes of a Stirling $\s$-permutation $w$ and the leaves of its corresponding tree $T(w)$.

\begin{figure}[ht]
    \centering\includegraphics[scale=1.5]{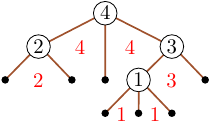}
    \caption{A $(2,1,1,2)$-decreasing tree with vertices labeled via in-order. The corresponding Stirling $\s$-permutation is $w=244113$.}
    \label{fig:2112-decreasing-inordered-tree}
\end{figure}

Analogous to the case of classical permutations, the cover relation in the $\s$-weak order can be described in terms of transpositions of substrings in Stirling $\s$-permutations. 

Let $w$ be a Stirling $\s$-permutation.
For $a\in [n]$, we define the \defn{$a$-block $B_a$} of $w$ to be the shortest substring of $w$ containing all $s_a$ occurrences of $a$. 
In Example~\ref{eg:s-perm}, we see that the $5$-block of $w=33725455716$ is $B_5=5455$.
Note that an $a$-block of $w$ necessarily starts and ends with $a$ by minimality, and contains only letters in $[a]$ because $w$ is $121$-avoiding. 
Furthermore for $a<c$, $w$ contains the consecutive substring $ac$ if and only if it is of the form $w=u_1B_acu_2$, where $u_1$ and $u_2$ denote consecutive substrings of $w$.

Let $w$ be a Stirling $\s$-permutation.
A pair $(a,c)$ with $1\leq a < c \leq n$ is called an \defn{ascent} of $w$ if $ac$ is a consecutive substring of $w$. It is a \defn{descent} of $w$ if $ca$ is a substring of $w$. 
If $w$ is of the form $w=u_1B_acu_2$ and $a<c$, the \defn{transposition} of $w$ along the ascent $(a,c)$ is the Stirling $\s$-permutation $u_1cB_au_2$. 
We denote by \defn{$\inv(w)$} the multiset of inversions formed by pairs $(c,a)$ 
with multiplicity \defn{$\#_w(c,a)$}$\in [0, s_c]$ the number of occurrences of $c$ that precede the $a$-block in $w$. 
As in the case of tree-rotations, if $A$ is a subset of ascents of $w$, we denote by \defn{$w+A$} the Stirling $\s$-permutation whose inversion set is the transitive closure of $\inv(w)+A$. We have the following correspondence between concepts on the family of $s$-decreasing trees and on the family of Stirling $s$-permutations, whose proof follows easily from the definitions.

\begin{lemma}\label{lem:ascdesc}
Let $w$ be a Stirling $\s$-permutation and $T(w)$ its corresponding $\s$-decreasing tree. 
Let $1\leq a<c\leq n$. 
\begin{enumerate}
\item[(a)] The pair $(a,c)$ is an ascent of $T(w)$ if and only if it is an ascent of $w$.
\item[(b)] The pair $(a,c)$ is a descent of $T(w)$ if and only if it is a descent of $w$.
\item[(c)] $\#_{T(w)}(c,a)=\#_w(c,a)$.
\end{enumerate}
Moreover, suppose $(a,c)$ is an ascent of $T=T(w)$ so that $w$ is the of the form $w=u_1B_acu_2$.
Then $T'$ is the $\s$-tree rotation of $T$ along $(a,c)$ if and only if $T'=T(w')$ where $w'=u_1 cB_au_2$.
\end{lemma}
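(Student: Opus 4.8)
The plan is to set up the bijection $w \mapsto T(w)$ explicitly via the in-order traversal of caverns, and then verify each equivalence by translating the tree-theoretic condition into a statement about the relative position of letters in the word. The key dictionary is this: for a node $c$ of $T=T(w)$ with subtrees $T^c_0, \dots, T^c_{s_c}$, the in-order reading visits (the caverns of) $T^c_0$, then the first occurrence of $c$, then $T^c_1$, then the second occurrence of $c$, and so on, ending with $T^c_{s_c}$. Consequently the $i$-th occurrence of $c$ in $w$ has exactly the letters of $T^c_0, \dots, T^c_{i-1}$ (and the occurrences of $c$ among them) to its left within the portion of $w$ coming from the subtree rooted at $c$; equivalently, a label $a<c$ lies in $T^c_i$ if and only if, in $w$, the $a$-block $B_a$ sits strictly between the $i$-th and $(i+1)$-st occurrences of $c$ (with the conventions that "$0$-th occurrence" means the left end of that subtree's block and "$(s_c+1)$-st" means its right end). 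This immediately gives part~(c): $\#_{T(w)}(c,a)$, defined as $i$ when $a\in T^c_i$ (and $0$, resp.\ $s_c$, when $a$ is left, resp.\ right, of $c$), counts exactly the occurrences of $c$ preceding $B_a$, which is $\#_w(c,a)$.

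For part~(a) I would check that the three defining conditions of an ascent $(a,c)$ of $T$ match, via the dictionary above, the single condition that $ac$ be a consecutive substring of $w$. Condition~(1), $a\in T^c_i$ with $i<s_c$, says $B_a$ lies immediately before some non-final occurrence of $c$ — i.e.\ there is an occurrence of $c$ right after $B_a$ with no other material of $T$-below-$c$ in between on the right side of $B_a$; this is the statement that $w = u_1 B_a c\, u_2$ locally. Condition~(2), that any $b$ with $a<b<c$ and $a\in T^b_j$ has $j=s_b$, says $B_a$ is at the right end of the $b$-block $B_b$ for every such $b$ — which is precisely what guarantees no letter larger than $a$ intervenes between $B_a$ and the following $c$, so that the substring is literally $ac$ and not $a\cdots c$. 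Condition~(3), that $T^a_{s_a}$ is a single leaf when $s_a>0$, says nothing lies to the right of the last occurrence of $a$ inside $B_a$ except what is forced — this ensures $B_a$ ends with $a$ with no trailing smaller letters glued on, so again the concatenation with $c$ is immediate. Running these equivalences in both directions establishes (a); part~(b) is the mirror-image argument, swapping left/right, $0\leftrightarrow s_c$, and $ac \leftrightarrow ca$, so I would just note that it follows by the symmetric reasoning (or by applying (a) to a reflected tree/word).

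For the final assertion about rotations, I would invoke \cite[Theorem~1.32]{CP22}: the $\s$-tree rotation of $T$ along the ascent $(a,c)$ is the tree whose inversion multiset is the transitive closure of $\inv(T)+(c,a)$, i.e.\ it increments $\#_T(c,a)$ by one and closes up. On the word side, passing from $w=u_1B_acu_2$ to $w'=u_1cB_au_2$ moves exactly one occurrence of $c$ (the one just after $B_a$) from after $B_a$ to before it, hence increments $\#_w(c,a)$ by one and, as one checks, leaves every other $\#_w(\cdot,\cdot)$ unchanged and produces a valid $121$-avoiding word; thus $\inv(w') = \inv(w) + (c,a)$ already being transitively closed, $w'$ represents $T+\{(a,c)\}$. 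Combining this with part~(c) (which identifies $\inv(T(w'))$ with $\inv(w')$) and the fact that an $\s$-decreasing tree is determined by its inversion multiset gives $T' = T(w')$ iff $T'$ is the rotation of $T$ along $(a,c)$.

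The routine bookkeeping — carefully matching conditions (2) and (3) with "no intervening larger letter" and "block ends cleanly" — is where all the content sits; I expect the main obstacle to be stating the in-order/cavern dictionary precisely enough that these matchings become immediate rather than requiring a case analysis, so I would invest in a clean formulation of that dictionary (essentially: the leaves of $T(w)$ correspond to prefixes of $w$, as already remarked in the text) before touching any of the three equivalences.
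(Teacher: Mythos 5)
The paper states this lemma without proof (``whose proof follows easily from the definitions''), so there is no argument to compare against; I will assess yours on its merits. Your in-order/cavern dictionary, the derivation of part (c), and the matching of ascent/descent conditions for parts (a) and (b) are all essentially correct. Minor quibble: your opening sentence attributes ``$B_a$ lies immediately before some non-final occurrence of $c$'' to condition (1) alone, but condition (1) only locates $a$ in some $T^c_i$ with $i<s_c$; the ``immediately'' requires all three conditions jointly, as your subsequent sentences in fact establish. That is a phrasing issue, not a logical one.

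The genuine gap is in the rotation clause. You assert that passing from $w=u_1B_acu_2$ to $w'=u_1cB_au_2$ ``increments $\#_w(c,a)$ by one and \ldots leaves every other $\#_w(\cdot,\cdot)$ unchanged,'' and hence $\inv(w')=\inv(w)+(c,a)$. This is false whenever $B_a$ properly contains another block: moving the occurrence of $c$ from the right of $B_a$ to its left also moves it past every block $B_{a'}\subseteq B_a$, so $\#(c,a')$ increments as well. Concretely, for $\s=(1,2,1)$, $w=2123$, $a=2$, $c=3$, $B_2=212$, one has $w'=3212$, and $\#_w(3,1)=0$ while $\#_{w'}(3,1)=1$, so $(3,1)$ also changes. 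Consequently $\inv(w')\neq\inv(w)+(c,a)$, and the claim that $\inv(w)+(c,a)$ is ``already transitively closed'' is unjustified.

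The fix is not hard but must be done: identify the changed pairs exactly as $(c,a')$ for every $a'$ with $B_{a'}\subseteq B_a$ (equivalently, $a'$ a weak descendant of $a$ in $T(w)$), and then show these are precisely the pairs whose multiplicity increases in the transitive closure of $\inv(w)+(c,a)$. This is the single-ascent case of Proposition~\ref{prop:transitiveclosure}: with $A=\{(a,c)\}$, a pair $(a',c)$ is $A$-dependent if and only if $B_{a'}\subseteq B_a$ (there is no $b$ with $a<b<c$ and $B_a\subsetneq B_b$, since the letter after $B_a$ in $w$ is $c>b$, so condition (i) there forces $b_1=a$). Once that identification is made, $\inv(w')=\inv(T+\{(a,c)\})$ does hold, part (c) identifies $\inv(T(w'))$ with $\inv(w')$, and your conclusion follows.
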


\begin{corollary}\label{prop:cover_relations_multiperm}
Let $w$ and $w'$ be Stirling $\s$-permutations.  
Then $w'$ covers $w$ in the $\s$-weak order if and only if $w'$ is the transposition of $w$ along an ascent.
\end{corollary}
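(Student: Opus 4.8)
The plan is to derive Corollary~\ref{prop:cover_relations_multiperm} directly from Lemma~\ref{lem:ascdesc} together with the known description of cover relations for $\s$-decreasing trees. Recall from the discussion preceding Definition~\ref{def:s-permutahedron} that $R'$ covers $R$ in the $\s$-weak order if and only if $R'$ is the $\s$-tree rotation of $R$ along some ascent $(a,c)$ of $R$ (\cite[Theorem~1.32]{CP22}). Since $T\colon \setperms \to \settrees$ is a bijection that, by definition of the $\s$-weak order on Stirling $\s$-permutations via $\inv(w)$, is an isomorphism of posets (both orders being defined by inclusion of inversion multisets, matched via Lemma~\ref{lem:ascdesc}(c)), cover relations correspond to cover relations. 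So it suffices to transport the tree-rotation description across this bijection.

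First I would spell out the poset isomorphism: for Stirling $\s$-permutations $v, w$, we have $v \trianglelefteq w$ iff $\inv(v) \subseteq \inv(w)$, and by Lemma~\ref{lem:ascdesc}(c) this holds iff $\inv(T(v)) \subseteq \inv(T(w))$, i.e.\ iff $T(v) \trianglelefteq T(w)$ in the $\s$-weak order on $\settrees$. Hence $w'$ covers $w$ in $\setperms$ iff $T(w')$ covers $T(w)$ in $\settrees$. Then I would invoke \cite[Theorem~1.32]{CP22}: the latter holds iff $T(w')$ is the $\s$-tree rotation of $T(w)$ along some ascent $(a,c)$ of $T(w)$. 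By Lemma~\ref{lem:ascdesc}(a), the ascents of $T(w)$ are exactly the ascents of $w$; and given such an ascent, $w$ is of the form $w = u_1 B_a c\, u_2$ and the final assertion of Lemma~\ref{lem:ascdesc} identifies the $\s$-tree rotation of $T(w)$ along $(a,c)$ as $T(w')$ with $w' = u_1 c B_a u_2$, which is by definition the transposition of $w$ along the ascent $(a,c)$. Combining these equivalences gives exactly the statement.

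One small point to address carefully is that the transposition $w' = u_1 c B_a u_2$ is again a Stirling $\s$-permutation (i.e.\ still avoids $121$); this is implicitly guaranteed because it equals $T^{-1}$ of an $\s$-decreasing tree, but I would note it, or alternatively observe directly that moving a single $c$ from immediately after the $a$-block to immediately before it cannot create a $121$-pattern, since the $a$-block contains only letters $\le a < c$ and $w$ was $121$-avoiding. I do not anticipate a genuine obstacle here — the content is entirely carried by Lemma~\ref{lem:ascdesc} and the cited tree-rotation theorem; the only thing to be careful about is stating the chain of "if and only if"s in the right direction and making sure the form $w = u_1 B_a c\, u_2$ is precisely the hypothesis under which the tree-rotation/transposition correspondence of Lemma~\ref{lem:ascdesc} applies.
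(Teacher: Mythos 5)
Your proposal is correct and is exactly the derivation the paper has in mind: the paper states this as a corollary of Lemma~\ref{lem:ascdesc} without a written proof, and your argument — transporting the tree-rotation description of covers (\cite[Theorem~1.32]{CP22}, cited in the introduction) across the inversion-preserving bijection $T\colon\setperms\to\settrees$ via parts (a), (c), and the final assertion of Lemma~\ref{lem:ascdesc} — is the intended chain of equivalences. Your remark that $u_1 c B_a u_2$ remains $121$-avoiding is a reasonable sanity check that the paper leaves implicit.
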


\begin{example}\label{eg:s-perm}
Let $\s=(1,1,2,1,3,1,2)$ and consider the $\s$-permutation $w=33725455716$. 
The transposition of $w$ along the ascent $(5, 7)$ switches the $5$-block of $w$ with the $7$ that immediately follows it and yields $w'=3372\textcolor{red}{7}\textcolor{blue}{5455}16$.
The corresponding $\s$-decreasing tree $T=T(w)$ is shown on the left of Figure~\ref{fig:tree_rotation}.  
The rotation of $T$ along the ascent $(5,7)$ yields $T'=T(w')$.
\end{example}

\begin{figure}[ht!]
\centering
\begin{tikzpicture}
\begin{scope}[scale=0.35, xscale=2.0, yscale=2.5, xshift=0, yshift=0]
\node at (-4,0) {$T:$};
	\treenode[](n7) at (0,0) {\scriptsize$7$};
	\treenode[](n6) at (2,-1) {\scriptsize$6$};
	\treenode[](n5) at (0,-1) {\scriptsize$5$};
	\treenode[](n4) at (-.5,-2) {\scriptsize$4$};
	\treenode[](n3) at (-2,-1) {\scriptsize$3$};
	\treenode[](n2) at (-1.5,-2) {\scriptsize$2$};
	\treenode[](n1) at (2.5,-2) {\scriptsize$1$};
	\leafnode[](l1) at (-2.5,-2) {};
	\leafnode[](l2) at (-3.5,-2) {};
	\leafnode[](l3) at (-4.5,-2) {};
	\leafnode[](l4) at (-2.5,-3) {};
	\leafnode[](l5) at (-1.5,-3) {};
	\leafnode[](l6) at (-.5,-3) {};
	\leafnode[](l7) at (.5,-3) {};	
	\leafnode[](l8) at (.5,-2) {};
	\leafnode[](l9) at (1.5,-2) {};	
	\leafnode[](l10) at (2.5,-3) {};
	\leafnode[](l11) at (3.5,-3) {};
	\leafnode[](l12) at (3.5,-2) {};

	\draw[very thick, color=sienna] (n7)--(n5);
	\draw[very thick, color=sienna] (n7)--(n3);
	\draw[very thick, color=sienna] (n7)--(n6);
	\draw[very thick, color=sienna] (n3)--(l1);
	\draw[very thick, color=sienna] (n3)--(l2);
	\draw[very thick, color=sienna] (n3)--(l3);
	\draw[very thick, color=sienna] (n5)--(n2);
	\draw[very thick, color=sienna] (n5)--(n4);
	\draw[very thick, color=sienna] (n5)--(l8);
	\draw[very thick, color=sienna] (n5)--(l9);
	\draw[very thick, color=sienna] (n2)--(l4);
	\draw[very thick, color=sienna] (n2)--(l5);
	\draw[very thick, color=sienna] (n4)--(l6);
	\draw[very thick, color=sienna] (n4)--(l7);
	\draw[very thick, color=sienna] (n1)--(l10);
	\draw[very thick, color=sienna] (n1)--(l11);
	\draw[very thick, color=sienna] (n6)--(n1);
	\draw[very thick, color=sienna] (n6)--(l12);
\end{scope}
\begin{scope}[scale=0.35, xscale=2.0, yscale=2.5, xshift=280, yshift=0]
\node at (-3.5,0) {$T':$};
	\treenode[](n7) at (0,0) {\scriptsize$7$};
	\treenode[](n6) at (2,-1) {\scriptsize$6$};
	\treenode[](n5) at (2,-2) {\scriptsize$5$};
	\treenode[](n4) at (1.5,-3) {\scriptsize$4$};
	\treenode[](n3) at (-2,-1) {\scriptsize$3$};
	\treenode[](n2) at (0,-1) {\scriptsize$2$};
	\treenode[](n1) at (3.5,-3) {\scriptsize$1$};
	\leafnode[](l1) at (-2,-2) {};
	\leafnode[](l2) at (-3,-2) {};
	\leafnode[](l3) at (-4,-2) {};
	\leafnode[](l4) at (-1,-2) {};
	\leafnode[](l5) at (0,-2) {};
	\leafnode[](l6) at (1.5,-4) {};
	\leafnode[](l7) at (2.5,-4) {};	
	\leafnode[](l8) at (0.5,-3) {};
	\leafnode[](l9) at (2.5,-3) {};	
	\leafnode[](l10) at (3.5,-4) {};
	\leafnode[](l11) at (4.5,-4) {};
	\leafnode[](l12) at (3,-2) {};
	\draw[very thick, color=sienna] (n7)--(n3);
	\draw[very thick, color=sienna] (n7)--(n2);
	\draw[very thick, color=sienna] (n7)--(n6);
	\draw[very thick, color=sienna] (n3)--(l1);
	\draw[very thick, color=sienna] (n3)--(l2);
	\draw[very thick, color=sienna] (n3)--(l3);
	\draw[very thick, color=sienna] (n5)--(n4);
	\draw[very thick, color=sienna] (n5)--(n1);
	\draw[very thick, color=sienna] (n5)--(l8);
	\draw[very thick, color=sienna] (n5)--(l9);
	\draw[very thick, color=sienna] (n2)--(l4);
	\draw[very thick, color=sienna] (n2)--(l5);
	\draw[very thick, color=sienna] (n4)--(l6);
	\draw[very thick, color=sienna] (n4)--(l7);
	\draw[very thick, color=sienna] (n1)--(l10);
	\draw[very thick, color=sienna] (n1)--(l11);
	\draw[very thick, color=sienna] (n6)--(n5);
	\draw[very thick, color=sienna] (n6)--(l12);
\end{scope}
\end{tikzpicture}
\caption{A rotation of the $\s$-decreasing tree $T$ along the ascent $(5,7)$ yields $T'$.  $T$ corresponds with $w=33725455716$ and $T'$ corresponds with $w'=33727545516$, which is the transposition of the $5$-block of $w$ with the $7$ that immediately follows it.
}
\label{fig:tree_rotation}
\end{figure}
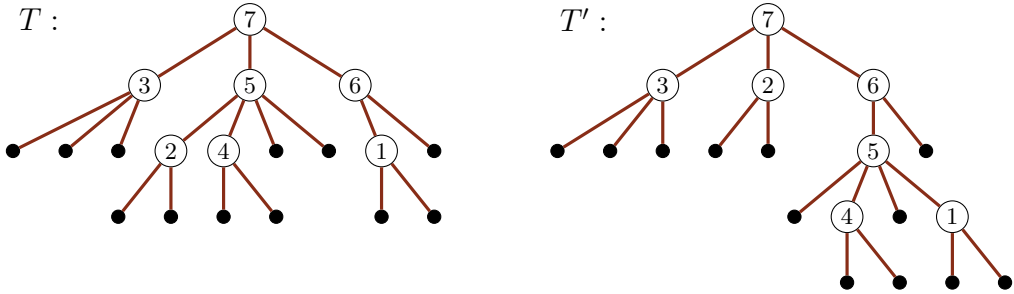

\begin{remark}\label{def:spermcombi_withperm}
    The $\s$-permutahedron $\spermcombi$ of Definition~\ref{def:s-permutahedron} can be alternatively defined as the combinatorial complex with faces $(w,A)$ where $w$ is a Stirling $\s$-permutation and $A$ is a subset of ascents of $w$.
\end{remark}

\section{Subdivisions of flow polytopes}\label{sec:subdivisions_flow_polytopes}

\subsection{Background on DKK triangulations of flow polytopes}\label{subsec:flowbackground}

Let $G=(V, E)$ be a loopless connected oriented graph on vertices $V=\{v_0, \ldots, v_n\}$ with edges oriented from $v_i$ to $v_j$ if $i<j$ and such that $v_0$ (respectively $v_n$)  is the only source (respectively sink) of $G$. A vertex is an \defn{inner vertex} if it is not a source and not a sink. We allow multiple edges between pairs of vertices. 
For any vertex $v_i$ we denote by \defn{$\cI_i$} its set of incoming edges, and by \defn{$\cO_i$} its set of outgoing edges.

Given a vector $\mathbf{a}=(a_0, a_1 \ldots, a_{n-1}, a_n)$ such that $\sum_i a_i=0$, a \defn{flow} of $G$ with \defn{netflow} $\mathbf{a}$ is a vector $(f_e)_{e\in E}\in (\RR_{\geq 0})^E$ such that: $\sum_{e\in \cI_i}f_e + a_i=\sum_{e\in \cO_i}f_e$ for all $i\in [0, n]$. 
A flow $(f_e)_{e\in E}$ of $G$ is called an \defn{integer flow} if all $f_e$ are integers. 
We denote by \defn{$\mathcal{F}_G^{\mathbb{Z}}({\bf a})$} the set of integer flows of $G$ with netflow $\mathbf{a}$. 
The \defn{flow polytope} of $G$ is
$$\fpol({\bf a}) = \Big\{ (f_e)_{e\in E} \text{ flow of } G \text{ with netflow } {\bf a} \Big\} \subseteq \RR^E.$$

When the netflow is not specified, i.e.\ when we write $\fpol[G]$, it is assumed to be $\mathbf{a}=(1, 0, \ldots, 0, -1)$. In this case, $\fpol$ is a polytope of dimension $|E|-|V|+1$, and the vertices of $\fpol$ correspond exactly to indicator vectors of the routes of $G$ (\cite[Corollary  3.1]{GS78}), where a \defn{route} of $G$ is a path from $v_0$ to $v_n$ i.e.\ a sequence of edges $((v_0, v_{k_1}), (v_{k_1}, v_{k_2}), \ldots, (v_{k_l}, v_n))$, with $0<k_1<k_2< \ldots <k_l<n$.

Flow polytopes admit several nice subdivisions that can be understood via combinatorial properties of the graph $G$, in particular the triangulations defined by Danilov, Karzanov and Koshevoy in~\cite{DKK12}, that will be our main tool for obtaining geometric realizations of $\s$-permutahedra.

Let $P$ be a route of $G$ that contains vertices $v_i$ and $v_j$.
We denote by \defn{$Pv_i$} the prefix of $P$ that ends at $v_i$, \defn{$v_iP$} the suffix of $P$ that starts at $v_i$ and \defn{$v_iPv_j$} the subroute of $P$ that starts at $v_i$ and ends at $v_j$. 

A \defn{framing $\preceq$} of $G$ is a choice of linear orders $\preceq_{\cI_i}$ and $\preceq_{\cO_i}$ on the sets of incoming and outgoing edges for each inner vertex $v_i$. 
This induces a total order on the set of partial routes from $v_0$ to $v_i$ (respectively from $v_i$ to $v_n$) by taking $Pv_i\preceq Qv_i$ if $e_P \preceq_{\cI_j} e_Q$ where $v_j$ is the first vertex after which the two partial routes coincide, and $e_P$, $e_Q$ are the edges of $P$ and $Q$ that end at $v_j$ (respectively $v_iP\preceq v_iQ$ if $e_P \preceq_{\cO_j} e_Q$ where $v_j$ is the last vertex before which the two partial routes coincide, and $e_P$, $e_Q$ are the edges of $P$ and $Q$ that start at $v_j$).
When $G$ is endowed with such a framing $\preceq$, we say that $G$ is \defn{framed}. See Figure~\ref{fig:framed_graph} for an example.

Let $P$ and $Q$ be routes of $G$ with a common subroute between inner vertices $v_i$ and $v_j$ (possibly with $v_i=v_j$). 
We say that $P$ and $Q$ are \defn{in conflict} at $[v_i,v_j]$ if the initial parts $Pv_i$ and $Qv_i$ are ordered differently than the final parts $v_jP, v_jQ$.
Otherwise we say that $P$ and $Q$ are \defn{coherent} at $[v_i,v_j]$. 
We say that $P$ and $Q$ are \defn{coherent} if they are coherent at each common inner subroute. See Figure~\ref{fig:coherent routes}.

\begin{figure}[ht]
    \centering
    \includegraphics{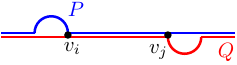} \qquad \includegraphics{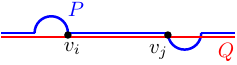}
    \caption{Illustration of routes $P$ and $Q$ that are coherent (on the left) and in conflict (on the right) with respect to the framing where the incoming/outgoing edges at each vertex are ordered from top to bottom.}
    \label{fig:coherent routes}
\end{figure}

The relation on routes being coherent is reflexive and symmetric and we can consider sets of mutually coherent routes which are called the  \defn{cliques} of $(G, \preceq)$. We denote  by \defn{$\cliques$} the set of cliques of $(G, \preceq)$, and \defn{$\maxcliques$} the set of maximal collection of cliques under inclusion. 
For a set of routes $C$, let $\Delta_C$ be the convex hull of the vertices of $\fpol$ corresponding to the elements in $C$.

\begin{theorem}[{\cite[Theorem 1 \& 2]{DKK12}}]\label{thm:DKKsimplices}
The simplices $\{\Delta_C\, |\, C \in \maxcliques\}$ are the maximal cells of a regular triangulation of $\fpol$.
\end{theorem}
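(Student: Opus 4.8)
This statement is a theorem of Danilov, Karzanov and Koshevoy, so in the paper one simply cites \cite{DKK12}; here is how I would reconstruct the proof. Three things must be checked: that $\{\Delta_C : C\in\maxcliques\}$ is a simplicial complex whose union is $\fpol$ (so it is a triangulation), that each $\Delta_C$ is unimodular, and that the triangulation is regular. All three hinge on understanding how an arbitrary flow decomposes into routes and on how the framing $\preceq$ constrains such decompositions; write $\mathbf{1}_P\in\RR^E$ for the indicator vector of a route $P$, so that the vertices of $\fpol$ are exactly the $\mathbf{1}_P$.

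First I would prove the covering statement by a greedy route-extraction procedure. Given a flow $f\in\fpol$, build a route $P$ through the support $\{e:f_e>0\}$ by starting at $v_0$ and, at each inner vertex reached along an incoming edge, leaving along the $\preceq_{\cO_i}$-maximal outgoing edge that still carries positive residual flow; then subtract $\varepsilon_P\,\mathbf{1}_P$, where $\varepsilon_P=\min_{e\in P}f_e$, and iterate until the flow is exhausted. The key lemma is that the routes $P_1,P_2,\dots$ produced this way are pairwise coherent: if two of them share a common subroute between inner vertices $v_i$ and $v_j$, the greedy rule having forced the earlier-extracted route to be $\preceq$-extremal on the residual support at $v_i$ forces it to be extremal at $v_j$ as well, so the two cannot be in conflict. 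Hence $f=\sum_k\varepsilon_{P_k}\mathbf{1}_{P_k}$ expresses $f$ as a nonnegative combination of the routes of a clique, so $f\in\Delta_C$ for any maximal clique $C$ containing $\{P_k\}$, and the simplices cover $\fpol$.

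The heart of the matter is the intersection statement: two maximal simplices $\Delta_C$ and $\Delta_{C'}$ should meet exactly along $\Delta_{C\cap C'}$. I would first show that inside a single clique the indicator vectors are affinely independent, by ordering the routes of $C$ with respect to the framing and peeling them off one at a time: each route contributes an edge used by no route peeled before it, which gives both linear independence of the differences and the fact that any integer flow in the affine span is an \emph{integer} combination of the $\mathbf{1}_P$. This simultaneously shows that a maximal clique has exactly $\dim\fpol+1=|E|-|V|+2$ routes and that $\Delta_C$ is unimodular. To finish the intersection property, I would run the greedy extraction on a point common to $\Delta_C$ and $\Delta_{C'}$ and check that every route it outputs is coherent with all of $C$ and all of $C'$, hence lies in $C\cap C'$ by maximality, using the uniqueness of the coherent decomposition to make this consistent. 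This analysis of how coherent routes interact along shared subroutes is, in my view, the main obstacle.

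Finally, for regularity I would exhibit a height function on routes compatible with the framing, for instance of the form $\omega(P)=\sum_{e\prec_P e'} x_e x_{e'}$, the sum over pairs of edges of $P$ taken in their order along $P$, for generic positive edge-weights $x_e$ increasing along $\preceq_{\cI_i}$ and $\preceq_{\cO_i}$, and check that the lower envelope of the lifted points $(\mathbf{1}_P,\omega(P))$ projects onto exactly the cells $\Delta_C$. The relevant computation is that for a conflicting pair $P,Q$ at some $[v_i,v_j]$ and the coherent pair $P',Q'$ obtained by interchanging their suffixes from $v_j$ — for which $\mathbf{1}_P+\mathbf{1}_Q=\mathbf{1}_{P'}+\mathbf{1}_{Q'}$ — one gets $\omega(P)+\omega(Q)-\omega(P')-\omega(Q')=\pm(X_{Pv_i}-X_{Qv_i})(X_{v_jP}-X_{v_jQ})$, with $X$ denoting the corresponding partial sums of the $x_e$, and the genericity of the weights can be arranged to give this the sign that forces the coherent diagonal into the triangulation on every such circuit. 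This verification, together with the coherence lemmas above, carries essentially all the weight of the theorem; the remainder is linear algebra on indicator vectors of routes.
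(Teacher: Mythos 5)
The paper does not in fact prove this theorem: its entire ``proof'' is the one-line observation that \cite{DKK12} works with the cone of flows with netflow $(\lambda,0,\dots,0,-\lambda)$, $\lambda\in\RR_+$, and that intersecting with the hyperplane $\lambda=1$ yields the flow-polytope formulation. Your proposal is a from-scratch reconstruction of the Danilov--Karzanov--Koshevoy theorem itself, which is a genuinely different (and far more ambitious) undertaking than what the paper records.

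There is a real gap in the covering step. Your greedy rule --- at each inner vertex, leave along the $\preceq_{\cO_i}$-maximal outgoing edge with positive residual flow --- does not control which \emph{incoming} edges the extracted routes use, and coherence is a condition that compares both orders. A framing makes no promise that $\preceq_{\cO_k}$ and $\preceq_{\cI_m}$ agree on the parallel edges from $v_k$ to $v_m$, and it says nothing at all at the source $v_0$. Concretely, take $V=\{v_0,v_1,v_2,v_3\}$, parallel edges $a_1\prec_{\cI_1}a_2$ from $v_0$ to $v_1$, one edge $b$ from $v_1$ to $v_2$, and parallel edges $c_1\prec_{\cO_2}c_2$ from $v_2$ to $v_3$. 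The flow $f=\tfrac12\mathbf{1}_{a_1bc_1}+\tfrac12\mathbf{1}_{a_2bc_2}$ is the centre of the square $\fpol$, and the coherent diagonal is $\{a_1bc_1,a_2bc_2\}$. But your greedy, if it happens to leave $v_0$ along $a_1$, produces $a_1bc_2$ first and then $a_2bc_1$, a pair in conflict at $[v_1,v_2]$. Nothing in your rule forbids this, so the claim that the extracted routes form a clique fails. (The correct statement, that a generic flow admits a \emph{unique} maximal coherent decomposition, is exactly the hard content of \cite{DKK12} and is usually deduced from the height function, not the other way around.)

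The height function you propose is also problematic as stated: $\omega(P)=\sum_{e\prec_P e'}x_ex_{e'}$ equals $\tfrac12\bigl(\sum_{e\in P}x_e\bigr)^2$ minus a function that is linear in $\mathbf{1}_P$, so modulo affine changes it is a rank-one quadratic form whose lift has large flat regions --- not a generic lift. Moreover ``$x_e$ increasing along $\preceq_{\cI_i}$ \emph{and} $\preceq_{\cO_i}$'' is asking a single scalar per edge to respect two independent orders, which is not always consistent. The actual admissible height in \cite{DKK12} (reproduced, suitably specialized, as the paper's Lemma~\ref{lem:epsilonheight}) has a different, non-degenerate form. Your unimodularity argument by peeling routes, and the conflict/resolvent sign computation, are the parts closest in spirit to what actually works; the flow-decomposition and height-function steps as written would not.
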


\begin{proof}
    The formulation in \cite{DKK12} is in terms of the cone $\cF_{+}$ of flows with any netflow $\mathbf{a} = (\lambda, 0, \ldots, 0, -\lambda)$, for $\lambda\in \RR_{+}$. We only need to intersect this cone with the affine hyperplane corresponding to taking $\lambda=1$ to obtain the theorem in our formulation for the flow polytope $\fpol$.
\end{proof}

The triangulation obtained this way is the \defn{DKK triangulation} of $\fpol$ with respect to the framing $\preceq$ and we denote it by \defn{$\triangDKK$}.
The word \emph{regular} in the above theorem is related to the existence of an admissible height function, see the definition in Section \ref{sec:background_tropicalization} and Section~\ref{subsec:height_functions}.

Postnikov~\cite{P1014} and Stanley~\cite{S00} used a recursive {\em subdivision lemma} (see \cite[Proposition 4.1]{MM15}) to show that the volume of $\fpol$ equals the number of integer flows in $\mathcal{F}_G^{\mathbb{Z}}({\bf d})$, with ${\bf d}=(0,d_1,\ldots,d_{n-1},-\sum_i d_i)$ and $d_i=\indeg_G(v_i)-1$ (see Corollary~\ref{cor:volume 10...0-1 case}). 
This recursive subdivision can be made compatible with DKK triangulations in what are called \defn{framed Postnikov--Stanley triangulations} \cite[Section 7.1]{MMS19}.
As explained by M\'esz\'aros, Morales, and Striker~\cite[Definition 7.5]{MMS19}, this leads to the following explicit bijection between the maximal cliques of $(G, \preceq)$ and the integer flows on $G$ with netflow $\bf d$.

Let $(G,\preceq)$ be a framed graph with netflow ${\bf d}=(0,d_1,\ldots,d_{n-1},-\sum_i d_i)$ where $d_i=\indeg_G(v_i)-1$. 
We define the function 
\[\Omega_{G,\preceq}: 
\maxcliques \to \mathcal{F}_G^{\mathbb{Z}}({\bf d}): C \mapsto (n_C(e)-1)_{e\in E(G)},\]
where $n_C(e)$ is the number of times the edge $e=(v_i, v_j)$ appears in the set of prefixes $\{Pv_j \mid P \in C\}$ of the maximal clique.

\begin{theorem}[{\cite[Theorem 7.8]{MMS19}}]
\label{thm:bij_cliques_intflows}
Given a framed graph $(G,\preceq)$, the map $\Omega_{G,\preceq}$ is a bijection between maximal cliques in $\maxcliques$ and integer flows in $\mathcal{F}^{\mathbb{Z}}_G({\bf d})$.

\end{theorem}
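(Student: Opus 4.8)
The plan is to prove that $\Omega:=\Omega_{G,\preceq}$ is a bijection in three steps: show it is well-defined, show the two sets have the same (finite) cardinality, and produce a one-sided inverse; all the real content sits in one structural property of maximal cliques. For the cardinality step I would use that $\triangDKK$ is a regular \emph{unimodular} triangulation, so the number of its maximal cells — which is $|\maxcliques|$ by Theorem~\ref{thm:DKKsimplices} — equals the normalized volume $\vol(\fpol[G])$, and this volume equals $\#\mathcal{F}_G^{\mathbb{Z}}(\mathbf{d})$ by the Postnikov--Stanley formula recalled above. Hence it is enough to prove $\Omega$ injective.

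The structural property I would establish is: for $C\in\maxcliques$ and every non-sink vertex $v_j$, among the pairwise distinct prefixes $\{Pv_j: P\in C\}$ exactly one continues along every edge of $\cO_j$ while each of the others continues along exactly one edge of $\cO_j$ (dually for the suffixes issuing from each non-source vertex). I would deduce this from the DKK characterization of cliques as families of mutually coherent routes (Theorem~\ref{thm:DKKsimplices}): maximality forces every edge $e$ to be used by some route of $C$, since otherwise all vertices of $\Delta_C$ lie in $\{x_e=0\}$ and $\Delta_C$ is not full-dimensional in $\fpol[G]$; and coherence, together with the count $|C| = \dim\fpol[G]+1 = |E|-|V|+2$, forces the branching to be as small as possible, that is, one full branch per vertex. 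Granting this, well-definedness of $\Omega$ is a local count at each inner vertex $v_j$: if $m_j$ denotes the number of distinct prefixes reaching $v_j$, then $\sum_{e\in\cI_j}n_C(e) = m_j$ while $\sum_{e\in\cO_j}n_C(e) = m_j + \outdeg_G(v_j)-1$ by the property, so using $d_j=\indeg_G(v_j)-1$ both sides of $\sum_{e\in\cI_j}(n_C(e)-1)+d_j=\sum_{e\in\cO_j}(n_C(e)-1)$ reduce to $m_j-1$; nonnegativity of $\Omega(C)$ is the "every edge is used" statement, and the balance conditions at the source and the sink are immediate.

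For injectivity I would exhibit a left inverse $\Psi$. Given $g\in\mathcal{F}_G^{\mathbb{Z}}(\mathbf{d})$, build the prefixes of a route family $\Psi(g)$ by processing $v_0,v_1,\dots,v_n$ in order: start with the empty prefix at $v_0$; having the list of distinct prefixes that reach $v_j$ in the order induced by $\preceq$, use the integers $(g_e+1)_{e\in\cO_j}$ — which sum to $m_j+\outdeg_G(v_j)-1$ — to read off, in the unique way compatible with the order $\preceq_{\cO_j}$, which of these prefixes branches along all of $\cO_j$ and how the remaining ones are matched singly to edges of $\cO_j$; this is exactly the ``staircase'' decoding underlying the Postnikov--Stanley reduction at $v_j$. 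Then I would check that $\Psi(g)$ is a set of $|E|-|V|+2$ routes, that they are mutually coherent — this is where the framing is genuinely used: the order discipline in the decoding prevents two reconstructed routes from being in conflict on a common subroute — hence a maximal clique, and that $\Omega(\Psi(g))=g$ by construction; injectivity of $\Omega$ follows, and with the cardinality step $\Omega$ is a bijection.

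The main obstacle is the structural property above (equivalently, the coherence of the reconstructed family): it is the only step that is genuinely combinatorial rather than a volume count or local arithmetic, and it is exactly what pins the bijection to this particular $\Omega$. The cleanest route to it, and the one I would ultimately follow, is the recursive framework of framed Postnikov--Stanley triangulations from \cite{MMS19}: resolve the inner vertices of $G$ one at a time, each resolution refining $\triangDKK$, and observe that a maximal clique is precisely a compatible sequence of local resolutions while an integer flow with netflow $\mathbf{d}$ is precisely a compatible sequence of local flow-splittings, with $d_i=\indeg_G(v_i)-1$ the size of the $i$-th local choice; matching the two recursions then identifies the composite bijection with $\Omega$. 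The remaining cost in that approach is verifying that this recursive subdivision is compatible with the DKK triangulation, after which everything else is bookkeeping.
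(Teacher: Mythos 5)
A preliminary remark: the paper does not prove this statement; it is imported verbatim from \cite[Theorem 7.8]{MMS19}, and the surrounding text only recalls the definition of $\Omega_{G,\preceq}$. So your proposal has to stand on its own rather than be checked against an in-paper argument, and your honest fallback to the recursive Postnikov--Stanley framework of \cite{MMS19} is exactly what the paper itself does by citation.

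That said, the structural lemma you isolate as the crux is false in the generality you state it. At an inner vertex $v_j$ with $m_j$ distinct prefixes $\pi_1\prec\cdots\prec\pi_{m_j}$ in $C$ and outgoing edges $e_1\prec\cdots\prec e_{\outdeg(v_j)}$, what coherence plus maximality actually force is that the realized pairs $(\pi_r,e_c)$ form a \emph{monotone staircase} in the $m_j\times\outdeg(v_j)$ grid, of total size $m_j+\outdeg(v_j)-1$. Your claim describes only the L-shaped staircases, where one row is full and every other row has a single cell. Already with $\indeg(v_j)=2$ and $\outdeg(v_j)=3$, the staircase $\{(1,1),(1,2),(2,2),(2,3)\}$ is a legitimate maximal local configuration, yet each prefix branches along exactly two of the three outgoing edges and neither branches along all three, so ``exactly one prefix continues along every edge of $\cO_j$'' fails. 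Your well-definedness computation happens to survive because it only uses the total count $\sum_{e\in\cO_j} n_C(e)=m_j+\outdeg(v_j)-1$, which is the size of any monotone staircase; and your decoding $\Psi$ is salvageable if you interpret $(g_e+1)_{e\in\cO_j}$ as the column sums of the unique monotone staircase with those sums, rather than as singling out one all-branching prefix. Note that for $\Gs$, the graph this paper actually applies the theorem to, every inner vertex has $\outdeg=2$, so every staircase is L-shaped and your stronger claim accidentally holds; but as a lemma toward the theorem for arbitrary framed $(G,\preceq)$ it must be replaced by the staircase statement.
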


As a corollary, we obtain the following result of Postnikov--Stanley (unpublished, see \cite[Sec. 6]{MM15} and \cite[Sec. 3]{MS} for proofs) and Baldoni--Vergne \cite[Thm. 38]{BVkpf}.

\begin{corollary}[{\cite{S00} and \cite[Thm. 38]{BVkpf}}] \label{cor:volume 10...0-1 case}
For a graph $G$ on $\{v_0,\ldots,v_{n}\}$ with netflow ${\bf d}=(0,d_1,\ldots,d_{n-1},-\sum_i d_i)$ where $d_i=\indeg(v_i)-1$, we have that 
\[
\vol \fpol(1,0,\ldots,0,-1) = \#\mathcal{F}^{\mathbb{Z}}_G({\bf d}),
\]
where $\vol$ denotes the normalized volume of a polytope.
\end{corollary}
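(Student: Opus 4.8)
The plan is to derive the statement directly from Theorem~\ref{thm:DKKsimplices} and Theorem~\ref{thm:bij_cliques_intflows}. First I would fix an arbitrary framing $\preceq$ of $G$; such a framing exists since it only amounts to choosing linear orders on the finite edge-sets $\cI_i$ and $\cO_i$ at each inner vertex. By Theorem~\ref{thm:DKKsimplices}, the collection $\{\Delta_C \mid C\in\maxcliques\}$ is the set of maximal cells of a (regular) triangulation $\triangDKK$ of $\fpol=\fpol(1,0,\ldots,0,-1)$. The crucial additional input, due to Danilov--Karzanov--Koshevoy, is that this triangulation is \emph{unimodular}: each $\Delta_C$ is the convex hull of the $0/1$ indicator vectors of a clique of routes, and these vertices span a simplex of normalized volume $1$. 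Granting this, the normalized volume of $\fpol$ equals the number of its top-dimensional cells, namely $\#\maxcliques$.

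Second, I would invoke Theorem~\ref{thm:bij_cliques_intflows}: the map $\Omega_{G,\preceq}$ is a bijection between $\maxcliques$ and $\mathcal{F}^{\mathbb{Z}}_G({\bf d})$, where ${\bf d}=(0,d_1,\ldots,d_{n-1},-\sum_i d_i)$ and $d_i=\indeg_G(v_i)-1$. Hence $\#\maxcliques=\#\mathcal{F}^{\mathbb{Z}}_G({\bf d})$, and combining with the previous paragraph yields $\vol\fpol(1,0,\ldots,0,-1)=\#\mathcal{F}^{\mathbb{Z}}_G({\bf d})$, as claimed. As a consistency check, both sides of the asserted equality are manifestly independent of the chosen framing $\preceq$, in agreement with the fact that the intermediate quantity $\#\maxcliques$ turns out not to depend on $\preceq$ either.

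The statement is essentially a bookkeeping consequence of the two cited theorems, so I do not expect a serious obstacle; the only point requiring care is the unimodularity of the DKK cells, which is what guarantees that counting maximal simplices actually computes the normalized volume rather than merely bounding it from below. This is part of the Danilov--Karzanov--Koshevoy construction and should be cited as such; alternatively it follows from the general principle that a lattice simplex whose vertex set, after translating one vertex to the origin, extends to a basis of the relevant lattice (here the lattice generated by route indicator vectors inside the affine hull of $\fpol$) is unimodular. With that in hand the corollary is immediate.
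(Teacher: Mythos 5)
Your proposal is correct and follows essentially the same route the paper intends: the paper presents this corollary as an immediate consequence of Theorem~\ref{thm:bij_cliques_intflows} (the bijection $\Omega_{G,\preceq}$ between $\maxcliques$ and $\mathcal{F}^{\mathbb{Z}}_G(\mathbf{d})$) together with the fact, stated earlier in Section~\ref{subsec:flowbackground}, that DKK triangulations are regular \emph{unimodular} triangulations, so the normalized volume is the count of maximal simplices. You correctly flag unimodularity as the one non-bookkeeping ingredient and correctly attribute it to Danilov--Karzanov--Koshevoy.
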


\subsection{The flow polytope realization }\label{subsec:realization_flowpolytopes}
We introduce the $\s$-oruga graphs along with a fixed framing, and apply the combinatorial method of Danilov, Karzanov and Koshevoy to obtain a triangulation of the associated flow polytope. 
Combining previous results, we will have bijections between $\s$-decreasing trees $\mathcal{T}_{\s}$, Stirling $\s$-permutations $\setperms$, integer $\mathbf{d}$-flows on $\Gs$, and maximal cliques in the framed graph $(\Gs, \preceq)$, represented in the following diagram:

\begin{center}
\begin{tikzpicture}
\begin{scope}[xscale=2]
\node[](n1) at (0,0) {$\settrees$};
\node[](n2) at (1.4,0) {$\setperms$};
\node[](n3) at (3,0) {$\mathcal{F}_{\Gs}^{\mathbb{Z}}(\mathbf{d})$};
\node[](n4) at (5.5,0) {$\maxcliques[\Gs]$};

\draw[-stealth] (n1)--(n2);
\draw[-stealth] (n2)--(n1);
\draw[-stealth] (n2)--(n3);
\draw[-stealth] (n3)--(n2);
\draw[-stealth] (n3)--(n4);
\draw[-stealth] (n4)--(n3);
\draw[-stealth] (n1) to[out=-30,in=210] (n3);
\draw[-stealth] (n3) to[out=210,in=-30] (n1);
\draw[-stealth] (n2) to[out=-30,in=210] (n4);
\draw[-stealth] (n4) to[out=210,in=-30] (n2);

\node[] at (.7,.3) {\tiny Sec~\ref{sec:combinatorics_of_the_s_permutahedron_stirling_permutations}};
\node[] at (2.05,.3) {\tiny Prop~\ref{prop:bij_simplices_permutations}};
\node[] at (3.95,.3) {\tiny Thm~\ref{thm:bij_cliques_intflows}};
\node[] at (1.6,-1) {\tiny Rem~\ref{thm:bij_simplices_trees}};
\node[] at (3.5,-1) {\tiny Lem~\ref{lem:max_clique}};

\end{scope}
\end{tikzpicture}
\end{center}

\subsubsection{The $\s$-oruga graph and a DKK triangulation of its flow polytope}

\begin{definition} \label{def:Gs}
Let $\s=(s_1, \ldots, s_n)$ be a composition, and for convenience of notation we also set $s_{n+1}=2$.
The framed graph \defn{$(\Gs, \preceq)$} consists of vertices $\{v_{-1},v_0, \ldots, v_n\}$ and
\begin{itemize}
    \itemsep0em 
    \item for $i\in [1,n+1]$, there are $s_i-1$ \defn{source-edges} $(v_{-1}, v_{n+1-i})$ labeled $e^i_1$, \ldots,  $e^i_{s_{i}-1}$,
    \item for $i\in [1,n]$, there are two edges $(v_{n+1-i-1}, v_{n+1-i})$ called \defn{bump} and \defn{dip} labeled $e^{i}_0$ and $e^{i}_{s_{i}}$,
    \item the incoming edges of $v_{n+1-i}$ are ordered $e^i_j \prec_{\cI_{n+1-i}} e^i_k$ for $0\leq j < k \leq s_{i}$,
    \item the outgoing edges of $v_{n+1-i}$ are ordered $e^{i-1}_0\prec_{\cO_{n+1-i}} e^{i-1}_{s_{i-1}}$.
\end{itemize}
We call $\Gs$ the \defn{$\s$-oruga graph}. 
We will also denote by \defn{$\oruga$} the induced subgraph of $\Gs$ with vertices $\{v_0,\ldots,v_n\}$ and call this the \defn{oruga graph} of length $n$.

\end{definition}

\begin{figure}[t!]
    \begin{subfigure}[b]{0.5\textwidth}
    \centering
    \raisebox{12pt}{
    \includegraphics{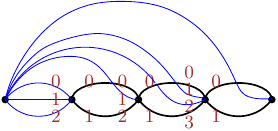} }
    \caption{}
    \label{fig:framed_graph}
    \end{subfigure}
    \begin{subfigure}[b]{0.3\textwidth}
    \centering
\raisebox{10pt}{\includegraphics{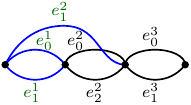}}
    \caption{}
        \label{fig:graph_framing_12}
\end{subfigure}
    \caption{(a) The graph $\Gs$ for $s=(2,3,2,2)$ with framing shown in red. That is, an ordering on incoming edges and outgoing edges at each inner vertex from  ``top to bottom". (b) The graph $\Gs$ for $s=(1,2,1)$ with edge labels.}
\end{figure}

Figure~\ref{fig:framed_graph} and Figure~\ref{fig:graph_framing_12} show examples of this construction. 
In this article, we choose to draw the graph $\Gs$ in such a way that the framing of the incoming and outgoing edges at each inner vertex is ordered from ``top to bottom''.
Note that the corresponding flow polytope $\fpol[\Gs]$ has dimension $\sizes=\sum_{i=1}^n s_i$.

The routes of $\Gs$ will play a key role, thus we will describe them as \defn{$\route(k, t, \delta)$} intuitively as follows. Every route of $\Gs$ starts from $v_{-1}$, lands in a vertex $v_{n+1-k}$ via a source-edge labelled $e_{t}^k$ and then follows $k-1$ edges that are either bumps or dips denoted by a $01$-vector $\delta$. 

Formally, for $k\in[n+1]$, $t\in [1, s_k-1]$, and $\delta=(\delta_1, \ldots, \delta_{k-1})\in \{0, 1\}^{k-1}$, we denote by \defn{$\route(k, t, \delta)$} the sequence of edges $(e^{k}_{t_k}, \, e^{k-1}_{t_{k-1}}, \, \ldots, \, e^1_{t_1})$
where \begin{itemize}
    \itemsep0em 
    \item $t_k=t$,
    \item for all $j\in [1, k-1]$, $t_j =\delta_j s_j$.
\end{itemize}

\begin{figure}[ht!]
\centering
\includegraphics[scale=1.0]{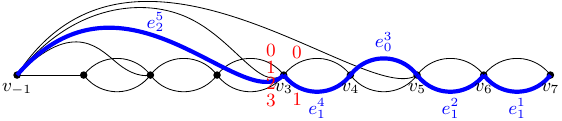}
\caption{The $\s$-oruga graph $\Gs$ for $\s=(1,1,2,1,3,1,2)$.  The framing orders $\cI_3$ and $\cO_3$ are shown at vertex $v_3$.  The route $\route(5,2, (1,0,1,1))$ is shown in bold blue.
}
\label{fig:s-oruga_graph_route2}
\end{figure}

\begin{remark}\label{rem:vertex_v-1}
    Although the graph $\Gs$ starts with vertex $v_{-1}$ instead of $v_0$ all the technology of Section~\ref{subsec:flowbackground} can be applied to it. To see this, we can either contract the edge $e_1^{n+1}$ to obtain a graph whose flow polytope is integrally equivalent to $\fpol$, or simply relabel the vertices with $[0,n+1]$. The resulting graph has flows and routes directly in bijection with the flows and routes of $\Gs$.
\end{remark}

\begin{proposition}
\label{prop:bij_simplices_permutations}
Let $\s$ be a composition and let $\mathbf{d}=(0,0,s_n, s_{n-1},\ldots, s_2, - \sum_{i=2}^n s_i)$. 
The set of Stirling $\s$-permutations is in bijection with the set of integer $\mathbf{d}$-flows of $\Gs$.
\end{proposition}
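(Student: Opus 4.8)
The plan is to exhibit an explicit bijection between Stirling $\s$-permutations $\setperms$ and integer $\mathbf{d}$-flows on $\Gs$, using the route decomposition $\route(k,t,\delta)$ already set up in the excerpt. The key observation is that an integer $\mathbf{d}$-flow on $\Gs$ is the same data as a multiset of routes: since the netflow $\mathbf{d}$ puts $s_{n+1-i}$ units of flow in at vertex $v_{n+1-i}$ for $i=2,\dots,n$ (recall $s_{n+1}=2$, so $v_0$ is given netflow $s_n$ after accounting for the contracted source-edge, etc.), conservation of flow forces a total of $\sum_{i=2}^n s_i$ units to travel from the sources to the sink $v_n$, and each unit traces out a route. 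So I first want to record the standard fact that $\mathcal{F}^{\mathbb{Z}}_{\Gs}(\mathbf{d})$ is in bijection with multisets of routes whose ``source profile'' at each vertex $v_{n+1-i}$ matches $\indeg(v_{n+1-i})-1 = s_i - 1$ incoming units along source-edges, together with the bump/dip passing-through units. This is exactly the kind of correspondence underlying the Postnikov--Stanley volume formula (Corollary~\ref{cor:volume 10...0-1 case}).

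Next I would set up the map from Stirling $\s$-permutations to such multisets of routes. Given $w \in \setperms$, for each letter value $c \in \{2,\dots,n\}$ I associate to each of the $s_c$ occurrences of $c$ in $w$ a route in $\Gs$ entering at the vertex corresponding to $c$. The entry edge (source-edge index $t$, or a bump/dip) should be dictated by which occurrence of $c$ it is — the first occurrence versus the later ones behave differently, matching the split between the $s_c-1$ source-edges and the bump $e^{\cdot}_0$ / dip $e^{\cdot}_{s_c}$. The continuation vector $\delta \in \{0,1\}^{k-1}$ should encode, for each smaller value $b < c$, whether the occurrence of $c$ sits to the left (bump, $\delta_b = 0$) or to the right (dip, $\delta_b = 1$) of the $b$-block $B_b$. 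The $121$-avoidance of $w$ is precisely what guarantees this ``left or right of $B_b$'' is well-defined (no occurrence of $c$ is strictly inside $B_b$), so the route is well-defined. I would then check that as the occurrence of $c$ ranges over all letters and positions of $w$, the resulting multiset of routes has the correct flow: at each vertex $v_{n+1-b}$, exactly $s_b$ routes pass through (the $s_b$ occurrences of $b$ give routes entering there, and occurrences of larger letters contribute the pass-through units counted by bump/dip edges), which is what the netflow $\mathbf{d}$ demands. This is essentially bookkeeping with the tree-inversion multiplicities $\#_w(c,a)$, which already record how many occurrences of $c$ precede $B_a$.

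For the inverse map, given an integer $\mathbf{d}$-flow viewed as a multiset of routes, I would reconstruct $w$ by reading off, for each $c$, the relative order of the occurrences of $c$ and the blocks $B_b$ for $b<c$ from the $\delta$-vectors of the $s_c$ routes entering at the $c$-vertex, then argue inductively (on $c$ from small to large, or equivalently building up the prefixes) that these constraints determine a unique $121$-avoiding word with the right content, and that it is consistent (the constraints coming from different values do not conflict, again because of the nested block structure). It is probably cleanest to phrase both directions through the intermediate object of $\s$-decreasing trees or through the already-established bijection $\settrees \leftrightarrow \setperms$ and the route-to-tree correspondence hinted at in the diagram (``Rem~\ref{thm:bij_simplices_trees}''), but I would aim to keep the proof self-contained by directly verifying the flow conservation count.

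The main obstacle I anticipate is the careful verification that the route multiset produced from $w$ actually satisfies flow conservation at every vertex with exactly the prescribed netflow $\mathbf{d}=(0,0,s_n,s_{n-1},\dots,s_2,-\sum_{i=2}^n s_i)$ — in particular correctly matching the ``$s_i - 1$ source-edges at $v_{n+1-i}$'' to the combinatorics of ``which occurrence of $i$'' and showing no off-by-one errors arise from the conventions $s_1 = 1$, $s_{n+1}=2$, and the auxiliary vertex $v_{-1}$ (Remark~\ref{rem:vertex_v-1}). The bijectivity itself, once the map is correctly defined, should follow from exhibiting the explicit inverse; but getting the entry-edge assignment and the $\delta$-encoding exactly right so that both directions are genuinely inverse to each other, and so that $121$-avoidance corresponds precisely to ``valid route'', is where the real content lies.
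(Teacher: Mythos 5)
Your plan goes through a much heavier machine than the statement requires, and the framing you lean on has a conceptual error at its heart. The paper's own proof rests on one simple observation that your proposal never makes: since $\mathbf{d}$ has netflow $0$ at $v_{-1}$ (which has no incoming edges), every source-edge of $\Gs$ necessarily carries zero flow. Once that is noted, the dip values are determined by the bump values through $f_{e_0^i}+f_{e_{s_i}^i}=s_n+\cdots+s_{i+1}$, so an integer $\mathbf{d}$-flow is exactly a choice of $f_{e_0^i}\in[0, s_n+\cdots+s_{i+1}]$ for $i\in[1,n-1]$, and the bijection with $\setperms$ is then immediate: set $f_{e_0^i}$ to be the number of letters greater than $i$ preceding the $i$-block of $w$, with the inverse given by a straightforward insertion algorithm.

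Your proposed route-multiset framing does not fit the $\mathbf{d}$-flow. You say a $\mathbf{d}$-flow ``is the same data as a multiset of routes'' with a prescribed ``source profile'' of $s_i-1$ units along source-edges at $v_{n+1-i}$, but a route in $\Gs$ by definition starts at $v_{-1}$ with a source-edge, and those edges carry \emph{zero} flow in any $\mathbf{d}$-flow. So a $\mathbf{d}$-flow does not decompose into routes at all; it decomposes into paths that begin at the positive-netflow vertices $v_1,\dots,v_{n-1}$ and use only bump and dip edges. (You also write ``$s_{n+1-i}$ units at $v_{n+1-i}$'' where it should be $s_i$ units at $v_{n+1-i}$, and the parenthetical about $v_0$ receiving netflow $s_n$ is off since $v_0$ has netflow $0$.) It appears you are conflating the $\Omega_{G,\preceq}$ bijection of Theorem~\ref{thm:bij_cliques_intflows}, which sends a maximal \emph{clique} of routes to a $\mathbf{d}$-flow via the counting map $C\mapsto(n_C(e)-1)_e$, with an actual flow decomposition into routes — these are different things, and only the former makes sense here. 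Beyond these framing errors, you explicitly flag that the actual verification of flow conservation and of the inverse map is not carried out, which is precisely where the argument would need to live if one took the route-multiset path. The missing idea is the zero-source-edge-flow observation; once you have it, the entire structure collapses to a product of intervals and the heavier route/clique machinery becomes unnecessary for this particular statement.
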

\begin{proof}
First, we notice that an integer flow $(f_e)_e$ on $\Gs$ with netflow $\mathbf{d}$ necessarily has zero flow on every source-edge, so $(f_e)_e$ is characterized by the fact that the total flow on each pair of bump and dip edges satisfies $f_{e_0^i} + f_{e_{s_i}^{i}} = s_n + \cdots + s_{i+1}$ for all $i\in[1,n-1]$.
Thus to describe an integer $\mathbf{d}$-flow on $\Gs$, it is enough to determine the flow on the bump edges $e_0^i$ for all $i\in [1,n-1]$.
Given a Stirling $\s$-permutation $w$, let $f_{e_0^i}$ be the number of letters strictly greater than $i$ that occur before the $i$-block $B_i$ in $w$.
This implies $0\leq f_{e_0^i} \leq s_n+\cdots+s_{i+1}$, and thus defines an integer $\mathbf{d}$-flow on $\Gs$.

Conversely, any Stirling $\s$-permutation can be built iteratively by an insertion algorithm associated to a choice of integers $f_{e_0^i} \in [0,s_n+\cdots+s_{i+1}]$ for $i\in [1,n-1]$ in the following way.
Start with a block of $s_n$ consecutive copies of $n$ (step $i=0$).
At step $i$ for $i\in [1,n-1]$, there are $s_n+\cdots+s_{n-i+1}+1$ possible positions for the next insertion.  We insert a block of $s_{n-i}$ consecutive copies of $(n-i)$ in the $(f_{e_0^{n-i}})$-th position.  
This creates a $121$-avoiding permutation of the word $1^{s_1}2^{s_2}\cdots n^{s_n}$.
\end{proof}
See Figure~\ref{fig:insertion_algorithm} for an example illustrating the insertion algorithm described in the proof of Proposition~\ref{prop:bij_simplices_permutations}.
\begin{figure}[!ht]
    \centering
    \includegraphics[scale=0.9]{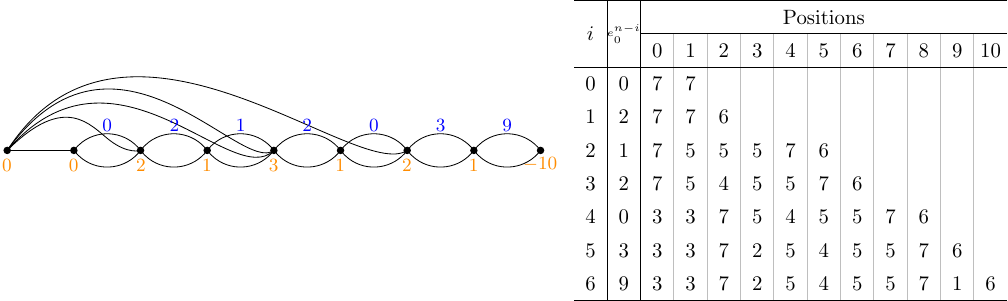}
    \caption{An integer $\mathbf{d}$-flow of $\Gs$ (only the flow on the bump edges is shown) and the steps of the insertion algorithm of Proposition~\ref{prop:bij_simplices_permutations} that output the corresponding Stirling $\s$-permutation $w=33725455716$.
    }
    \label{fig:insertion_algorithm}
\end{figure}

By Corollary~\ref{cor:volume 10...0-1 case}, since the normalized volume of the flow polytope $\mathcal{F}_{\Gs}$ is the number of integer $\mathbf{d}$-flows on $\Gs$, then we obtain the following as an immediate corollary.

\begin{corollary} \label{cor:volume is number of trees}
Given a composition $\s$, then 
\[\vol\fpol[\Gs] 
    = \#\settrees 
    = \#\setperms
    = \prod_{i=1}^{n-1}\left(1+s_{n-i+1}+s_{n-i+2}+\cdots + s_n\right).
\]
\end{corollary}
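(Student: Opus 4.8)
Here is how I would prove Corollary~\ref{cor:volume is number of trees}.

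The plan is to assemble the stated chain of equalities from results already in hand, the only genuine work being a short compatibility check between the combinatorial data of $\Gs$ and the hypotheses of Corollary~\ref{cor:volume 10...0-1 case}. Concretely, I would establish, in order: $\vol\fpol[\Gs]=\#\mathcal{F}^{\mathbb{Z}}_{\Gs}(\mathbf{d})$ from Corollary~\ref{cor:volume 10...0-1 case}; $\#\mathcal{F}^{\mathbb{Z}}_{\Gs}(\mathbf{d})=\#\setperms$ from Proposition~\ref{prop:bij_simplices_permutations}; $\#\setperms=\#\settrees$ from the bijection between Stirling $\s$-permutations and $\s$-decreasing trees recalled in Section~\ref{sec:combinatorics_of_the_s_permutahedron_stirling_permutations}; and finally the closed product form, either by citing formula~\eqref{eq: formula s decreasing trees} or by counting $\mathbf{d}$-flows directly.

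The step that requires care is checking that Corollary~\ref{cor:volume 10...0-1 case} applies to $\Gs$ with exactly the netflow $\mathbf{d}$ of Proposition~\ref{prop:bij_simplices_permutations}. By Remark~\ref{rem:vertex_v-1}, after a harmless relabelling of its vertex set the graph $\Gs$ has a single source $v_{-1}$, a single sink $v_n$, and edges oriented by increasing index, so the framework of Section~\ref{subsec:flowbackground} applies; its inner vertices are $v_0,v_1,\dots,v_{n-1}$. From Definition~\ref{def:Gs} one reads off their indegrees: the vertex $v_0$ receives only the source-edge $e^{n+1}_1$ (using the convention $s_{n+1}=2$), so $\indeg(v_0)=1$, while for $i\in[2,n]$ the vertex $v_{n+1-i}$ receives the $s_i-1$ source-edges $e^i_1,\dots,e^i_{s_i-1}$ together with the bump $e^i_0$ and the dip $e^i_{s_i}$, so $\indeg(v_{n+1-i})=s_i+1$. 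Hence the tuple $(\indeg(v)-1)$ over $v_0,v_1,\dots,v_{n-1}$ is $(0,s_n,s_{n-1},\dots,s_2)$, and prepending the source's netflow $0$ and appending the sink's netflow $-\sum_{i=2}^n s_i$ produces exactly $\mathbf{d}=(0,0,s_n,s_{n-1},\dots,s_2,-\sum_{i=2}^n s_i)$, the vector appearing in Proposition~\ref{prop:bij_simplices_permutations}. This index-matching is the only place I expect to need to be careful; the rest is a sequence of invocations.

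With this identification, Corollary~\ref{cor:volume 10...0-1 case} gives $\vol\fpol[\Gs]=\#\mathcal{F}^{\mathbb{Z}}_{\Gs}(\mathbf{d})$, Proposition~\ref{prop:bij_simplices_permutations} gives $\#\mathcal{F}^{\mathbb{Z}}_{\Gs}(\mathbf{d})=\#\setperms$, and the bijection of Section~\ref{sec:combinatorics_of_the_s_permutahedron_stirling_permutations} gives $\#\setperms=\#\settrees$. For the closed product I would argue directly from the description in the proof of Proposition~\ref{prop:bij_simplices_permutations}, rather than relying on formula~\eqref{eq: formula s decreasing trees}: an integer $\mathbf{d}$-flow on $\Gs$ vanishes on all source-edges and is determined by a free and independent choice of bump-values $f_{e^i_0}\in\{0,1,\dots,s_{i+1}+\dots+s_n\}$ for $i\in[1,n-1]$, so the number of such flows is $\prod_{i=1}^{n-1}\bigl(1+s_{i+1}+\dots+s_n\bigr)$, which equals $\prod_{i=1}^{n-1}\bigl(1+s_{n-i+1}+\dots+s_n\bigr)$ after reversing the index of the product. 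Concatenating the equalities yields the statement.
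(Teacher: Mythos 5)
Your proof is correct and follows essentially the same route as the paper: apply Corollary~\ref{cor:volume 10...0-1 case} to $\Gs$, then chain through Proposition~\ref{prop:bij_simplices_permutations} and the tree/permutation bijection, with the closed product coming from the independent choices of bump-flows. The only difference is that you spell out the indegree computation showing $\mathbf{d}$ is the right netflow (which the paper leaves implicit), and that check is carried out correctly.
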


\begin{remark}\label{thm:bij_simplices_trees}
We can also give an explicit correspondence between $\s$-decreasing trees and integer $\mathbf{d}$-flows of $\Gs$.
Note that this correspondence holds in the more general setting where $\s$ is a weak composition and we consider integer $\mathbf{d}$-flows on the oruga graph~$\oruga$ since the source-edges do not play any role.

Given an integer $\mathbf{d}$-flow $(f_e)_e$ of $\Gs$ (note again that it is enough to know the values $f_{e_0^i}$ for $i\in [1,n-1]$ to determine the entire integer flow), we build an $\s$-decreasing tree inductively as follows.
Start with the tree given by the node $n$ and $s_n+1$ leaves. 
At step $i$ for $i \in [1, n-1]$, we have a partial $\s$-decreasing tree with labelled nodes $n$ to $n+1-i$, and $1+\sum_{k=n+1-i}^{n} s_{k}$ leaves that we momentarily label from $0$ to $\sum_{k=n+1-i}^{n} s_{k}$ along the counterclockwise traversal of the partial tree. 
Attach the next node $n-i$, with $s_{n-i}+1$ pending leaves, to the leaf of the partial tree labeled $f_{e^{n-i}_0}$. 
This procedure produces decreasing trees with the correct number of children at each node. 
Hence, after the $n$-th step we obtain an $\s$-decreasing tree.
Reciprocally, any $\s$-decreasing tree can be built iteratively in this way, so it is associated to a choice of integers $f_{e^i_0}\in [0, \sum_{k=n+1-i}^{n} s_{k}]$ for all $i\in[1, n-1]$. The interested reader can verify that this procedure applied to the flow in the example of Figure \ref{fig:insertion_algorithm} produces the tree $T$ on the left of Figure \ref{fig:tree_rotation}.

\end{remark}

We can now explicitly describe the DKK maximal cliques of coherent routes of $\Gs$ via Stirling $\s$-permutations.
This is an important construction for the results which follow. 

Let $\s$ be a composition, and $u$ a (possibly empty) prefix of a Stirling $\s$-permutation.
For all $a\in [1,n]$, we denote by $t_a$ (or \defn{$t_a(u)$} if $u$ is not clear from the context) the number of occurrences of $a$ in $u$, and we denote by $c$ the smallest value in $[1,n]$ such that $0<t_c<s_{c}$. 
If there is no such value, we set $c=n+1$ and $t_{n+1}=1$. 
The definition of $c$ implies that for all $a<c$, either $t_a=0$ or $t_a=s_a$.
Then we define \defn{$\routep{u}$} to be the route $(e^{c}_{t_c}, \, e^{c-1}_{t_{c-1}}, \, \ldots, \, e^1_{t_1})$.

For example, for the subword $u=3372545$ of $w=33725455716$ in the example of Figure \ref{fig:insertion_algorithm} we have that $c=5$, $t_5=2,t_4=1,t_3=2,t_2=1,t_1=0$ so $\routep{u}=(e^{5}_{2}, \, e^{4}_{1},e^{3}_{2},e^{2}_{1},e^{1}_{0})=\route(5, 2, (1,1,1,0))$.

Let $w$ be a Stirling $\s$-permutation.
For $i\in [\sizes]$, we denote by \defn{$w_i$} the $i$-th letter of $w$, and for $i\in [0, \sizes]$ we denote by \defn{$\prefix{w}{i}$} the prefix of $w$ of length $i$, with $\prefix{w}{0}:=\emptyset$. 
Let \defn{$\Delta_w$} be the set of routes $\{\routep{\prefix{w}{i}}\, |\, i\in [0, \sizes]\}$ and identify it with the simplex whose vertices are the indicator vectors of these routes.

Note that each maximal clique always contains the routes $\routep{\prefix{w}{0}}=(e^{n+1}_1, e^n_0, \ldots, e^1_0)=\route(n+1, 1, (0)^n)$ and $\routep{\prefix{w}{\sizes}}=(e^{n+1}_1, e^n_{s_n}, \ldots, e^1_{s_1})=\route(n+1, 1, (1)^n)$.
See Figure~\ref{fig:clique_3221} for the example of $\Delta_w$  corresponding to the Stirling $(1,2,1)$-permutation $w=3221$.

\begin{figure}[ht]
\begin{subfigure}[ht]{\linewidth}
 \[\routep{\prefix{w}{0}} = {\includegraphics[scale=0.8]{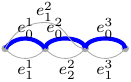}}\]
\end{subfigure}
\begin{subfigure}[ht]{\linewidth}
 \[\routep{\prefix{w}{1}} = {\includegraphics[scale=0.8]{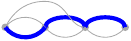}}\]
\end{subfigure}
\begin{subfigure}[ht]{\linewidth}
 \[\routep{\prefix{w}{2}} = {\includegraphics[scale=0.8]{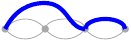}}\]
\end{subfigure}

\begin{subfigure}[ht]{\linewidth}
 \[\routep{\prefix{w}{3}} = {\includegraphics[scale=0.8]{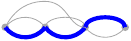}}\]
\end{subfigure}
\begin{subfigure}[ht]{\linewidth}
 \[\routep{\prefix{w}{4}} = {\includegraphics[scale=0.8]{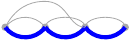}}\]
\end{subfigure}
    \caption{The maximal clique $\Delta_w=\{\routep{\prefix{w}{0}},\ldots,\routep{\prefix{w}{\sizes}}\}$ corresponding to the Stirling $(1,2,1)$-permutation $w=3221$.}
    \label{fig:clique_3221}
\end{figure}

\begin{lemma}\label{lem:max_clique}
The maximal simplices of $\triangDKK[\Gs]$ are exactly the simplices $\Delta_{w}$ where $w$ ranges over all Stirling $\s$-permutations.
\end{lemma}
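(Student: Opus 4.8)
The plan is to establish a bijection between Stirling $\s$-permutations and maximal cliques of $(\Gs,\preceq)$, and to show that under this bijection the simplex $\Delta_w$ is exactly the convex hull of a maximal clique. Since Theorem~\ref{thm:DKKsimplices} identifies the maximal cells of $\triangDKK[\Gs]$ with the simplices $\Delta_C$ for $C\in\maxcliques[\Gs]$, it suffices to prove: (i) for each Stirling $\s$-permutation $w$, the set of routes $\{\routep{\prefix{w}{i}}\mid i\in[0,\sizes]\}$ is a clique, i.e.\ the routes are pairwise coherent; (ii) this clique is maximal; and (iii) every maximal clique arises this way. I expect (iii) to follow from a counting argument once (i) and (ii) are in place: by Corollary~\ref{cor:volume is number of trees} the number of Stirling $\s$-permutations equals $\vol\fpol[\Gs]$, which by Theorem~\ref{thm:DKKsimplices} equals the number of maximal cells of the triangulation, hence the number of maximal cliques; so an injective map $w\mapsto\Delta_w$ from $\setperms$ into $\maxcliques[\Gs]$ whose image consists of maximal cliques must be a bijection. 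Injectivity is clear because $w$ can be recovered from the sequence of prefix-routes (the route $\routep{\prefix{w}{i}}$ together with $\routep{\prefix{w}{i-1}}$ records the $i$-th letter $w_i$ via which bump/dip coordinate changed).

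For step (i), the key observation is how consecutive prefix-routes differ. Appending the letter $w_i=a$ to the prefix $\prefix{w}{i-1}$ changes the counts $t_b$, and because $w$ is $121$-avoiding, appending $a$ increments $t_a$ by one and — crucially — leaves $t_b$ for $b<a$ unchanged only after the $a$-block is complete; more precisely, the route $\routep{\prefix{w}{i}}$ and $\routep{\prefix{w}{i-1}}$ share a common subroute, and the framing orders on $\cI_{n+1-i}$ (by the value of the lower index $j$ in $e^i_j$, increasing) and on $\cO_{n+1-i}$ (bump before dip) are precisely set up so that "later prefix $\Rightarrow$ later route" in the induced total order on partial routes. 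I would formalize this by proving the monotonicity statement: $i<i'$ implies $\routep{\prefix{w}{i}}\preceq\routep{\prefix{w}{i'}}$ in the DKK order on routes, both for the prefix order (from $v_{-1}$) and the suffix order (to $v_n$). Coherence of any two routes $\routep{\prefix{w}{i}}, \routep{\prefix{w}{i'}}$ at a common subroute $[v_p,v_q]$ then follows because both the initial parts and the final parts are ordered the same way (consistently with $i$ versus $i'$). This monotonicity/coherence verification is where the combinatorial work lies, and it is the step I expect to be the main obstacle — one has to carefully track, for a $121$-avoiding word, exactly where two prefix-routes branch apart and confirm that the framing resolves the comparison in the right direction; the $121$-avoidance is what guarantees the branch happens at a single well-defined vertex.

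For step (ii), maximality: a clique in $(\Gs,\preceq)$ has at most $\dim\fpol[\Gs]+1 = \sizes+1$ routes (a clique indexes an affinely independent set spanning a simplex inside the flow polytope, which has dimension $\sum_{i=1}^n s_i=\sizes$ by the remark after Definition~\ref{def:Gs}). Since $\Delta_w$ by definition has the $\sizes+1$ routes $\routep{\prefix{w}{0}},\dots,\routep{\prefix{w}{\sizes}}$, I must also check these are pairwise distinct (again recoverability of $w_i$ from consecutive routes does this, since each step strictly changes a bump/dip coordinate) and affinely independent; the latter is automatic once we know they form a clique, by Theorem~\ref{thm:DKKsimplices}, but to even invoke that we only need them to be a clique of the right cardinality. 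So a clique of size $\sizes+1$ is necessarily maximal. Combining (i), (ii), and the counting argument for (iii) completes the proof. If the pure counting argument for (iii) feels unsatisfying, an alternative is to argue directly that any route coherent with all of $\routep{\prefix{w}{0}},\dots,\routep{\prefix{w}{\sizes}}$ must itself be some $\routep{\prefix{w}{i}}$, by reconstructing which "level" it sits at from its bump/dip pattern and using coherence to pin down every coordinate; but I would present the counting version first as it is shorter and leans on results already available in the excerpt.
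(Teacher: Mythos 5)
Your proposal is correct and follows essentially the same three-step structure as the paper's proof: pairwise coherence of the prefix-routes via a monotonicity observation (the paper phrases it as $t_a(\prefix{w}{i})\leq t_a(\prefix{w}{i'})$ for $i<i'$, forcing the incoming and outgoing edges at every common vertex to be ordered consistently), maximality via the cardinality count $\sizes+1=\dim\fpol[\Gs]+1$, and surjectivity via an injective map between two finite sets of equal size. The only cosmetic difference is that the paper establishes the matching cardinalities by composing the explicit bijections of Theorem~\ref{thm:bij_cliques_intflows} and Remark~\ref{thm:bij_simplices_trees} rather than invoking the volume directly, but both reduce to the same counting argument.
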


\begin{proof}
Recall that by Theorem~\ref{thm:DKKsimplices} the maximal simplices of $\triangDKK[\Gs]$ are the simplices $\Delta_C$, where $C$ is a maximal clique of coherent routes of $(\Gs, \preceq)$.

Let $w$ be a Stirling $\s$-permutation. We will check that $\Delta_w$ is a clique of coherent routes of $(\Gs, \preceq)$. 
Let $1\leq i < i'\leq [\sizes]$ index two routes $\routep{\prefix{w}{i}}$ and $\routep{\prefix{w}{i'}}$ in $\Delta_w$.
Since $i<i'$, we have that $t_a(\prefix{w}{i})\leq t_a(\prefix{w}{i'})$ for all $a\in [n]$. 
Thus, for any vertex $v_{n+1-a}$ that appears in both routes $\routep{\prefix{w}{i}}$ and $\routep{\prefix{w}{i'}}$, we have that the incoming (respectively outgoing) edge of $\routep{\prefix{w}{i}}$ precedes the incoming (respectively outgoing) edge of $\routep{\prefix{w}{i'}}$ for the order $\preceq$.
Hence, the routes $\routep{\prefix{w}{i}}$ and $\routep{\prefix{w}{i'}}$ are coherent, and $\Delta_w$ is a clique for the coherence relation. 
Moreover, since $\Delta_w$ has $\sizes+1=\text{dim}(\fpol[\Gs])+1$ elements, it is a maximal clique, and corresponds to a maximal simplex in the DKK triangulation of $\fpol[\Gs]$.

Now, suppose that $w'$ is a Stirling $\s$-permutation distinct from $w$. We need to check that $\Delta_w\neq \Delta_{w'}$. 
Suppose that the minimal index $i\in [1,\sizes-1]$ such that $w_i \neq w'_i$ satisfies $w_i<w'_i$.  
Then $\routep{\prefix{w'}{i}}$ cannot belong to $\Delta_w$. 
Indeed, if we denote $a$ the value of $w_i$, we have that $e^{a}_{t_{a}(\prefix{w}{i})-1}$ is an edge of the route $\routep{\prefix{w'}{i}}$ but for any $j>i$, $t_{a}(\prefix{w}{j})\geq t_{a}(\prefix{w}{i})$, so the route $\routep{\prefix{w}{j}}$ does not contain this edge. 
Thus, the map $w \mapsto \Delta_w$ is an injection from Stirling $\s$-permutations to maximal simplices of $\triangDKK[\Gs]$.

Then, it follows from the bijection between $\s$-decreasing trees and maximal simplices of $\triangDKK[\Gs]$ (Remark~\ref{thm:bij_simplices_trees}) and the bijection between $\s$-decreasing trees and Stirling $\s$-permutations (Section~\ref{sec:combinatorics_of_the_s_permutahedron_stirling_permutations}) that this injection is a bijection. 
\end{proof}

\subsubsection{The (unoriented) Hasse diagram of the $\s$-weak order}
We show that the graph dual to the triangulation $\triangDKK[\Gs]$ coincides with the (unoriented) Hasse diagram of the $\s$-weak order.

\begin{theorem}\label{thm:cover_relations}
Let $s=(s_1, \ldots, s_n)$ be a composition.
Let $w$ and $w'$ be two Stirling $\s$-permutations. 
There is a cover relation between $w$ and $w'$ in the $\s$-weak order if and only if the simplices $\Delta_{w}$ and $\Delta_{{w'}}$ are adjacent in $\triangDKK[\Gs]$. 
\end{theorem}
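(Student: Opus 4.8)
The plan is to translate both sides of the equivalence into the combinatorics of prefixes and blocks of Stirling $\s$-permutations, using the explicit description $\Delta_w = \{\routep{\prefix{w}{i}} \mid i \in [0,\sizes]\}$ from Lemma~\ref{lem:max_clique}. Two maximal simplices of a triangulation are adjacent precisely when they share a codimension-one face, i.e.\ when $|\Delta_w \cap \Delta_{w'}| = \sizes$, so that they differ in exactly one route. Thus the geometric condition ``$\Delta_w$ and $\Delta_{w'}$ are adjacent'' is equivalent to the purely combinatorial condition that the multisets of routes $\{\routep{\prefix{w}{i}}\}$ and $\{\routep{\prefix{w'}{i}}\}$ coincide in all but one position. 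On the other side, by Corollary~\ref{prop:cover_relations_multiperm}, $w'$ covers $w$ (or vice versa) in the $\s$-weak order if and only if $w'$ is obtained from $w$ by a transposition along an ascent, i.e.\ $w = u_1 B_a c\, u_2$ and $w' = u_1 c B_a u_2$ for some ascent $(a,c)$. So the theorem reduces to showing: the transposition move on Stirling $\s$-permutations corresponds exactly to changing a single route in the associated simplex.

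\textbf{Forward direction.} Suppose $w' = u_1 c B_a u_2$ is the transposition of $w = u_1 B_a c\, u_2$ along the ascent $(a,c)$. I would compare the two lists of prefixes. For a prefix ending inside $u_1$ or inside $u_2$ (more precisely, for prefixes of length $i \le |u_1|$ or $i \ge |u_1| + |B_a| + 1$), the route $\routep{\prefix{w}{i}}$ depends only on the occurrence counts $t_1,\dots,t_n$ in that prefix, and these counts agree for $w$ and $w'$; hence the corresponding routes are identical. The only prefixes that can differ are those ending strictly between $u_1$ and the position after $c$: in $w$ these are $u_1$ followed by a proper nonempty prefix of $B_a c$; in $w'$ these are $u_1 c$ followed by a proper prefix of $B_a$. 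Here I expect to show that among all these ``intermediate'' prefixes, the collection of routes $\routep{\cdot}$ produced is the same for $w$ and $w'$ \emph{except for exactly one route}, which changes. The key local computation: the $a$-block $B_a$ contributes a sequence of routes that first ``fills up'' vertex $v_{n+1-a}$ through its incoming edges $e^a_1, e^a_2, \dots$, and the single extra letter $c$ toggles the bump/dip choice at vertex $v_{n+1-c}$; moving $c$ from after $B_a$ to before it swaps which one intermediate prefix sees the dip versus the bump at $v_{n+1-c}$. I would verify that this amounts to deleting one route from $\Delta_w$ and inserting one new route to form $\Delta_{w'}$, so $|\Delta_w \cap \Delta_{w'}| = \sizes$ and the simplices are adjacent. (One should double-check the edge cases where $s_a = 1$ so $B_a = a$ is a single letter, and where $a$ or $c$ interacts with the artificial vertex $v_{n+1}$.)

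\textbf{Converse direction.} For the converse, suppose $\Delta_w$ and $\Delta_{w'}$ are adjacent, so they share $\sizes$ routes and $w \ne w'$. Let $i$ be the smallest index with $w_i \ne w_i'$; as in the proof of Lemma~\ref{lem:max_clique}, the prefixes of length $< i$ give identical routes in both simplices, and WLOG $w_i = a < c = w_i'$. The argument in Lemma~\ref{lem:max_clique} shows $\routep{\prefix{w'}{i}} \notin \Delta_w$, so this must be the \emph{unique} route that distinguishes the two simplices; in particular all other routes coincide, which forces strong rigidity on how $w$ and $w'$ can differ. I would argue that the only way for the route-lists to agree except at this one spot is that $w$ and $w'$ differ by transposing a single $c$ past the $a$-block $B_a$ that starts at position $i$ in $w$ — any larger discrepancy would change at least two routes. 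Then checking conditions (1)--(3) in the definition of an ascent (equivalently, using Lemma~\ref{lem:ascdesc} and the characterization via consecutive substrings $ac$) confirms $(a,c)$ is an ascent of $w$ and $w'$ is its transposition, hence $w \lessdot w'$ in the $\s$-weak order by Corollary~\ref{prop:cover_relations_multiperm}.

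\textbf{Main obstacle.} The delicate part is the bookkeeping in the forward direction: precisely identifying which intermediate prefixes of $w$ and of $w'$ yield which routes, and confirming that the symmetric difference of the two route-sets has size exactly two (one route out, one route in) rather than zero or more than two. This requires carefully tracking the value $c$ (the smallest letter with $0 < t_c < s_c$) across the block $B_a$ and the inserted letter, and handling the boundary behavior at vertex $v_{n+1-c}$; the $s_a = 1$ special case and the role of the padded value $s_{n+1} = 2$ also need separate attention. Everything else — the combinatorial reformulation of adjacency, and the converse's rigidity argument — follows fairly directly from Lemma~\ref{lem:max_clique} and Corollary~\ref{prop:cover_relations_multiperm}.
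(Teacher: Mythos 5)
Your high-level plan is the same as the paper's: translate adjacency of simplices into "the route multisets differ in exactly one element," then match up prefixes across the transposition move. The converse direction also follows the paper's argument (identify the longest common prefix $u_1$, let $a = w_{\ell(u_1)+1}$ and $c = w'_{\ell(u_1)+1}$ with $a < c$, and use the two routes $\routep{u_1 B_a}$ and $\routep{u_1 c}$ to force $w = u_1 B_a c u_2$, $w' = u_1 c B_a u_2$). So structurally you are on the right track, and Lemma~\ref{lem:max_clique} and Corollary~\ref{prop:cover_relations_multiperm} are the right inputs.

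However, your description of the forward direction's key mechanism is wrong, and this is exactly the step you flag as "the delicate part." You say the letter $c$ "toggles the bump/dip choice at vertex $v_{n+1-c}$" for the intermediate prefixes. In fact, for a prefix $\prefix{w}{i}$ strictly inside the $a$-block we have $0 < t_a(\prefix{w}{i}) < s_a$, so the smallest "active" index $a'$ (the smallest with $0 < t_{a'} < s_{a'}$) satisfies $a' \le a < c$. The route $\routep{\prefix{w}{i}}$ enters the graph via the source edge $e^{a'}_{t_{a'}}$ directly at $v_{n+1-a'}$, and since $n+1-c < n+1-a'$, this route never passes through $v_{n+1-c}$ at all — there is no bump/dip choice there to toggle. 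The correct observation, which the paper makes explicit, is that for $i \in [\ell(u_1)+1,\,\ell(u_1)+\ell(B_a)-1]$ the routes $\routep{\prefix{w}{i}}$ and $\routep{\prefix{w'}{i+1}}$ are \emph{literally equal}, because the value of $t_c$ plays no role in either route when $a < c$ and $0 < t_a < s_a$. The only routes that do differ are $\routep{u_1 B_a} \in \Delta_w$ and $\routep{u_1 c} \in \Delta_{w'}$, the two boundary prefixes of the swapped region. Without this precise matching (an index-by-index bijection between $\{\routep{\prefix{w}{i}}\}$ and $\{\routep{\prefix{w'}{i}}\}$, identity outside the block, shift-by-one inside), your "symmetric difference has size two" conclusion is asserted rather than proved, and the mechanism you offer in support of it does not hold. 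The $s_a = 1$ and $v_{n+1}$ edge cases you worry about are not actually special once this is set up correctly, since the argument only uses $a < c$ and the definition of $\routep{u}$.
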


\begin{figure}[t!]
    \centering
    \includegraphics[scale=0.7]{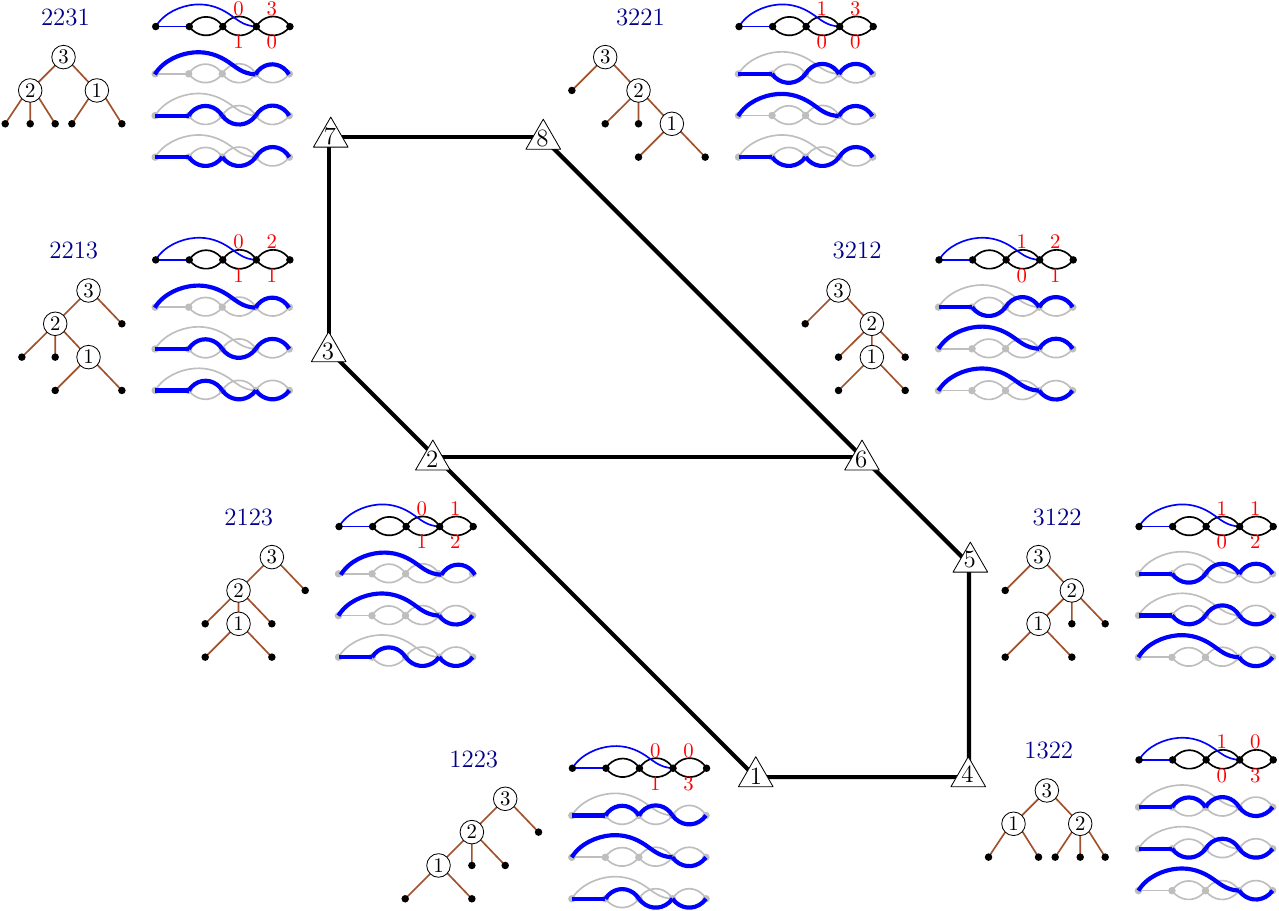}
    \caption{The $\s$-permutahedron for the case $\s=(1,2,1)$. The vertices are indexed by the following combinatorial objects: $\s$-decreasing trees, Stirling $\s$-permutations, maximal cliques of routes (omitting $\routep{\prefix{w}{0}}$ and $\routep{\prefix{w}{\sizes}}$), and integer $\mathbf{d}$-flows (in red on the topmost graph). They correspond to simplices or maximal mixed cells in our first and second realizations. }
 \label{fig:adjacency_graph_121}
\end{figure}

Figure~\ref{fig:adjacency_graph_121} shows the graph dual to the DKK triangulation of $\fpol[\Gs]$ for $s=(1,2,1)$, which corresponds to the (unoriented) Hasse diagram of the $(1,2,1)$-weak order. Note that in this Figure we are omitting the routes $\routep{\prefix{w}{0}}$ and $\routep{\prefix{w}{\sizes}}$ since both appear in $\Delta_{w}$ for every $w \in \mathcal{W}_{(1,2,1)}$.

\begin{proof}
Suppose that $w'$ is obtained from $w$ by a transposition along the ascent $(a,c)$.
It follows from Corollary~\ref{prop:cover_relations_multiperm} that $w=u_1 B_a c u_2$ and $w'=u_1 c B_a u_2$,  where $B_a$ is the $a$-block of $w$. 
We denote by $\ell(u)$ the length of a word $u$. 
For all $i\in [0, \ell(u_1)]$ and $i\in [\ell(u_1)+\ell(B_a)+1, \sizes]$, the routes $\routep{\prefix{w}{i}}$ and $\routep{\prefix{w'}{i}}$ are equal since $\prefix{w}{i}$ and $\prefix{w'}{i}$ have the same letters.
For all $i\in[\ell(u_1)+1, \ell(u_1)+\ell(B_a)-1]$, the routes $\routep{\prefix{w}{i}}$ and $\routep{\prefix{w'}{i+1}}$ are equal as well. 
Indeed, for such $i$ we have that $t_b(\prefix{w}{i})=t_{b}(\prefix{w'}{i+1})$ for all $b\in [n]\setminus \{c\}$ and since $0<t_a(\prefix{w}{i})<s_a$ (because we are reading the substring $B_a$) with $a<c$, the value of $t_c(\prefix{w}{i})$ (respectively $t_c(\prefix{w'}{i+1})$) does not play a role in the route $\routep{\prefix{w}{i}}$ (respectively $\routep{\prefix{w'}{i+1}}$). 
Hence, the vertices of $\Delta_w$ and $\Delta_{w'}$ differ only in one element: $\routep{\prefix{w}{\ell(u_1)+\ell(B_a)}}\in \Delta_w$ corresponding to the prefix $u_1B_a$ of $w$ and $\routep{\prefix{w'}{\ell(u_1)+1}}\in \Delta_{w'}$ corresponding to the prefix $u_1c$ of $w'$. 
This means that the corresponding simplices in the DKK triangulation of $\fpol[\Gs]$ share a common facet.

Reciprocally, suppose that the vertices of $\Delta_w$ and $\Delta_{w'}$ differ only in one element.
We denote $u_1$ the longest common prefix of $w$ and $w'$. We denote $a:=w_{\ell(u_1)+1}$, $c:=w'_{\ell(u_1)+1}$ and suppose that $a<c$. Let $B_a$ be the $a$-block of $w$. Then:
\begin{itemize}
    \item The substring $u_1$ contains no occurrence of $a$, because otherwise there would be a subword $aca$ in $w'$, which contradicts the $121$-pattern avoidance.
    \item The route $\routep{\prefix{w}{\ell(u_1)+\ell(B_a)}}$, corresponding to the prefix $u_1B_a$ of $w$, is in $\Delta_w$ but not in $\Delta_{w'}$.
    \item The route $\routep{\prefix{w'}{\ell(u_1)+1}}$, corresponding to the prefix $u_1c$ of $w'$, is in $\Delta_{w'}$ but not in $\Delta_w$.
\end{itemize}
Thus, the only possibility that $\Delta_w$ and $\Delta_{w'}$ differ only on these elements is that $w=u_1B_acu_2$ and $w'=u_1cB_au_2$, where $u_2$ is their longest common suffix. 
This means that there is an $\s$-tree rotation along the ascent $(a,c)$ between $w$ and $w'$.
\end{proof}

In this situation, we will say that the common facet of $\Delta_{w}$ and $\Delta_{{w'}}$ is associated to the transposition of $w$ along $(a,c)$. 
Note that such facets are exactly the interior facets (codimension-$1$ simplices) of $\triangDKK[\Gs]$.

\subsubsection{Higher faces of the $\s$-permutahedron}

We show that the faces of the $\s$-permutahedron other than vertices and edges are also encoded in the triangulation $\triangDKK[\Gs]$, after proving several technical results.

We say that a simplex of $\triangDKK[\Gs]$ is \defn{interior} if it is not contained in the boundary of the polytope $\fpol[\Gs]$. Otherwise it is in the boundary of $\triangDKK[\Gs]$.

\begin{lemma}\label{lem:routes_invsets}
    Let $w$ be a Stirling $\s$-permutation. Let $\route=\route(k,t,\delta)$ be a route of $\Gs$.
    Then, $\route$ is a vertex of $\Delta_w$ if and only if the multiset of inversions of $w$ satisfies the following inequalities:
    \begin{enumerate}
    \item $\#_w(k,i)\geq t$ for all $1\leq i < k$ such that $\delta_i=0$,
    \item $\#_w(k,i)\leq t$ for all $1\leq i < k$ such that $\delta_i=1$,
    \item $\#_w(j,i)=0$ for all $1\leq i < j <k$ such that $(\delta_i, \delta_j)=(1,0)$,
    \item $\#_w(j,i)=s_j$ for all $1\leq i < j <k$ such that $(\delta_i, \delta_j)=(0,1)$.
    \end{enumerate}
\end{lemma}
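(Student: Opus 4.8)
The plan is to unwind the definition of $\routep{\prefix{w}{i}}$ and match it up with the route $\route(k,t,\delta)$, reading off exactly which prefix lengths $i$ can give rise to this route. Recall that $\routep{u}$ is determined by the occurrence counts $t_a(u)$: if $c$ is the smallest value with $0 < t_c(u) < s_c$ (and $c = n+1$, $t_{n+1}=1$ otherwise), then $\routep{u} = (e^c_{t_c}, e^{c-1}_{t_{c-1}}, \ldots, e^1_{t_1})$ where for $a < c$ each $t_a(u) \in \{0, s_a\}$. So to say $\route(k,t,\delta)$ is a vertex of $\Delta_w$ is to say there is a prefix $\prefix{w}{i}$ with $c(\prefix{w}{i}) = k$, $t_k(\prefix{w}{i}) = t$, and $t_a(\prefix{w}{i}) = \delta_a s_a$ for all $a < k$. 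The first step is therefore to reformulate membership of $\route(k,t,\delta)$ in $\Delta_w$ as: \emph{there exists an index $i$ such that the prefix $\prefix{w}{i}$ contains exactly $\delta_a s_a$ copies of $a$ for each $a < k$, exactly $t$ copies of $k$, and no copy of any $a > k$ except possibly... } — wait, one must be careful: the edge $e^k_t$ with $0 < t < s_k$ (if $k \le n$) forces $k$ itself to be the ``active'' letter, which means all letters $> k$ must have count $0$ or $s_{>k}$; but actually since $c = k$ is the \emph{smallest} such value, letters $> k$ just need count $0$ or their full multiplicity, they don't constrain the route past $e^k$. Actually the route only records edges $e^k_{t_k}$ down to $e^1_{t_1}$, so letters $>k$ are irrelevant to \emph{which} route we get — but they do affect \emph{whether such a prefix exists}. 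I'd handle this by choosing $i$ minimally: take $i$ to be the position just after the $t$-th occurrence of $k$ in $w$ if such a prefix has the right shape, and argue about what the $121$-avoidance forces.

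The second and main step is to translate the existence of such a prefix into the inversion inequalities (1)–(4). The key dictionary entry is $\#_w(k,i)$ = number of occurrences of $k$ before the $i$-block $B_i$ of $w$. Given a target prefix shape (counts $\delta_a s_a$ for $a<k$, count $t$ for $k$), I want to show: such a prefix exists $\iff$ for every $i<k$ with $\delta_i = 0$ the block $B_i$ comes ``late enough'' that at most... hmm, let me think in the right direction. If $\delta_i = 0$, the prefix contains no copy of $i$, so it ends before $B_i$ starts, hence all $t$ copies of $k$ in the prefix precede $B_i$, giving $\#_w(k,i) \ge t$ — this is (1). If $\delta_i = 1$, the prefix contains all $s_i$ copies of $i$, so it ends after $B_i$ ends, hence the copies of $k$ preceding $B_i$ are among the $t$ in the prefix, giving $\#_w(k,i) \le t$ — this is (2). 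Conditions (3) and (4) come from comparing two letters $i < j$ both below $k$: if $\delta_i = 1, \delta_j = 0$ then the prefix contains $B_i$ fully but nothing of $j$, so $B_i$ precedes $B_j$ (no copy of $j$ before $B_i$, since a copy of $j$ before $B_i$ would need another after by... no). Let me just say: $\delta_i=1,\delta_j=0$ means $B_i$ ends before $B_j$ begins hence $\#_w(j,i) = 0$; and $\delta_i = 0, \delta_j = 1$ means $B_j$ ends before $B_i$ begins, so all $s_j$ copies of $j$ precede $B_i$, i.e. $\#_w(j,i) = s_j$. So the forward direction (route is a vertex $\Rightarrow$ inequalities) is a matter of carefully extracting, from the defining prefix, the relative order of the blocks $B_i$ for $i \le k$ and translating to inversion counts; $121$-avoidance is used to guarantee each $a$-block is a genuine contiguous substring so these comparisons make sense.

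For the converse (inequalities $\Rightarrow$ route is a vertex of $\Delta_w$), I would argue that the inequalities pin down a consistent linear order on the relevant blocks $B_i$ ($i < k$ with $\delta_i = 1$) and the first $t$ copies of $k$, and then show that the prefix of $w$ ending right before the ``next'' block (the $(t+1)$-st copy of $k$, or the start of $B_i$ for the $\preceq$-least relevant $i$ with $\delta_i = 0$, whichever comes first) has exactly the right occurrence vector. Concretely: let $i_0$ be the position in $w$ of whichever of these ``stopping events'' occurs earliest, and set the prefix to be $\prefix{w}{i_0}$. Conditions (1) ensure all $t$ copies of $k$ are before this stop; condition (2) together with $\delta_i=1$ means $B_i$ fully precedes the stop (since fewer than $t$, hence $\le t$... need $B_i$ to not be cut); (3) and (4) ensure the partial order on blocks $B_i$, $i<k$, induced by their relative positions is compatible with ``$\delta = 1$ blocks all come before $\delta = 0$ blocks.'' I then verify $t_a(\prefix{w}{i_0}) = \delta_a s_a$ for $a < k$ and $= t$ for $a = k$, so $c(\prefix{w}{i_0}) = k$ and $\routep{\prefix{w}{i_0}} = \route(k,t,\delta)$.

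The main obstacle I anticipate is the bookkeeping in the converse direction: the inequalities (1)–(4) only directly constrain pairwise relationships, and I need to check they are jointly realizable by an actual prefix of the specific word $w$ — i.e. that no ``cyclic'' obstruction prevents choosing the cutoff $i_0$. This should follow because $w$ is a fixed word (so its block order is automatically a total preorder, no cycles) and the inequalities are exactly the ones consistent with that order restricted to the relevant blocks; but making this airtight — especially handling the boundary cases $k = n+1$, $t = s_k - 1$ vs.\ $t < s_k$ on a bump/dip edge, and $\delta$ all-zeros or all-ones — will require care. A secondary subtlety is confirming that letters $a > k$ genuinely impose no obstruction: since the route $\route(k,t,\delta)$ lands at $v_{n+1-k}$ via a source-edge and then never revisits vertices indexed by larger letters, and since in $w$ every occurrence of a letter $> k$ sits inside a contiguous block (by $121$-avoidance) that the cutoff $i_0$ either fully includes or fully excludes, the count $t_a(\prefix{w}{i_0})$ for $a > k$ is automatically $0$ or $s_a$, which is all that is needed for $k$ to be the minimal ``active'' index. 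I would close the argument by noting the list (1)–(4) is manifestly the conjunction of all the pairwise conditions derived, hence necessary and sufficient.
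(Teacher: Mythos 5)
Your proof follows the same overall route as the paper's, which is quite terse: it simply states the translation from route membership to prefix occurrence-counts, and from occurrence-counts to the inversion inequalities, without spelling out the converse in detail. You fill in the missing details correctly, and your worry about a ``cyclic obstruction'' in the converse is resolved exactly as you suggest: the fixed word $w$ already imposes a total order on the blocks $B_i$, and $121$-avoidance makes each $B_i$ a genuine interval, so conditions (1)--(4) are precisely what is needed to place the cutoff after all of the ``include'' events and before all of the ``exclude'' events.

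One intermediate claim in your closing paragraph is false, though harmlessly so, and should be dropped. You assert that for every $a > k$ the count $t_a(\prefix{w}{i_0})$ is automatically $0$ or $s_a$, i.e., that the cutoff never falls strictly inside a block of a letter exceeding $k$. This is not true: with $\s=(1,1,2,2)$ and $w = 431234$, the prefix $431$ yields the route $\route(3,1,(1,0))$ yet contains exactly one of the two $4$'s. Fortunately you do not need the claim. The index $c$ in the definition of $\routep{u}$ is the \emph{smallest} letter with strictly partial count, so once you know $t_k(u)=t\in(0,s_k)$ while $t_a(u)\in\{0,s_a\}$ for every $a<k$, you already have $c=k$; the counts of letters above $k$ are simply not consulted. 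Deleting that sentence leaves the rest of the argument intact.
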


We say that the route $\route$ \defn{implies} these inequalities on inversion sets.

\begin{proof}
    ($\Rightarrow$) Suppose that $\route$ is a vertex of $\Delta_w$. It means that $\route=\routep{\prefix{w}{r}}$ for a certain $r\in[0, \sizes]$ and it conveys information on the prefix $u=\prefix{w}{r}$. More precisely, $t$ is the number of occurrences of $k$ in $u$, and for all $1\leq i < k$, the number of occurrences of $i$ in $u$ is either $0$ if $\delta_i=0$ or $s_i$ if $\delta_i=1$. This gives the announced inequalities on the inversion set of $w$.
    
    ($\Leftarrow$) Reciprocally, suppose that the inversion set of $w$ satisfies these inequalities. Then there is a prefix $u=\prefix{w}{r}$ of $w$ that contains no occurrence of $i$ for $i$ such that $\delta_i=0$, all $s_i$ occurrences of $i$ for $i$ such that $\delta_i=1$, and exactly $t$ occurrences of $k$. Then this prefix is exactly the one associated to the route $\routep{u}=\route(k,t,\delta)=\route$.
\end{proof}

    Let $w$ be a Stirling $\s$-permutation, $A$ a subset of ascents of $w$, and $1\leq a < c\leq n$ such that $\#_w(c,a)<s_c$. We say that the pair $(a,c)$ is \defn{$A$-dependent} in $w$ if there is a sequence $a\leq b_1< \ldots <  b_k< b_{k+1}=c$ such that: 
    \begin{itemize}
        \item[(i)] $b_1$ is the greatest letter strictly smaller than $c$ such that the $a$-block is contained in the $b_1$-block,
        \item[(ii)] for all $i\in[k-1]$, the $b_{i}$-block is directly followed by the $b_{i+1}$-block,
        \item[(iii)] the $b_k$-block is directly followed by an occurrence of $c$,
        \item[(iv)] $(b_i,b_{i+1})\in A$ for all $i\in [k]$.
    \end{itemize}

Note that in particular, every ascent $(a,c)$ in $A$ is $A$-dependent taking $k=1$, $b_1=a$ and $b_2=c$.

For example for the Stirling $\s$-permutation $w=33725455716$ and $A=\{(2,5), (5,7), (1,6)\}$ there is an $A$-dependency between $2$ and $7$ given by the sequence $b_1=2$, $b_2=5$ and $b_3=7$ but there is no $A$-dependency between $2$ and $6$ since the second occurrence of $7$ does not form a block and it is followed by $1$.

\begin{proposition}\label{prop:transitiveclosure}
    Let $w$ be a Stirling $\s$-permutation and $A$ a subset of its ascents. Then we have that
    $$\inv(w+A)=\begin{cases}
        \#_w(c,a)+1 & \text{if $(a,c)$ is $A$-dependent in }w\\
        \#_w(c,a)&\text{otherwise.}
    \end{cases}$$
\end{proposition}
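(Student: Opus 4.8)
The plan is to compute $\inv(w+A)$ by exhibiting it explicitly. By definition $\inv(w+A)$ is the transitive closure of the multiset $M := \inv(w)+A$ (so $\#_M(c,a)=\#_w(c,a)+1$ if $(a,c)\in A$ and $\#_M(c,a)=\#_w(c,a)$ otherwise); by the lattice structure of the $\s$-weak order~\cite{CP22} this closure is the componentwise-smallest inversion multiset of an $\s$-decreasing tree that dominates $M$ componentwise. I will use the following description of which multisets $I$ (with $0\le \#_I(c,a)\le s_c$ for all $a<c$) arise as $\inv(T)$ for some $T\in\settrees$: calling such $I$ \emph{valid}, validity means that for all $1\le a<b<c\le n$,
\begin{gather*}
\#_I(b,a)=0 \Rightarrow \#_I(c,a)\le \#_I(c,b),\qquad
0<\#_I(b,a)<s_b \Rightarrow \#_I(c,a)=\#_I(c,b),\\
\#_I(b,a)=s_b \Rightarrow \#_I(c,a)\ge \#_I(c,b).
\end{gather*}
Via Lemma~\ref{lem:ascdesc}(c) these conditions just record that the $a$-block of a Stirling $\s$-permutation lies before, strictly inside, or after the $b$-block according to whether $\#_w(b,a)$ is $0$, in $(0,s_b)$, or $s_b$, together with the fact that a $b$-block contains no letter $\ge c$.

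Let $J$ denote the multiset defined by the right-hand side of the statement. First I would record two easy facts. $\#_J(c,a)\le s_c$ always: for an $A$-dependent pair $(a,c)$, condition (iii) of $A$-dependency provides an occurrence of $c$ after the $b_k$-block, and since $B_{b_1}\supseteq B_a$ while $B_{b_1},\dots,B_{b_k}$ are consecutive and contain no $c$, we get $\#_w(c,a)=\#_w(c,b_k)<s_c$. And $J\ge M$ componentwise, since every $(a,c)\in A$ is $A$-dependent (take $k=1$). It then remains to prove that (1) $J$ is valid, and (2) every valid $K\ge M$ satisfies $K\ge J$; combined with the two easy facts, these give $J=\inv(w+A)$.

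For (2), fix an $A$-dependent pair $(a,c)$ with chain $a\le b_1<\dots<b_{k+1}=c$; I must show $\#_K(c,a)\ge \#_w(c,a)+1$. Writing $m:=\#_w(c,a)$, the observation above gives $\#_w(c,b_1)=\dots=\#_w(c,b_k)=m$. A downward induction on $i$ yields $\#_K(c,b_i)\ge m+1$ for all $i\le k$: the base case $i=k$ is $(b_k,c)\in A$, and for the step one uses that $(b_{i-1},b_i)\in A$ forces $\#_K(b_i,b_{i-1})\ge 1$, so one of the last two validity conditions on the triple $b_{i-1}<b_i<c$ gives $\#_K(c,b_{i-1})\ge \#_K(c,b_i)$. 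Finally $B_a\subseteq B_{b_1}$ (condition (i)) means $0<\#_w(b_1,a)<s_{b_1}$, hence $\#_K(b_1,a)\ge 1$, and again a validity condition on $a<b_1<c$ gives $\#_K(c,a)\ge \#_K(c,b_1)\ge m+1$.

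Step (1), the validity of $J$, is where the work lies and will be the main obstacle. Since $\#_J=\#_w+\mathbf 1[\text{$A$-dependent}]$ and $\#_w$ is already valid, one must control how $A$-dependency of the three pairs $(a,b)$, $(b,c)$, $(a,c)$ interacts with the relative position of $B_a$ and $B_b$. The plan is to prove, by chasing the laminar family of blocks of $w$ (equivalently the tree $T(w)$), the transfer lemmas: (i) if $B_a\subsetneq B_b$ with $b<c$, then $(a,c)$ and $(b,c)$ are $A$-dependent simultaneously (and $\#_w(c,a)=\#_w(c,b)$); (ii) if $B_a$ lies before $B_b$ and $\#_w(c,a)=\#_w(c,b)$, then $(a,c)$ $A$-dependent implies $(b,c)$ $A$-dependent; (iii) if $B_a$ lies after $B_b$ and $\#_w(c,a)=\#_w(c,b)$, then $(b,c)$ $A$-dependent implies $(a,c)$ $A$-dependent; (iv) if $(a,b)$ is $A$-dependent then $\#_w(b,a)<s_b$ (immediate from the definition), and if moreover $\#_w(b,a)=0$ then $\#_w(c,a)=\#_w(c,b)$ and $(a,c),(b,c)$ are $A$-dependent simultaneously. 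In each case the mechanism is the same: the greatest block below $c$ containing $B_a$ either coincides with, or lies on, the directly-following-block chain witnessing the $A$-dependency of the relevant pair, so that conditions (ii)–(iv) of the definition transfer verbatim. Feeding (i)–(iv) into the three validity conditions — split according to the value of $\#_w(b,a)\in\{0\}\cup(0,s_b)\cup\{s_b\}$ and according to whether $(a,b)$ is $A$-dependent — establishes (1). The bulk of the proof is this case analysis; the delicate points are the two block configurations where $B_a$ is a proper sub-block of some block on a dependency chain, and where $B_a$ and $B_b$ are disjoint blocks lying between the same two occurrences of $c$.
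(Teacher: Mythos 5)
Your overall strategy matches the paper's: both define the candidate multiset, show it dominates $\inv(w)+A$ componentwise, show it is dominated by every admissible multiset containing $\inv(w)+A$, and show it is itself admissible. Your step (2) — the minimality argument via a downward induction along the dependency chain — is correct and is essentially the same induction the paper runs on the chain length $k$ (the paper separates the base case into $b_1=a$ and $a<b_1$, and you do too, implicitly, via "$a\le b_1$"). Your two preliminary observations, including the boundedness $\#_J(c,a)\le s_c$, are also sound; the paper doesn't state the boundedness check explicitly, so this is a small improvement.

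The genuine gap is in step (1). You have correctly identified that this is where the work is, but you have only written a plan for it: you state transfer lemmas (i)–(iv) and assert they can be proven "by chasing the laminar family of blocks," then assert they establish validity of $J$ when "fed into the three validity conditions." Neither the transfer lemmas nor the resulting case analysis is actually carried out, and you yourself flag the "delicate points" as unresolved. As it stands the proof is incomplete.

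There is also a structural choice worth noting: you verify the full three-case characterization of what it means for a multiset to be the inversion multiset of an $\s$-decreasing tree (before/nested/after, giving $\le$, $=$, $\ge$). The paper instead only verifies the CP20/CP22 notion of transitivity, namely the single one-sided implication $\#_I(b,a)>0\Rightarrow\#_I(c,a)\ge\#_I(c,b)$, and leans on the Ceballos–Pons result that the transitive closure of an inversion multiset is again the inversion multiset of an $\s$-decreasing tree (i.e., of the join). This external input is what lets the paper get away with checking much less: its step (1) is a short three-subcase argument (on $\#_w(b,a)=0$ versus $\#_w(b,a)>0$, the latter split on whether $\#_I(c,b)=\#_w(c,b)$ or $\#_w(c,b)+1$). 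Your version, if completed, would be more self-contained — you would not need the transitive-closure-is-the-join fact — but at the cost of the heavier case analysis you have not finished. I would either complete the transfer lemmas (lemma (i) is the straightforward one, since by laminarity the maximal block below $c$ containing $B_a$ is the same as the one containing $B_b$; (ii) and (iii) require showing that $B_b$ sits on the directly-following-block chain witnessing the dependency, and this needs a careful argument about where $B_b$ can lie relative to the chain), or switch to the paper's weaker transitivity criterion and cite CP22 for the fact that the transitive closure is again an inversion multiset.
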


\begin{example}
    If $w=33725455716$ and $A=\{(2,5), (5,7), (1,6)\}$, then the resulting Stirling $s$-permutation is $w+A=33775245561$ and the pairs whose inversion number has been increased by $1$ are $\{(2,5), (2,7), (4,7), (5,7), (1,6)\}$.
\end{example}

\begin{proof}
Let be $I$ the multiset of inversions defined by
    $$\#_I(c,a):=\begin{cases}
        \#_w(c,a)+1 & \text{if $(a,c)$ is $A$-dependent in }w\\
        \#_w(c,a)&\text{otherwise.}
    \end{cases}$$

Note that if  $(a,c)$ is $A$-dependent and $d>c$ this implies that $\#_I(d,c)=\#_I(d,a)$. Indeed, in this case either both $(a,d)$ and $(c,d)$ are $A$-dependent or both are not.

    We will prove that $I=\inv(w+A)$ by showing that $I$ is the smallest transitive multiset of inversions that contains $\inv(w)+A$. Following \cite[Definition 2.3]{CP20} or \cite[Definition 1.14]{CP22}, by \defn{transitivity} of $I$ we understand that if $a<b<c$, then $\#_I(b,a)=0$ or $\#_I(c,a)\geq \#_I(c,b)$.
    
    First, it is clear that $I$ contains $\inv(w)+A$ since every pair in $A$ is $A$-dependent.
    
    Let us show that any transitive multiset of inversions $I'$ that contains $\inv(w)+A$ necessarily contains $I$. Note that for such $I'$ we have $\#_{I'}(c,a)\geq \#_w(c,a)$ for all pairs $(a,c)$. 
    Let $(a,c)$ be $A$-dependent with an associated sequence $a\leq b_1< \ldots <  b_k< b_{k+1}=c$ and we proceed by induction on $k$. 
    
    If $k=1$, we have that:
    \begin{itemize}
    \item either $b_1=a$ and $(a,c)$ is in $A$, and directly $\#_{I'}(c,a)\geq \#_w(c,a)+1$, 
    \item or $a<b_1<c$, which in this case $\#_{I'}(b_1,a)\geq \#_w(b_1,a)>0$ since the $a$-block is contained in the $b_1$-block in $w$. We get that \begin{equation}\label{eq:transitivity+A+increasing_pair}
        \#_{I'}(c,a)\geq \#_{I'}(c,b_1)\geq \#_w(c,b_1)+1=\#_w(c,a)+1
    \end{equation} where the first inequality comes from transitivity, the second from the previous case since $(b_1,c)\in A$ and the last equality again because the $(a,b_1)$ are $A$-dependent.
    \end{itemize}
    Suppose that $k>1$. Then the induction hypothesis implies that $\#_{I'}(b_k,a)\geq \#_w(b_k,a)+1 > 0$. Just like in equation~\eqref{eq:transitivity+A+increasing_pair}, applying transitivity to $a<b_k<c$ and using that $(b_k,c)\in A$ and $(a,b_k)$ is $A$-dependent (so there cannot be any occurrance of $c$ between $a$ and $b_k$) gives us the inequalities 
    $\#_{I'}(c,a)\geq \#_{I'}(c,b_k)\geq \#_{w}(c,b_k)+1=\#_w(c,a)+1.$
    
    Finally, we check that $I$ is indeed transitive. Let $1\leq a < b < c \leq n$ and $\#_I(b,a)>0$. We need to check that $\#_I(c,a)\ge \#_I(c,b)$.
    
    \textbf{\quad Case 1:} if $\#_w(b,a)=0$, then $(a,b)$ is $A$-dependent and $\#_I(c,a)=\#_I(c,b)$. 
    
    \textbf{\quad Case 2:} Suppose that $\#_w(b,a)>0$.

    \textbf{\quad\quad Case 2.1:} If $\#_I(c,b)=\#_w(c,b)$, due to the inclusion $\inv(w)\subset I$ and the transitivity of $\inv(w)$ we have that $\#_I(c,a)\geq \#_w(c,a)\geq \#_w(c,b)=\#_I(c,b)$.

    \textbf{\quad\quad Case 2.2:}
    Suppose that $\#_I(c,b)=\#_w(c,b)+1$ i.e.\ $(b,c)$ is $A$-dependent. If $\#_w(c,a)\geq \#_w(c,b)+1$, we have $\#_I(c,a)\geq \#_w(c,a)\geq \#_w(c,b)+1 =\#_I(c,b)$. Otherwise, we have $\#_w(c,a)=\#_w(c,b)=:i$. It follows from the assumption $\#_w(b,a)>0$ that the $a$-block appears in $w$ between the first occurrence of $b$ and the $i$-th occurrence of $c$. This implies that $(a,c)$ is also $A$-dependent, with a corresponding sequence that is included in the one giving the the $A$-dependency of $(b,c)$. The two $A$-dependencies together with the transitivity of $w$ for $a<b<c$ imply that $\#_I(c,a)=\#_w(c,a)+1\geq\#_w(c,b)+1=\#_I(c,b)$.\end{proof}

    Let $(w, A)$ be a face of $\spermcombi$. 
    We define \defn{$\Delta_{(w, A)}$} to be the following intersection of facets of $\Delta_w$:
\begin{equation} \label{def:Delta_(T,A)}
\Delta_{(w,A)}:=\bigcap_{(a,c)\in A} \left\{\Delta_w\cap \Delta_{w'} \, | \, w' \text{ is the transposition of $w$ along $(a,c)$} \right\},
\end{equation}
    and $\Delta_{(w,A)}:=\Delta_w$ if $A=\emptyset$.
    
Note that the $|A|$ routes that are in $\Delta_w$ and not in $\Delta_{(w,A)}$ correspond to the prefixes of $w$ that end at an ascent in $A$. 

\begin{lemma}\label{lem:Delta_(T,A)}
    Let $(w, A)$ be a face of $\spermcombi$ and $w'$ a Stirling $\s$-permutation. We denote by $[w, w+A]$ the interval of the $\s$-weak order defined by $w$ and $w+A$.
    
    Then, $\Delta_{(w,A)}\subseteq \Delta_{w'}$ if and only if $w'\in [w, w+A]$.
\end{lemma}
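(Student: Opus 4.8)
The plan is to rewrite both sides of the stated equivalence as conditions on the inversion multiset $\inv(w')$. Since $\triangDKK[\Gs]$ is a simplicial complex (Theorem~\ref{thm:DKKsimplices}) and, by~\eqref{def:Delta_(T,A)}, $\Delta_{(w,A)}$ is a face of the maximal simplex $\Delta_w$ (Lemma~\ref{lem:max_clique}), it is itself a simplex of this complex; hence $\Delta_{(w,A)}\subseteq\Delta_{w'}$ holds if and only if $\Delta_{(w,A)}$ is a face of $\Delta_{w'}$, equivalently if and only if every route that is a vertex of $\Delta_{(w,A)}$ is also a vertex of $\Delta_{w'}$. By the note following~\eqref{def:Delta_(T,A)}, the vertices of $\Delta_{(w,A)}$ are exactly the routes $\routep{\prefix{w}{r}}$ for which the prefix $\prefix{w}{r}$ does not end at an ascent in $A$; call these the \emph{surviving} routes. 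On the other hand, by the definition of the $\s$-weak order, $w'\in[w,w+A]$ is equivalent to $\inv(w)\subseteq\inv(w')\subseteq\inv(w+A)$, that is,
\[
\#_w(c,a)\ \le\ \#_{w'}(c,a)\ \le\ \#_{w+A}(c,a)\qquad\text{for all }1\le a<c\le n,
\]
where, by Proposition~\ref{prop:transitiveclosure}, the upper bound equals $\#_w(c,a)+1$ if $(a,c)$ is $A$-dependent in $w$ and $\#_w(c,a)$ otherwise. So the lemma reduces to showing that $\inv(w')$ satisfies this box system if and only if it satisfies all the (in)equalities imposed, in the sense of Lemma~\ref{lem:routes_invsets}, by the surviving routes.

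For the implication $w'\in[w,w+A]\Rightarrow\Delta_{(w,A)}\subseteq\Delta_{w'}$, I would first prove the special case $w'=w+A$, i.e.\ $\Delta_{(w,A)}\subseteq\Delta_{w+A}$, directly from Lemma~\ref{lem:routes_invsets} and Proposition~\ref{prop:transitiveclosure}. Fix a surviving route $\route=\routep{u}$, where $u=\prefix{w}{r}$ does not end at an ascent in $A$, and write also $\route=\route(k,t,\delta)$. The \emph{lower} conditions imposed by $\route$ ($\#(k,i)\ge t$ when $\delta_i=0$, and the equalities $\#(j,i)=s_j$) hold for $\inv(w)$, hence also for the larger multiset $\inv(w+A)$. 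For the \emph{upper} conditions ($\#(k,i)\le t$ when $\delta_i=1$, and the equalities $\#(j,i)=0$), by Proposition~\ref{prop:transitiveclosure} the only way one of them could fail for $\inv(w+A)$ is if some pair, say $(i,k)$ with $\delta_i=1$, is $A$-dependent in $w$ while the inequality $\#_w(k,i)\le t$ is tight (and symmetrically for an equality $\#(j,i)=0$). This is ruled out by a block-chasing argument along the $A$-dependency chain $i\le b_1<\dots<b_m<b_{m+1}=k$ of Proposition~\ref{prop:transitiveclosure}: since $\delta_i=1$, $u$ contains the entire $i$-block, hence the first letter of the $b_1$-block, which contains the $i$-block by condition~(i); since each $b_\ell<k$, the definition of $\routep{u}$ forces $u$ to contain all of $B_{b_\ell}$ once it contains its first letter; and since $u$ does not end at an ascent in $A$, it cannot stop at the end of any $B_{b_\ell}$ with $\ell<m$ nor at the end of $B_{b_m}$, since those stopping points are precisely the chain ascents $(b_\ell,b_{\ell+1})\in A$. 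Hence $u$ contains $B_{b_1}\cdots B_{b_m}$ together with the occurrence of $k$ following $B_{b_m}$; but that occurrence lies to the right of the $i$-block, so $t=t_k(u)\ge\#_w(k,i)+1$, contradicting tightness. This proves $\Delta_{(w,A)}\subseteq\Delta_{w+A}$, and the general case follows: a lower condition valid for $\inv(w)$ is valid for the larger set $\inv(w')$, while an upper condition valid for $\inv(w+A)$ is valid for the smaller set $\inv(w')$.

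For the converse, $\Delta_{(w,A)}\subseteq\Delta_{w'}\Rightarrow w'\in[w,w+A]$, I would exhibit, for each pair $1\le a<c\le n$, surviving routes forcing the two required bounds on $\#_{w'}(c,a)$. For the lower bound $\#_{w'}(c,a)\ge\#_w(c,a)$ (nontrivial only when $\#_w(c,a)>0$): let $e$ be the largest value $<c$ whose block contains the $a$-block (so $e\ge a$), and let $u$ be the prefix of $w$ ending just before the $e$-block; one checks that in $u$ every entry $<c$ is absent or complete, that $\delta_a=0$, and that $t_c(u)=\#_w(c,a)$, so that (when $\#_w(c,a)<s_c$) $\routep{u}$ lands at $c$ with $t=\#_w(c,a)$, and the first relation of Lemma~\ref{lem:routes_invsets} gives the bound --- the case $\#_w(c,a)=s_c$ being obtained similarly from a prefix with $\delta_c=1$ and the fourth relation. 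For the upper bound $\#_{w'}(c,a)\le\#_{w+A}(c,a)$: if $(a,c)$ is not $A$-dependent, a prefix $u$ of $w$ ending just after the $a$-block (retracted if needed so that $c$ is the landing vertex) has $\delta_a=1$ and $t_c(u)=\#_w(c,a)$, and it is a surviving route --- were its prefix to end at an ascent in $A$ one could read off an $A$-dependency chain for $(a,c)$ --- so Lemma~\ref{lem:routes_invsets} gives $\#_{w'}(c,a)\le\#_w(c,a)=\#_{w+A}(c,a)$; if $(a,c)$ is $A$-dependent with chain $a\le b_1<\dots<b_m<c$, the prefix ending just after the occurrence of $c$ that follows $B_{b_m}$ gives $\#_{w'}(c,a)\le\#_w(c,a)+1=\#_{w+A}(c,a)$ when it survives, and in the exceptional case one recovers the same bound from the bounds already obtained on the chain entries $b_1,\dots,b_m$ and the transitivity of $\inv(w')$.

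I expect the main obstacle to be the interplay between the non-surviving routes --- those whose prefix ends at an ascent in $A$ --- and the $A$-dependency relation of Proposition~\ref{prop:transitiveclosure}: in the first direction this is the block-chasing claim that a surviving route cannot traverse an $A$-dependency chain without stopping at one of its ascents, and in the second direction it is the verification that the witness routes realizing the box bounds can always be chosen to be surviving, the $A$-dependent upper bound being the most delicate subcase.
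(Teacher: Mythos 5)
Your overall framework is the same as the paper's: both inclusions are reduced, via Lemma~\ref{lem:routes_invsets} and Proposition~\ref{prop:transitiveclosure}, to comparing two systems of (in)equalities on $\inv(w')$. Your forward direction, proved by first establishing $\Delta_{(w,A)}\subseteq\Delta_{w+A}$ via the block-chasing argument and then invoking monotonicity in $\inv(w')$, is correct; the block-chasing is a careful expansion of the one-line justification the paper gives for the ``only if'' half of its item~(2). The converse direction, however, has a gap in the lower-bound witness.

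The prefix $u$ of $w$ ending just before the $e$-block need not give a surviving route: nothing prevents the pair $(w_{|u|},e)$ from being an ascent of $w$ lying in $A$. For a concrete counterexample, take $\s=(1,1,1,2)$, $w=41243$, $A=\{(1,2)\}$, and $(a,c)=(2,4)$. Then $\#_w(4,2)=1<s_4=2$, and $e=2$ is the only value below $4$ whose block contains $B_2$, so your prefix is $u=\prefix{w}{2}=41$. But $(w_2,w_3)=(1,2)$ is precisely the ascent in $A$, so $\routep{u}$ is one of the routes deleted when passing from $\Delta_w$ to $\Delta_{(w,A)}$, and cannot serve as the witness. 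The correct surviving witness here is $\routep{\prefix{w}{1}}=\route(4,1,(0,0,0))$. The paper's construction is different and avoids this issue: it takes the first prefix of $w$ containing the $t$-th occurrence of $c$ that does not end at an ascent in $A$. When $t<s_c$ this is exactly the prefix ending at the $t$-th $c$, which survives automatically because that $c$ is not the last letter of $B_c$; when $t=s_c$ one advances past ascents of $A$ and separately checks that $B_a$ is never reached. Your $e$-construction overshoots the $t$-th occurrence of $c$ and thus requires a survival argument that fails in general. Incidentally, you flagged the $A$-dependent upper bound as the most delicate subcase, but there your witness (the prefix ending at the $(t+1)$-th occurrence of $c$, with $t+1<s_c$) survives for free, so no ``exceptional case'' arises; it is the lower-bound witness that needs to be repaired.
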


\begin{proof}
Recall that the inversion set of $w+A$ is described in Proposition~\ref{prop:transitiveclosure} and that $w'\in [w, w+A]$ if and only if its inversion set satisfies that for all $1\leq a < c \leq n$, $\#_w(c,a)\leq \#_{w'}(c,a)\leq \#_{w+A}(c,a)$. 
We show that these inequalities are exactly the ones implied by the union of routes that give vertices of $\Delta_{(w,A)}$, in the sense of Lemma~\ref{lem:routes_invsets}.

Let $(a,c)$ be a pair with $\#_w(c,a)=t$. We have to show these three inequalities:
\begin{enumerate}
    \item There is a route $R$ in $\Delta_{(w,A)}$ such that $R\in \Delta_{w'}$ implies the inequality $\#_{w'}(c,a)\geq t$. 
    We can take the route that corresponds to the first prefix of $w$ containing the $t$-th occurrence of $c$ that does not end at an ascent in $A$. Such a prefix cannot contain any $a$ since $a<c$.
    \item There is a route $R$ in $\Delta_{(w,A)}$ such that $\#_{w'}(c,a)\leq t$ for all $w'$ with $R\in\Delta_{w'}$ if and only if $\#_{w+A}(c,a)=t$, that is, the pair $(a,c)$ is not $A$-dependent. 
    Indeed, this inequality is only implied by routes that contain the edges $e^c_t$ and $e^a_{s_a}$.
    Such routes in $\Delta_w$ correspond to prefixes in $w$ that contain the $a$-block and the $t$-th occurrence of $c$ and that do not end inside a $b$-block for any $b<c$. The pair $(a,c)$ is $A$-dependent exactly when all such prefixes end at a descent in $A$, so the corresponding routes are removed in $\Delta_{(w,A)}$.
    \item If $t+1< s_c$ and $\#_{w+A}(c,a)=t+1$, i.e.\ $(a,c)$ is an $A$-dependent pair, there is a route $R$ in $\Delta_{(w,A)}$ such that $R\in \Delta_{w'}$ implies $\#_{w'}(c,a)\leq t+1$.
    Indeed, we can take the route that corresponds to the prefix of $w$ that ends at the $(t+1)$-th occurrence of $c$. Since $t+1<s_c$,$~c$ appears afterwards so this prefix does not end at an ascent. (Note that if $t+1=s_c$ there is no need to check that $\#_{w'}(c,a)\leq s_c$). \qedhere
\end{enumerate}
\end{proof}

Lemma \ref{lem:Delta_(T,A)} leads to the following alternative characterization of $\Delta_{(w,A)}$.
\begin{corollary}
    $\Delta_{(w,A)}=\bigcap_{w' \in [w, w+A]} \Delta_{w'}$.
\end{corollary}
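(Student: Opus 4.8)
The plan is to prove the two inclusions separately, and the key point is that the interval $[w,w+A]$ contains enough ``small'' elements (namely $w$ itself and the single-ascent transpositions) to recover the defining intersection \eqref{def:Delta_(T,A)}, while Lemma~\ref{lem:Delta_(T,A)} handles the other direction essentially for free.

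For the inclusion $\Delta_{(w,A)}\subseteq\bigcap_{w'\in[w,w+A]}\Delta_{w'}$, I would simply invoke Lemma~\ref{lem:Delta_(T,A)}: for every $w'\in[w,w+A]$ that lemma gives $\Delta_{(w,A)}\subseteq\Delta_{w'}$, so $\Delta_{(w,A)}$ is contained in the intersection over all such $w'$. (When $A=\emptyset$ the interval is $\{w\}$ and $\Delta_{(w,\emptyset)}=\Delta_w$, so the statement is trivial; assume $A\neq\emptyset$ from now on.)

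For the reverse inclusion I would first observe that $w$ itself lies in $[w,w+A]$, and that for each ascent $(a,c)\in A$ the transposition $w'_{(a,c)}$ of $w$ along $(a,c)$ also lies in $[w,w+A]$: indeed $\inv(w)\subseteq \inv(w'_{(a,c)})=\overline{\inv(w)+\{(a,c)\}}$ and, since $\{(a,c)\}\subseteq A$, monotonicity of the transitive closure gives $\inv(w'_{(a,c)})\subseteq \overline{\inv(w)+A}=\inv(w+A)$. Hence $\{w\}\cup\{w'_{(a,c)}\mid (a,c)\in A\}\subseteq [w,w+A]$, and therefore
\[
\bigcap_{w'\in[w,w+A]}\Delta_{w'}\ \subseteq\ \Delta_{w}\cap\bigcap_{(a,c)\in A}\Delta_{w'_{(a,c)}}\ =\ \Delta_{(w,A)},
\]
where the last equality is exactly Definition~\eqref{def:Delta_(T,A)}. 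Combining the two inclusions yields the claimed equality.

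I do not expect any real obstacle here: the corollary is a formal consequence of Lemma~\ref{lem:Delta_(T,A)} together with Proposition~\ref{prop:transitiveclosure} (or more precisely the monotonicity of transitive closure that underlies it). The only point that needs a moment of care is the verification that each single-ascent transposition $w'_{(a,c)}$ genuinely sits inside the interval $[w,w+A]$, so that the defining intersection \eqref{def:Delta_(T,A)} is recovered as an intersection over a subset of $[w,w+A]$.
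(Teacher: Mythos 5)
Your proof is correct, and it fills in exactly the details that the paper leaves implicit (the paper states this corollary with no proof, as an immediate consequence of Lemma~\ref{lem:Delta_(T,A)}). The forward inclusion is indeed a direct application of that lemma, and your reverse inclusion correctly identifies the key point: $w$ and each single-ascent transposition $w'_{(a,c)}$ lie in $[w,w+A]$ (by monotonicity of the transitive closure applied to $\inv(w)+\{(a,c)\}\subseteq\inv(w)+A$), so the defining intersection in \eqref{def:Delta_(T,A)} is an intersection over a subset of the interval. This is the natural and intended argument.
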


\begin{lemma}\label{cor:interiorsimplicesDKK}
If $C$ is a clique of routes of $(\Gs, \preceq)$ that contains $\route(n+1,1,(0)^n), \route(n+1, 1, (1)^n)$ and at least one route that starts with $e$ for each source-edge $e$ that is not $(v_{-1}, v_0)$, then $\Delta_C$ is in the interior of $\triangDKK[\Gs]$.
\end{lemma}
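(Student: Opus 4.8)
I would reduce the statement to the purely combinatorial assertion that, under the three hypotheses, \emph{every} edge of $\Gs$ is traversed by at least one route of $C$. Recall that $\fpol[\Gs]$ is the set of points of $\RR^{E(\Gs)}$ satisfying the flow-conservation equations together with the inequalities $f_e\geq 0$; consequently every facet of $\fpol[\Gs]$ is of the form $\{f_e=0\}\cap\fpol[\Gs]$ for some edge $e$. I would then note that a simplex $\Delta_C$ of $\triangDKK[\Gs]$ is contained in $\partial\fpol[\Gs]$ if and only if it is contained in one such facet: a point in the relative interior of $\Delta_C$ lies on some facet $\{f_e=0\}$, and since the $f_e$-coordinates of the vertices of $\Delta_C$ are nonnegative while a strictly positive convex combination of them vanishes, they all vanish, so $\Delta_C\subseteq\{f_e=0\}$. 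As the vertices of $\Delta_C$ are the indicator vectors of the routes in $C$, the condition ``$f_e\equiv 0$ on $\Delta_C$'' says exactly that no route of $C$ uses $e$. Hence $\Delta_C$ is interior if and only if every edge of $\Gs$ is used by some route of $C$.

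It then remains to derive this from the hypotheses. The edges of $\Gs$ are the bumps $e^i_0$ and dips $e^i_{s_i}$ for $i\in[1,n]$, together with the source-edges (those incident to $v_{-1}$, among which is $e^{n+1}_1=(v_{-1},v_0)$). The route $\route(n+1,1,(0)^n)\in C$ uses the source-edge $e^{n+1}_1$ and every bump $e^i_0$; the route $\route(n+1,1,(1)^n)\in C$ uses every dip $e^i_{s_i}$; and by the third hypothesis each of the remaining source-edges is the first edge of some route of $C$. Together these routes cover every edge of $\Gs$, and the previous paragraph concludes.

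The polytopal reduction is routine, so the only point requiring attention is the edge bookkeeping, and in particular the special status of the source-edge $e^{n+1}_1=(v_{-1},v_0)$: it is deliberately excluded from the third hypothesis but is automatically covered by each of the two extreme routes $\route(n+1,1,(0)^n)$ and $\route(n+1,1,(1)^n)$ that $C$ is assumed to contain. (When $\s=(1,\dots,1)$ this is the only source-edge and the third hypothesis is vacuous; the argument is unaffected.)
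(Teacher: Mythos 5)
Your proof is correct, and it takes a genuinely different and in some ways cleaner route than the paper's. The paper argues by contradiction inside the triangulation: it assumes $\Delta_C$ lies on the boundary, observes that it must then be contained in some boundary facet $\Delta_w\setminus R$ of $\triangDKK[\Gs]$, and uses the combinatorial classification of such $R$ (either an extreme route, or the unique route of $\Delta_w$ through a given source-edge $e^c_t$, identified via a repeated letter $w_i=w_{i+1}$ in the Stirling $\s$-permutation $w$) to show that $C\subseteq\Delta_w\setminus R$ violates one of the three hypotheses. Your argument bypasses the Stirling combinatorics entirely and works directly with the facet description of $\fpol[\Gs]$: since the polytope is cut out inside an affine subspace solely by the inequalities $f_e\geq 0$, any boundary point lies in some $\{f_e=0\}$, and a strictly positive convex combination argument upgrades this to ``$\Delta_C$ is in the boundary if and only if some edge of $\Gs$ is missed by every route in $C$.'' This buys you an exact characterization of interiority (an iff) where the paper only proves the implication it needs, and it makes the role of the three hypotheses transparent: the two extreme routes cover $e^{n+1}_1$, all bumps, and all dips, and the third hypothesis covers the remaining source-edges. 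The edge bookkeeping you spell out is accurate, including the special handling of $e^{n+1}_1=(v_{-1},v_0)$. One small point worth stating explicitly if this were to replace the paper's proof: the direction ``$\Delta_C\subseteq\{f_e=0\}$ implies $\Delta_C$ lies on the boundary'' uses that $\{f_e=0\}\cap\fpol[\Gs]$ is a \emph{proper} face, which holds because every edge of $\Gs$ is used by at least one route, so $f_e$ is not identically zero on $\fpol[\Gs]$; but since the lemma only requires the other direction, your contrapositive is already complete as written.
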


\begin{proof}
    Suppose that $\Delta_C$ is a boundary simplex of $\triangDKK[\Gs]$. Then it is contained in a facet that is in the boundary of $\triangDKK[\Gs]$. This facet corresponds to a clique of the form $\Delta_w\setminus R$, where $w$ is a Stirling $\s$-permutation and $R$ is a route of $\Delta_w$ that does not correspond to an ascent nor a descent of $w$. Hence, either $R\in\{\route(n+1,1,(0)^n), \route(n+1,1,(1)^n)\}$, or $R$ corresponds to a prefix $\prefix{w}{i}$ such that $w_i=w_{i+1}$. In this case, suppose that $w_i$ is the $t$-th occurrence of $c$ in $w$. Then $R$ is the only route of $\Delta_w$ that starts with the edge $e^c_t$. In any case, since $C\subseteq \Delta_w\setminus R$, it does not satisfy the condition of the lemma.
\end{proof}

\begin{corollary}\label{cor:Delta(w,A)_interior}
    Let $w$ be a Stirling $\s$-permutation and $A$ a subset of its ascents. Then $\Delta_{(w,A)}$ is an interior simplex of $\triangDKK[\Gs]$.
\end{corollary}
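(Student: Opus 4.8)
The plan is to deduce this from Lemma~\ref{cor:interiorsimplicesDKK}. First I would observe that $\Delta_{(w,A)}$ is itself a simplex of the form $\Delta_{C'}$ for a clique $C'$: by construction it is the subsimplex of the maximal simplex $\Delta_w$ obtained by deleting exactly the $|A|$ vertices $\routep{\prefix{w}{r}}$ for which the prefix $\prefix{w}{r}$ ends at an ascent in $A$, as noted right after the definition of $\Delta_{(w,A)}$; and since any subset of a clique of coherent routes is again such a clique, $C':=\{\routep{\prefix{w}{r}} : \prefix{w}{r}\text{ does not end at an ascent in }A\}$ is a clique with $\Delta_{C'}=\Delta_{(w,A)}$. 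It then suffices to verify the three hypotheses of Lemma~\ref{cor:interiorsimplicesDKK} for $C'$. The routes $\route(n+1,1,(0)^n)=\routep{\prefix{w}{0}}$ and $\route(n+1,1,(1)^n)=\routep{\prefix{w}{\sizes}}$ lie in every $\Delta_w$, and neither is deleted: a prefix ending at an ascent $(a,c)$ has the form $u_1B_a$ with $w=u_1B_ac\,u_2$, so it is nonempty of length at most $\sizes-1$, hence is neither $\prefix{w}{0}$ nor $\prefix{w}{\sizes}$. Thus both routes lie in $C'$.

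The substantive point is the source-edge condition. The source-edges of $\Gs$ other than $(v_{-1},v_0)=e^{n+1}_1$ are precisely the $e^i_j$ with $1\le i\le n$ and $1\le j\le s_i-1$, and a route starting with $e^i_j$ is exactly one of the form $\route(i,j,\delta)$. Fixing such a pair $(i,j)$ --- so $s_i\ge 2$ --- I would take $u=\prefix{w}{r}$ to be the prefix of $w$ ending with the $j$-th occurrence of the letter $i$, and check two things. First, $\routep{u}=\route(i,j,\delta)$ for a suitable $\delta$: here $t_i(u)=j$ with $0<j<s_i$, and for every $a<i$ the $a$-block $B_a$ consists only of letters $\le a$ by $121$-avoidance, so it does not contain the position of this $j$-th $i$; being a contiguous block containing all occurrences of $a$, it therefore lies entirely on one side of that position, giving $t_a(u)\in\{0,s_a\}$. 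Hence $i$ is the least value with $0<t_i(u)<s_i$, i.e.\ the parameter ``$c$'' in the definition of $\routep{u}$, so $\routep{u}$ begins with $e^i_{t_i(u)}=e^i_j$. Second, $u$ does not end at an ascent in $A$: its final letter is the $j$-th occurrence of $i$, which is not the last occurrence of $i$ since $j<s_i$, whereas a prefix $u_1B_a$ ends with the last occurrence of $a$; so $u$ is not among the deleted prefixes, and therefore $\routep{u}\in C'$.

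With all three hypotheses checked for $C'$, Lemma~\ref{cor:interiorsimplicesDKK} yields that $\Delta_{(w,A)}=\Delta_{C'}$ is an interior simplex of $\triangDKK[\Gs]$. The only step that needs genuine care is this source-edge bookkeeping: one must produce, for every admissible $(i,j)$, a surviving prefix that stops ``in the middle of the $i$'s'' while recording a complete or empty block for each smaller letter, and I expect the $121$-avoidance argument guaranteeing $t_a(u)\in\{0,s_a\}$ for $a<i$ to be the main point.
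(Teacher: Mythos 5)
Your proposal is correct and follows essentially the same approach as the paper: for each source-edge $e^i_j$ other than $e^{n+1}_1$, you pick the prefix ending at the $j$-th occurrence of $i$ and observe that it cannot end at an ascent, then invoke Lemma~\ref{cor:interiorsimplicesDKK}. The only difference is that you spell out, via $121$-avoidance, why the resulting route actually starts with $e^i_j$ (i.e.\ that $i$ is the least letter with $0 < t_i(u) < s_i$), a verification the paper states more tersely.
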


\begin{proof}
It is sufficient to show that $\Delta_{(w,A)}$ contains $\route(n+1,1,(0)^n), \route(n+1, 1, (1)^n)$ and at least one route that starts with $e$ for each source-edge $e$ that is not $(v_{-1}, v_0)$.

First, it is clear that $\route(n+1, 1, (0)^n)$ and $\route(n+1, 1, (1)^n)$ are in $\Delta_{(w,A)}$ since they do not correspond to ascents in $w$.

    Let $c\in[n]$ and $t\in[s_{c}-1]$. Then the prefix of $w$ that ends with the $t$-th occurrence of $c$ corresponds to a route $R$ that contains the edge source-edge $e^c_t$. Moreover, there cannot be an ascent of $w$ after this prefix since there are still occurrences of $c$ afterwards. Thus the route $R$ is not removed from $\Delta_w$ to $\Delta_{(w,A)}$.
\end{proof}

\begin{theorem}\label{thm:bij_interiorfacesDKK_facessperm}
    The map $(w,A) \mapsto \Delta_{(w,A)}$ induces a poset isomorphism between the face poset of the $\s$-permutahedron $\spermcombi$ and the set of interior simplices of $\triangDKK[\Gs]$ ordered by reverse inclusion.
\end{theorem}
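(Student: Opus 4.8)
The plan is to check that $\phi\colon(w,A)\mapsto\Delta_{(w,A)}$ is a bijection from the faces of $\spermcombi$ (in the Stirling $\s$-permutation model of Remark~\ref{def:spermcombi_withperm}) onto the interior simplices of $\triangDKK[\Gs]$, and that it reverses containment. That $\phi$ takes values in interior simplices is Corollary~\ref{cor:Delta(w,A)_interior}. For the order statement I would combine the two outputs of Lemma~\ref{lem:Delta_(T,A)} (and the corollary after it): for any Stirling $\s$-permutation $u$, $\Delta_{(w',A')}\subseteq\Delta_u$ iff $u\in[w',w'+A']$, and $\Delta_{(w,A)}=\bigcap_{u\in[w,w+A]}\Delta_u$. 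These give
\[
(w,A)\le(w',A')\iff[w,w+A]\subseteq[w',w'+A']\iff\Delta_{(w',A')}\subseteq\bigcap_{u\in[w,w+A]}\Delta_u=\Delta_{(w,A)},
\]
the first equivalence being the definition of the face order of $\spermcombi$ and the middle step rewriting ``$u\in[w',w'+A']$ for every $u\in[w,w+A]$'' via Lemma~\ref{lem:Delta_(T,A)}. So $\phi$ is order preserving and order reflecting for reverse inclusion, and in particular injective (if $\Delta_{(w,A)}=\Delta_{(w',A')}$ then $(w,A)\le(w',A')\le(w,A)$). The only real work is surjectivity.

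So let $\sigma=\Delta_C$ be an interior simplex, $C$ a clique of routes. By Theorem~\ref{thm:DKKsimplices} and Lemma~\ref{lem:max_clique}, $\sigma$ is a face of some maximal simplex $\Delta_w$; I would choose $w$ minimal in the $\s$-weak order among all Stirling $\s$-permutations with $\sigma\subseteq\Delta_w$. Since $\Delta_w$ has vertex set $\{\routep{\prefix{w}{i}}:i\in[0,\sizes]\}$ of size $\sizes+1$, write $C=\Delta_w\setminus\{R_1,\dots,R_k\}$ with $R_j=\routep{\prefix{w}{i_j}}$ for distinct $i_j$. The heart of the argument is to show each $i_j$ is an \emph{ascent position}, i.e.\ $1\le i_j\le\sizes-1$ and $w_{i_j}<w_{i_j+1}$. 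Granting this, set $A=\{(w_{i_j},w_{i_j+1}):j\in[k]\}$, a $k$-element set of ascents of $w$ (distinct, as an ascent $(a,c)$ of $w$ can occur only at the end of the $a$-block). The prefix of $w$ ending just after $B_{w_{i_j}}$ is exactly $\prefix{w}{i_j}$, so by the description of $\Delta_{(w,A)}$ recorded just after its definition, $\Delta_{(w,A)}=\Delta_w\setminus\{R_1,\dots,R_k\}=\sigma$, which gives surjectivity and hence the theorem.

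To prove the claim I would first record the boundary criterion: $\partial\fpol[\Gs]=\bigcup_e\bigl(\fpol[\Gs]\cap\{f_e=0\}\bigr)$ over edges $e$ of $\Gs$, and a simplex contained in $\partial\fpol[\Gs]$ lies in a single face $\{f_e=0\}$ (the coordinate $f_e$ is a linear form, nonnegative on the simplex, vanishing at a relative-interior point, hence at all vertices). Thus ``$\sigma$ interior'' means: for every edge $e$ of $\Gs$, some route of $C$ uses $e$. Then fix $j$ and argue by cases. (i) If $i_j=0$ then $R_j=\route(n+1,1,(0)^n)$, which lies in every maximal clique and equals $\routep{\prefix{w}{i}}$ only for $i=0$; hence the only maximal simplex containing $\Delta_w\setminus R_j$ is $\Delta_w$ itself, so $\Delta_w\setminus R_j$ is a boundary facet and $\sigma\subseteq\Delta_w\setminus R_j\subseteq\partial\fpol[\Gs]$, contradicting interiority; symmetrically $i_j\ne\sizes$. (ii) If $w_{i_j}=w_{i_j+1}=c$, say the $t$-th and $(t+1)$-th copies of $c$ (so $1\le t\le s_c-1$), then by $121$-avoidance every $a<c$ appearing in $\prefix{w}{i_j}$ appears $s_a$ times there, so $R_j$ starts with the source-edge $e^c_t$; as these two copies of $c$ are adjacent, $\prefix{w}{i_j}$ is the unique prefix of $w$ with exactly $t$ copies of $c$, so $R_j$ is the only route of $\Delta_w$ through $e^c_t$, whence $C$ avoids $e^c_t$ — a contradiction. (iii) If $w_{i_j}=c>a=w_{i_j+1}$, then $w_{i_j+1}$ begins the $a$-block (by $121$-avoidance), so $w=u_1\,c\,B_a\,u_2$ with $\prefix{w}{i_j}=u_1c$; the word $w''=u_1\,B_a\,c\,u_2$ is a Stirling $\s$-permutation of which $w$ is the transposition along the ascent $(a,c)$, so $w$ covers $w''$ (Corollary~\ref{prop:cover_relations_multiperm}) and, by the proof of Theorem~\ref{thm:cover_relations}, the facet common to $\Delta_w$ and $\Delta_{w''}$ is $\Delta_w\setminus R_j$; then $\sigma\subseteq\Delta_w\cap\Delta_{w''}\subseteq\Delta_{w''}$ contradicts minimality of $w$. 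Hence every $i_j$ is an ascent position.

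The main obstacle is exactly this last paragraph: one must choose the correct representative maximal simplex (the $\s$-weak-order-minimal $w$ with $\sigma\subseteq\Delta_w$) so that no deleted route of $\Delta_w$ can come from a descent, and then invoke the flow-polytope boundary criterion to rule out the two ``extreme'' routes and the ``plateau'' routes. Everything else is Lemma~\ref{lem:Delta_(T,A)} together with routine bookkeeping about prefixes of Stirling $\s$-permutations.
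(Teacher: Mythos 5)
Your proof is correct and follows the same strategy as the paper's: well-definedness comes from Corollary~\ref{cor:Delta(w,A)_interior}, the order-reversing bijectivity on the image (and hence injectivity) comes from Lemma~\ref{lem:Delta_(T,A)}, and surjectivity is proved by taking $w$ minimal in the $\s$-weak order with $F\subseteq\Delta_w$ and showing the deleted routes are ascent positions. Your three-case analysis in the surjectivity step simply makes explicit what the paper compresses into ``These facets correspond to certain transpositions involving $w$'' (ruling out the endpoint and plateau positions because $F$ is interior) and ``The minimality of $w$ implies that all elements in $A$ are ascents'' (your case (iii)).
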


\begin{proof}
    The fact that all $\Delta_{(w,A)}$ are interior simplices of $\triangDKK[\Gs]$ is stated in Corollary~\ref{cor:Delta(w,A)_interior}. 
    The injectivity follows from Lemma~\ref{lem:Delta_(T,A)}. 
    
    Let us show the surjectivity. 
    Let $F$ be an interior simplex of $\triangDKK[\Gs]$. Let $w$ be a Stirling $\s$-permutation that is minimal for the $\s$-weak order with respect to the condition that $F\subseteq \Delta_w$. Then, $F$ is an intersection of facets of $\Delta_w$. These facets correspond to certain transpositions involving $w$. We denote by $A$ the set of ascents corresponding to these transpositions. The minimality of $w$ implies that all elements in $A$ are ascents (and not descents) of $w$. Thus, $F=\Delta_{(w,A)}$, and the choice of $w$ was unique. 

    Finally, let $w, w'$ be Stirling $\s$-permutations and $A,A'$ subsets of their respective ascents. Lemma~\ref{lem:Delta_(T,A)} implies that $[w, w+A]\subseteq [w', w'+A']$ if and only if $\Delta_{(w', A')} \subseteq \Delta_{(w,A)}$, which proves that the map is a poset isomorphism.
\end{proof}

Just as how the minimal elements of the face poset of $\spermcombi$ have a characterization as the maximal cliques of $\triangDKK[\Gs]$, the maximal elements of the face poset also have an explicit characterization in terms of cliques.

\begin{corollary}\label{cor:maximal_faces_perm}
A simplex $\Delta_C$ of $\triangDKK[\Gs]$ corresponds with a maximal interior face of $\spermcombi$ if and only if $C$ is a clique of size $|s|-n+2$  that satisfies the following:
\begin{itemize}
    \item $\routep{\prefix{w}{0}}$ and $\routep{\prefix{w}{|s|}}$ are in $C$, and
    \item each source-edge of $\Gs$ that is different from $(v_{-1},v_0)$ is contained in exactly one route in $C$.
\end{itemize}
\end{corollary}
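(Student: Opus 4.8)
The plan is to read the statement off the poset isomorphism of Theorem~\ref{thm:bij_interiorfacesDKK_facessperm}. Since $(w,A)\mapsto\Delta_{(w,A)}$ is an isomorphism from the face poset of $\spermcombi$ (ordered by inclusion) onto the set of interior simplices of $\triangDKK[\Gs]$ ordered by \emph{reverse} inclusion, a face $(w,A)$ of $\spermcombi$ is maximal if and only if $\Delta_{(w,A)}$ is inclusion-minimal among the interior simplices of $\triangDKK[\Gs]$. Recalling that the faces of $\triangDKK[\Gs]$ are exactly the simplices $\Delta_C$ for $C\in\cliques[\Gs]$, and that $C\subseteq C'$ if and only if $\Delta_C\subseteq\Delta_{C'}$ (the containment of two faces of a simplicial complex forces one to be a face of the other), this reduces the corollary to describing the inclusion-minimal interior cliques of $(\Gs,\preceq)$.

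The second step is to upgrade Lemma~\ref{cor:interiorsimplicesDKK} to an equivalence: $\Delta_C$ is interior if and only if $C$ contains $\route(n+1,1,(0)^n)$ and $\route(n+1,1,(1)^n)$ and, for every source-edge $e$ of $\Gs$ other than $(v_{-1},v_0)$, at least one route of $C$ starts with $e$. The implication ``$\Leftarrow$'' is precisely Lemma~\ref{cor:interiorsimplicesDKK}. For ``$\Rightarrow$'' I would reuse the argument of that lemma together with Theorem~\ref{thm:cover_relations} and the remark following it: a codimension-one simplex of $\triangDKK[\Gs]$ is a boundary facet exactly when it is \emph{not} associated to a transposition, i.e.\ when it has the form $\Delta_w\setminus\{R\}$ with $R\in\{\route(n+1,1,(0)^n),\route(n+1,1,(1)^n)\}$, or with $R$ the unique route of $\Delta_w$ through a source-edge $e^c_t$ coming from a plateau position $w_i=w_{i+1}$ (where $w_i$ is the $t$-th occurrence of $c$). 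A boundary simplex $\Delta_C$ lies in such a boundary facet, and then $C\subseteq\Delta_w\setminus\{R\}$ violates one of the listed conditions.

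With this equivalence in hand, I would pin down the minimal interior cliques by a short counting argument. The two routes $\route(n+1,1,(0)^n)$ and $\route(n+1,1,(1)^n)$ lie in every interior clique $C$, and both pass through the source-edge $(v_{-1},v_0)=e^{n+1}_1$. Since every route of $\Gs$ uses exactly one source-edge, if $C$ contained two distinct routes through a common source-edge $e\neq(v_{-1},v_0)$, deleting one of them would keep both extreme routes and would not uncover any other source-edge, hence would leave an interior clique; so $C$ would not be minimal. Therefore a minimal interior clique uses each source-edge $e\neq(v_{-1},v_0)$ in exactly one of its routes (at least one by the equivalence, at most one by this deletion argument), which forces $|C|=2+\sum_{i=1}^n(s_i-1)=\sizes-n+2$. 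Conversely, any clique $C$ with these properties is interior by the equivalence and is minimal among interior cliques, since removing any of its routes either removes an extreme route or leaves some source-edge uncovered. Finally, $\route(n+1,1,(0)^n)=\routep{\prefix{w}{0}}$ and $\route(n+1,1,(1)^n)=\routep{\prefix{w}{\sizes}}$ for every Stirling $\s$-permutation $w$, so this is exactly the description in the statement; as a byproduct it shows that the maximal faces of $\spermcombi$ are precisely those $(w,A)$ with $|A|=n-1$.

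The part needing the most care is the ``only if'' half of the interior-simplex equivalence in the second step: one must argue cleanly that the boundary facets of $\triangDKK[\Gs]$ are exactly the $\Delta_w\setminus\{R\}$ with $R$ a non-transposition route. This is implicit in the proof of Lemma~\ref{cor:interiorsimplicesDKK}, but I would isolate it as an explicit preliminary observation (every interior facet of $\triangDKK[\Gs]$ is associated to a transposition, by Theorem~\ref{thm:cover_relations} and the remark after it, and conversely) before running the counting argument. Once that is settled, everything else is bookkeeping with routes and source-edges of $\Gs$.
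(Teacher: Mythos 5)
Your overall strategy differs from the paper's: you reduce the statement to characterizing the inclusion-minimal interior simplices of $\triangDKK[\Gs]$, which requires upgrading Lemma~\ref{cor:interiorsimplicesDKK} to a full equivalence, whereas the paper's proof never needs the converse of that lemma. The paper proves the ``only if'' direction by taking a maximal face $(w,A)$, observing that $A=\ASC(w)$ has exactly $n-1$ ascents (one per letter $i\in[n-1]$, at the $s_i$-th occurrence of $i$), and reading off directly that the $|s|-n$ remaining proper prefixes supply distinct source-edges; the ``if'' direction then follows from Lemma~\ref{cor:interiorsimplicesDKK} together with the fact that all minimal interior simplices have the same dimension $|s|-n+1$. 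Your route is cleaner in one respect: it yields as a byproduct that the maximal faces are exactly those with $|A|=n-1$, which the paper uses implicitly but does not justify.

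However, there is a genuine logical gap in your second step. For the direction ``interior $\Rightarrow$ conditions'' you argue: boundary facets are exactly $\Delta_w\setminus\{R\}$ with $R$ a non-transposition route; a boundary simplex lies in such a facet; hence $C$ violates the conditions. That establishes ``boundary $\Rightarrow$ not-conditions,'' whose contrapositive is ``conditions $\Rightarrow$ interior'' --- i.e.\ you have re-proved Lemma~\ref{cor:interiorsimplicesDKK}, not its converse. What you actually need is ``not-conditions $\Rightarrow$ boundary,'' and the hard case is a clique $C$ that contains both extreme routes but uses no route through some source-edge $e^c_t$. Given any $w$ with $C\subseteq\Delta_w$, the route of $\Delta_w$ through $e^c_t$ ends at the $t$-th occurrence of $c$, whose successor in $w$ is either $c$ (plateau, giving a boundary facet containing $C$) or a smaller letter $b$ (a \emph{descent}, giving an \emph{interior} facet $\Delta_w\cap\Delta_{w'}$). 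In the latter case one must pass to $w'$ and iterate; since each transposition along the descent $(b,c)$ strictly decreases $w$ in the $\s$-weak order, the process terminates at a plateau, at which point $\Delta_C$ is exhibited as lying in a boundary facet. Alternatively, and more in the spirit of the paper, you can bypass the iteration entirely: Theorem~\ref{thm:bij_interiorfacesDKK_facessperm} shows that every interior simplex is of the form $\Delta_{(w,A)}$, and the proof of Corollary~\ref{cor:Delta(w,A)_interior} verifies precisely the conditions you need for $\Delta_{(w,A)}$. Either patch closes the gap, but as written the ``$\Rightarrow$'' half of your equivalence is a direction confusion rather than a proof.
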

\begin{proof}
We first note that for each $i\in [n]$, the graph $\Gs$ has $s_i-1$ source-edges, so $\Gs$ indeed has $\sum_{i=1}^n (s_i-1) = |s|-n$ source-edges that are not $(v_{-1},v_0)$.
By Lemma~\ref{cor:interiorsimplicesDKK}, a clique $C$ with the above stated properties corresponds with a maximal interior face of $\spermcombi$.

Conversely, let $(w, A)$ be a maximal face of $\spermcombi$.
We will check that $C = \Delta_{(w,A)}$ satisfies the specified properties.
Let $N \subset [0,|s|]$ denote the set of non-ascent positions in $w$, so that $C = \cup_{j\in N} \routep{\prefix{w}{j}}$.  

Observe that since $w$ is $121$-avoiding, if it has $n-1$ ascents, then the ascents are of the form $(i,c_i)$ where $i < c_i$ for each $i\in [n-1]$.
Moreover, for each $i\in [n-1]$, it is the $s_i$-th occurrence of $i$ in $w$ which produces an ascent pair in $w$.
Therefore, the set $N\backslash \{0, |s|\}$ indexes the first $s_i-1$ occurrences of $i$ in $w$.

Now suppose $j\in N \backslash\{0, |s|\}$ is a non-ascent position of $w$  so that $\routep{\prefix{w}{j}} \in C$. We denote $a$ the letter $w_j$.
If $w_j$ is the $k$-th occurrence of $a$ in $w$ for some $k\in[s_{a}-1]$, then the route $\routep{\prefix{w}{j}}$ contains the proper source-edge $e^{a}_k$.
Lastly, since $|N\backslash \{0, |s| \}| = |s|-n$, then $C$ has the desired properties.
\end{proof}

\subsubsection{On the \texorpdfstring{$h$}{h} and $h^*$-polynomials}
\label{sec:hstar}

Given a (simplicial, polytopal) complex $\Delta$ one can define its $f$- and $h$-polynomials as follows. The \defn{$f$-polynomial} of $\Delta$ is defined as
$$f_{\Delta}(x)=\sum_{F\in \Delta}x^{\dim(F)},$$
where the sum is over all the faces $F$ of $\Delta$. Then  its \defn{$h$-polynomial} is defined by the relation
$$f(x)=h(x+1).$$

Let $\setperms(k)$ denote the subset of Stirling $\s$-permutations with $k$ descents.
We generalize the notion of the Eulerian polynomial and define the \defn{$\s$-order Eulerian polynomial} by $A_{\s}(x)= \sum_{k\geq0} |\setperms(k)|x^k$.
This recovers the classical Eulerian polynomial when $\s=(1,\ldots, 1)$ and the second-order Eulerian polynomial (see for example \cite[Section 6.2]{GKP94}) when $\s=(2,\ldots, 2)$
\footnote{Savage and Visontai~\cite{SavVison} have previously defined a notion of $\s$-Eulerian polynomials in the context of {\em $\s$-lecture hall polytopes} that is different from $A_{\s}(x)$.}.

The $f$-polynomial of the $\s$-permutahedron is
\begin{align*}
    f_{\spermcombi}(x)&=\sum_{\substack{w\in \setperms\\A\subseteq \ASC(w)}}x^{|A|}\\
    &=\sum_{w\in \setperms}(1+x)^{|\ASC(w)|}\\
    &=\sum_{k\geq 0} |\setperms(k)|(x+1)^k\\
    &=A_{\s}(x+1),
\end{align*}
where $\ASC(w)$ denotes the set of ascents of a Stirling $\s$-permutation $w$.

In the second-to-last equality we used the fact that the number of elements in $\setperms$ with $k$ ascents is equal to the number of elements in $\setperms$ with $k$ descents, which can be seen by reading $w$ in reverse.

\begin{proposition}
    The $h$-polynomial of $\spermcombi$ is $A_{\s}(x)$.
\end{proposition}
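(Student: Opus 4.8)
The plan is to deduce the statement immediately from the $f$-polynomial computation that precedes it, together with the defining relation $f(x) = h(x+1)$ between the $f$- and $h$-polynomials of a complex. We have just established that $f_{\spermcombi}(x) = A_{\s}(x+1)$, so if we write $h_{\spermcombi}$ for the $h$-polynomial of $\spermcombi$, the relation $f_{\spermcombi}(x) = h_{\spermcombi}(x+1)$ gives $h_{\spermcombi}(x+1) = A_{\s}(x+1)$. Substituting $y = x+1$ (equivalently, replacing $x$ by $x-1$ throughout) yields $h_{\spermcombi}(y) = A_{\s}(y)$, which is the claim.

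Concretely, the proof would be two lines: first recall from the displayed computation above that $f_{\spermcombi}(x) = A_{\s}(x+1)$; then, since the $h$-polynomial is defined by $f(x) = h(x+1)$, we get $h_{\spermcombi}(x+1) = f_{\spermcombi}(x) = A_{\s}(x+1)$, and since this is a polynomial identity in $x$ it implies $h_{\spermcombi}(x) = A_{\s}(x)$. One small point worth a word of care: the relation $f(x) = h(x+1)$ should be read with the indeterminate substitution done formally, i.e. $h_{\spermcombi}$ is characterized by $h_{\spermcombi}(x+1) = f_{\spermcombi}(x)$, and comparing the two expressions $h_{\spermcombi}(x+1) = A_{\s}(x+1)$ forces equality of the underlying polynomials because $x \mapsto x+1$ is a bijection on $\mathbb{R}$ (or because two polynomials agreeing after a shift of the variable are equal).

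There is essentially no obstacle here; the only thing to be slightly careful about is bookkeeping of which variable the shift is applied to, and making sure the convention for $f$- and $h$-polynomials stated earlier (with $f(x) = h(x+1)$, and $f_{\Delta}(x) = \sum_{F}x^{\dim F}$) is exactly the one used — in particular that the sum over faces $(w,A)$ with exponent $|A|$ in the displayed computation is indeed $\sum_F x^{\dim F}$, which holds because the face $(w,A)$ of $\spermcombi$ has dimension $|A|$ (an interval $[w,w+A]$ in the $\s$-weak order, whose ``rank'' as a face is the number of generating ascents). This identification of $\dim(w,A)$ with $|A|$ is implicit in Definition~\ref{def:s-permutahedron} and the surrounding discussion, so the computation of $f_{\spermcombi}$ already accounts for it, and the present proposition then follows formally.

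$$
h_{\spermcombi}(x) = A_{\s}(x).
$$
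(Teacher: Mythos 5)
Your argument is correct and is exactly the paper's approach: the proposition is stated right after the displayed computation $f_{\spermcombi}(x) = A_{\s}(x+1)$ and is understood to follow immediately from the defining relation $f(x) = h(x+1)$, just as you spell out. Your added remark that $\dim(w,A) = |A|$ is precisely the point that makes the $f$-polynomial computation count faces correctly, so the bookkeeping you flag is the right thing to be careful about.
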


\begin{example}
As an example, the reader can compute the $h$-polynomial of $\spermcombi$ when $\s=(1,2,1)$ from Figure \ref{fig:adjacency_graph_121_classic} to get $h_{\spermcombi}(x)=1+5x+2x^2$.
\end{example}

Ehrhart showed in~\cite{E62} that the lattice point enumerator $|tP\cap \mathbb{Z}^d|$ of a $t$ dilated $d$-dimensional lattice polytope $P$ is a polynomial in $t$, known as the \defn{Ehrhart polynomial} of $P$. In addition, he showed that the generating series has the form
$$1+\sum_{t\ge 1}|tP\cap \mathbb{Z}^d|x^t=\dfrac{h^*(x)}{(1+x)^{d+1}},$$
where $h^*(x)$ is a polynomial of degree $d$ known as the \defn{$h^*$-polynomial} of $P$. This polynomial was shown by Stanley to have nonnegative coefficients~\cite{s80} and it turns out that the $h^*$-polynomial coincides with the $h$-polynomial of a shellable unimodular triangulation (see for example~\cite{BS07}). A triangulation is said to be \defn{shellable} if its facets can be totally ordered $F_1,\ldots, F_m$ such that for any pair $p<q$, there exists $r < q$ satisfying $|F_r\cap F_q|=|F_q|-1$ and $F_p\cap F_q\subseteq F_r \cap F_q$. 

\begin{theorem}
   The $h^*$-polynomial of $\fpol[\Gs]$ is $A_{\s}(x)$. 
\end{theorem}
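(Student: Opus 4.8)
The plan is to combine three facts established earlier in the paper: (i) by Theorem~\ref{thm:DKKsimplices}, $\triangDKK[\Gs]$ is a regular, hence in particular a \emph{unimodular} (being a DKK triangulation of a flow polytope, all cells are unimodular) triangulation of $\fpol[\Gs]$; (ii) by the discussion just above the statement, the $h^*$-polynomial of a lattice polytope coincides with the $h$-polynomial of any \emph{shellable} unimodular triangulation of it; and (iii) the $h$-polynomial of the full simplicial complex $\triangDKK[\Gs]$ should turn out to equal $A_{\s}(x)$, which we already know is the $h$-polynomial of $\spermcombi$. So the work splits into two tasks: first, show that $\triangDKK[\Gs]$ admits a shelling; second, compute its $h$-polynomial and identify it with $A_{\s}(x)$.

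\textbf{Step 1: shellability.} Since DKK triangulations are regular (Theorem~\ref{thm:DKKsimplices}), they are automatically shellable: a generic linear functional on $\fpol[\Gs]$ induces a shelling order on the maximal simplices. (Alternatively, one can invoke that the admissible height function furnishing regularity — see Section~\ref{subsec:height_functions} — gives a line shelling.) Thus no ad hoc shelling needs to be constructed; it suffices to cite regularity. This is the step I expect to require the least new work.

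\textbf{Step 2: computing the $h$-polynomial of $\triangDKK[\Gs]$.} The maximal simplices of $\triangDKK[\Gs]$ are the $\Delta_w$, $w \in \setperms$ (Lemma~\ref{lem:max_clique}), and the volume count (Corollary~\ref{cor:volume is number of trees}) confirms there are $\#\setperms$ of them, consistent with unimodularity. To get the $h$-polynomial I would use the shelling from Step~1 together with the standard fact that for a shellable simplicial complex, $h_k$ counts the facets whose ``restriction face'' (the minimal new face contributed) has size $k$. One then needs to match this shelling statistic, facet by facet, with the descent statistic $\des(w)$ on Stirling $\s$-permutations, so that $h_k(\triangDKK[\Gs]) = |\setperms(k)|$ and hence $h(\triangDKK[\Gs]; x) = A_\s(x)$. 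A clean way to organize this: order the $\Delta_w$ by a linear extension of the $\s$-weak order (or its reverse); Theorem~\ref{thm:cover_relations} identifies the adjacencies of $\Delta_w$ with the cover relations, i.e.\ with ascents/descents of $w$, and one checks that the facets of $\Delta_w$ ``already shelled'' are exactly those associated to the descents of $w$ — so the restriction face of $\Delta_w$ has dimension $\des(w)$, giving the desired matching. Alternatively, and perhaps more robustly, one bypasses shelling statistics entirely: the $h^*$-polynomial of $\fpol[\Gs]$ equals the $h$-polynomial of \emph{any} unimodular triangulation (this holds for the $h$-polynomial of a unimodular triangulation regardless of shellability, by Ehrhart reciprocity / Stanley), and then one computes $h(\triangDKK[\Gs];x)$ from its $f$-polynomial. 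But computing $f$ directly is awkward, so I would still route through the interior-face description: Theorem~\ref{thm:bij_interiorfacesDKK_facessperm} identifies interior faces with pairs $(w,A)$, and the boundary faces would need to be understood too, which reintroduces complications — hence the shelling route is cleaner.

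\textbf{Main obstacle.} The crux is the bookkeeping in Step~2: verifying that, in the chosen shelling order (a linear extension of the $\s$-weak order), the ``old'' facets of each maximal simplex $\Delta_w$ are precisely those indexed by the descents of $w$. This requires knowing which facets of $\Delta_w$ are \emph{interior} (shared with another $\Delta_{w'}$) versus on the boundary of $\fpol[\Gs]$ — the interior ones are exactly those associated to ascents or descents of $w$ (Theorem~\ref{thm:cover_relations} and the remark following it, together with Corollary~\ref{cor:maximal_faces_perm}), while the two ``endpoint'' facets ($\routep{\prefix{w}{0}}$ and $\routep{\prefix{w}{\sizes}}$) are always present — and then arguing that in a $\s$-weak-order-compatible shelling, a facet associated to an ascent $(a,c)$ of $w$ leads to a simplex $w' \lhd w$ appearing later while one associated to a descent leads to an earlier simplex. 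The $121$-avoidance structure and Lemma~\ref{lem:ascdesc} should make this precise, but it is the step where care is needed; the rest is citation.
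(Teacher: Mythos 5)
Your overall plan — shell $\triangDKK[\Gs]$ by a linear extension of the $\s$-weak order, identify the restriction face of $\Delta_w$ with the descents of $w$, and read off $h_k = |\setperms(k)|$ — is exactly the approach the paper takes. However, there is a genuine gap in your Step~1 that your ``main obstacle'' paragraph does not identify. You claim that citing regularity (Theorem~\ref{thm:DKKsimplices}) suffices to establish shellability and that ``no ad hoc shelling needs to be constructed.'' Regularity does guarantee that \emph{some} shelling exists (a line shelling of the lifted polytope), but it does not establish that the particular order you want to use in Step~2 — a linear extension of the $\s$-weak order — is itself a shelling order. These are two different statements, and the shelling statistic only computes $h_k$ if the linear order you feed it actually satisfies the shelling condition. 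Since a generic line shelling has no a priori reason to refine the $\s$-weak order, you cannot pass from the existence of a shelling to the descent-counting argument without more work.

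The paper's proof supplies exactly what is missing: it verifies directly that any linear extension of the $\s$-weak order is a shelling. For $p < q$ with $w^{(p)}$ before $w^{(q)}$ in the order, it uses the lattice structure to choose $r < q$ with $w^{(p)} \wedge w^{(q)} < w^{(r)} \lessdot w^{(q)}$, and then invokes Lemma~\ref{lem:routes_invsets} (the characterization of which routes appear in $\Delta_w$ in terms of inversion-multiset inequalities) together with $\inv(w^{(p)}) \cap \inv(w^{(q)}) = \inv(w^{(p)} \wedge w^{(q)}) \subseteq \inv(w^{(r)}) \subseteq \inv(w^{(q)})$ to conclude $\Delta_{w^{(p)}} \cap \Delta_{w^{(q)}} \subseteq \Delta_{w^{(r)}} \cap \Delta_{w^{(q)}}$. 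This lattice-theoretic verification is the heart of the argument; without it, you only know the triangulation is shellable in the abstract, not that the combinatorial order you need is a shelling. Once this is in place, your Step~2 (using Theorem~\ref{thm:cover_relations} to identify the ``old'' facets of $\Delta_w$ with descents of $w$) goes through essentially as you describe, so that part of the bookkeeping you flagged as the main obstacle is in fact the easier half.
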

\begin{proof}
Similar to the proof of~\cite[Lemma 6.1]{vBGDLMCY21}, we will show that any linear extension of the $\s$-weak order gives a shelling $\Delta_{w^{(1)}},\ldots, \Delta_{w^{(|\setperms|)}}$ of $\triangDKK[\Gs]$.

Fix a linear extension of the $\s$-weak order and let $p<q$ such that $w^{(p)} < w^{(q)}$ in the linear extension. 
Let $r<q$ be such that  $w^{(p)}\wedge w^{(q)} <w^{(r)} \lessdot w^{(q)}$ in the $\s$-weak order. 
It is clear that $|\Delta_{w^{(r)}} \cap \Delta_{w^{(q)}}| = |\Delta_{w^{(q)}}|-1$ since $\Delta_{w^{(r)}}$ and $\Delta_{w^{(q)}}$ intersect in a facet.
Now suppose that there is a route $R=R(k,t,\delta)$ such that $R \in \Delta_{w^{(p)}}\cap \Delta_{w^{(q)}}$ but $R \not \in \Delta_{w^{(r)}}$. By Lemma \ref{lem:routes_invsets} one of the following cases will occur:
 \begin{enumerate}
    \item for some $1\leq i < k$ such that $\delta_i=0$ we will have $\#_{w^{(r)}}(k,i)< t$ but $\#_{w^{(p)}}(k,i)\geq t$ and $\#_{w^{(q)}}(k,i)\geq t$, or
    \item for some $1\leq i < k$ such that $\delta_i=1$ we will have $\#_{w^{(r)}}(k,i)> t$, but $\#_{w^{(p)}}(k,i)\leq t$ and $\#_{w^{(q)}}(k,i)\leq t$, or
    \item for some $1\leq i < j <k$ such that $(\delta_i, \delta_j)=(1,0)$ we have that
    $\#_{w^{(r)}}(j,i)=s_j$ but $\#_{w^{(p)}}(j,i)=\#_{w^{(q)}}(j,i)=0$, or
    \item for some $1\leq i < j <k$ such that $(\delta_i, \delta_j)=(0,1)$ we have that
    $\#_{w^{(r)}}(j,i)=0$ but $\#_{w^{(p)}}(j,i)=\#_{w^{(q)}}(j,i)=s_j$.
    \end{enumerate}
Any one of these cases will contradict the fact that
    $$\inv(w^{(p)})\cap \inv(w^{(q)})=\inv(w^{(p)}\wedge w^{(q)})\subseteq \inv(w^{(r)}) \subseteq \inv(w^{(q)}).$$
Hence, $R \in \Delta_{w^{(r)}}$ and we have $\Delta_{w^{(p)}}\cap \Delta_{w^{(q)}}\subseteq \Delta_{w^{(r)}}\cap \Delta_{w^{(q)}}$.

Now, let 
\[V_q=\{ v\in \Delta_{w^{(q)}} \mid v \hbox{ is a vertex in } \Delta_{w^{(q)}} \hbox{ and } \Delta_{w^{(q)}}\backslash v \subseteq \Delta_{w^{(p)}} \hbox{ for some } 1 \leq p< q\}.
\]
Then the $k$-th coefficient of the $h$-polynomial is $|\{q \mid |V_q|=k, 1\leq q \leq |\setperms| \} |$, which is the number of Stirling $\s$-permutations that cover exactly $k$ elements in the $\s$-weak order and hence is the number of Stirling $\s$-permutations with exactly $k$ descents.
\end{proof}

\begin{example}
Again as an example, the reader can confirm that the $h^*$-polynomial of $\fpol[\Gs]$ when $\s=(1,2,1)$ is $h^*_{\fpol[\Gs]}(x)=1+5x+2x^2$.
\end{example}

\section{Cayley trick and mixed subdivisions}
\label{sec:Cayley_trick}

The Cayley trick allows us to give another geometric realization of the $\s$-permutahedron as a fine mixed subdivision of a $n$-dimensional polytope (or even as a $(n-1)$-dimensional one).
This technique was first developed by Sturmfels in~\cite[Section 5]{Sturmfels94} for coherent subdivisions and by Humber, Rambau, and Santos in~\cite{HRS00} for arbitrary subdivisions.
It was applied to flow polytopes by M\'esz\'aros and Morales in~\cite[Section 7]{MM19}. 
We slightly modify their work for our special case of the flow polytope $\fpol[\Gs]$. To this end we need some basic definitions.

\subsection{Background on the Cayley trick}

Consider the polytopes $P_1,\ldots,P_k$ in $\RR^d$. Their \defn{Minkowski sum} is the polytope \[P_1+\ldots+P_k:=\{x_1+\cdots + x_k\,|\,x_i\in P_i\}.\] 
For the Minkowski sum of $k$ copies of a polytope $P$ we simply write $kP$. The cells of this subdivision are called \defn{Minkowski cells} and are obtained via sums $\sum B_i$ where $B_i$ is the convex hull of a subset of vertices of $P_i$. A \defn{mixed subdivision} of a Minkowski sum is a collection of Minkowski cells such that their union covers the Minkowski sum and they intersect properly as Minkowski sums (see \cite[Definition~1.1]{Santos05}). A \defn{fine mixed subdivision} is a minimal mixed subdivision via containment of its summands.

Let $e_1,\ldots,e_k$ be a basis of $\RR^k$. We call the polytope 
\[\cC(P_1,\ldots,P_k):=\emph{conv}(\{\mathbf{e_1}\}\times P_1, \ldots, \{\mathbf{e_k}\}\times P_k)\subseteq \RR^{k}\times \RR^d\] 
the \defn{Cayley embedding} of $P_1,\ldots,P_k$.

\begin{proposition}[{The Cayley trick \cite[Section 5]{Sturmfels94}}]\label{prop:cayleytrick}
Let $P_1,\ldots,P_k$ be polytopes in $\RR^d$. The regular polytopal subdivisions (respectively triangulations) of $\cC(P_1,\ldots,P_k)$ are in bijection with the regular mixed subdivisions (respectively fine mixed subdivisions) of $P_1+\ldots+P_k$. 
\end{proposition}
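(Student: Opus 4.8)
The statement is the classical Cayley trick of Sturmfels \cite[Section~5]{Sturmfels94} (see also \cite{HRS00,Santos05}), and the plan is to reprove it by the standard \emph{slicing} argument. Write $\Sigma:=P_1+\cdots+P_k$ and let $p:=\tfrac1k(\mathbf{e_1}+\cdots+\mathbf{e_k})$ be the barycentre of the simplex $\conv(\mathbf{e_1},\ldots,\mathbf{e_k})$. The key elementary observation is that, slicing by the affine subspace $W:=\{p\}\times\RR^d\subseteq\RR^k\times\RR^d$, one gets $\cC(P_1,\ldots,P_k)\cap W=\{p\}\times\tfrac1k\Sigma$, an affine copy of $\Sigma$: writing a point of the Cayley embedding as $\sum_i(\lambda_i\mathbf{e_i},\lambda_i x_i)$ with $\lambda_i\ge0$, $\sum_i\lambda_i=1$, $x_i\in P_i$, membership in $W$ forces $\lambda_i=\tfrac1k$ for every $i$. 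More generally, the same computation shows that a face $\conv\bigl(\bigcup_{i\in S}\{\mathbf{e_i}\}\times F_i\bigr)$ of $\cC(P_1,\ldots,P_k)$ slices, under $\cap W$, either to an affine copy of the Minkowski sum $\sum_{i\in S}F_i$ (when $p\in\conv(\mathbf{e_i}:i\in S)$, i.e.\ when $S=[k]$, since $p$ is the barycentre) or to the empty set.

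Next I would set up the bijection on cells. Given a polyhedral subdivision $\mathcal S$ of $\cC:=\cC(P_1,\ldots,P_k)$, every cell uses only vertices of $\cC$, and these are partitioned according to which $\mathbf{e_i}$ they lie over, so each cell has the form $\conv\bigl(\bigcup_{i\in S}\{\mathbf{e_i}\}\times B_i\bigr)$ with $B_i$ the convex hull of a subset of the vertices of $P_i$; a dimension count (a cell with $S\subsetneq[k]$ lies in an affine subspace of dimension $<\dim\cC$) shows that a full-dimensional cell $\sigma$ has $S=[k]$ and $\dim(B_1^\sigma+\cdots+B_k^\sigma)=\dim\Sigma$. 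The map $\Phi$ sends $\mathcal S$ to the collection of Minkowski cells $\{B_1^\sigma+\cdots+B_k^\sigma:\sigma\in\mathcal S\ \text{full-dimensional}\}$, identified via the slice with $\{\sigma\cap W:\sigma\in\mathcal S\}$ after discarding empty slices; the candidate inverse $\Psi$ sends a mixed subdivision with Minkowski cells $\sum_iB_i^{(j)}$ to the subdivision of $\cC$ whose maximal cells are $\conv\bigl(\bigcup_i\{\mathbf{e_i}\}\times B_i^{(j)}\bigr)$ together with all of their faces. The steps to carry out are: (i) $\Phi(\mathcal S)$ is a mixed subdivision of $\Sigma$, where covering is clear since $\cC$ is covered by its full-dimensional cells and $\cap W$ commutes with unions, and proper intersection as Minkowski sums is obtained by slicing the relation ``$\sigma\cap\sigma'$ is a common face of $\sigma$ and $\sigma'$'' through $W$; (ii) $\Psi$ indeed lands in subdivisions of $\cC$, and $\Phi,\Psi$ are mutually inverse and preserve the refinement order.

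I expect step (i)--(ii) to be the main obstacle: it requires a precise dictionary between the face lattice of a Cayley cell $\conv\bigl(\bigcup_i\{\mathbf{e_i}\}\times B_i\bigr)$ and the poset of sub-Minkowski-cells $\sum_iB_i'$ with each $B_i'\subseteq B_i$ a face, so that ``meeting along a common face'' in $\cC$ translates exactly into ``intersecting properly as Minkowski sums'' in $\Sigma$; once this dictionary is in place, the remaining two points are bookkeeping. For regularity, a height function on the vertices of $\cC$ is literally the same data as a tuple $(h_1,\ldots,h_k)$ of height functions on the $P_i$ via $h_i(v):=h(\mathbf{e_i},v)$, and slicing the lower hull of the lifted Cayley polytope by $W\times\RR$ produces exactly the lower hull of the lifted Minkowski sum $\sum_i\{(v,h_i(v)):v\in P_i\}$; hence the regular subdivision of $\cC$ induced by $h$ slices to the regular mixed subdivision of $\Sigma$ induced by $(h_1,\ldots,h_k)$, and conversely. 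Finally, for the ``triangulations versus fine mixed subdivisions'' clause, a dimension count gives $\dim\sigma=(k-1)+\dim(B_1+\cdots+B_k)\le (k-1)+\sum_i\dim B_i\le (k-1)+\sum_i(\#\mathrm{vert}(B_i)-1)$, so $\sigma$ is a simplex if and only if equality holds throughout, i.e.\ if and only if each $B_i$ is a simplex and the summands are affinely independent, which is precisely the condition that $\sum_iB_i$ is a fine Minkowski cell; thus $\Phi$ and $\Psi$ restrict to the claimed bijection between triangulations of $\cC$ and fine mixed subdivisions of $\Sigma$.
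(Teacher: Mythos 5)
The paper does not actually prove this proposition: it cites Sturmfels~\cite{Sturmfels94} and Huber--Rambau--Santos~\cite{HRS00} and only records, as a remark after the statement, the one-line slicing description (intersect a subdivision of $\cC(P_1,\ldots,P_k)$ with $(\frac1k,\ldots,\frac1k)\times\RR^d$). Your attempt fleshes out exactly this slicing approach, and your technical observations are correct: the slice $\cC\cap W$ is $\{p\}\times\frac1k\Sigma$; cells of a subdivision of $\cC$ have the form $\conv\bigl(\bigcup_{i\in S}\{\mathbf{e_i}\}\times B_i\bigr)$; a cell slices nonemptily iff $S=[k]$; full-dimensional cells slice to Minkowski cells; and $\sigma$ is a simplex iff each $B_i$ is a simplex and the summands are affinely independent, i.e.\ the Minkowski cell is fine. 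So the content is on track.

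You are also right to flag steps (i)--(ii) as the real issue: establishing that ``the slices intersect properly as Minkowski sums'' (and the converse) requires a genuine dictionary between the Cayley face poset and the sub-Minkowski-cell poset, and that is the nontrivial contribution of~\cite{HRS00} for \emph{arbitrary} subdivisions. But the proposition as stated only concerns \emph{regular} subdivisions, and for those the height-function paragraph at the end of your proposal actually closes the gap by itself: a height $h$ on the vertices of $\cC$ is the same data as a tuple $(h_1,\ldots,h_k)$, the slice of the lower hull of the lifted Cayley polytope by $W\times\RR$ is (up to the factor $\frac1k$) the lower hull of $\widehat{P_1}+\cdots+\widehat{P_k}$, and this makes proper intersection and the full bijection automatic, because both sides are determined by the same piecewise-linear function. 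The clean fix is therefore structural, not mathematical: lead with the height-function argument (this is Sturmfels' original route for the coherent case), and then your triangulation/fine-cell dimension count finishes it. As written, the combinatorial subdivision machinery you set up in the middle is more than the statement needs and is where you correctly felt the argument was incomplete.
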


A concrete way to have this bijection (see the ``one-picture-proof'' \cite[Figure~1]{HRS00}) is to intersect a subdivision of $\cC(P_1, \ldots, P_k)$ with the subspace $(\frac{1}{k}, \ldots, \frac{1}{k})\times \RR^n$ of $\RR^k\times \RR^n$. Up to dilation by the factor $k$ we obtain a mixed subdivision of $P_1+\ldots +P_k$. 

For regular subdivisions, this also gives a way to obtain an admissible height function for a mixed subdivision of $P_1+\ldots+P_k$ from an admissible height function for a subdivision of $\cC(P_1, \ldots, P_k)$, see Section~\ref{subsec:height_functions}.

\begin{remark}
  Note that the Cayley trick induces a poset isomorphism between the interior faces of the subdivision of $\cC(P_1, \ldots, P_k)$ and the interior faces of the corresponding mixed subdivision of $P_1+\ldots+P_k$ (both sets of faces being ordered by inclusion).  
\end{remark}

\subsection{The sum of cubes realization}\label{subsec:realization_mixedsubdiv}
To apply the Cayley trick to our triangulation $\triangDKK[\Gs]$ of the flow polytope $\fpol[\Gs]$, we need to describe it as the Cayley embedding of some lower-dimensional polytopes. 
Recall that $\fpol[\Gs]$ lives in the space of edges of the graph $\Gs$. We parameterize this space as $\RR^{p} \times \RR^{2n}$, where $p=1 +\sum_{i=1}^n (s_i-1)$ and $\RR^p$ corresponds to the space of source-edges and $\RR^{2n}$ to the space of bumps and dips (edges of $\oruga$, see Definition~\ref{def:Gs}).
Moreover, for all $i\in [n]$ and for any point in $\fpol[\Gs]$, (i.e.\ a flow of $\fpol[\Gs]$), we have that the sum of its coordinates along edges $e^i_0$ and $e^i_{s_{i}}$ is determined by the coordinates along the source-edges $e^k_t$ for $k\in[i+1, n+1]$, $t\in [s_k-1]$.
Thus, $\fpol[\Gs]$ is affinely equivalent to its projection on the space $\RR^p\times \RR^{n}$ where $\RR^{n}$ corresponds to the space of edges $e^i_0$ for $i\in [n]$.

With this parameterization, the indicator vector of the route of $\Gs$ denoted $\route(k,t,\delta)$ (as in the discussion after Definition~\ref{def:Gs}) with $k\in[n+1]$, $t\in[s_k-1]$ and $\delta\in \{0,1\}^{k-1}$  is:
$$e^{k}_{t}\times \sum_{i\in[k-1], \, \delta_i=0} e^i_0.$$ 
Thus, if we denote by $\square_{k-1}$ these $(k-1)$-dimensional hypercubes with vertices $\{0,1\}^{k-1}\times 0^{n-k+1}$ embedded in $\RR^{n}$, we see that $\fpol[\Gs]$ is the Cayley embedding of $\square_n$ and $\square_{k-1}$ repeated $s_{k}-1$ times for $k\in [n]$.

We denote by \defn{$\subdivCay$} the fine mixed subdivision of the Minkowski sum of hypercubes $\square_{n}+\sum_{i=1}^{n} (s_i-1)\square_{i-1} \subseteq \RR^n$ obtained by intersecting the triangulation $\triangDKK[\Gs]$ (projected onto $\RR^p\times \RR^n$) with the subspace ${\left\{\frac{1}{p}\right\}}^p\times \RR^{n}$.

The following theorem follows directly from the Cayley trick (Proposition~\ref{prop:cayleytrick}), and the isomorphism between the face poset of $\spermcombi$ and the interior simplices of the DKK triangulation given in Theorem~\ref{thm:bij_interiorfacesDKK_facessperm}.

\begin{theorem}\label{thm:bij_mixed_subdiv}
The face poset of the $\s$-permutahedron $\spermcombi$ is isomorphic to the set of interior cells of $\subdivCay$ ordered by reverse inclusion.
\end{theorem}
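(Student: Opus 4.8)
The plan is to assemble this statement essentially by transport of structure along two maps that have already been analyzed, so the proof should be short. First I would recall the setup: by Theorem~\ref{thm:bij_interiorfacesDKK_facessperm}, the face poset of $\spermcombi$ is isomorphic, via $(w,A)\mapsto \Delta_{(w,A)}$, to the poset of interior simplices of $\triangDKK[\Gs]$ ordered by reverse inclusion. So it suffices to produce a poset isomorphism between the interior simplices of $\triangDKK[\Gs]$ (reverse inclusion) and the interior cells of $\subdivCay$ (reverse inclusion). That isomorphism is exactly what the Cayley trick provides, together with the observation in the Remark following Proposition~\ref{prop:cayleytrick}.

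The key steps, in order, are as follows. (1) Record that $\fpol[\Gs]$, after the affine projection onto $\RR^p\times\RR^n$ described in Section~\ref{subsec:realization_mixedsubdiv}, is the Cayley embedding $\cC(\square_n,\,\square_0^{\times(s_1-1)},\,\square_1^{\times(s_2-1)},\ldots,\square_{n-1}^{\times(s_n-1)})$ of the listed hypercubes; this is already established in the text, and the projection is an affine equivalence carrying $\triangDKK[\Gs]$ to a regular triangulation of the Cayley polytope. (2) Invoke Proposition~\ref{prop:cayleytrick}: since $\triangDKK[\Gs]$ is a regular triangulation (Theorem~\ref{thm:DKKsimplices}), intersecting it with the slice $\{\frac1p\}^p\times\RR^n$ yields a regular (indeed fine, being the image of a triangulation) mixed subdivision of the Minkowski sum $\square_n+\sum_i(s_i-1)\square_{i-1}$, which is by definition $\subdivCay$. (3) Use the Remark after Proposition~\ref{prop:cayleytrick} (the ``one-picture proof'' of \cite{HRS00}): the slicing map $F\mapsto F\cap(\{\frac1p\}^p\times\RR^n)$, up to dilation by $p$, is a bijection between the interior faces of the Cayley triangulation and the interior cells of $\subdivCay$, and it is inclusion-preserving in both directions, hence a poset isomorphism for the inclusion orders (and therefore also for the reverse-inclusion orders). (4) Compose: $(w,A)\mapsto \Delta_{(w,A)}$ (Theorem~\ref{thm:bij_interiorfacesDKK_facessperm}), then slice, to get the desired poset isomorphism from the face poset of $\spermcombi$ onto the interior cells of $\subdivCay$ ordered by reverse inclusion.

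The one point that needs a sentence of care — and I expect it to be the only real obstacle — is matching up the word \emph{interior}: we must check that the slicing bijection of the Cayley trick sends the interior simplices of $\triangDKK[\Gs]$ (interior in the sense of ``not contained in $\partial\fpol[\Gs]$'', as defined in Section~\ref{subsec:realization_flowpolytopes}) precisely to the interior cells of $\subdivCay$ (cells not contained in the boundary of the Minkowski sum). This is where Corollary~\ref{cor:Delta(w,A)_interior} is used: every $\Delta_{(w,A)}$ is an interior simplex of the triangulation, and conversely every interior simplex is some $\Delta_{(w,A)}$, so the two notions of interior are already identified on the triangulation side by Theorem~\ref{thm:bij_interiorfacesDKK_facessperm}; on the mixed-subdivision side, a Minkowski cell lies in the boundary of the sum iff it is the slice of a simplex lying in the boundary of the Cayley polytope, so the slicing map restricts to a bijection between interior objects. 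I would state this compatibility explicitly and then simply write that the theorem follows by composing the two poset isomorphisms, as already announced in the paragraph preceding the statement.

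\begin{proof}
By Theorem~\ref{thm:bij_interiorfacesDKK_facessperm}, the map $(w,A)\mapsto \Delta_{(w,A)}$ is a poset isomorphism from the face poset of $\spermcombi$ onto the set of interior simplices of $\triangDKK[\Gs]$, ordered by reverse inclusion; moreover, by Corollary~\ref{cor:Delta(w,A)_interior} together with the surjectivity argument of that theorem, the interior simplices of $\triangDKK[\Gs]$ are exactly the simplices $\Delta_{(w,A)}$.

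As recalled in Section~\ref{subsec:realization_mixedsubdiv}, the affine projection of $\fpol[\Gs]$ onto $\RR^p\times\RR^n$ is an affine equivalence identifying $\fpol[\Gs]$ with the Cayley embedding $\cC$ of $\square_n$ together with $s_k-1$ copies of $\square_{k-1}$ for $k\in[n]$, and it carries $\triangDKK[\Gs]$ to a regular triangulation of $\cC$ (regularity by Theorem~\ref{thm:DKKsimplices}). By the Cayley trick (Proposition~\ref{prop:cayleytrick}), intersecting this triangulation with the slice $\{\tfrac1p\}^p\times\RR^n$ and dilating by $p$ produces a fine mixed subdivision of $\square_n+\sum_{i=1}^n(s_i-1)\square_{i-1}$, which is by definition $\subdivCay$. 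By the Remark following Proposition~\ref{prop:cayleytrick}, the slicing operation $F\mapsto p\cdot\bigl(F\cap(\{\tfrac1p\}^p\times\RR^n)\bigr)$ is an inclusion-preserving bijection between the interior faces of the triangulation of $\cC$ and the interior cells of $\subdivCay$; since a face of the triangulation lies in $\partial\cC$ if and only if its slice lies in the boundary of the Minkowski sum, this bijection sends interior simplices to interior cells and its inverse sends interior cells to interior simplices. Being an order isomorphism for inclusion in both directions, it is in particular an order isomorphism for reverse inclusion.

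Composing the poset isomorphism $(w,A)\mapsto\Delta_{(w,A)}$ of Theorem~\ref{thm:bij_interiorfacesDKK_facessperm} with the slicing isomorphism above yields a poset isomorphism from the face poset of $\spermcombi$ onto the set of interior cells of $\subdivCay$ ordered by reverse inclusion.
\end{proof}
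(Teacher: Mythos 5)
Your proof is correct and follows exactly the same route as the paper: compose the poset isomorphism of Theorem~\ref{thm:bij_interiorfacesDKK_facessperm} with the Cayley-trick bijection, using the Remark following Proposition~\ref{prop:cayleytrick} to handle the identification of interior faces on both sides. The paper's own ``proof'' is simply the one-sentence pointer to these two ingredients in the paragraph preceding the theorem; your version spells out the same argument in more detail.
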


In particular, the $\s$-decreasing trees are in bijection with the maximal cells of $\subdivCay$.

\begin{remark}
    We can use a different parameterization of the space where $\fpol[\Gs]$ lives by considering the cube $\square_n$ as the Cayley embedding of two hypercubes $\square_{n-1}$, or equivalently intersect $\RR^n$ with the hyperplane $x_n=\frac{1}{2}$. This allows us to lower the dimension and obtain a fine mixed subdivision of the Minkowski sum of hypercubes $(s_n +1)\square_{n-1}+\sum_{i=1}^{n-1} (s_i-1)\square_{i-1}$. We use this representation for the figures.
\end{remark}

Figure~\ref{fig:mixed_cell_3221} shows the mixed cell corresponding to the Stirling $(1,2,1)$-permutation $w=3221$, obtained from the clique $\Delta_w$ with the Cayley trick.
Figure~\ref{fig:mixed_subdivision} shows the entire mixed subdivision for the case $s=(1,2,1)$.
Both figures are represented in the coordinate system $(e^2_0, e^3_0)$. 
In Figure~\ref{fig:adjacency_graph_121} the dual graph of the cells of this mixed subdivision is portrayed with edges oriented perpendicular to each inner wall.

\begin{figure}
\centering
  \begin{subfigure}[b]{0.5\textwidth}
\centering
\includegraphics[scale=0.85]{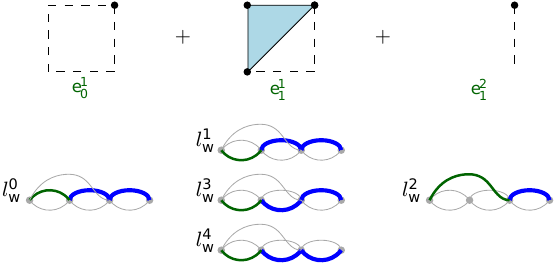}
    \caption{}
        \label{fig:mixed_cell_3221}
\end{subfigure}
\qquad\qquad 
\begin{subfigure}[b]{0.3\textwidth}
  \centering
\includegraphics[scale=0.7]{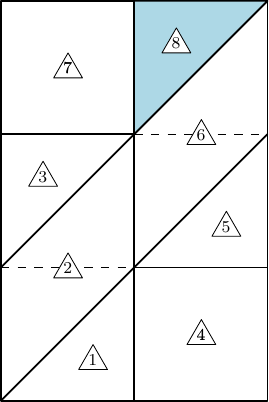}
  \caption{}
    \label{fig:mixed_subdivision}
\end{subfigure}
     \caption{(a) Summands of the Minkowski cell corresponding to $w=3221$ together with their corresponding routes in the clique $\Delta_w$. (b) Mixed subdivision of $2\square_2+\square_1$ corresponding dually to the $(1,2,1)$-permutahedron. The cells are numbered according to Figure~\ref{fig:adjacency_graph_121}. The highlighted cell in blue corresponds to $w=3221$ as obtained in Figure~\ref{fig:mixed_cell_3221}.}
    \label{fig:comparing flows PS and gen PS}
\end{figure}

\section{Intersection of tropical hypersurfaces}\label{sec:realization_polyhedral_complex}

In the two realizations that we provided in Sections~\ref{subsec:realization_flowpolytopes} and~\ref{subsec:realization_mixedsubdiv}, the $\s$-decreasing trees index the maximal cells of a polytopal complex. However, Conjecture~\ref{conj:s-permutahedron} asks for a polytopal complex where the $\s$-decreasing trees index the vertices.

In this section, we explain how to dualize our previous realizations in order to obtain such a polytopal realization and fully answer the conjecture for strict compositions. 
Tropical geometry offers a nice setting to dualize regular polyhedral subdivisions that moreover behaves well with the Cayley trick.

\subsection{Background on tropical dualization}\label{sec:background_tropicalization} This section is based on the work of Joswig in~\cite{Joswig16} and \cite[Chapter 1]{Joswig21}.

Let $\cA=\{\mathbf{a}^1, \ldots, \mathbf{a}^m\}$ be a point configuration in $\RR^d$ with integer coordinates, and $\cS$ a subdivision of $\cA$. 

The subdivision $\cS$ is said to be \defn{regular} if there is a function $\height: [m]\to \RR, i\mapsto \height^i$ such that the faces of $\cS$ are the images of the lower faces of the lift of $\cA$ (the polytope with vertices $(\mathbf{a}^i, \height^i) \in \RR^{d+1}$ for $i \in [m]$) by the projection that omits the last coordinate. 
In this case, the function $\height$ is called an \defn{admissible height function} for $\cS$.

Such a point configuration together with a height function $\height$ is associated to the \defn{tropical polynomial} (in the min-plus algebra): 
$$F(\mathbf{x})=\bigoplus_{i\in [m]} h^i \odot \mathbf{x}^{\mathbf{a}^i}=\min\left\{h^i + \langle \mathbf{a}^i, \mathbf{x}\rangle \, |\, i\in [m] \right\},$$
where $\mathbf{x}\in \RR^d$ and $\langle \cdot, \cdot \rangle$ denotes the usual scalar product in $\RR^d$.

The \defn{tropical hypersurface defined by $F$}, or \defn{vanishing locus of $F$} is 
$$\cT(F) := \left\{ \mathbf{x}\in \RR^d\, |\, \text{the minimum of $F(\mathbf{x})$ is attained at least twice}\right\}.$$
It is the image codimension-2-skeleton of the \defn{dome} 
$$\cD(F)=\left\{(\mathbf{x},y)\in \RR^{d+1}\mid \mathbf{x}\in \RR^d, \, y\in \RR, \,  y\leq F(\mathbf{x}) \right\}$$ under the orthogonal projection that omits the last coordinate \cite[Corollary 1.6]{Joswig21}. 
The \defn{cells} of $\cT(F)$ are the projections of the faces of $\cD(F)$ (here we include the regions of $\RR^d$ delimited by $\cT(F)$ as its $d$-dimensional cells ; in fact we are considering the normal complex $NC(F)$ defined in~\cite[after Example 1.7]{Joswig21}).

We say that $\cT(F)$ is the \defn{tropical dual} of the subdivision $\cS$ with admissible function $\height$, since we have the following theorem:

\begin{theorem}[{\cite[Theorem 1.13]{Joswig21}}]\label{thm:tropical_dual}
There is a bijection between the $k$-dimensional cells of $\cS$ and the $(d-k)$-dimensional cells of $\cT(F)$, that reverses the inclusion order. 
\end{theorem}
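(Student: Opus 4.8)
Since the statement is quoted verbatim from \cite[Theorem~1.13]{Joswig21}, one may simply invoke it; what follows is the argument I would reconstruct behind it. The plan is to exhibit both $\cS$ and $\cT(F)$ as combinatorial shadows of one $(d+1)$-dimensional polyhedron and to read the bijection off its face lattice. By definition of regularity, the cells of $\cS$ are the images under the coordinate projection $\RR^{d+1}\to\RR^d$ of the lower faces of the lifted polytope $\widehat{\cA}=\conv\{(\mathbf{a}^i,\height^i):i\in[m]\}$; concretely, a cell of $\cS$ is $\conv\{\mathbf{a}^i : i\in S(\mathbf{x})\}$ for some $\mathbf{x}\in\RR^d$, where $S(\mathbf{x})=\{\,i : \height^i+\langle\mathbf{a}^i,\mathbf{x}\rangle = F(\mathbf{x})\,\}$ is the set of minimizers, and its dimension equals $\dim\operatorname{aff}\{\mathbf{a}^i : i\in S(\mathbf{x})\}$. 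On the other side, the dome $\cD(F)=\bigcap_{i\in[m]}\{(\mathbf{x},y): y\le \height^i+\langle\mathbf{a}^i,\mathbf{x}\rangle\}$ is a full-dimensional unbounded polyhedron whose boundary is the graph of the finite concave piecewise-linear function $F$, so the orthogonal projection $\pi$ restricts to a homeomorphism of polyhedral complexes from $\partial\cD(F)$ onto $\RR^d$; the cells of $\cT(F)=NC(F)$ are precisely the images $\pi(G)$ of the faces $G$ of $\cD(F)$.

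Next I would build the correspondence directly. To a face $G$ of $\cD(F)$ associate its active set $S_G=\{\,i : G\subseteq\{y=\height^i+\langle\mathbf{a}^i,\mathbf{x}\rangle\}\,\}$; any $\mathbf{x}$ in the relative interior of $\pi(G)$ satisfies $S(\mathbf{x})=S_G$, so $\conv\{\mathbf{a}^i : i\in S_G\}$ is a cell of $\cS$. The assignment $\pi(G)\mapsto\conv\{\mathbf{a}^i : i\in S_G\}$ is the claimed bijection; its inverse sends a cell $C$ of $\cS$ with active set $S_C$ to $\pi$ of the face $\{(\mathbf{x},y)\in\cD(F): y=\height^i+\langle\mathbf{a}^i,\mathbf{x}\rangle\text{ for all }i\in S_C\}$. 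That these maps are mutually inverse is a routine unwinding of the notion of ``face of a polyhedron''. For the dimension count: a $k$-cell $C$ of $\cS$ lifts to a $k$-dimensional lower face $\widehat C$ of $\widehat{\cA}$, and the set of directions selecting $\widehat C$ is the relative interior of the lower normal cone of $\widehat C$, which has dimension $d-k$ --- the full normal cone of a $k$-face of a $(d{+}1)$-polytope is $(d{+}1{-}k)$-dimensional, and restricting to the slice of lifting directions $(\mathbf{x},1)$ drops one dimension. This cone, identified with a subset of $\RR^d$ via $(\mathbf{x},1)\mapsto\mathbf{x}$, is exactly the $\cT(F)$-cell dual to $C$, whence $\dim\pi(G)=d-\dim C$.

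Finally, for order-reversal: if $C$ is a face of $C'$ in $\cS$ then $\widehat C$ is a face of $\widehat{C'}$, hence $S_C\subseteq S_{C'}$, and since normal cones reverse the face order, the lower normal cone of $\widehat C$ contains that of $\widehat{C'}$; projecting, the $\cT(F)$-cell dual to $C$ contains the one dual to $C'$, and conversely. The genuinely delicate points --- where one must argue rather than wave hands --- are (i) verifying that $\partial\cD(F)\to\RR^d$ is a homeomorphism \emph{of complexes} (finiteness and concavity of $F$), and especially (ii) matching cells in \emph{every} dimension and not just the maximal ones: one must check that for each cell $C$ of $\cS$ the locus $\{\,\mathbf{x}: S(\mathbf{x})\supseteq S_C\,\}$ is nonempty and of the expected dimension $d-\dim C$, which is precisely where the regularity of $\cS$ (existence of the admissible height $\height$) is used essentially. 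I would expect step (ii) to be the main obstacle; everything else is the standard translation between a polytope, its normal fan, and the lifted picture.
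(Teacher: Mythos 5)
The paper does not actually prove this statement---it is cited verbatim from Joswig's book, and the text that follows only describes the bijection (via minimizer/active sets) without proof. Your reconstruction, passing through the lifted point configuration $\widehat{\cA}$ and the dome $\cD(F)$ as two shadows of one $(d{+}1)$-dimensional picture, with active sets mediating the face correspondence and normal cones providing the dimension count, is the standard argument and is consistent with the surrounding machinery: the paper's own proof of Lemma~\ref{lem:tropical_dual_interior} works with the extended Newton polyhedron $\widetilde{\cN}(F)$ and its normal cones in precisely this way. One place where you should tighten the wording is the dimension count: the claim that the full normal cone of a $k$-face of a $(d{+}1)$-polytope is $(d{+}1{-}k)$-dimensional, and that slicing by $\{(\mathbf{x},1)\}$ drops exactly one dimension, implicitly assumes $\widehat{\cA}$ is full-dimensional (equivalently, the heights $\height^i$ do not lie on a common affine function, i.e.\ the subdivision is nontrivial) and that the lower normal cone's relative interior meets the slice; both hold here because we restrict to lower faces, but it is worth saying. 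Concerning your flagged worry (ii), there is no real gap: the locus $\{\mathbf{x}: S(\mathbf{x})\supseteq S_C\}$ is by definition the slice of that normal cone and hence automatically nonempty with the expected dimension once the cone is set up correctly, so regularity is used precisely to have the lift in the first place, not to patch a dimension mismatch afterward.
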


This bijection sends a vertex $\mathbf{a^j}$ to the region 
$$\left\{\mathbf{x}\in \RR^d \, \middle|\,  \height^j + \langle \mathbf{a}^j, \mathbf{x} \rangle = \min_{i \in [m]} \{ \height^i + \langle \mathbf{a}^i, \mathbf{x} \rangle\} \right\},
$$
and a cell of $\cS$ to the intersection of the regions corresponding to its vertices.

\begin{lemma}\label{lem:tropical_dual_interior}
    The bijection of Theorem~\ref{thm:tropical_dual} restricts to a bijection between the interior cells of $\cS$ and the bounded cells of $\cT(F)$.
\end{lemma}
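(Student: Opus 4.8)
The plan is to make the bijection of Theorem~\ref{thm:tropical_dual} completely explicit at the level of closed cells, and then to characterize the bounded dual cells through their recession cones. Identifying a cell $\sigma$ of $\cS$ with its vertex set $\sigma\subseteq\cA$, the cell of $\cT(F)$ dual to $\sigma$ is the intersection of the regions attached to the vertices of $\sigma$ (as recalled after Theorem~\ref{thm:tropical_dual}), namely the polyhedron
\[
\sigma^{\ast}=\bigl\{\mathbf{x}\in\RR^{d}\mid h^{j}+\langle \mathbf{a}^{j},\mathbf{x}\rangle\le h^{k}+\langle \mathbf{a}^{k},\mathbf{x}\rangle\ \text{ for all }\mathbf{a}^{j}\in\sigma,\ \mathbf{a}^{k}\in\cA\bigr\},
\]
since requiring that the minimum defining $F(\mathbf{x})$ be attained at every vertex of $\sigma$ is exactly this system of inequalities. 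First I would record that $\sigma^{\ast}$ is nonempty and is indeed the cell dual to $\sigma$, which is part of the content of Theorem~\ref{thm:tropical_dual}.

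Next I would compute the recession cone
\[
\operatorname{rec}(\sigma^{\ast})=\bigl\{\mathbf{v}\in\RR^{d}\mid \langle \mathbf{a}^{j},\mathbf{v}\rangle\le\langle \mathbf{a}^{k},\mathbf{v}\rangle\ \text{ for all }\mathbf{a}^{j}\in\sigma,\ \mathbf{a}^{k}\in\cA\bigr\}.
\]
Letting $\mathbf{a}^{k}$ also range over $\sigma$ forces $\langle \mathbf{a}^{j},\mathbf{v}\rangle$ to be constant over the vertices of $\sigma$, so a nonzero $\mathbf{v}$ lies in $\operatorname{rec}(\sigma^{\ast})$ precisely when the linear functional $\langle\,\cdot\,,\mathbf{v}\rangle$ attains its minimum over $\conv(\cA)$ on a set containing $\conv(\sigma)$; equivalently, $\operatorname{rec}(\sigma^{\ast})$ is the negative of the normal cone of $\conv(\cA)$ at the smallest face $G_{\sigma}$ of $\conv(\cA)$ containing $\sigma$.

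Then I would conclude as follows. Since $\sigma^{\ast}$ is a nonempty polyhedron, it is bounded if and only if $\operatorname{rec}(\sigma^{\ast})=\{\mathbf{0}\}$. Assuming $\cA$ affinely spans $\RR^{d}$ (which we may; otherwise every dual cell is unbounded and the statement is to be read modulo the lineality $\operatorname{aff}(\cA)^{\perp}$, which is trivial in all of our applications), a nonzero functional on the full-dimensional polytope $\conv(\cA)$ has a proper minimal face, so $\operatorname{rec}(\sigma^{\ast})\neq\{\mathbf{0}\}$ if and only if $\conv(\sigma)$ is contained in some proper face of $\conv(\cA)$, i.e.\ $\conv(\sigma)\subseteq\partial\conv(\cA)$, i.e.\ $\sigma$ is \emph{not} an interior cell of $\cS$. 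Hence $\sigma$ is interior if and only if $\sigma^{\ast}$ is bounded. Since $\sigma\mapsto\sigma^{\ast}$ is already a bijection between all cells of $\cS$ and all cells of $\cT(F)$, restricting it to the interior cells yields a bijection onto the bounded cells.

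The step I expect to be the main obstacle is the middle one: correctly identifying $\operatorname{rec}(\sigma^{\ast})$ with a normal cone of $\conv(\cA)$ and pinning down the accompanying full-dimensionality hypothesis (without it, ``bounded'' has to be interpreted in the quotient by the lineality space). Once that is settled, the equivalence $G_{\sigma}=\conv(\cA)\iff\sigma$ interior, and the final bijection statement, are immediate; the remaining verifications (nonemptiness of $\sigma^{\ast}$, the convexity fact that a convex subset of $\partial\conv(\cA)$ lies in a single proper face) are routine.
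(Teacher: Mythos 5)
Your argument is correct, and it takes a genuinely different route from the one in the paper. The paper first reduces the statement to the case of interior facets versus bounded edges of $\cT(F)$, and then analyzes the normal cone of the lift of a facet inside the extended Newton polyhedron $\widetilde{\cN}(F)\subseteq\RR^{d+1}$: an unbounded edge $\mathbf{w}+\RR_{+}\mathbf{v}$ produces (after normalizing by $\lambda$ and letting $\lambda\to\infty$) a direction $-(\mathbf{v},0)$ in that normal cone, which is exactly the signature of a boundary facet of $\cS$, and vice versa. You instead work entirely in $\RR^{d}$: you write the dual cell $\sigma^{\ast}$ explicitly as a polyhedron, compute its recession cone from those inequalities, and identify $\operatorname{rec}(\sigma^{\ast})$ with $-\overline{N(G_{\sigma})}$, the negated (closed) normal cone of $\conv(\cA)$ at the minimal face $G_{\sigma}\supseteq\sigma$. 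This bypasses the reduction to facets altogether: boundedness of $\sigma^{\ast}$ is equivalent to $\operatorname{rec}(\sigma^{\ast})=\{\mathbf{0}\}$, i.e.\ to $G_{\sigma}=\conv(\cA)$, i.e.\ to $\sigma$ being interior. The facts you flag as needing care are indeed the right ones to worry about, and you handle them correctly: the recession-cone formula for a nonempty polyhedron given by inequalities, the equivalence between ``$\langle\cdot,\mathbf{v}\rangle$ constant on $\sigma$ and minimal over $\cA$'' and ``$\mathbf{v}\in-\overline{N(G_{\sigma})}$,'' and the standard fact that a convex subset of $\partial\conv(\cA)$ lies in a single proper face. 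Your explicit caveat about full-dimensionality of $\conv(\cA)$ (or passing to the quotient by the lineality $\operatorname{aff}(\cA)^{\perp}$) is a point the paper glosses over; in the paper's applications the configuration is full-dimensional, so both proofs apply. Your version is arguably cleaner, since it avoids introducing the auxiliary polyhedron in one higher dimension, at the cost of spelling out the normal-fan dictionary directly; the paper's version is closer to Joswig's dome/Newton-polyhedron formalism, which it is already using elsewhere.
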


\begin{proof}
    It is sufficient to show that the bijection restricts to a bijection between the interior facets ($(d-1)$-dimensional cells) of $\cS$ and the bounded edges of $\cT(F)$. 
    Indeed, suppose that it is the case. Any cell of $\cS$ is either maximal and associated to a vertex of $\cT(F)$, or it is an intersection of facets of $\cS$. A non-maximal cell of $\cS$ is interior if and only if it is included only in interior facets of $\cS$. Thus it is sent via the bijection to a cell of $\cT(F)$ that only contains bounded edges. Reciprocally, a non-bounded cell of $\cT(F)$ contains a non-bounded edge, so it is sent to a boundary cell of $\cS$. 

    Let us show the statement about the interior facets of $\cS$ in a fashion similar to the proof of~\cite[Theorem 1.13]{Joswig21}. Let $\widetilde{\cN}(F) := \conv\{(\mathbf{a}^i, r) \, |\, i\in [m], r\geq h^i \}\subseteq \RR^{d+1}$ be the extended Newton polyhedron of $F$, whose lower faces project bijectively onto the cells of $\cS$. 
    Let $\mathbf{e}$ be an edge of $\cT(F)$ and $H$ its corresponding facet in $\cS$ via the bijection. 
    Suppose that $\mathbf{e}$ is unbounded, of the form $\mathbf{e}=\mathbf{w}+\RR_{+}\mathbf{v}$ for some $\mathbf{v},\mathbf{w}\in \RR^d$. Then, for any $\lambda\in \RR_{+}$ the vector $-(\mathbf{w}+\lambda \mathbf{v}, 1)$ is in the normal cone of the lift of $H$ in $\widetilde{\cN}(F)$. Taking the limit of $\lambda\to 0$ of $-\left(\frac{1}{\lambda}\mathbf{w}+\mathbf{v}, \frac{1}{\lambda}\right)$, we obtain that $-(\mathbf{v}, 0)$ is in the normal cone of the lift of $H$, hence $H$ is in the boundary of $\cS$. 
    
    Reciprocally, if $H$ is a boundary facet of $\cS$, it means that the normal cone of the lift of $H$ in $\widetilde{\cN}(F)$ is a two-dimensional cone whose extremal rays can be written $-\RR_{+}(\mathbf{v},0)$ and $-\RR_{+}(\mathbf{w}, 1)$, for some $\mathbf{v},\mathbf{w}\in \RR^d$. For any $\lambda\in \RR_{+}$, the vector $-(\mathbf{w}+\lambda \mathbf{v}, 1)=-\lambda(\frac{1}{\lambda}\mathbf{w}+\mathbf{v}, \frac{1}{\lambda})$ is in this cone, so the point $\mathbf{w}+\lambda \mathbf{v}$ belongs to the edge $\mathbf{e}$ in $\cT(F)$. Hence, this edge is unbounded.
\end{proof}

In the case where $\cA$ is a Cayley embedding, Joswig explains in~\cite[Corollary~4.9]{Joswig21} how the Cayley trick allows us to describe the tropical dual of a regular mixed subdivision with an arrangement of tropical hypersurfaces. 
This extends what was known for triangulations of a product of simplices $\Delta_{m-1}\times\Delta_{d-1}$, which is the Cayley embedding of $m$ copies of the simplex $\Delta_{d-1}$ (the canonical simplex in $\RR^d$) and gives arrangements of tropical hyperplanes, see~\cite[Section~4]{DS04}, \cite{FR15}.

We consider $\cA$ given by the vertices of the Cayley embedding $\cC(P_1, \ldots, P_k)$, with $P_j=\conv(\mathbf{a}^{j, 1}, \ldots, \mathbf{a}^{j, m_j})$ a polytope in $\RR^d$ with integer coordinate vertices, and consider a regular subdivision given by the height $\height=(h^{1,1}, \ldots, h^{1, m_1}, \ldots, h^{k, m_k})\in \RR^{[m_1]\times \ldots \times [m_k]}$.

After the Cayley trick we obtain the subdivision $\widetilde{\cS}$ of the point configuration $\widetilde{\cA}$ given by the points of the form 
$\sum_{j=1}^k \mathbf{a}^{j, i_j}$
 for $(i_1, \ldots, i_k)\in [m_1]\times \ldots \times [m_k]$ with height $h^{(i_1, \ldots, i_k)}=\sum_{j=1}^k h^{j, i_j}$. 

The corresponding tropical polynomial is
\begin{eqnarray*}
\widetilde{F}(\mathbf{x}) &=& \bigoplus_{(i_1, \ldots, i_k)\in [m_1]\times \ldots \times [m_k]} h^{(i_1, \ldots, i_k)}\odot \mathbf{x}^{\sum_{j=1}^k \mathbf{a}^{j, i_j}}\\
&=& \bigoplus_{(i_1, \ldots, i_k)\in [m_1]\times \ldots \times [m_k]} \bigodot_{j=1}^k h^{j, i_j}\odot \mathbf{x}^{\mathbf{a}^{j, i_j}}\\
&=& \bigodot_{j=1}^k \bigoplus_{i_j\in [m_j]} h^{j, i_j}\odot \mathbf{x}^{\mathbf{a}^{j, i_j}}\\
&=& \bigodot_{j=1}^k F_j(\mathbf{x}),\\
\end{eqnarray*}
where $F_j$ is the tropical polynomial $F_j(\mathbf{x}) =  \bigoplus_{i_j\in [m_j]} h^{j, i_j}\odot \mathbf{x}^{\mathbf{a}^{j, i_j}}$.

Then, the vanishing locus $\cT(\widetilde{F})$ is obtained by taking the union of the vanishing loci $\cT(F_j)$ for $j\in [k]$ and the cells of $\cT(\widetilde{F})$ are the intersections of the cells of all $\cT(F_j)$, $ j\in [m]$. We say that these cells are \defn{induced} by the arrangement of tropical hypersurfaces $\left\{\cT(F_j) \, |\, j\in [m]\right\}$.
We have the following theorem as a consequence of Theorem~\ref{thm:tropical_dual}.

\begin{theorem}\label{thm:arrangement_tropical_hypersurfaces}
The tropical dual of the mixed subdivision $\widetilde{\cS}$ is the polyhedral complex of cells induced by the arrangement of tropical hypersurfaces $\left\{\cT(F_j) \, |\, j\in [m]\right\}$.
\end{theorem}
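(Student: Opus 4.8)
The plan is to deduce Theorem~\ref{thm:arrangement_tropical_hypersurfaces} directly from Theorem~\ref{thm:tropical_dual} applied to the Cayley-embedding subdivision $\widetilde{\cS}$, using the explicit factorization of $\widetilde{F}$ computed just above the statement. First I would recall that, by construction, $\widetilde{\cS}$ is the regular subdivision of $\widetilde{\cA}$ with admissible height function $\height$ given by $h^{(i_1,\ldots,i_k)}=\sum_{j=1}^k h^{j,i_j}$, so Theorem~\ref{thm:tropical_dual} applies verbatim: there is an inclusion-reversing bijection between the $k$-dimensional cells of $\widetilde{\cS}$ and the $(d-k)$-dimensional cells of $\cT(\widetilde{F})$. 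It therefore suffices to identify $\cT(\widetilde{F})$, together with its cell structure, with the polyhedral complex induced by the arrangement $\{\cT(F_j)\mid j\in[k]\}$.

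The computation preceding the theorem already shows $\widetilde{F}(\mathbf{x})=\bigodot_{j=1}^k F_j(\mathbf{x})$, i.e.\ $\widetilde{F}=\sum_{j=1}^k F_j$ in ordinary arithmetic since tropical multiplication $\odot$ is addition. From this I would argue the two standard facts about tropical products. For the support: the minimum defining $\widetilde{F}(\mathbf{x})$ is attained at a monomial $\mathbf{x}^{\sum_j \mathbf{a}^{j,i_j}}$ iff each factor $F_j(\mathbf{x})$ attains its own minimum at $\mathbf{x}^{\mathbf{a}^{j,i_j}}$; hence the minimum in $\widetilde{F}(\mathbf{x})$ is attained at least twice iff it is attained at least twice in at least one factor $F_j$, which gives $\cT(\widetilde{F})=\bigcup_{j=1}^k \cT(F_j)$ as sets. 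For the cell structure: the dome $\cD(\widetilde{F})$ has $y\le \widetilde{F}(\mathbf{x})=\min_j F_j(\mathbf{x})$, and the linear region of $\RR^d$ on which $\widetilde{F}$ is affine with slope $\sum_j \mathbf{a}^{j,i_j}$ is exactly the intersection $\bigcap_{j=1}^k \{$region of $\cT(F_j)$ of slope $\mathbf{a}^{j,i_j}\}$; intersecting these top-dimensional regions and their faces shows that the normal complex $NC(\widetilde{F})$ is the common refinement of the $NC(F_j)$, i.e.\ the complex induced by the arrangement. I would phrase this last point using $\widetilde{\cN}(F)=\conv\{(\mathbf{a},r)\}$ as in the proof of Lemma~\ref{lem:tropical_dual_interior}, noting that the Minkowski-sum structure $\widetilde{\cN}(\widetilde F)$ coming from $\widetilde F=\sum_j F_j$ matches the mixed-subdivision structure of $\widetilde{\cS}$.

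Combining, the bijection of Theorem~\ref{thm:tropical_dual} for $\widetilde{\cS}$ lands in $\cT(\widetilde{F})$, whose cells are precisely the cells induced by $\{\cT(F_j)\mid j\in[k]\}$, proving the claim. The main obstacle I anticipate is purely bookkeeping: carefully matching the face (normal-complex) structure on $\cT(\widetilde F)$ with the ``intersection of cells of the $\cT(F_j)$'' structure, since the latter a priori produces possibly non-full-dimensional overlaps that must be checked to coincide with the projected faces of $\cD(\widetilde F)$ — but this is exactly the content of Joswig's~\cite[Corollary~4.9]{Joswig21}, which one can cite, so the proof can be kept short. No new ideas beyond unwinding the definition of $\cT$ of a tropical product are needed.
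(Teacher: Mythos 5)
Your proposal is correct and follows essentially the same route as the paper: the paper establishes the factorization $\widetilde{F}=\bigodot_j F_j$ just before the statement, observes (as you do) that $\cT(\widetilde{F})=\bigcup_j\cT(F_j)$ with cells given by intersections of cells of the $\cT(F_j)$, and then declares the theorem a consequence of Theorem~\ref{thm:tropical_dual}, citing \cite[Corollary~4.9]{Joswig21} for the Cayley-trick dualization exactly as you suggest. The only minor remark is that the paper's ``$j\in[m]$'' in the theorem statement is a typo for ``$j\in[k]$,'' which you correctly use.
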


\subsection{The tropical realization}\label{subsec:realization_tropicalhypersurfaces}
Before applying this theorem to our mixed subdivision $\subdivCay$, we explain how to obtain admissible height functions.

\subsubsection{DKK admissible height functions}\label{subsec:height_functions}

Danilov et al.\ provided explicit constructions of admissible height functions for the DKK triangulation of a flow polytope (\cite[Lemma 2 \& 3]{DKK12}) that we can adapt to our particular graph $\Gs$. Note that since their definition of regular subdivisions is in terms of upper faces (linearity areas of a concave function) we change the sign from their $w$ to our $\height$. 
We slightly refine their results.

Let $(G, \preceq)$ be a framed graph. 
Let $P$ and $Q$ be a pair of non-coherent routes of $G$ that are in conflict at subroutes $[x_1, y_1]$, $\dots$, $[x_k, y_k]$, where $x_1\leq y_1 < x_2\leq y_2 \ldots <x_k\leq y_k$ and the subroutes $[x_i, y_i]$ are as long as possible. We define the route $P'$ as the concatenation of subroutes $Px_1$, $x_1Qx_2$, $x_2Px_3$, $\dots$, that we denote $Px_1Qx_2Px_3\dots$ and  $Q'$ the concatenation $Qx_1Px_2Qx_3\dots$. It is clear that $P+Q=P'+Q'$ (where $P+Q$ denotes the union of edges in $P$ and edges in $Q$) and $P'$ and $Q'$ are coherent. We call $P'$ and $Q'$ the \defn{resolvents} of $P$ and $Q$.

We say that there is a \defn{minimal conflict} between routes $P$ and $Q$ if they are in conflict at exactly one subroute $[v_i,v_j]$
and the edges of $P$ and $Q$ that end at $v_i$ are adjacent for the total order $\preceq_{\cI_i}$, (resp. the edges of $P$ and $Q$ that start at $v_j$ are adjacent for the total order $\preceq_{\cO_j}$). 

\begin{figure}[!ht]
    \centering
    \includegraphics[scale=1]{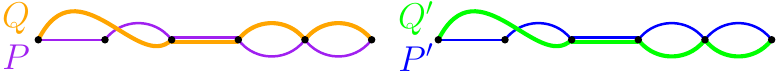}
    \caption{Two routes in conflict (left) and their resolvents (right). 
    }
    \label{fig:crossing_decrossing_routes}
\end{figure}

\begin{lemma}[adaptation of~{\cite[Lemma 2]{DKK12}}]\label{lem:DKKheight1}
Let $(G, \preceq)$ be a framed graph. 
A function $\height$ from the routes of $G$ to $\RR$ is an admissible height function of $\triangDKK$ if and only if: 

For any two non-coherent routes $P$ and $Q$ with resolvents $P'$ and $Q'$ we have:
\begin{equation}\label{eq:admissible_height}
\height(P)+\height(Q)>\height(P')+\height(Q').
\end{equation}
\end{lemma}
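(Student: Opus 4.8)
The plan is to derive this characterization by translating the statement into the language of height functions on the cone of flows, following Danilov--Karzanov--Koshevoy, and then checking that the condition on all pairs of non-coherent routes is equivalent to regularity of the DKK triangulation. First I would recall the general principle: a triangulation $\mathcal{T}$ of a point configuration is regular with admissible height function $\height$ precisely when, for every circuit (minimal affine dependence) among the points, the ``lower'' side picked out by $\height$ agrees with the simplices of $\mathcal{T}$. For flow polytopes, the relevant points are indicator vectors of routes, and the key observation of DKK is that the minimal circuits supported on vertices of $\fpol$ are exactly of the form $\mathbf{1}_P + \mathbf{1}_Q = \mathbf{1}_{P'} + \mathbf{1}_{Q'}$ where $P,Q$ is a pair of non-coherent routes and $P',Q'$ are their resolvents (this is where the hypothesis $P+Q=P'+Q'$ as edge-multisets is used, together with the fact that the DKK triangulation has all its simplices spanned by pairwise coherent routes).

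Then I would argue both directions. For necessity ($\Rightarrow$): if $\height$ is admissible, apply it to the circuit coming from a non-coherent pair $P,Q$. Since $P,Q$ are not coherent, the simplex containing both is not in $\triangDKK$, so $\{P,Q\}$ lies on the ``upper'' side of this circuit while $\{P',Q'\}$ (being coherent) lies on the ``lower'' side; regularity forces $\height(P)+\height(Q) > \height(P')+\height(Q')$. For sufficiency ($\Leftarrow$): suppose the inequality \eqref{eq:admissible_height} holds for every non-coherent pair. One shows that the lower envelope of the lifted point configuration projects exactly to $\triangDKK$. The cleanest route is to show that the cliques of $(G,\preceq)$ are exactly the subsets of routes on which no such ``flip inequality'' is violated, i.e.\ the coherent sets are precisely those that appear as faces of the regular subdivision induced by $\height$; since by Theorem~\ref{thm:DKKsimplices} the maximal cliques already triangulate $\fpol$, any height function whose induced subdivision refines into (or coincides with) the clique complex must produce exactly $\triangDKK$. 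Here one uses that a collection of routes fails to be a face of the $\height$-induced subdivision iff it contains a non-coherent pair whose resolvents witness a violation, which is guaranteed by the circuit analysis above.

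A subtle point I would want to handle carefully is the reduction from ``all circuits'' to ``only the circuits from non-coherent pairs'': in general a point configuration has many circuits, and regularity is a condition on all of them. The resolution is the standard fact (see DKK) that for the flow polytope with the chosen framing, it suffices to check the defining inequalities on a generating set of circuits, and the non-coherent-pair circuits generate in the relevant sense because the DKK triangulation is a \emph{flag} (clique) complex with respect to the coherence relation --- so non-faces are detected by non-faces of size two, i.e.\ non-coherent pairs. This reduction is precisely what makes Lemma~2 of DKK work, and my adaptation only needs to observe that the construction of resolvents and the conflict structure at subroutes $[v_i,v_j]$ goes through verbatim for $\Gs$ (the graph $\Gs$ has the required properties: single source $v_{-1}$, single sink $v_n$, edges oriented by index, and by Remark~\ref{rem:vertex_v-1} all of the Section~\ref{subsec:flowbackground} machinery applies).

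The main obstacle I anticipate is making the ``flag complex'' / circuit-generation argument fully rigorous rather than just citing it: one must verify that the only obstruction to a set of mutually-coherent routes forming a simplex of the regular subdivision is pairwise, and that the resolvent construction indeed produces the unique opposing pair in each such circuit (in particular that $P'$ and $Q'$ are genuinely coherent and distinct from $P,Q$ when $P,Q$ are in conflict). Since the paper is content to present this as an \emph{adaptation} of \cite[Lemma 2]{DKK12}, the honest proof is: recall the DKK statement on the flow cone $\cF_+$, note that it is phrased for framed graphs satisfying exactly the hypotheses $\Gs$ satisfies, intersect with the hyperplane $\lambda = 1$ as in the proof of Theorem~\ref{thm:DKKsimplices} to pass to $\fpol$, and observe the sign flip between their concave $w$ and our $\height$. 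I would therefore keep the write-up short, doing the circuit/resolvent bookkeeping explicitly enough to justify the equivalence and pointing to \cite{DKK12} for the part that is genuinely unchanged.
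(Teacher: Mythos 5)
Your proposal is correct and takes essentially the same approach as the paper: the necessity direction is proven by observing that the resolvents $P',Q'$ are coherent, hence form an edge of $\triangDKK$ lying on the lower envelope, so the midpoint $\tfrac{1}{2}(\mathbf{1}_P+\mathbf{1}_Q)=\tfrac{1}{2}(\mathbf{1}_{P'}+\mathbf{1}_{Q'})$ must be lifted strictly above, forcing $\height(P)+\height(Q)>\height(P')+\height(Q')$; the sufficiency direction is deferred to \cite[Lemma 2]{DKK12}, which gives it under the even weaker hypothesis where $P',Q'$ need only satisfy $P+Q=P'+Q'$. The circuit-theoretic framing you use is a slightly more elaborate packaging of the same lower-envelope argument, and the paper likewise points to DKK for the part that is genuinely unchanged.
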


\begin{proof}
The original statement of~\cite[Lemma 2]{DKK12} is that a weaker version of this condition, where $P'$ and $Q'$ are not necessarily the resolvents but can be any two routes that satisfy $P+Q=P'+Q'$, is sufficient. Let us show (with the same ideas as their proof) that it is also necessary and that we can even choose $P',Q'$ to be the resolvents of $P$ and $Q$. 
Suppose that $\height$ is an admissible height function of $\triangDKK$ and let $P, Q$ be two non-coherent routes of $(G, \preceq)$ with resolvents $P', Q'$.
Since they form a clique, $P'$ and $Q'$ are the vertices of an edge of the DKK triangulation of $\fpol$. The point $F=\frac{1}{2}(P+Q)=\frac{1}{2}(P'+Q')$ belongs to this edge. Since this edge as to be lifted to a lower face of the lift of the flow polytope $\fpol$ given by the height function $\height$, which is admissible for $\triangDKK$, we necessarily have $\height(P')+\height(Q')<\height(P)+\height(Q)$.
\end{proof}

This statement can be made slightly stronger by restricting condition~\ref{eq:admissible_height} to minimal conflicts.

\begin{lemma}\label{lem:DKKheight}
Let $(G, \preceq)$ be a framed graph. 
A function $\height$ from the routes of $G$ to $\RR$ is an admissible height function of $\triangDKK$ if and only if: 

For any minimal conflict between two routes $P$ and $Q$ with resolvents $P'$ and $Q'$, we have
\begin{equation}
\height(P)+\height(Q)>\height(P')+\height(Q').
\end{equation}
\end{lemma}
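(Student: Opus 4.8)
The direction ``$\Rightarrow$'' is immediate from Lemma~\ref{lem:DKKheight1}: a minimal conflict is in particular a conflict, so if $\height$ is admissible then the displayed inequality already holds for minimal conflicts. For ``$\Leftarrow$'', by Lemma~\ref{lem:DKKheight1} it is enough to deduce, from the minimal-conflict inequalities, that $\height(P)+\height(Q)>\height(P')+\height(Q')$ for \emph{every} pair of non-coherent routes $P,Q$ with resolvents $P',Q'$. I would encode a route $R$ by its indicator vector $\mathbf{1}_R$ in $\RR^{\cR(G)}$, where $\cR(G)$ is the set of routes of $G$, and set $w_{P,Q}:=\mathbf{1}_P+\mathbf{1}_Q-\mathbf{1}_{P'}-\mathbf{1}_{Q'}$, so the required inequality is $\Phi(P,Q):=\langle\height,w_{P,Q}\rangle>0$. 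The plan is to show that every $w_{P,Q}$ is a nonnegative integer combination of the vectors $w_{R,S}$ arising from minimal conflicts; then each $\Phi(P,Q)$ is a nonnegative combination of the (strictly positive, by hypothesis) quantities $\Phi(R,S)$ over minimal conflicts, hence positive. This I would prove by induction on the pair consisting of (the number of common subroutes at which $P$ and $Q$ are in conflict) and (the total number of edges lying strictly between the incoming edges of $P,Q$, resp. between their outgoing edges, over all conflict subroutes), ordered lexicographically; the minimal instance is exactly a minimal conflict.

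The core computation is the single-conflict case. Suppose $P,Q$ are in conflict only at $[v_i,v_j]$, write $M:=v_iPv_j$ for the common middle subroute, and let $e_P=g_0\prec_{\cI_i}g_1\prec_{\cI_i}\cdots\prec_{\cI_i}g_m=e_Q$ be the incoming edges at $v_i$ lying between those of $P$ and $Q$, while the outgoing edges at $v_j$ are $f_Q\prec_{\cO_j}f_P$ (the conflict forces these orders to be opposite). For $0\le\ell\le m$ let $X_\ell$ be the route whose incoming edge at $v_i$ is $g_\ell$, which then runs along $M$ and follows $Q$ after $v_j$, and let $Y_\ell$ be the analogous route following $P$ after $v_j$; here the portions before $v_i$ are taken minimally for $\preceq$. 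Then $X_0=P'$, $X_m=Q$, $Y_0=P$, $Y_m=Q'$, and for each $\ell$ the pair $\{Y_\ell,X_{\ell+1}\}$ is a \emph{minimal} conflict (its incoming edges $g_\ell,g_{\ell+1}$ are adjacent, its outgoing edges $f_P,f_Q$ are adjacent, and it has no other conflict subroute because the prefixes were chosen minimally and $P,Q$ are coherent past $v_j$) whose resolvents are $X_\ell$ and $Y_{\ell+1}$. Summing $w_{Y_\ell,X_{\ell+1}}=\mathbf{1}_{Y_\ell}+\mathbf{1}_{X_{\ell+1}}-\mathbf{1}_{X_\ell}-\mathbf{1}_{Y_{\ell+1}}$ over $\ell=0,\dots,m-1$ telescopes to $\mathbf{1}_{Y_0}-\mathbf{1}_{Y_m}+\mathbf{1}_{X_m}-\mathbf{1}_{X_0}=w_{P,Q}$. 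For more than one conflict subroute I would first reduce to this case by ``undoing'' one subroute at a time: decrossing $P,Q$ at one of their conflict subroutes yields a pair with one fewer conflict subroute and the same resolvents, and the corresponding change in the $w$-vectors is again written as a telescoping sum of minimal-conflict vectors by applying the same interpolation at that subroute.

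The main obstacle is making the interpolation airtight in general: one must ensure that each intermediate pair $\{Y_\ell,X_{\ell+1}\}$ really has a \emph{single} conflict subroute, i.e.\ that no spurious conflict hides inside the freshly chosen prefixes before $v_i$ (or suffixes after $v_j$), since otherwise the induction parameter would not decrease. For the graph $\Gs$ this is automatic: the ``in-between'' incoming edges $g_1,\dots,g_{m-1}$ at any vertex are source-edges, whose minimal prefixes are single edges out of the source and hence create no earlier conflict, and inner vertices have only two outgoing edges so there is never a gap on the outgoing side — so the single-conflict telescoping applies verbatim and the reduction to one conflict subroute is routine. For an arbitrary framed graph one instead has to check that the minimal partial route through an in-between edge is coherent with the relevant initial part of $P$ (resp.\ $Q$), or otherwise absorb any residual conflict into a preliminary step of the induction; this is the only delicate point, and it is exactly where the definition of minimal conflict in terms of adjacency of \emph{edges} (rather than of partial routes) is used. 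Granting this, every $w_{P,Q}$ is a nonnegative combination of minimal-conflict vectors, so the minimal-conflict inequalities force all the inequalities of Lemma~\ref{lem:DKKheight1}, and $\height$ is admissible for $\triangDKK$.
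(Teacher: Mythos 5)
Your overall plan—telescope through minimal conflicts for the single-conflict case, then reduce the multi-conflict case to it—is exactly the structure of the paper's proof, and the forward direction and the reduction-to-Lemma~\ref{lem:DKKheight1} step are correct. But there is a genuine gap in your single-conflict telescoping that makes the argument incomplete for a general framed graph (which is what the lemma asserts).

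You interpolate only on the incoming side at $v_i$, choosing intermediate edges $g_0,\dots,g_m$, while on the outgoing side at $v_j$ you write ``the outgoing edges at $v_j$ are $f_Q\prec_{\cO_j}f_P$'' and later assert ``its outgoing edges $f_P,f_Q$ are adjacent.'' Nothing forces $f_P$ and $f_Q$ to be adjacent in $\preceq_{\cO_j}$; in a general framed graph there may be edges of $\cO_j$ strictly between them, in which case none of the pairs $\{Y_\ell,X_{\ell+1}\}$ is a minimal conflict and the telescoping does not produce the desired nonnegative combination. The paper handles this by interpolating on \emph{both} sides—building chains $R_1\prec\cdots\prec R_k$ into $v_i$ and $S_1\prec\cdots\prec S_t$ out of $v_j$ and summing the $(k-1)(t-1)$ minimal-conflict inequalities $W_{x,y}$ over a two-dimensional grid, which telescopes in both indices. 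Your closing paragraph acknowledges that for $\Gs$ ``inner vertices have only two outgoing edges so there is never a gap on the outgoing side,'' but then declares the coherence of fresh prefixes to be ``the only delicate point'' for arbitrary graphs, silently dropping the outgoing-side gap you yourself just identified. Since the lemma is stated for all framed $(G,\preceq)$, this is a real omission, not merely a detail.

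Two smaller points. First, you take the intermediate prefixes ``minimally for $\preceq$'', which may differ from the actual prefixes $Pv_i$ and $Qv_i$; then $X_0,X_m,Y_0,Y_m$ need not coincide with $P',Q,P,Q'$, so the telescoping endpoints do not match. The paper avoids this by fixing $R_1=Pv_i$, $R_k=Qv_i$ and constructing the intermediate $R_j$'s backwards from $v_i$ so that once two of them meet at a vertex they coincide thereafter (which also guarantees coherence among them). Second, your multi-conflict reduction (``decross one subroute at a time, same resolvents'') is plausible but left at the level of intuition; the paper instead gives an explicit four-inequality argument using the pairs $(P,Px_1Q)$, $(Q,Qx_1P)$, $(P,Qx_1P')$, $(Q,Px_1Q')$ and sums them, which is cleaner and verifiably correct. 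With the double interpolation and the careful choice of intermediate prefixes filled in, your argument would align with the paper's; as written it proves the lemma only under extra hypotheses that happen to hold for $\Gs$.
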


\begin{proof}
Let $\height$ be a function from the routes of $G$ to $\RR$ such that for any minimal conflict between two routes $P$ and $Q$ with resolvents $P'$ and $Q'$, we have
$\height(P)+\height(Q)>\height(P')+\height(Q')$. 
It follows from Lemma~\ref{lem:DKKheight1} that we only need to show that for any two non-coherent routes $P$ and $Q$, there exist routes $P'$ and $Q'$ such that $P+Q=P'+Q'$ and $\height(P)+\height(Q)>\height(P')+\height(Q')$. 

First, suppose that $P$ and $Q$ are conflicting at exactly one subroute $[v_i,v_j]$. 
We can build partial routes $R_1=Pv_i, R_2, \ldots, R_k=Qv_i$ that end at $v_i$ and  such that $R_1\prec R_2 \prec \ldots \prec R_k$, their ending edges are adjacent in $\preceq_{\cI_i}$ and they are not in conflict. This can be done by building these partial routes from right to left: the ending edge is determined and we can choose the other ones as we want but if we arrive at a vertex common to a previously built partial route we choose the same edges as in this partial route. Similarly we can build partial routes $S_1=v_jQ, S_2, \ldots, S_t=v_jP$ that start at $v_j$ and such that $S_1\prec S_2 \prec \ldots \prec S_t$, their starting edges are adjacent in $\preceq_{\cO_j}$ and they are not in conflict. 
Then for any $x\in[k-1]$, $y\in [t-1]$ the routes $R_xv_iPv_jS_{y+1}$ and $R_{x+1}v_iPv_jS_{y}$ are in minimal conflict, with resolvents $R_xv_iPv_jS_{y}$ and $R_{x+1}v_iPv_jS_{y+1}$. Hence the condition on $\height$ implies the following inequality:
\begin{equation}\tag{$W_{x,y}$}
\height(R_xv_iPv_jS_{y+1})+\height(R_{x+1}v_iPv_jS_{y}) > \height(R_xv_iPv_jS_{y}) +\height(R_{x+1}v_iPv_jS_{y+1}).
\end{equation}
    When we sum all these inequalities for all $x\in [k-1]$, $y\in[t-1]$ we see that all terms of the form $\height(R_{x}v_iPv_jS_{y})$ are cancelled out by pairs, except for $(x,y)\in \{(1,1), (k,t), (1,t), (k,1)\}$. We end up with:
\begin{equation*}
\height(R_1v_iPv_jS_t)+\height(R_kv_iPv_jS_{1}) > \height(R_1v_iPv_jS_{1}) +\height(R_{k}v_iPv_jS_{t}),
\end{equation*} 
which is exactly $\height(P)+\height(Q)>\height(P')+\height(Q')$, where $P'$ and $Q'$ are the resolvents of $P, Q$.

Now, we can finish the proof by induction on the number of conflicts. Suppose that $\height$ satisfies that for any pair of non-coherent routes $P$ and $Q$ with at most $n$ conflicts their resolvents $P', Q'$ satisfy $\height(P)+\height(Q)>\height(P')+\height(Q')$.
Let $P$ and $Q$ be non-coherent routes with $n+1$ conflicts at subroutes $[x_1, y_1], \ldots, [x_{n+1}, y_{n+1}]$. 
Since the routes $P$ and $Px_1Q$ have $n$ conflicts and their resolvents are $Px_1P'=P'$ and $Px_1Q'$, the induction hypothesis gives us:
\begin{equation*}
    \height(P)+\height(Px_1Q) > \height(P')+\height(Px_1Q').
\end{equation*}
Similarly we have:
\begin{equation*}
    \height(Q)+\height(Qx_1P) > \height(Qx_1P')+\height(Q').
\end{equation*}
Moreover, the routes $P$ and $Qx_1P'$ only have one conflict and their resolvents are $P'$ and $Qx_1P$, so we have
\begin{equation*}
    \height(P)+\height(Qx_1P') > \height(P')+\height(Qx_1P),
\end{equation*}
and similarly:
\begin{equation*}
    \height(Q)+\height(Px_1Q') > \height(Px_1Q)+\height(Q').
\end{equation*}
When we sum up these four inequalities, some terms cancel out and we recover:
\begin{equation*}
    \height(P)+\height(Q) > \height(P')+\height(Q'). \qedhere
\end{equation*}
\end{proof}

Recall that the routes of $\Gs$ are denoted $\route(k, t, \delta)$ as in the discussion after Definition~\ref{def:Gs}.
Adapting {\cite[Lemma 3]{DKK12}} to our context gives us the following lemma.

\begin{lemma}\label{lem:epsilonheight}
Let $\s$ be a composition and $\varepsilon>0$ a sufficiently small real number. 
Consider $\height_{\varepsilon}$ to be the function that associates to a route $\route=\route(k, t_k, \delta)$ of $\Gs$ the quantity
\begin{equation}\label{eq:epsilonheight}
\height_{\varepsilon}(\route)=-\sum_{k\geq c > a \geq 1} \varepsilon^{c-a} (t_c+\delta_a)^2,
\end{equation}
where $t_c=
\begin{cases}
0 &\text{ if } \delta_c=0,\\
s_c &\text{ if } \delta_c=1,
\end{cases}$ for all $c\in [k-1]$.

Then $\height_{\varepsilon}$ is an admissible height function for $\triangDKK[\Gs]$.

\end{lemma}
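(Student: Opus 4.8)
The plan is to verify the criterion of Lemma~\ref{lem:DKKheight}: we must check that for every \emph{minimal} conflict between two routes $P=\route(k,t_k,\delta)$ and $Q=\route(k',t'_{k'},\delta')$ of $\Gs$, with resolvents $P'$ and $Q'$, we have $\height_\varepsilon(P)+\height_\varepsilon(Q)>\height_\varepsilon(P')+\height_\varepsilon(Q')$. The first step is to understand what a minimal conflict looks like in $\Gs$. Since all routes of $\Gs$ pass through a contiguous suffix of vertices $v_{n+1-k}, v_{n-k}, \ldots, v_0$ after entering via a source-edge, two routes $P,Q$ share a common subroute $[v_i, v_j]$ exactly on such a contiguous stretch, and conflict there means that the relative order of their incoming edges at $v_i$ is opposite to that of their outgoing edges at $v_j$. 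Because at each inner vertex $v_{n+1-a}$ the outgoing edges are only the bump $e^{a-1}_0$ and dip $e^{a-1}_{s_{a-1}}$ (two of them), minimality of the conflict at $[v_i,v_j]$ forces the two routes to differ in a very controlled way: they agree on a prefix up to $v_i$ except for adjacent incoming edges, run together from $v_i$ to $v_j$, then split into the two outgoing edges at $v_j$. Concretely, I expect a minimal conflict to be between two routes that, written as $\route(k,t_k,\delta)$ and $\route(k,t'_k,\delta')$ (they must reach the same entry vertex, or one enters ``inside'' the common stretch of the other), have $\delta$ and $\delta'$ differing in exactly two coordinates, say positions $a<b$, where the conflict happens and where the resolvents are obtained by swapping those two coordinate-pairs — this is precisely the ``crossing/decrossing'' picture of Figure~\ref{fig:crossing_decrossing_routes}.

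The second step is the algebraic heart: once the combinatorial shape of $P, Q, P', Q'$ is pinned down, the inequality \eqref{eq:admissible_height} becomes a statement about the quadratic form $\sum_{c>a}\varepsilon^{c-a}(t_c+\delta_a)^2$. Since $P+Q=P'+Q'$ as multisets of edges, the \emph{linear} parts of the heights agree, so the difference $\bigl(\height_\varepsilon(P)+\height_\varepsilon(Q)\bigr)-\bigl(\height_\varepsilon(P')+\height_\varepsilon(Q')\bigr)$ is a finite $\ZZ$-linear combination of powers $\varepsilon^{c-a}$ coming only from the finitely many pairs $(a,c)$ at which the four routes disagree. The plan is to identify the \emph{lowest} power of $\varepsilon$ appearing with a nonzero coefficient and show that this coefficient is strictly positive; then for $\varepsilon>0$ small enough the whole sum is positive. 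This mirrors exactly the proof of \cite[Lemma 3]{DKK12}: the point of the $\varepsilon^{c-a}$ weighting is that contributions from ``closer'' pairs $(a,c)$ (smaller $c-a$) dominate, and the minimal conflict is designed so that the dominant term is a genuine strict-convexity term of the form $2(x-y)^2>0$ when the relevant coordinates $x\ne y$ differ.

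The third step is bookkeeping: carefully compute, for the minimal-conflict configuration, which pairs $(a,c)$ contribute and with what sign. If the conflict is localized at positions $a<b$ in the $\delta$-vectors, I expect the smallest relevant exponent to be $b-a$ (or possibly $1$, if $b=a+1$), with the coefficient of $\varepsilon^{b-a}$ equal to something like $2(\delta_b s_b - \delta'_b s_b + \cdots)^2$ or more precisely $2\bigl((t_b+\delta_a)-(t'_b+\delta_a)\bigr)^2$-type expression that is strictly positive because $\delta,\delta'$ genuinely differ at position $b$; all terms with exponent strictly less than $b-a$ must cancel because below that scale the routes are identical. One has to be slightly careful that the entry index $k$ may differ between the two routes, but in a minimal conflict the ``outer'' parts beyond the common stretch are forced to coincide between $P$ and one of $P',Q'$, so they contribute nothing to the difference. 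Having isolated a strictly positive leading coefficient, choosing $\varepsilon$ below the (finitely many, hence bounded) thresholds coming from all possible minimal-conflict shapes — note there are only finitely many since $k\le n+1$ — finishes the proof via Lemma~\ref{lem:DKKheight}.

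The main obstacle I anticipate is the careful case analysis of exactly which minimal conflicts occur in $\Gs$ and the precise form of their resolvents — in particular handling the cases where the two routes enter via different source-edges (different $k$) versus the same vertex, and checking that in every case the leading $\varepsilon$-coefficient is a sum of squares times $2$ rather than something with an ambiguous sign. Once the ``$\delta,\delta'$ differ in exactly two places and the resolvents swap them'' picture is established, the inequality itself is a short computation with the quadratic $(t_c+\delta_a)^2$, so the difficulty is entirely in the combinatorial reduction, not the algebra.
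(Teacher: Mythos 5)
Your plan is structurally the right one — invoke Lemma~\ref{lem:DKKheight}, classify minimal conflicts in $\Gs$, compute $H:=\height_\varepsilon(P)+\height_\varepsilon(Q)-\height_\varepsilon(P')-\height_\varepsilon(Q')$ and show the leading $\varepsilon$-term is positive. This is exactly what the paper does, so the strategy is sound. But the proposal stops short of a proof at the very step you flag as the main obstacle, and the one concrete claim you make about that step is wrong.

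The issue is the characterization of minimal conflicts. Two routes of $\Gs$ in minimal conflict do \emph{not} have $\delta,\delta'$ differing in exactly two coordinates. A minimal conflict is at a single common subroute $[v_{n+1-y},v_{n-x}]$ with adjacent incoming edges at $v_{n+1-y}$; since each inner vertex of $\Gs$ has incoming edges $e^y_0\prec e^y_1\prec\cdots\prec e^y_{s_y}$ (bump, then $s_y-1$ source-edges, then dip), "adjacent" yields \emph{three distinct cases}: (i) two source-edges $e^y_t,e^y_{t+1}$ with $k=k'=y$; (ii) bump $e^y_0$ versus source-edge $e^y_1$, so $k>k'=y$; (iii) source-edge $e^y_{s_y-1}$ versus dip $e^y_{s_y}$, so $k'>k=y$. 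Moreover $\delta_a=\delta'_a$ is forced only for $x<a<y$, and at $a=x$ one has $\delta_x=1,\delta'_x=0$; for $a<x$ the coordinates may well differ (the "no other conflict" condition only forces a one-sided inequality $\delta_a\geq\delta'_a$), and in cases (ii) and (iii) there are additional contributing pairs $(a,c)$ with $c>y$ from the longer route. So the picture "two swapped coordinates, quadratic gives $2(\cdot)^2$" is not the right one: after the square-terms cancel (as you correctly observe, using $P+Q=P'+Q'$ edgewise), the surviving cross-terms form a sum over the whole triangle $\{(a,c): x\geq a,\ c\geq y\}$, and bounding that sum — not reading off a single positive square — is where the real work is. In particular cases (ii) and (iii) do \emph{not} reduce to a single dominant term of the form $2\varepsilon^{y-x}$ alone; the paper has to estimate a tail $\sum_{c>y,\,a\leq x}\varepsilon^{c-a}(2s_c+1)$ and choose $\varepsilon<\frac{1}{n(1+\sum_{j\ge 2}(2s_j+1))}$ to make the leading term win. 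Without carrying out this case analysis and the accompanying estimate, the argument is a plan rather than a proof; and the one substantive combinatorial assertion in the plan (the two-coordinate-swap shape of minimal conflicts) is false, which is precisely what a complete proof must get right.
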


\begin{proposition}
    In Lemma~\ref{lem:epsilonheight}, it is enough to take $\varepsilon<\frac{1}{n(1+\sum_{j=2}^n (2s_j+1))}$.
\end{proposition}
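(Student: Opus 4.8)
The plan is to redo the proof of Lemma~\ref{lem:epsilonheight} while tracking constants. By Lemma~\ref{lem:DKKheight} it suffices to check, for every minimal conflict between routes $P,Q$ of $\Gs$ with resolvents $P',Q'$, that $\height_{\varepsilon}(P)+\height_{\varepsilon}(Q)>\height_{\varepsilon}(P')+\height_{\varepsilon}(Q')$; writing $\height_{\varepsilon}=-G$ with $G(\route(k,t,\delta))=\sum_{1\le p<c\le k}\varepsilon^{c-p}(t_c+\delta_p)^2$ (where $t_k$ is the source index and $t_c=\delta_c s_c$ for $c<k$, as in Lemma~\ref{lem:epsilonheight}), this reads $G(P)+G(Q)<G(P')+G(Q')$. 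First I would establish the shape of a minimal conflict in $\Gs$: its unique conflicting subroute is $[v_i,v_j]$ with $v_i=v_{n+1-b}$, $v_j=v_{n+1-a}$ and $2\le a\le b\le n$; the routes $P$ and $Q$ use the same edges on the positions $\{a,\dots,b-1\}$ (their common \emph{middle}); they enter $v_i$ through two consecutive incoming edges and leave $v_j$ through the bump and the dip in the crossing way. After swapping $P$ and $Q$ if needed, $P$ takes the dip out of $v_j$ and $Q$ the bump, so $\delta^P_{a-1}=1$, $\delta^Q_{a-1}=0$, and the conflict condition forces $t^P_b=\ell<\ell+1=t^Q_b$ for some $\ell$ (here one uses that the value $t^R_b$ of a route $R$ at position $b$ is precisely the index of the incoming edge of $R$ at $v_i$, whether or not $R$ starts there).

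Next I would split the positions of a route $R$ into a \emph{head} $\{b,\dots,k_R\}$, the \emph{middle} $\{a,\dots,b-1\}$, and a \emph{tail} $\{1,\dots,a-1\}$, and observe that $P'$ carries the head of $P$ and the tail of $Q$, $Q'$ the head of $Q$ and the tail of $P$, and all four routes share the same middle (in particular $k_{P'}=k_P$ and $k_{Q'}=k_Q$). Splitting the double sum defining $G$ by the region of each of the two indices, one checks that of the six kinds of terms exactly one, the ``tail, head'' kind, survives in $G(P)+G(Q)-G(P')-G(Q')$, leaving
\[
G(P)+G(Q)-G(P')-G(Q')=\sum_{p=1}^{a-1}\Bigl(\sum_{c=b}^{k_P}\varepsilon^{c-p}\bigl[(t^P_c+\delta^P_p)^2-(t^P_c+\delta^Q_p)^2\bigr]+\sum_{c=b}^{k_Q}\varepsilon^{c-p}\bigl[(t^Q_c+\delta^Q_p)^2-(t^Q_c+\delta^P_p)^2\bigr]\Bigr).
\]
Its term of lowest order is the one at $(p,c)=(a-1,b)$: since $\delta^P_{a-1}=1$, $\delta^Q_{a-1}=0$ and $t^P_b-t^Q_b=-1$, that coefficient equals exactly $-2$, contributing $-2\varepsilon^{b-a+1}$, while every other pair satisfies $c-p\ge b-a+2$.

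Finally I would bound the sum $R$ of the remaining terms. A term vanishes unless $\delta^P_p\ne\delta^Q_p$, and then its bracket has absolute value $2t_c+1$ for the relevant route, hence at most $2s_c+1$ when $c\le n$ and at most $3$ when $c=n+1$ (and $0$ if $k_P=k_Q=n+1$). Summing the geometric series in $p$ and then in $c$, and using $2\le b\le n$ and $k_P,k_Q\le n+1$, gives $|R|<\dfrac{\varepsilon^{b-a+2}}{1-\varepsilon}\Bigl(3+\sum_{c=2}^{n}(2s_c+1)\Bigr)$, so $-2\varepsilon^{b-a+1}+R<0$ as soon as $\varepsilon\bigl(5+\sum_{c=2}^{n}(2s_c+1)\bigr)<2$. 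One then checks that $\dfrac{1}{n\bigl(1+\sum_{j=2}^{n}(2s_j+1)\bigr)}\le\dfrac{2}{\,5+\sum_{j=2}^{n}(2s_j+1)\,}$ for $n\ge2$ --- equivalently $(2n-1)\sum_{j=2}^{n}(2s_j+1)\ge 5-2n$, which holds since the left side is at least $9$ and the right side at most $1$ --- while for $n=1$ there is no conflicting pair of routes at all; this yields the stated bound. I expect the main obstacle to be the second step: setting up the head/middle/tail decomposition so that precisely the tail--head part of $G(P)+G(Q)-G(P')-G(Q')$ survives, together with the case analysis of the brackets $|(t_c+\delta^P_p)^2-(t_c+\delta^Q_p)^2|$ underlying the bound $2s_c+1$. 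The final comparison of constants is routine.
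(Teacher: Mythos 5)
Your proposal is correct and reaches the stated bound, but it packages the argument quite differently from the paper. The paper's proof of this proposition uses Lemma~\ref{lem:DKKheight} in the same way, but then splits minimal conflicts into three explicit cases according to whether $P$ and $Q$ (i) start at the same vertex via consecutive source edges, (ii) $P$ passes through $Q$'s starting vertex via the bump, or (iii) $Q$ passes through $P$'s starting vertex via the dip, and in each case manipulates $\height_\varepsilon(P)+\height_\varepsilon(Q)-\height_\varepsilon(P')-\height_\varepsilon(Q')$ directly, arriving at a per-case lower bound of the form $2\varepsilon^{y-x}\bigl(1-\varepsilon(\cdots)\bigr)$. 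You instead decompose the positions of every route into tail $\{1,\dots,a-1\}$, middle $\{a,\dots,b-1\}$ and head $\{b,\dots,k_R\}$, observe that $P'$ and $Q'$ are obtained by swapping tails, verify that only the tail--head block of the double sum survives, extract the leading coefficient $-2\varepsilon^{b-a+1}$ at $(p,c)=(a-1,b)$, and bound all the remaining tail--head terms at once by a single geometric series; the per-position bracket bound $2s_c+1$ (and $3$ at $c=n+1$) is justified since, at positions where both $P$ and $Q$ contribute, the two brackets have opposite signs and partially cancel to $2|t^P_c-t^Q_c|\le 2s_c$. Your unified treatment buys two things: it avoids the case distinction entirely, and it transparently covers the boundary situation $s_b=1$, where $P$ and $Q$ both pass \emph{through} $v_{n+1-b}$ (entering via bump and dip, which are adjacent precisely when $s_b=1$) and neither starts there, a situation not captured by any of the three cases the paper lists. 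Your final numerical comparison $\frac{1}{n(1+S)}\le\frac{2}{5+S}$ (with $S=\sum_{j=2}^n(2s_j+1)$), reduced to $(2n-1)S\ge 5-2n$ and checked for $n\ge2$, together with the observation that there are no conflicting pairs when $n=1$, is correct and routine. The only thing I would flag as a presentational gap is that you should state explicitly the cancellation $2|t^P_c-t^Q_c|\le 2s_c$ when both routes pass through position $c$, since a naive sum of the two individual bracket bounds would give $2(2s_c+1)$; as it happens, even that cruder bound would still yield the proposition, so this does not affect the correctness of your conclusion.
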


\begin{proof}
Let $P=\route(k,t,\delta)$ and $Q=\route(k', t', \delta')$ be two routes of $\Gs$ that are in minimal conflict at a common route $[v_{n+1-y}, v_{n-x}]$. We can suppose that $Pv_{n+1-y}\prec Qv_{n+1-y}$. Note that this implies that $\delta_x=1$ and $\delta'_x=0$. We deal separately with the three following cases (which are the only possible ones for a minimal conflict) and compute the quantity $H:=\height_{\varepsilon}(P)+\height_{\varepsilon}(Q)-\height_{\varepsilon}(P')-\height_{\varepsilon}(Q')$. 

\textbf{Case 1: $k=k'=y$, $t\in [s_y-2]$, $t'=t+1$.} 

In the computation of $\height_{\varepsilon}(P)+\height_{\varepsilon}(Q)-\height_{\varepsilon}(P')-\height_{\varepsilon}(Q')$, we see that all pairs $(a,c)$ in formula~\ref{eq:epsilonheight} cancel out either with $\height_{\varepsilon}(P)-\height_{\varepsilon}(Q')$ or $\height_{\varepsilon}(Q)-\height_{\varepsilon}(P')$, except for $(a,c)=(x,y)$.  Thus we have:
\begin{align*}
    H &= \height_{\varepsilon}(P)+\height_{\varepsilon}(Q)-\height_{\varepsilon}(P')-\height_{\varepsilon}(Q')\\
    & = -\varepsilon^{y-x}\Big((t+1)^2+((t+1)+0)^2 - (t+0)^2 - ((t+1)+1)^2 \Big)\\
    &= 2\ \varepsilon^{y-x} >0.
\end{align*}

\textbf{Case 2: $k>k'=y$, $\delta_y=0$, $t'=1$.}

Here the pairs that do not cancel out are all pairs $(a,c)$ for $k\geq c\geq y$ and $x\geq a$.
Then we have:
\begin{align*}
    H &= 
     - \sum_{x\geq a} \varepsilon^{y-a} \Big( \delta_a^2 +(1+\delta'_a)^2 - {\delta'_a}^{2} - (1+\delta_a)^2\Big)
     - \sum_{k\geq c > y, \ x\geq a} \varepsilon^{c-a} \Big( (t_c+\delta_a)^2-(t_c+\delta'_a)^2 \Big)\\
     &= 2\ \varepsilon^{y-x}
     - 2\ \sum_{x> a} \varepsilon^{y-a} ( {\delta'_a} - \delta_a)
     - \sum_{k\geq c > y, \ x\geq a} \varepsilon^{c-a} \Big( 2\ t_c(\delta_a-\delta'_a)+\delta_a^2-{\delta'_a}^{2} \Big)\\
     &\geq 2\ \varepsilon^{y-x}
     - 2\ \sum_{x> a} \varepsilon^{y-a}
     - \sum_{k\geq c > y, \ x\geq a} \varepsilon^{c-a} (2 s_c+1)\\
     &\geq 2\ \varepsilon^{y-x}
     - 2\ \varepsilon^{y-x+1}\Big(x-1 + x \sum_{k\geq c > y} (2 s_c+1) \Big)\\
     &\geq 2\ \varepsilon^{y-x}\Big(1 - \varepsilon\Big(y-2+ (y-1) \sum_{k\geq c > y} (2 s_c+1)\Big)\Big)
\end{align*}

Then, we see that if $\varepsilon<\frac{1}{n(1+\sum_{j=2}^n (2s_j+1))}$, then for any $y\in [2,n]$ we have $$1 - \varepsilon\Big(y-2+ (y-1) \sum_{k\geq c > y} (2 s_c+1)\Big)>0,$$
thus $H>0$.

\textbf{Case 3: $k'>k=y$, $t=s_y-1, \delta'_y=1$.}
    Here again, the pairs that do not cancel out are all pairs $(a,c)$ for $k\geq c\geq y$ and $x\geq a$ and we have:
    \begin{align*}
    H &= 
     - \sum_{x\geq a} \varepsilon^{y-a} \Big( (s_y-1+\delta_a)^2 +(s_y+\delta'_a)^2 - (s_y-1+\delta'_a)^2 - (s_y+\delta_a)^2\Big)\\
     &\phantom{=}
     - \sum_{k\geq c > y, \ x\geq a} \varepsilon^{c-a} \Big( (t'_c+\delta'_a)^2-(t'_c+\delta_a)^2 \Big)\\
     &= 2\ \varepsilon^{y-x} +2\sum_{x>a} \varepsilon^{y-a}(\delta'_a-\delta_a)
     - \sum_{k\geq c > y, \ x\geq a} \varepsilon^{c-a} \Big(2\ t'_c(\delta'_a - \delta_a)+{\delta'_a}^2-\delta_a^2\Big),
     \end{align*}
and the rest of the computations are very similar to the Case 2.
\end{proof}

\subsubsection{Coordinates for the $\s$-permutahedron}

For the remainder of this section $\s$ is assumed to be a composition and $\height$ an admissible height function for $\triangDKK[\Gs]$. 

Since we defined in Section~\ref{subsec:realization_mixedsubdiv} the mixed subdivision $\subdivCay$ from the regular triangulation $\triangDKK[\Gs]$ via the Cayley trick, the following theorem directly follows from Theorem~\ref{thm:arrangement_tropical_hypersurfaces}.

\begin{theorem}\label{thm:arr_trop_hypersurfaces_s-perm}
The tropical dual of the mixed subdivision $\subdivCay$ is the polyhedral complex of cells induced by the arrangement of tropical hypersurfaces 
$$\troparr:=\left\{\cT(F^k_{t}) \, |\, k\in [2, n+1], \, t\in [s_k-1]\right\},$$ where $F^k_{t}(\mathbf{x})=\bigoplus_{} \height(\route(k, t, \delta)) \odot \mathbf{x}^{\delta} = \min \left\{\height(\route(k, t, \delta)) + \sum_{i\in [k-1]} \delta_i x_i \, |\, \delta\in\{0,1\}^{k-1}  \right\}$.
\end{theorem}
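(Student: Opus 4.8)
The plan is to simply instantiate the general machinery of Section~\ref{sec:background_tropicalization} in the specific case of the Cayley embedding that produces $\subdivCay$. In Section~\ref{subsec:realization_mixedsubdiv} we established that $\fpol[\Gs]$, after projecting onto $\RR^p\times\RR^n$, is exactly the Cayley embedding $\cC(P_1,\ldots,P_k)$ where the summand polytopes are the hypercube $\square_n$ together with $s_k-1$ copies of the hypercube $\square_{k-1}$ for each $k\in[2,n]$; the vertex of the $j$-th summand indexed by $\delta$ is the indicator vector of the route $\route(k,t,\delta)$ for the appropriate $(k,t)$. First I would fix an admissible height function $\height$ for $\triangDKK[\Gs]$ (which exists by Lemma~\ref{lem:epsilonheight}) and observe that, by the construction of $\subdivCay$ via the Cayley trick (intersecting the lifted triangulation with $\{\frac1p\}^p\times\RR^n$), the induced height on the Minkowski summand indexed by $(k,t)$ is precisely $\delta\mapsto \height(\route(k,t,\delta))$.

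Next I would write down the tropical polynomial attached to each summand: for the summand indexed by $(k,t)$, its vertices are $\{\delta\}\subseteq\{0,1\}^{k-1}\times 0^{n-k+1}\subseteq\RR^n$, so the associated tropical polynomial is exactly
\[
F^k_t(\mathbf{x}) \;=\; \min\left\{\height(\route(k,t,\delta)) + \textstyle\sum_{i\in[k-1]}\delta_i x_i \;\middle|\; \delta\in\{0,1\}^{k-1}\right\},
\]
matching the statement. The contribution of the single $\square_n$ summand is the polynomial $F^{n+1}_1$ in the notation used (recall $s_{n+1}=2$, so $t$ ranges over $[s_{n+1}-1]=\{1\}$), consistent with the index set $k\in[2,n+1]$, $t\in[s_k-1]$. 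Then I would invoke Theorem~\ref{thm:arrangement_tropical_hypersurfaces} verbatim: it states that the tropical dual of the mixed subdivision obtained by the Cayley trick from a regular subdivision of a Cayley embedding is the polyhedral complex of cells induced by the arrangement of the tropical hypersurfaces $\cT(F_j)$ of the individual summands. Applying this with $F_j = F^k_t$ gives exactly the claimed description of the tropical dual of $\subdivCay$ as the complex induced by $\troparr$.

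The only genuinely substantive point — and hence the main obstacle, though a mild one — is the bookkeeping that identifies the height induced on each Cayley summand by the Cayley trick with the restriction of $\height$ to the corresponding family of routes, i.e.\ checking that the ``slicing'' of $\triangDKK[\Gs]$ at $\{\frac1p\}^p\times\RR^n$ really carries the lift coming from $\height$ to the sum-of-heights lift of $\widetilde{\cS}$ described before Theorem~\ref{thm:arrangement_tropical_hypersurfaces}. This was already noted in general after Proposition~\ref{prop:cayleytrick} (admissible height functions transport through the Cayley trick), so here it only remains to record that each vertex of $\fpol[\Gs]$ is the indicator of a unique route $\route(k,t,\delta)$ and that these vertices are distributed among the Cayley summands exactly as the index pairs $(k,t)$ dictate — which is precisely the content of the paragraph preceding the definition of $\subdivCay$. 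Once this identification is in place, the theorem is an immediate corollary of Theorem~\ref{thm:arrangement_tropical_hypersurfaces}, so the proof is short.
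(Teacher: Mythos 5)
Your proof is correct and takes essentially the same approach as the paper: the paper simply states that the result ``directly follows from Theorem~\ref{thm:arrangement_tropical_hypersurfaces}'' since $\subdivCay$ was constructed from the regular triangulation $\triangDKK[\Gs]$ via the Cayley trick. Your writeup just spells out the same instantiation (identifying the Cayley summands with the index pairs $(k,t)$ and the induced heights with $\height$ restricted to routes $\route(k,t,\cdot)$) in more detail than the paper does.
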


\begin{definition}\label{def:s-perm_geom}
We denote by \defn{$\spermgeom$} the polyhedral complex of bounded cells induced by the arrangement $\troparr$.
\end{definition}

\begin{figure}
    \centering
    \includegraphics[scale=0.4]{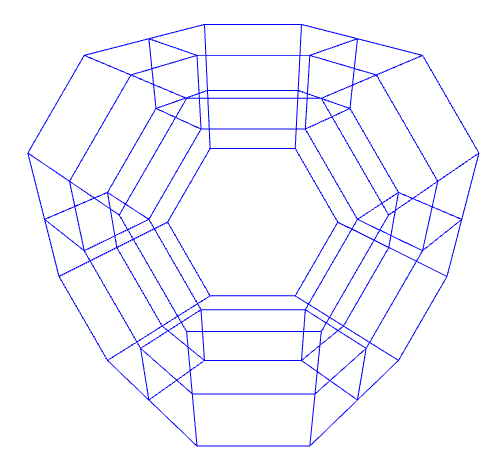}
    \includegraphics[scale=0.4]{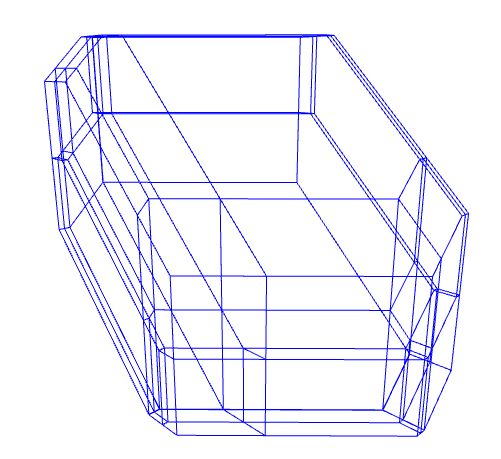}
    \caption{The $(1,1,1,2)$-permutahedron (left) and the $(1,1,1,2)$-permutahedron (right) via their tropical realization.}
    \label{fig:tropical_realizations}
\end{figure}

\begin{theorem}\label{thm:bij_trop_arr}
The face poset of the geometric polyhedral complex $\spermgeom$ is isomorphic to the face poset of the combinatorial $\s$-permutahedron $\spermcombi$.
\end{theorem}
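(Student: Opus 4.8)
The plan is to assemble this theorem from the chain of isomorphisms already established in the excerpt, so the proof should be short. First I would recall that Theorem~\ref{thm:bij_interiorfacesDKK_facessperm} gives a poset isomorphism between the face poset of $\spermcombi$ and the interior simplices of $\triangDKK[\Gs]$ ordered by reverse inclusion, via $(w,A)\mapsto \Delta_{(w,A)}$. Then Theorem~\ref{thm:bij_mixed_subdiv} (equivalently, the Cayley trick of Proposition~\ref{prop:cayleytrick} together with the remark that the Cayley trick is a poset isomorphism on interior faces) transports this to the interior cells of the mixed subdivision $\subdivCay$, again ordered by reverse inclusion. Finally, Theorem~\ref{thm:arr_trop_hypersurfaces_s-perm} identifies the tropical dual of $\subdivCay$ with the polyhedral complex induced by the arrangement $\troparr$, and Theorem~\ref{thm:tropical_dual} together with Lemma~\ref{lem:tropical_dual_interior} says that tropical dualization is an inclusion-reversing bijection that restricts to a bijection between the interior cells of $\subdivCay$ and the bounded cells of $\cT(\widetilde F)$, i.e.\ the cells of $\spermgeom$.

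The key point is that composing these maps turns \emph{two} order-reversals into an order-\emph{preserving} map: the face poset of $\spermcombi$ is anti-isomorphic to the interior cells of $\subdivCay$, which in turn is anti-isomorphic to the bounded cells of $\spermgeom$; the two anti-isomorphisms compose to a genuine poset isomorphism. So the steps, in order, are: (1) invoke Theorem~\ref{thm:bij_interiorfacesDKK_facessperm}; (2) invoke Theorem~\ref{thm:bij_mixed_subdiv}, noting that the relevant admissible height function exists by Lemma~\ref{lem:epsilonheight} (and the explicit bound in the following proposition), so that the Cayley trick of Proposition~\ref{prop:cayleytrick} applies to the regular triangulation $\triangDKK[\Gs]$; (3) invoke Theorem~\ref{thm:arr_trop_hypersurfaces_s-perm} and Lemma~\ref{lem:tropical_dual_interior} to pass to bounded cells of $\spermgeom$; (4) track the variance of each arrow and conclude the composite is an isomorphism of posets.

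One genuinely needed verification, which I would state explicitly, is that the construction of $\spermgeom$ does not depend on the choice of admissible height function up to the combinatorial type claimed: Theorem~\ref{thm:bij_interiorfacesDKK_facessperm} and Theorem~\ref{thm:bij_mixed_subdiv} are about \emph{the} DKK triangulation, which is canonically defined by the framing $\preceq$ regardless of which admissible height realizes it, whereas $\spermgeom$ as in Definition~\ref{def:s-perm_geom} a priori depends on $\height$. The resolution is that for any admissible $\height$ for $\triangDKK[\Gs]$, the induced mixed subdivision $\subdivCay$ has the same combinatorial (face-poset) structure — it is the Cayley-trick image of $\triangDKK[\Gs]$ — and hence its tropical dual has a well-defined face poset; the only role of $\height$ is to furnish actual coordinates for the vertices. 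So the statement of the theorem is about the face poset, and any admissible $\height$ (e.g.\ $\height_\varepsilon$) works. I would fold this observation into step (2)–(3).

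The main obstacle, such as it is, is bookkeeping the order-reversals correctly and making sure that "interior simplices of $\triangDKK[\Gs]$'' in Theorem~\ref{thm:bij_interiorfacesDKK_facessperm} matches "interior cells of $\subdivCay$'' under the Cayley trick (which it does, since the Cayley trick is a poset isomorphism that sends interior faces to interior faces, as stated in the remark after Proposition~\ref{prop:cayleytrick}), and that "interior cells of $\subdivCay$'' matches the domain of the restricted bijection in Lemma~\ref{lem:tropical_dual_interior}. None of these require new arguments; the work is entirely in correctly citing the right statement at each arrow. I would write the proof as essentially a one-paragraph composition:
\[
\spermcombi \;\cong^{\mathrm{op}}\; \{\text{interior cells of } \subdivCay\} \;\cong^{\mathrm{op}}\; \{\text{bounded cells of } \spermgeom\},
\]
where each $\cong^{\mathrm{op}}$ is an anti-isomorphism (reverse inclusion), so the composite is an isomorphism of face posets, which is the assertion of Theorem~\ref{thm:bij_trop_arr}.
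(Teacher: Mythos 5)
Your proposal is correct and is essentially the paper's own argument: the paper's proof cites Theorem~\ref{thm:bij_mixed_subdiv} for the first anti-isomorphism, then Lemma~\ref{lem:tropical_dual_interior} and Theorem~\ref{thm:arr_trop_hypersurfaces_s-perm} for the second, and concludes the composite is a poset isomorphism, exactly as you lay out. Your extra remarks — unpacking Theorem~\ref{thm:bij_mixed_subdiv} into the chain through $\triangDKK[\Gs]$, and noting that the face poset of $\spermgeom$ is independent of the choice of admissible height $\height$ — are sound but are elaboration rather than a different route.
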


\begin{proof}
 We showed in~Theorem~\ref{thm:bij_mixed_subdiv} that the face poset of $\spermcombi$ is anti-isomorphic to the face poset of interior cells of the mixed subdivision~$\subdivCay$. It then follows from Lemma~\ref{lem:tropical_dual_interior} and Theorem~\ref{thm:arr_trop_hypersurfaces_s-perm} that this poset is isomorphic to the poset of bounded cells of $\troparr$, which is the face poset of $\spermgeom$. 
\end{proof}

Figure~\ref{fig:tropical_realizations} shows some examples of such realizations of the $s$-permutahedron. 

Moreover, we can describe the explicit coordinates of the vertices of $\spermgeom$. 
For a Stirling $\s$-permutation $w$, $a\in [n]$ and $t\in [s_a]$, we denote $i(a^t)$ the length of the prefix of $w$ that precedes the $t$-th occurrence of $a$. 
As explained in the argument leading to Lemma \ref{lem:max_clique},
this prefix is associated to the route $\routep{\prefix{w}{i(a^t)}}$ in the clique
$\Delta_w$. 

\begin{theorem}\label{thm:vertices}
The vertices of $\spermgeom$ are in bijection with Stirling $\s$-permutations. 
Moreover, the vertex $\mathbf{v}(w)=(\mathbf{v}(w)_a)_{a\in[n]}$ associated to a Stirling $\s$-permutation $w$ has coordinates
\begin{equation}\label{eq:coordinates}
    \mathbf{v}(w)_a = \sum_{t=1}^{s_a} \left(\height(\routep{\prefix{w}{i(a^t)}})-\height(\routep{\prefix{w}{i(a^t)+1}})\right).
\end{equation}
\end{theorem}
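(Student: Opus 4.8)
The plan is to combine the tropical duality bijection of Theorem~\ref{thm:tropical_dual} (and its Cayley-trick refinement Theorem~\ref{thm:arr_trop_hypersurfaces_s-perm}) with the explicit description of maximal cliques $\Delta_w$ from Lemma~\ref{lem:max_clique}. First I would recall that under the composite isomorphism of Theorem~\ref{thm:bij_trop_arr}, the vertices of $\spermgeom$ correspond to the maximal interior cells of $\subdivCay$, which by Theorem~\ref{thm:bij_mixed_subdiv} correspond to $\s$-decreasing trees, hence to Stirling $\s$-permutations. This gives the bijection claim; the work is to pin down the coordinates of the vertex $\mathbf{v}(w)$ dual to the cell indexed by $w$.

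The key computation proceeds through the Cayley embedding. Recall from Section~\ref{subsec:realization_mixedsubdiv} that $\fpol[\Gs]$ is the Cayley embedding of the hypercubes $\square_{k-1}$ (with multiplicities), and that the route $\route(k,t,\delta)$ has indicator vector $e^k_t \times \sum_{i\in[k-1],\,\delta_i=0} e^i_0$, i.e.\ in the $\RR^n$-coordinates $(e^1_0,\dots,e^n_0)$ it is the $\{0,1\}$-vector with $i$-th coordinate equal to $1-\delta_i$ for $i<k$ and $0$ for $i\ge k$. By Theorem~\ref{thm:tropical_dual}, the cell of the tropical complex dual to a maximal cell $\Delta_w$ of $\widetilde{\cS}$ is the single point $\mathbf{x}$ at which, for each tropical polynomial $F^k_t$ appearing in $\troparr$, the minimum $\min_\delta\{\height(\route(k,t,\delta)) + \langle \mathbf{a}(\route(k,t,\delta)),\mathbf{x}\rangle\}$ is attained exactly at the $\delta$ for which $\route(k,t,\delta)\in\Delta_w$. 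So I would first identify, for each source-edge $e^k_t$ (equivalently each $t$-th occurrence of a letter $a=n+1-k+1$... more precisely each pair $(k,t)$ with $k\in[2,n+1]$, $t\in[s_k-1]$), the unique route of $\Delta_w$ through that source-edge: by the description after Definition~\ref{def:Gs} and the paragraph preceding Theorem~\ref{thm:vertices}, this is $\routep{\prefix{w}{j}}$ where $j$ is the length of the prefix ending at the appropriate occurrence of the letter $n+1-k$ (or its $t$-th occurrence). Writing out the stationarity/optimality conditions at $\mathbf{v}(w)$ for consecutive values of $t$ at a fixed $k$, and telescoping, should extract $\mathbf{v}(w)_a$ as the stated alternating sum of heights of the routes $\routep{\prefix{w}{i(a^t)}}$ and $\routep{\prefix{w}{i(a^t)+1}}$, since the edge $e^{k}_t$ vs $e^{k}_{t+1}$ comparison in $F^{n+1-?}_?$ isolates the contribution of coordinate $x_a$ where $a$ is the letter read at position $i(a^t)+1$.

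Concretely, the mechanism is: for a fixed $a\in[n]$, the routes $\routep{\prefix{w}{i(a^t)}}$ and $\routep{\prefix{w}{i(a^t)+1}}$ differ (as $\{0,1\}$-vectors in $\RR^n$) in exactly the $a$-th coordinate — the first has $t_a = t-1$ going to the route with $t_a = t$ — and the tropical hypersurface forcing these two routes to be simultaneously optimal is exactly one of the $\cT(F^k_{t'})$ crossing at $\mathbf{v}(w)$. Setting the two affine functions equal and solving for $x_a$ isolates $\height(\routep{\prefix{w}{i(a^t)}}) - \height(\routep{\prefix{w}{i(a^t)+1}})$ as the increment contributed; summing over $t=1,\dots,s_a$ recovers formula~\eqref{eq:coordinates}. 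The main obstacle I anticipate is bookkeeping: carefully matching each source-edge/occurrence with the right route in $\Delta_w$, handling the boundary cases $t=0$ and $t=s_a$ (the bump/dip edges $e^a_0$, $e^a_{s_a}$ rather than source-edges) and the terminal vertex $v_{n+1}$ convention $s_{n+1}=2$, and verifying that the system of optimality equations indeed has the unique solution given by~\eqref{eq:coordinates} rather than merely being consistent with it — i.e.\ checking that there are enough independent tropical hypersurfaces through $\mathbf{v}(w)$ to determine all $n$ coordinates, which follows from $\Delta_w$ being a maximal (full-dimensional) simplex so its dual cell is a vertex.
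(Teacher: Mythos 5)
The high-level framing is fine (use Theorems~\ref{thm:bij_trop_arr}, \ref{thm:arr_trop_hypersurfaces_s-perm}, and the explicit cliques $\Delta_w$ from Lemma~\ref{lem:max_clique}, then extract coordinates from the optimality conditions defining the dual cell), and this is indeed what the paper does. However, the concrete ``mechanism'' you describe for extracting $\mathbf{v}(w)_a$ has a genuine gap, and the telescoping as you set it up would not close.

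The central problem is the claim that, for fixed $a$, the routes $\routep{\prefix{w}{i(a^t)}}$ and $\routep{\prefix{w}{i(a^t)+1}}$ ``differ in exactly the $a$-th coordinate'' and that some tropical hypersurface forces them to be simultaneously optimal. Neither part holds. For $2\le t\le s_a-1$ the two prefixes differ only by one occurrence of $a$; since in that range both routes have $k=a$, their $\delta$-parts are identical, so their projections to $\RR^n$ are the \emph{same} point --- they do not differ in any coordinate. Moreover they pass through \emph{different} source-edges $e^a_{t-1}$ and $e^a_t$, hence live in different Cayley summands and different tropical polynomials $F^a_{t-1}$ and $F^a_t$; no single tropical hypersurface compares them, so ``setting the two affine functions equal'' is not a step you are entitled to. Worse, the polynomial $F^a_t$ has exponents $\delta\in\{0,1\}^{a-1}$ and so does not contain $x_a$ at all: the constraints from $F^a_\ast$ can never isolate $\mathbf{v}(w)_a$ directly. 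The side claim that there is a ``unique route of $\Delta_w$ through each source-edge'' is also false in general (for $w=3221$ with $\s=(1,2,1)$, four of the five routes in $\Delta_w$ share the source-edge $e^4_1$).

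What the paper actually does is different in a crucial way. It compares the two routes $\routep{\prefix{w}{i(a^1)}}$ and $\routep{\prefix{w}{i(a^{s_a})+1}}$ (just before and just after the entire $a$-block): these \emph{do} share a source-edge $e^c_{t_c}$, where $c$ is the smallest letter whose block contains the $a$-block. The resulting single equation from $F^c_{t_c}$ gives not $\mathbf{v}(w)_a$ but the sum $\sum_{b:\, b\text{-block}\subseteq a\text{-block}} \mathbf{v}(w)_b = \height(\routep{\prefix{w}{i(a^1)}})-\height(\routep{\prefix{w}{i(a^{s_a})+1}})$. One then inducts on $a$ and observes that, when the induction hypothesis is substituted for the smaller $b$, the inner terms telescope away leaving exactly the right-hand side of~\eqref{eq:coordinates}. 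So the telescoping is real, but it happens across the induction on $a$, not across $t$ for a fixed $a$ as in your sketch. If you want to repair your proof, the place to look is: which two routes of $\Delta_w$ share a source-edge and differ precisely by ``having seen'' the $a$-block, and what the resulting constraint actually determines.
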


\begin{proof}
    The bijection between vertices of $\spermgeom$ and Stirling $\s$-permutations is a direct consequence of Theorem~\ref{thm:bij_trop_arr}. 
    
Let $w$ be a Stirling $\s$-permutation.
It is associated via Theorem~\ref{thm:arr_trop_hypersurfaces_s-perm} to the intersection of all regions of the form 
\begin{equation}\label{eq:coordinates_regions}
    \Big\{ \mathbf{x}\in \RR^n \, \Big| \, \height(\route(c, t, \delta)) + \sum_{a\in [c-1]} \delta_a x_a = \min_{\theta \in \{0,1\}^{c-1}} \{ \height(\route(c, t, \theta)) + \sum_{a\in [c-1]} \theta_a x_a\}\Big\},
\end{equation} 
where $\route(c,t,\delta)$ is a route in the clique $\Delta_w$. 
    It follows from the previous remark that this intersection is a single point, that we denote $\mathbf{v}$.
    We show that $\mathbf{v}$ necessarily has the coordinates given by the theorem.
    Let $a\in [n]$. Both routes $\routep{\prefix{w}{i(a^1)}}$ and $\routep{\prefix{w}{i(a^{s_a})+1}}$ are of the form $\route(c,t, \delta)$ and $\route(c,t,\delta')$ respectively, where $c$ is the smallest letter such that the $a$-block is contained in the $c$-block in $w$, and $t$ denotes the number of occurrences of $c$ that precedes the $a$-block. If the $a$-block is contained in no other block we set $c=n+1$ and $t=1$. The indicator vectors $\delta$ and $\delta'$ satisfy that $\delta'-\delta$ is the indicator vector of the letters $b\leq a$ such that the $b$-block is contained in the $a$-block in $w$. 
    The fact that both routes belong to $\Delta_w$ implies that 
    $\height(\routep{\prefix{w}{i(a^1)}})+ \sum_{b\in [c-1]} \delta_b v_b = \height(\routep{\prefix{w}{i(a^{s_a})+1}})+ \sum_{b\in [c-1]} \delta'_b v_b$, thus
    \[ \sum_{\substack{b \in [a] \text{ s.t.}\\\text{$b$-block }\subseteq \text{ $a$-block}}} v_b = \height(\routep{\prefix{w}{i(a^1)}}) - \height(\routep{\prefix{w}{i(a^{s_a})+1}}).\]
    Then, we obtain Equation~\ref{eq:coordinates} by induction on $a$. Indeed, if the equation is true for all $b <a$, then all terms in $\sum_{\substack{b \in [a-1] \text{ s.t.}\\\text{$b$-block }\subset \text{ $a$-block}}} v_b$ cancel by pairs except the terms that correspond to a prefix ending at or just before an occurrence of $a$ in $w$, which are of the form $-\height(\routep{\prefix{w}{i(a^r)}})$ for $r\in [2, s_a]$ or $\height(\routep{\prefix{w}{i(a^r)+1}})$ for $r\in [s_a-1]$.
\end{proof}

\begin{corollary}
    The $\s$-permutahedron $\spermgeom$ is contained in the hyperplane 
    \begin{equation}
    \left\{\mathbf{x}\in \RR^n \, \middle|\, \sum_{i=1}^n x_i = \height(\route(n+1, 1, (0)^n)-\height(\route(n+1, 1, (1)^n))\right\}.
    \end{equation}
\end{corollary}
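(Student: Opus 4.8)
The plan is to reduce the statement to the vertices of $\spermgeom$ and then evaluate the coordinate sum given by Theorem~\ref{thm:vertices}, observing that it telescopes to a quantity that does not depend on the chosen Stirling $\s$-permutation.

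First I would record that $\spermgeom$ is, by Definition~\ref{def:s-perm_geom}, the polyhedral complex of \emph{bounded} cells of the arrangement $\troparr$. Every bounded polyhedron is the convex hull of its vertices, and by Theorem~\ref{thm:bij_trop_arr} together with Theorem~\ref{thm:vertices} the vertices of $\spermgeom$ are exactly the points $\mathbf{v}(w)$ for $w\in\setperms$. Hence it suffices to show that $\sum_{a=1}^n \mathbf{v}(w)_a$ equals $\height(\route(n+1,1,(0)^n))-\height(\route(n+1,1,(1)^n))$ for every $w$.

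Then I would sum the coordinate formula of Theorem~\ref{thm:vertices} over $a\in[n]$. The key point is a reindexing: as $(a,t)$ ranges over all pairs with $a\in[n]$ and $t\in[s_a]$, the integer $i(a^t)$ (the length of the prefix of $w$ that precedes the $t$-th occurrence of $a$) ranges bijectively over $\{0,1,\dots,\sizes-1\}$, the bijection sending $(a,t)$ to the index $j$ such that $w_{j+1}$ is the $t$-th occurrence of $a$; note also that then $\prefix{w}{i(a^t)+1}=\prefix{w}{j+1}$. Therefore
\[
\sum_{a=1}^n \mathbf{v}(w)_a
= \sum_{j=0}^{\sizes-1}\Bigl(\height(\routep{\prefix{w}{j}})-\height(\routep{\prefix{w}{j+1}})\Bigr)
= \height(\routep{\prefix{w}{0}})-\height(\routep{\prefix{w}{\sizes}}),
\]
the middle expression being a telescoping sum. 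Finally I would invoke the identities $\routep{\prefix{w}{0}}=\route(n+1,1,(0)^n)$ and $\routep{\prefix{w}{\sizes}}=\route(n+1,1,(1)^n)$ recorded just before Lemma~\ref{lem:max_clique} to conclude that this common value is precisely the right-hand side of the hyperplane equation, independent of $w$.

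I do not anticipate a genuine obstacle: the one place requiring a little care is the reindexing identity for $i(a^t)$ (and, correspondingly, that $\prefix{w}{i(a^t)+1}$ is exactly the next prefix $\prefix{w}{i(a^t)+1}$), after which the computation is a single telescoping step; the passage from the vertices to the whole complex is immediate from boundedness of the cells of $\spermgeom$.
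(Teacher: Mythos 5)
Your argument is correct, and it is the natural derivation of the corollary from Theorem~\ref{thm:vertices}: the reindexing $(a,t)\mapsto i(a^t)$ is indeed a bijection onto $\{0,\dots,\sizes-1\}$ (each position $j\in[\sizes]$ is the $t$-th occurrence of the letter $a=w_j$ for a unique $t$, and $i(a^t)=j-1$), so the double sum telescopes to $\height(\routep{\prefix{w}{0}})-\height(\routep{\prefix{w}{\sizes}})$, and the identities $\routep{\prefix{w}{0}}=\route(n+1,1,(0)^n)$, $\routep{\prefix{w}{\sizes}}=\route(n+1,1,(1)^n)$ are recorded in the paper just before Lemma~\ref{lem:max_clique}. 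The reduction from cells to vertices is also fine, since $\spermgeom$ is a polyhedral complex of bounded cells and an affine condition satisfied on all vertices extends to the whole complex.

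The paper leaves this corollary without a written proof, so your computation is presumably the intended one. It is worth noting, though, that the paper re-derives the same containment in the proof of Lemma~\ref{lem:support}(2) by a structurally different route: since every maximal clique $\Delta_w$ contains both $\route(n+1,1,(0)^n)$ and $\route(n+1,1,(1)^n)$, the corresponding vertex $\mathbf{v}(w)$ of the tropical dual lies in the region where $F^{n+1}_1$ attains its minimum at both $\delta=(0)^n$ and $\delta=(1)^n$, which immediately yields $\sum_i x_i = \height(\route(n+1,1,(0)^n))-\height(\route(n+1,1,(1)^n))$. That argument avoids the coordinate formula entirely and, because it identifies the full defining region, it scales to give the facet inequalities of $\supp(\spermgeom)$ as well; your telescoping argument is more elementary and self-contained given Theorem~\ref{thm:vertices}, but is specific to this single linear relation.
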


\begin{theorem}\label{thm:edges} 
    Let $1\leq a < c\leq n$.
    Let $w$ and $w'$ be Stirling $\s$-permutations of the form $u_1B_acu_2$ and $u_1cB_au_2$ respectively, where $B_a$ is the $a$-block of $w$ and $w'$. 

    Then the edge of $\spermgeom$ corresponding to the transposition between $w$ and $w'$ is:
    \begin{equation}\label{eq:edges}
        \mathbf{v}(w')-\mathbf{v}(w)= \left(\height(\routep{u_1c})+\height(\routep{u_1B_a}) - \height(\routep{u_1})-\height(\routep{u_1B_ac}) \right)(\mathbf{e}_a-\mathbf{e}_c),
    \end{equation}
    where $(\mathbf{e}_i)_{i\in [n]}$ is the canonical basis of $\RR^n$.
\end{theorem}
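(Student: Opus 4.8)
The plan is to compute $\mathbf{v}(w')-\mathbf{v}(w)$ coordinate by coordinate from the explicit formula of Theorem~\ref{thm:vertices}, after recording that --- by Theorem~\ref{thm:bij_trop_arr}, Lemma~\ref{lem:tropical_dual_interior}, and the anti-isomorphism of Theorem~\ref{thm:bij_mixed_subdiv} --- the edge of $\spermgeom$ dual to the transposition along $(a,c)$ is exactly the segment with endpoints $\mathbf{v}(w)$ and $\mathbf{v}(w')$. Put $\ell:=\ell(u_1)$ and $m:=\ell(B_a)$, so that $w$ and $w'$ coincide on positions $1,\dots,\ell$ (the word $u_1$) and on positions $\ell+m+2,\dots$ (the word $u_2$), and differ only by reading $B_ac$ versus $cB_a$ in between; in particular the letters at positions $\ell+m$ and $\ell+m+1$ of $w$ are $a$ and $c$, and the letter at position $\ell+1$ of $w'$ is $c$. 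The first step is to transcribe the route dictionary from the proof of Theorem~\ref{thm:cover_relations}: $\routep{\prefix{w}{i}}=\routep{\prefix{w'}{i}}$ for $i\le\ell$ or $i\ge\ell+m+1$, and $\routep{\prefix{w}{i}}=\routep{\prefix{w'}{i+1}}$ for $\ell+1\le i\le\ell+m-1$ (such a prefix ends strictly inside $B_a$, so $0<t_a<s_a$ and the number of $c$'s is irrelevant to the route). Hence the only prefix-length not covered is $\ell+m$, giving the route $\routep{u_1B_a}\in\Delta_w\setminus\Delta_{w'}$; symmetrically $\routep{u_1c}\in\Delta_{w'}\setminus\Delta_w$ arises from prefix-length $\ell+1$ in $w'$. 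I will also use the elementary identity $\routep{u_1B_ac}=\routep{u_1cB_a}$, valid because $\routep{u}$ depends only on the multiset of letters of $u$.

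The heart of the argument is the claim that $\mathbf{v}(w)_b=\mathbf{v}(w')_b$ for every $b\ne a,c$. I would prove it by splitting into cases according to where the $s_b$ occurrences of $b$ lie relative to $B_ac$: all inside $u_1$, all inside $u_2$, all inside $B_a$, or split between $u_1$ and $u_2$. This list is exhaustive and pattern avoidance constrains it further --- if a single $b$ lands inside $B_a$ then all of them do (such a $b$ is smaller than $a$ and trapped between the two boundary $a$'s of $B_a$), and a split can occur only when $b>c$. In each case the occurrences of $b$ in $w$ and in $w'$ sit at positions that correspond under the shift carrying $B_a$ past $c$, and none of the prefixes of $w$ appearing in~\eqref{eq:coordinates} for such a $b$ has length $\ell+m$, nor do those of $w'$ have length $\ell+1$ --- precisely because position $\ell+m$ of $w$ carries the letter $a$ and position $\ell+m+1$ the letter $c$. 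The route dictionary then matches the summands of $\mathbf{v}(w)_b$ with those of $\mathbf{v}(w')_b$ one at a time. This bookkeeping is routine but delicate; keeping the boundary positions straight is the main obstacle of the proof.

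With the other coordinates fixed, I would obtain the $a$-coordinate from the identity established inside the proof of Theorem~\ref{thm:vertices}: $\sum_{\substack{b\in[a]\\ B_b\subseteq B_a}}\mathbf{v}(w)_b=\height(\routep{u_1})-\height(\routep{u_1B_a})$, since the prefix of $w$ just before its first $a$ is $u_1$ and the prefix just after its last $a$ is $u_1B_a$. Applying the same identity to $w'$, the index set $\{b\in[a]:B_b\subseteq B_a\}$ is literally the same (the word $B_a$ is unchanged), while the two relevant prefixes for $w'$ are $u_1c$ and $u_1cB_a$, and $\routep{u_1cB_a}=\routep{u_1B_ac}$. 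Subtracting the two instances and using the key claim to kill every summand with $b\ne a$ (all such $b$ satisfy $b<a<c$) yields
\[
\mathbf{v}(w')_a-\mathbf{v}(w)_a=\height(\routep{u_1c})+\height(\routep{u_1B_a})-\height(\routep{u_1})-\height(\routep{u_1B_ac}),
\]
the scalar appearing in~\eqref{eq:edges}.

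Finally, since $\spermgeom$ lies in a hyperplane $\{\sum_i x_i=\mathrm{const}\}$ (the corollary immediately preceding the theorem), the coordinate sums of $\mathbf{v}(w)$ and $\mathbf{v}(w')$ agree; as only the $a$- and $c$-coordinates can change, this forces $\mathbf{v}(w')_c-\mathbf{v}(w)_c=-\bigl(\mathbf{v}(w')_a-\mathbf{v}(w)_a\bigr)$, whence $\mathbf{v}(w')-\mathbf{v}(w)$ is the stated multiple of $\mathbf{e}_a-\mathbf{e}_c$. (Alternatively the $c$-coordinate can be read off directly by the same telescoping applied to the occurrences of $c$.) One should keep an eye on the degenerate case $s_a=1$, i.e.\ $m=1$ and $B_a=a$, where ``first'' and ``last'' $a$ coincide and the middle sums are empty; the computation goes through verbatim.
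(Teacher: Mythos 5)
Your proof is correct and follows essentially the same strategy as the paper's: both rest on the explicit coordinate formula of Theorem~\ref{thm:vertices} and the observation that the routes contributing to $\mathbf{v}(w)$ and $\mathbf{v}(w')$ coincide except for $\routep{u_1B_a}\in\Delta_w\setminus\Delta_{w'}$ and $\routep{u_1c}\in\Delta_{w'}\setminus\Delta_w$. The differences are organizational rather than conceptual. You isolate and prove the statement $\mathbf{v}(w)_b=\mathbf{v}(w')_b$ for $b\neq a,c$ as an explicit lemma by a case analysis on where the occurrences of $b$ sit relative to $B_ac$; the paper treats this implicitly when it asserts that only the four routes $\routep{u_1}$, $\routep{u_1c}$, $\routep{u_1B_a}$, $\routep{u_1B_ac}$ fail to cancel. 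You then extract the $a$-coordinate via the telescoping identity $\sum_{b:B_b\subseteq B_a}\mathbf{v}(w)_b=\height(\routep{u_1})-\height(\routep{u_1B_a})$ (an intermediate step inside the paper's proof of Theorem~\ref{thm:vertices}) and deduce the $c$-coordinate from the ambient hyperplane constraint, whereas the paper computes $\mathbf{v}(w')_a-\mathbf{v}(w)_a$ and $\mathbf{v}(w')_c-\mathbf{v}(w)_c$ directly from Equation~\ref{eq:coordinates} and combines them. Your hyperplane shortcut is clean and spares the second direct computation. One small thing you omit: the paper's proof closes by noting that the scalar coefficient is strictly positive, by Lemma~\ref{lem:DKKheight} applied to the minimal conflict $P=\routep{u_1B_a}$, $Q=\routep{u_1c}$ with resolvents $P'=\routep{u_1}$, $Q'=\routep{u_1B_ac}$. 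This is not required by the statement of Theorem~\ref{thm:edges}, but the subsequent lemma on orienting edges cites precisely this remark, so it is worth adding.
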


\begin{proof}
    We denote $t:=\#_w(c,a)+1$, so that the transposition from $w$ to $w'$ exchanges the $a$-block with the $t$-th occurrence of $c$.
    We use the expression of the explicit coordinates given in Theorem~\ref{thm:vertices} to compute $\mathbf{v}(w')-\mathbf{v}(w)$. The only routes that do not cancel out are the ones corresponding to prefixes $u_1$, $u_1c$, $u_1B_a$ and $u_1B_ac$, which gives the same route as $u_1cB_a$. Indeed, a prefix contained in $u_1$ is common to $w$ and $w'$ ; a prefix that ends inside the $a$-block does not give information on $c$, so the corresponding route will be common to $w$ and $w'$, and a prefix that ends inside $u_2$ does not give information on the relative order of the $a$-block and letter $c$, so the corresponding route will also be common to $w$ and $w'$. 
    Hence:
    \begin{align*}
        \mathbf{v}(w')-\mathbf{v}(w) &= \left(\mathbf{v}(w')_a-\mathbf{v}(w)_a \right)\mathbf{e}_a + \left(\mathbf{v}(w')_c-\mathbf{v}(w)_c \right)\mathbf{e}_c \\
        &=  \left(\height(\routep{\prefix{w'}{i(a^1)}})-\height(\routep{\prefix{w'}{i(a^{s_a})+1}}) - \height(\routep{\prefix{w}{i(a^1)}}) + \height(\routep{\prefix{w}{i(a^{s_a})+1}}) \right)\mathbf{e}_a \\
        & \phantom{=} + \left(\height(\routep{\prefix{w'}{i(c^t)}})-\height(\routep{\prefix{w'}{i(c^t)+1}}) - \height(\routep{\prefix{w}{i(c^t)}}) + \height(\routep{\prefix{w}{i(c^t)+1}}) \right)\mathbf{e}_c \\
        &= \left(\height(\routep{u_1c})-\height(\routep{u_1cB_a}) - \height(\routep{u_1}) + \height(\routep{u_1B_a}) \right)\mathbf{e}_a \\
        & \phantom{=} + \left(\height(\routep{u_1})-\height(\routep{u_1c}) - \height(\routep{u_1B_a}) + \height(\routep{u_1B_ac}) \right)\mathbf{e}_c\\
        &= \left(\height(\routep{u_1c})+\height(\routep{u_1B_a}) - \height(\routep{u_1})-\height(\routep{u_1B_ac}) \right)(\mathbf{e}_a-\mathbf{e}_c).
    \end{align*}

It follows from Lemma~\ref{lem:DKKheight} that we have 
\[
\height(\routep{u_1c})+\height(\routep{u_1B_a}) - \height(\routep{u_1})-\height(\routep{u_1B_ac}) >0,
\] 
since $P:=\routep{u_1B_a}$ and $Q:=\routep{u_1c}$ are in minimal conflict at $[v_{n+1-c}, v_{n+1-a}]$ and $P':=\routep{u_1}$ and $Q':=\routep{u_1B_ac}$ are their resolvents.
\end{proof}

\begin{lemma}
    For any strictly decreasing sequence of real numbers $\kappa_1>\ldots > \kappa_n$, the direction $\sum_{i=1}^n \kappa_i \mathbf{e}_i$ orients the edges of $\spermgeom$ according to the $\s$-weak order covering relations.
\end{lemma}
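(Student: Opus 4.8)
The plan is to combine the explicit edge formula of Theorem~\ref{thm:edges} with the sign condition coming from the admissibility of $\height$ (Lemma~\ref{lem:DKKheight}). Recall that a cover relation $w \lessdot w'$ in the $\s$-weak order corresponds, by Corollary~\ref{prop:cover_relations_multiperm}, to a transposition along an ascent $(a,c)$ with $a<c$, where $w=u_1B_acu_2$ and $w'=u_1cB_au_2$. By Theorem~\ref{thm:edges}, the corresponding edge of $\spermgeom$ is
\[
\mathbf{v}(w')-\mathbf{v}(w)=\lambda\,(\mathbf{e}_a-\mathbf{e}_c),
\]
where $\lambda=\height(\routep{u_1c})+\height(\routep{u_1B_a})-\height(\routep{u_1})-\height(\routep{u_1B_ac})$, and the last line of the proof of Theorem~\ref{thm:edges} shows $\lambda>0$ because $\routep{u_1B_a}$ and $\routep{u_1c}$ are in minimal conflict with resolvents $\routep{u_1}$ and $\routep{u_1B_ac}$, so Lemma~\ref{lem:DKKheight} applies.

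First I would fix a strictly decreasing sequence $\kappa_1>\dots>\kappa_n$ and set $\boldsymbol{\kappa}=\sum_{i=1}^n\kappa_i\mathbf{e}_i$. For an edge of $\spermgeom$ joining $\mathbf{v}(w)$ and $\mathbf{v}(w')$ as above, compute the pairing
\[
\langle \boldsymbol{\kappa},\,\mathbf{v}(w')-\mathbf{v}(w)\rangle=\lambda\,(\kappa_a-\kappa_c).
\]
Since $a<c$ we have $\kappa_a>\kappa_c$, and since $\lambda>0$ this quantity is strictly positive. Hence the linear functional $\langle\boldsymbol{\kappa},\cdot\rangle$ strictly increases along the edge in the direction from $\mathbf{v}(w)$ to $\mathbf{v}(w')$, i.e.\ from the smaller to the larger element in the covering relation. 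This is exactly the assertion that the direction $\boldsymbol{\kappa}$ orients each edge of $\spermgeom$ according to the $\s$-weak order cover relations.

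The only point requiring a little care — and the main (minor) obstacle — is to make sure the combinatorial labelling is consistent: the edge of $\spermgeom$ associated by Theorem~\ref{thm:bij_trop_arr} to a cover relation $w\lessdot w'$ is indeed the segment $[\mathbf{v}(w),\mathbf{v}(w')]$ described in Theorem~\ref{thm:edges}, and the ascent realizing the cover has $a<c$, so that $\kappa_a-\kappa_c>0$ rather than the reverse. Both facts are already in place: Theorem~\ref{thm:edges} is stated precisely for $1\le a<c\le n$ with $w=u_1B_acu_2$, $w'=u_1cB_au_2$, and Lemma~\ref{lem:ascdesc} together with Corollary~\ref{prop:cover_relations_multiperm} guarantees every cover relation arises this way. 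So the argument is essentially a one-line pairing computation once Theorem~\ref{thm:edges} and Lemma~\ref{lem:DKKheight} are invoked; no further estimates are needed.
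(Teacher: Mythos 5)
Your proof is correct and follows exactly the route the paper intends: the paper's own proof is a one-liner citing Theorem~\ref{thm:edges} and the positivity remark $\lambda>0$ at the end of its proof, which you spell out as the pairing computation $\langle\boldsymbol{\kappa},\mathbf{v}(w')-\mathbf{v}(w)\rangle=\lambda(\kappa_a-\kappa_c)>0$. There is nothing to add.
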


\begin{proof}
    This is a direct consequence of Theorem~\ref{thm:edges} and the remark at the end of its proof.
\end{proof}

\begin{lemma}\label{lem:support}
    The support $\supp(\spermgeom)$, i.e.\ the union of faces of $\spermgeom$, is a polytope combinatorially isomorphic to the $(n-1)$-dimensional permutahedron. More precisely it has:
    \begin{enumerate}
        \item vertices $\mathbf{v}(w^ {\sigma})$ for all permutation $\sigma$ of $[n]$ where $w^{\sigma}$ is the Stirling $\s$-permutation 
        \begin{equation*}
        w^{\sigma} = \underbrace{\sigma(1)\ldots \sigma(1)}_{s_{\sigma(1)}\text{ times}}\ldots \underbrace{\sigma(n)\ldots\sigma(n)}_{s_{\sigma(n)}\text{ times}},
        \end{equation*}
        \item facet defining inequalities 
        \begin{equation}\label{eq:halfspace1}
        \langle\delta,\mathbf{x}\rangle \geq \height(\route(n+1, 1, (0)^n))-\height(\route(n+1, 1, \delta)),
        \end{equation}
        \begin{equation}\label{eq:halfspace2}
       \langle\mathbf{1}-\delta,\mathbf{x}\rangle \leq \height(\route(n+1, 1, \delta))-\height(\route(n+1, 1, (1)^n)),
        \end{equation}
        for all $\delta\in \{0,1\}^n$.
    \end{enumerate}
\end{lemma}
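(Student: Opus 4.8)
The plan is to recognize the region $\mathcal{Q}\subseteq\RR^n$ cut out by \eqref{eq:halfspace1} and \eqref{eq:halfspace2} as an $(n-1)$-dimensional permutahedron, to prove the inclusion $\supp(\spermgeom)\subseteq\mathcal{Q}$, and finally to promote that inclusion to an equality. The easy points come first. For a permutation $\sigma$ each letter of $w^{\sigma}$ occurs in a single run, so $w^{\sigma}$ is trivially $121$-avoiding, hence a Stirling $\s$-permutation, and Theorem~\ref{thm:vertices} produces a vertex $\mathbf{v}(w^{\sigma})$ of $\spermgeom$. For the inclusion, Theorem~\ref{thm:vertices} says every face of $\spermgeom$ is the convex hull of vertices $\mathbf{v}(w)$, so it suffices to check $\mathbf{v}(w)\in\mathcal{Q}$ for each Stirling $\s$-permutation $w$: the point $\mathbf{v}(w)$ lies in the region \eqref{eq:coordinates_regions} attached to \emph{every} route of the clique $\Delta_w$, and $\Delta_w$ always contains $\route(n+1,1,(0)^n)$ and $\route(n+1,1,(1)^n)$. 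Spelling out that $\route(n+1,1,(0)^n)$ attains the minimum defining $F^{n+1}_{1}$ at $\mathbf{v}(w)$ is exactly the family \eqref{eq:halfspace1}, and doing the same for $\route(n+1,1,(1)^n)$ gives \eqref{eq:halfspace2}.

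Next I would show $\mathcal{Q}$ is combinatorially the permutahedron. Set $g(\delta):=\height(\route(n+1,1,\delta))$ for $\delta\in\{0,1\}^n$. On the hyperplane $\sum_i x_i=g((0)^n)-g((1)^n)$ the inequalities \eqref{eq:halfspace1} and \eqref{eq:halfspace2} coincide, of the form $\langle\mathbf{1}-\delta,\mathbf{x}\rangle\le g(\delta)-g((1)^n)$, so $\mathcal{Q}$ is a generalized permutahedron whose controlling submodular function is a complement-reindexing of $g$. The crucial input is that $g$ is \emph{strictly supermodular}: for incomparable $\delta,\delta'$ the routes $\route(n+1,1,\delta)$ and $\route(n+1,1,\delta')$ are non-coherent and their resolvents are $\route(n+1,1,\delta\wedge\delta')$ and $\route(n+1,1,\delta\vee\delta')$ (uncrossing two monotone paths in the oruga part produces their coordinatewise minimum and maximum), so the admissibility of $\height$ via Lemma~\ref{lem:DKKheight1} gives $g(\delta)+g(\delta')>g(\delta\wedge\delta')+g(\delta\vee\delta')$. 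Strict supermodularity makes every defining inequality of $\mathcal{Q}$ facet-defining, so $\mathcal{Q}$ is combinatorially the $(n-1)$-permutahedron with one vertex per permutation $\sigma$; this vertex is $\mathbf{v}(w^{\sigma})$ because the prefixes of $w^{\sigma}$ at its block boundaries are precisely the routes $\route(n+1,1,\delta)$ for $\delta$ along the maximal chain $\emptyset\subset\{\sigma(1)\}\subset\cdots\subset[n]$, all lying in $\Delta_{w^{\sigma}}$, so $\mathbf{v}(w^{\sigma})$ saturates the corresponding chain of inequalities. This settles parts (1) and (2) of the statement.

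The remaining and hardest point is $\supp(\spermgeom)=\mathcal{Q}$, for which I would induct on $n$. For $\emptyset\ne S\subsetneq[n]$, a vertex $\mathbf{v}(w)$ lies on the facet of $\mathcal{Q}$ given by equality in \eqref{eq:halfspace2} with $\delta$ the indicator vector of $S$ exactly when $\route(n+1,1,\delta)\in\Delta_w$, i.e.\ when $w=w_1w_2$ with $w_1,w_2$ Stirling permutations supported on $S$ and on $S^{c}$; these, together with the ascents of $w$ not bridging the two parts, form a subcomplex of $\spermcombi$ isomorphic to the product of two $\s$-permutahedra of smaller rank, which $\spermgeom$ realizes inside that facet hyperplane, and whose support is the corresponding facet of $\mathcal{Q}$ by the induction hypothesis. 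Thus $\partial\mathcal{Q}\subseteq\supp(\spermgeom)\subseteq\mathcal{Q}$; since $\spermgeom$ is pure of dimension $n-1$ (Corollary~\ref{cor:maximal_faces_perm}) and realizes the combinatorial $\s$-permutahedron, whose underlying space is a polyhedral $(n-1)$-ball with boundary subcomplex precisely these facet subcomplexes, $\supp(\spermgeom)$ is a compact $(n-1)$-manifold with boundary $\partial\mathcal{Q}$ embedded in the hyperplane, and invariance of domain forces $\supp(\spermgeom)=\mathcal{Q}$. I expect this last step to be the real obstacle: Steps 1 and 2 are essentially forced by Theorem~\ref{thm:vertices} and admissibility of $\height$, whereas turning the inclusion into an equality requires pinning down which faces of $\spermcombi$ sit on $\partial\mathcal{Q}$, recognizing the boundary subcomplexes as products so the induction closes, and invoking the ball/manifold structure of $\spermcombi$ to rule out the support being a proper subpolyhedron. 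An alternative to the manifold argument would be to observe that already the single tropical hypersurface $\cT(F^{n+1}_{1})$ — with Newton polytope the full cube and inducing on it the order triangulation, again by strict supermodularity of $g$ — has bounded complex supported on $\mathcal{Q}$, and then to argue that adjoining the cylindrical hypersurfaces $\cT(F^{k}_{t})$ with $k\le n$ only refines this complex without changing its support.
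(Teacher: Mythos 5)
Your proposal takes a genuinely different route from the paper's proof, and the core of it is sound. The paper argues Part~(1) by direct optimization: for each permutation $\sigma$ it considers the linear functional $f(\mathbf{x})=\sum_a \sigma(a)x_a$ and shows, via ascent/descent edge directions from Theorem~\ref{thm:edges}, that $f$ is maximized among $\{\mathbf{v}(w)\}$ at $\mathbf{v}(w^{\sigma})$, concluding that the normal fan of $\supp(\spermgeom)$ is the braid fan. For Part~(2) the paper observes, as you do, that every clique $\Delta_w$ contains $\route(n+1,1,(0)^n)$ and $\route(n+1,1,(1)^n)$, and then checks which $\mathbf{v}(w^{\sigma})$ saturate each inequality. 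Your proof instead exhibits the region $\mathcal{Q}$ as a generalized permutahedron by establishing the strict inequality $g(\delta)+g(\delta')>g(\delta\wedge\delta')+g(\delta\vee\delta')$ for incomparable $\delta,\delta'$, using the crucial observation that the resolvents (in the sense of Lemma~\ref{lem:DKKheight1}) of the two cube routes $\route(n+1,1,\delta)$ and $\route(n+1,1,\delta')$ are exactly $\route(n+1,1,\delta\wedge\delta')$ and $\route(n+1,1,\delta\vee\delta')$; this observation is correct because these routes pass through every vertex of $\Gs$ and the framing orders bumps before dips, so the uncrossing forces coordinatewise min and max. The two approaches trade off: the paper's optimization argument is local, edge by edge, and does not require analyzing conflicts of cube routes at all; yours is global and structural, makes the connection to submodular functions and generalized permutahedra explicit, and yields the facet description and the combinatorial type in a single stroke. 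One small caution: the inequality you derive is \emph{strict submodularity} of $g$ (equivalently, the relabeled function $h(S)=g(\mathbf{1}_{S^c})-g(\mathbf{1})$ is strictly submodular, which is what makes the $\leq$ inequalities of \eqref{eq:halfspace2} facet-defining), not supermodularity; the mathematics in your argument is right but the word is wrong.

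Where you go further than the paper is in insisting on a rigorous proof that $\supp(\spermgeom)=\mathcal{Q}$ rather than just $\subseteq$. The paper does not address this explicitly: its phrasing ``the vertices of $\supp(\spermgeom)$ have the same normal cones as the $(n-1)$-permutahedron'' quietly presupposes that $\supp(\spermgeom)$ is already known to be a polytope, which is a nontrivial tropical-geometry fact and is exactly the gap you identify. Your inductive strategy (identify the boundary faces of $\spermcombi$ with products of smaller $\s$-permutahedra sitting inside the facet hyperplanes, then close with invariance of domain) is plausible but, as you write, would need real work to make precise: the restriction of the tropical arrangement to a facet hyperplane is a tropical arrangement with a height function not literally of the form used in the induction hypothesis, so one must verify it still realizes a product of $\s$-permutahedra with the conjectured support; and the appeal to $\spermcombi$ being a polyhedral $(n-1)$-ball is not established in the paper. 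Your alternative closing argument — that the single tropical hypersurface $\cT(F^{n+1}_1)$, whose Newton polytope is the cube $\square_n$ lifted by the strictly submodular $g$, already has bounded complex supported on $\mathcal{Q}$, and that adjoining the other $\cT(F^k_t)$ only refines without changing the support — is likely the cleaner route and closer in spirit to what the paper takes for granted.
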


\begin{proof}
\begin{enumerate}
    \item Let $\sigma$ be a permutation of $[n]$. we consider the linear functional $f(x)=\sum_{a\in[n]} \sigma(a) x_a$. Among the vertices of $\spermgeom$ $f$ is maximized on $\mathbf{v}(w^{\sigma})$. 
    Indeed, let $w'$ be a Stirling $\s$-permutation. 
    \begin{itemize}
    \item If $w'$ contains an ascent $(a,c)$ such that $\sigma(a)>\sigma(c)$, then $f$ is increasing along the edge of direction $\mathbf{e}_a-\mathbf{e}_c$ corresponding to the transposition of $w'$ along the ascent $(a,c)$.
    \item If $w'$ contains a descent $(c',a')$ such that $\sigma(a')<\sigma(c')$, then $f$ is increasing along the edge of direction $\mathbf{e_{c'}}-\mathbf{e_{a'}}$ corresponding to the transposition of $w'$ along the descent $(c',a')$.
    \item If $w'$ is in neither of the above cases, than necessarily $w'=w^{\sigma}$.
    \end{itemize}
    This shows that the vertices of $\supp(\spermgeom)$ have the same normal cones as the $(n-1)$-permutahedron (embedded in $\RR^n$), hence its normal fan is the braid fan. 
    \item It follows from the fact that all cliques $\Delta_w$ contain the routes $\route(n+1, 1, (0)^n)$ and $\route(n+1, 1, (1)^n)$ 
    (see the remark before Lemma \ref{lem:max_clique})
    that all vertices of $\spermgeom$ are contained in the region 
    \begin{align*}
     \Big\{ \mathbf{x}\in \RR^n \, \Big| \, \height(\route(n+1, 1, (0)^n)) &= \height(\route(n+1, 1, (1)^n)) + \sum_{a\in [n]} x_a \\
     &= \min_{\theta \in \{0,1\}^{n}} \{ \height(\route(n+1, 1, \theta)) + \sum_{a\in [n]} \theta_a x_a\}\Big\}.  
    \end{align*}
    This region is exactly defined by intersecting the half-spaces defined by \ref{eq:halfspace1} and \ref{eq:halfspace2}. 

    Moreover, let $\delta\in \{0,1\}^n$ and $I:=\{i\in [n]\, |\, \delta_i=1\}$. The equality in \ref{eq:halfspace1} and \ref{eq:halfspace2} is achieved exactly on vertices $\mathbf{v}(w^{\sigma})$ where $\{\sigma(1), \ldots, \sigma(|I|)\}=I$. Hence, these inequalities define the facets of $\supp(\spermgeom)$.
\end{enumerate}
    
\end{proof}

\begin{remark}
    With similar arguments we can see that the restriction of the $\s$-weak order to a face of $\supp(\spermgeom)$, associated to an ordered partition, will correspond to a product of $\s'$-weak orders, one for each part of the ordered partition.
\end{remark}

Note that Lemma~\ref{lem:support} finishes to answer Conjecture~\ref{conj:s-permutahedron} in the case where $\s$ is a composition, because then the zonotope $\sum_{1\leq i<j\leq n}s_j[\mathbf{e}_i,\mathbf{e}_j]$ is combinatorially isomorphic to the $(n-1)$-dimensional permutahedron. 

In the case where $\height$ is given by Lemma~\ref{lem:epsilonheight}, we can even go a bit further.

\begin{proposition}
    Let $\varepsilon>0$ be a small enough real number so that $\height_{\varepsilon}$ is an admissible height function for $\triangDKK[\Gs]$. 

    Then the support $\supp(\spermgeom[\height_{\varepsilon}])$ is a translation of the zonotope 
    $2\sum_{1\leq a < c\le n} s_c\  \varepsilon^{c-a}[\mathbf{e}_a, \mathbf{e}_c]. $
\end{proposition}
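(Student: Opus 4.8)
The plan is to compute the edge vectors of the polytope $\supp(\spermgeom[\height_{\varepsilon}])$ directly and match them with the generators of the claimed zonotope. By Lemma~\ref{lem:support} the support $\supp(\spermgeom[\height_{\varepsilon}])$ is combinatorially the $(n-1)$-permutahedron: its normal fan is the braid fan, its vertices are the points $\mathbf{v}(w^{\sigma})$ for $\sigma$ a permutation of $[n]$, and two of them span an edge exactly when $w^{\sigma'}$ is obtained from $w^{\sigma}$ by interchanging two consecutive blocks of the word. The zonotope $Z:=2\sum_{1\le a<c\le n}s_c\,\varepsilon^{c-a}[\mathbf{e}_a,\mathbf{e}_c]$ has all generators in pairwise non‑parallel directions and all edge weights $2s_c\varepsilon^{c-a}>0$, so it too has the braid fan as its normal fan, and for each pair $a<c$ every edge of $Z$ parallel to $\mathbf{e}_a-\mathbf{e}_c$ is a translate of $2s_c\varepsilon^{c-a}(\mathbf{e}_c-\mathbf{e}_a)$. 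Since a polytope with a prescribed normal fan is determined up to translation by its edge lengths (the orientation of each edge being forced by the fan), it suffices to prove that for each $1\le a<c\le n$ every edge of $\supp(\spermgeom[\height_{\varepsilon}])$ in direction $\mathbf{e}_a-\mathbf{e}_c$ has vector $\pm\,2s_c\varepsilon^{c-a}(\mathbf{e}_a-\mathbf{e}_c)$.

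So fix adjacent vertices $\mathbf{v}(w^{\sigma})$ and $\mathbf{v}(w^{\sigma'})$, and let $a<c$ be the two values whose blocks are swapped; exchanging $\sigma$ and $\sigma'$ if necessary, assume that in $w^{\sigma}$ the $a$‑block comes immediately before the $c$‑block, so $w^{\sigma}=u_1 a^{s_a}c^{s_c}u_2$ and $w^{\sigma'}=u_1 c^{s_c}a^{s_a}u_2$ with $u_1,u_2$ words on values different from $a$ and $c$. I would interpolate by the path of $\s$-tree rotations
\[
w^{(t)}:=u_1\,c^{t}\,a^{s_a}\,c^{s_c-t}\,u_2,\qquad t=0,1,\dots,s_c,
\]
where $w^{(t+1)}$ is the transposition of $w^{(t)}$ along the ascent $(a,c)$; here one checks that each $w^{(t)}$ is $121$-avoiding and that $a^{s_a}$ is its $a$-block, so that Theorem~\ref{thm:edges} applies at every step with $u_1$ replaced by $u_1c^t$. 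Summing the $s_c$ edge vectors given by Theorem~\ref{thm:edges} and telescoping — using the identity $\routep{u_1c^{t}a^{s_a}c}=\routep{u_1c^{t+1}a^{s_a}}$, valid because the two prefixes have the same multiset of letters and $\routep{\cdot}$ depends only on that multiset — yields
\[
\mathbf{v}(w^{\sigma'})-\mathbf{v}(w^{\sigma})=\Big(\height_{\varepsilon}(\routep{u_1c^{s_c}})+\height_{\varepsilon}(\routep{u_1a^{s_a}})-\height_{\varepsilon}(\routep{u_1})-\height_{\varepsilon}(\routep{u_1c^{s_c}a^{s_a}})\Big)(\mathbf{e}_a-\mathbf{e}_c).
\]

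It remains to evaluate the scalar. Each of the four prefixes $u_1$, $u_1a^{s_a}$, $u_1c^{s_c}$, $u_1c^{s_c}a^{s_a}$ is a concatenation of complete blocks, so the associated route is of the form $\route(n+1,1,\mathbf{1}_T)$, where $T$ is its set of values and $\mathbf{1}_T\in\{0,1\}^n$ its indicator vector; writing $S$ for the value set of $u_1$ (so $a,c\notin S$), the scalar equals
\[
-\big(\Phi(\mathbf{1}_{S\cup\{a,c\}})-\Phi(\mathbf{1}_{S\cup\{a\}})-\Phi(\mathbf{1}_{S\cup\{c\}})+\Phi(\mathbf{1}_S)\big),\qquad \Phi(\delta):=\height_{\varepsilon}(\route(n+1,1,\delta)),
\]
i.e.\ the negative of the mixed second difference of $\Phi$ in the coordinates $\delta_a,\delta_c$ at the point $\mathbf{1}_S$. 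Now the formula of Lemma~\ref{lem:epsilonheight} writes $\Phi(\delta)=-\sum_{n+1\ge c'>a'\ge 1}\varepsilon^{c'-a'}(t_{c'}+\delta_{a'})^2$ as a sum of terms each depending on at most two coordinates of $\delta$, and the mixed difference in $\delta_a,\delta_c$ annihilates every term except $-\varepsilon^{c-a}(s_c\delta_c+\delta_a)^2$, whose mixed difference at $(0,0)$ equals $-\varepsilon^{c-a}\big((s_c+1)^2-s_c^2-1\big)=-2s_c\varepsilon^{c-a}$. Hence $\mathbf{v}(w^{\sigma'})-\mathbf{v}(w^{\sigma})=2s_c\varepsilon^{c-a}(\mathbf{e}_a-\mathbf{e}_c)$, which has the same length as the corresponding edge of $Z$; by the reduction in the first paragraph, $\supp(\spermgeom[\height_{\varepsilon}])$ is a translate of $Z$. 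I expect the only real obstacle to be the bookkeeping in the telescoping step — checking that each $w^{(t)}$ is a legitimate Stirling $\s$-permutation with $a$-block $a^{s_a}$, matching prefixes across consecutive rotations, and collecting the boundary terms into exactly the four full-block routes; once those terms are isolated the height computation is a one-line expansion.
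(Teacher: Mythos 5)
Your proposal is correct and follows essentially the same route as the paper: identify the support with the braid-fan permutahedron via Lemma~\ref{lem:support}, compute each edge vector $\mathbf{v}(w^{\sigma'})-\mathbf{v}(w^{\sigma})$ by evaluating the height differences from Theorem~\ref{thm:edges} with $\height_{\varepsilon}$, observe the only surviving contribution is the $(a,c)$ term giving $2s_c\varepsilon^{c-a}(\mathbf{e}_a-\mathbf{e}_c)$, and conclude the support is a translate of the zonotope. Your telescoping of Theorem~\ref{thm:edges} over the chain $w^{(0)},\dots,w^{(s_c)}$ makes rigorous what the paper handles with the informal phrase ``where the letter $c$ is replaced by $s_c$ occurrences of $c$,'' and the mixed-second-difference phrasing is a clean way to justify that all cross terms vanish, but these are refinements rather than a different argument.
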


\begin{proof}
    It follows from Lemma~\ref{lem:support} that the edges of $\supp(\spermgeom[\height_{\varepsilon}])$ are of the form $[\mathbf{v}(w^{\sigma}), \mathbf{v}(w^{\sigma'})]$, where $\sigma$ and $\sigma'$ are permutations of $[n]$ related by a transposition along an ascent $(a,c)$. When we plug the expression~\ref{eq:epsilonheight} of $\height_{\varepsilon}$ into the formula~\ref{eq:edges}, where the letter $c$ is replaced by $s_c$ occurrences of $c$, we see that the only terms that do not cancel out are those involving the pair $(a,c)$:
    \begin{align*}
    \mathbf{v}(w^{\sigma'})- \mathbf{v}(w^{\sigma}) &= -\varepsilon^{c-a}\Big( (0+1)^2 + (s_c+0)^2 - (0+0)^2 - (s_c+1)^2\Big)(\mathbf{e}_a - \mathbf{e}_c)\\
    &= 2\ s_c \ \varepsilon^{c-a}(\mathbf{e}_a - \mathbf{e}_c).
    \end{align*}
    Hence all edges of the same direction have the same length, and since $\supp(\spermgeom[\height_{\varepsilon}])$ is combinatorially equivalent to a permutahedron, it follows that it is a zonotope.
\end{proof}

\section{A Lidskii-type decomposition of the \texorpdfstring{$\s$}{s}-permutahedron}\label{sec:lidskii_main}

In this section we apply the Lidskii formulas for the volume and lattice points of flow polytopes to the polytope $\fpol[\Gs]$ to obtain two identities for the number of $\s$-decreasing trees. We also interpret the identities enumeratively by partitioning the set of trees accordingly. Lastly, we illustrate these identities in the context of the $\s$-permutahedron.

\subsection{The Lidskii formula and enumerations of \texorpdfstring{$\s$}{s}-decreasing trees}\label{subsec:lidskii}

Given two weak compositions ${\bf b}=(b_0,\ldots,b_{n-1})$ and ${\bf c}=(c_0,\ldots,c_{n-1})$ of $N$, we say that ${\bf b}$ \defn{dominates} ${\bf c}$, denoted by ${\bf b} \succeq {\bf c}$, if $\sum_{i=1}^k b_i \geq \sum_{i=1}^k c_i$ for all $k\in\{0,\ldots,n-1\}$. Let $\bbinom{m}{k}=\binom{m+k-1}{k}$ be the number of multisets of $[m]$ of size $k$.

\begin{theorem}[{Lidskii lattice point formulas \cite{BVkpf,MM19}}] \label{thm: lidskii lattice point formulas} 
Given a graph $G$ on~$\{v_0,\ldots,v_n\}$ with $m$ edges and $\bf a$ $=(a_0, \dots, a_n)$ such that $\sum_i a_i=0$ and $a_i \in \mathbb{N}$ for $i=0,\ldots,n-1$, then
\begin{align}
    \#\mathcal{F}_G^{\mathbb{Z}}(\textup{\textbf{a}}) &= \sum_{\bf j} \binom{a_0+t_0}{j_0} \binom{a_1+t_1}{j_1}\cdots \binom{a_{n-1}+t_{n-1}}{j_{n-1}} \cdot \# \mathcal{F}^{\mathbb{Z}}_G({\bf j}-{\bf t}), \label{eq: 1st lidskii lattice points} \\
    &= \sum_{\bf j} \bbinom{a_0-d_0}{j_0} \bbinom{a_1-d_1}{j_1}\cdots \bbinom{a_{n-1}-d_{n-1}}{j_{n-1}} \cdot \# \mathcal{F}^{\mathbb{Z}}_G({\bf j}-{\bf t}) \label{eq: 2nd lidskii lattice points} , 
\end{align}
where all sums are over weak compositions ${\bf j}=(j_0,\ldots,j_{n-1})$ of $m-n$ that are $\succeq {\bf t}:=(t_0,t_1,\ldots,t_{n-1})$ in dominance order, $t_i=\outdeg_G(v_i)-1$, $d_i=\indeg_G(v_i)-1$, and $\bbinom{n}{k}=\binom{n+k-1}{k}$.
\end{theorem}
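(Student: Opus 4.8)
This is the lattice‑point form of Lidskii's formula: identity \eqref{eq: 1st lidskii lattice points} is \cite[Thm.~38]{BVkpf}, and both identities appear in this shape in \cite{MM19}. So the first part of the plan is just to quote it, after checking that the hypotheses match our use --- $G$ has a unique source $v_0$ and a unique sink $v_n$, all edges directed by the vertex order, and $\mathbf a=(a_0,\dots,a_{n-1},-\sum_i a_i)$ has $a_i\in\NN$ off the sink --- which is exactly the setting of $\Gs$ together with the netflows to which we apply the theorem, and to record that Corollary~\ref{cor:volume 10...0-1 case} is the specialization at $\mathbf a=(0,d_1,\dots,d_{n-1},\cdot)$. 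For completeness I would then sketch a proof, following the polytopal route of \cite[Sec.~6--7]{MM19}, which moreover runs parallel to the subdivision arguments already used in the paper.

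Write $N_G(\mathbf b):=\#\mathcal F^{\mathbb Z}_G(\mathbf b)$ for a netflow $\mathbf b=(b_0,\dots,b_{n-1},-\sum_i b_i)$, and recall that $N_G(\mathbf a)$ is a polynomial in $\mathbf a$ of degree $\dim\fpol(\mathbf a)=m-n$ on the positive orthant. The plan is to build a polyhedral subdivision of $\fpol(\mathbf a)$ by iterating a reduction that peels off the source: at $v_0$ one records how the netflow $a_0$ splits among the $\outdeg_G(v_0)=t_0+1$ outgoing edges and pushes the result into $G\setminus v_0$ as additional netflow, then repeats at $v_1,\dots,v_{n-1}$. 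Carried out carefully, this groups the maximal cells of the subdivision by the weak composition $\mathbf j=(j_0,\dots,j_{n-1})$ of $m-n$ recording, at each $v_i$, the ``dimension spent'' on the split there; the cells in group $\mathbf j$ are in bijection with integer flows on $G$ of residual netflow $\mathbf j-\mathbf t$, and each such cell is integrally equivalent to a product $\prod_{i}(a_i+t_i-j_i)\,\Delta_{j_i}$ of dilated coordinate simplices, hence contributes $\prod_i\binom{a_i+t_i}{j_i}$ lattice points. Counting lattice points cell by cell (with a half‑open decomposition to avoid double‑counting on shared faces) then gives $N_G(\mathbf a)=\sum_{\mathbf j}\bigl(\prod_i\binom{a_i+t_i}{j_i}\bigr)\,N_G(\mathbf j-\mathbf t)$, which is \eqref{eq: 1st lidskii lattice points}; the sum may be restricted to $\mathbf j\succeq\mathbf t$ because for $\mathbf j\not\succeq\mathbf t$ a forward cut separating $\{v_0,\dots,v_k\}$ from the rest shows the residual netflow $\mathbf j-\mathbf t$ to be infeasible, so $N_G(\mathbf j-\mathbf t)=0$, and the degree bound $m-n$ forces $|\mathbf j|=m-n$. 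For \eqref{eq: 2nd lidskii lattice points} I would run the mirror reduction peeling off the sink instead of the source (exchanging $\outdeg$ for $\indeg$, so that the simplex factors become the ones with $\bbinom{a_i-d_i}{j_i}$ lattice points), or obtain it from the first identity by a binomial recombination. An alternative, following \cite{BVkpf}, is analytic: extract the coefficient of $y_0^{a_0}\cdots y_{n-1}^{a_{n-1}}$ in $\prod_{(v_i,v_j)\in E}(1-y_iy_j^{-1})^{-1}$ (with $y_n:=1$) by iterated Jeffrey--Kirwan residues, the dominance condition emerging as the chamber condition that keeps the intermediate residues nonzero.

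The step I expect to be the crux --- and the reason I would ultimately cite \cite{BVkpf,MM19} rather than reproduce it --- is the verification that the cells of this subdivision are genuinely integrally equivalent to the stated products of dilated simplices, i.e.\ that the ``residual'' flow polytope really separates off as a literal Cartesian factor under a lattice‑preserving change of coordinates. This is where one genuinely uses the structure of $\fpol$ (its dimension $m-n$ and the combinatorics of its faces), and it is the part that, once granted, makes the remainder a routine binomial‑and‑Ehrhart computation of exactly the kind already deployed for Corollary~\ref{cor:volume 10...0-1 case}.
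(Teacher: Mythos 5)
Your proposal takes the same approach as the paper: Theorem~\ref{thm: lidskii lattice point formulas} is a cited result from \cite{BVkpf,MM19} and the paper does not reprove it, which you correctly recognize by first quoting it and checking the hypotheses. Your supplementary sketch of the Postnikov--Stanley subdivision argument and the Jeffrey--Kirwan alternative is consistent with what those references do, and you correctly identify the integral‑equivalence of the cells to products of dilated simplices as the nontrivial step one would need to verify if not deferring to the citation.
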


\begin{corollary} \label{cor:identitise s-trees}
For a (weak) composition $s=(s_1,\ldots,s_n)$, the number of elements of the $\s$-weak order decomposes as
\begin{align}
\prod_{i=1}^{n-1}\Bigl(1+\sum_{r=n-i+1}^n s_r\Bigr)
&= 
\sum_{\bf j}  \binom{s_n+1}{j_1}\binom{s_{n-1}+1}{j_2} \cdots \binom{s_2+1}{j_{n-1}} \cdot \prod_{i=1}^{n-1} (j_1+\cdots + j_i-i+1). 
    \label{eq: first lidskii formula number of trees}\\
    &\,=\, \sum_{\bf j} \bbinom{s_n+1}{j_1}\bbinom{s_{n-1}-1}{j_2} \cdots \bbinom{s_2-1}{j_{n-1}} \cdot \prod_{i=1}^{n-1} (j_1+\cdots + j_i-i+1)  \label{eq: second lidskii formula number of trees},
\end{align}
where all sums are over weak compositions ${\bf j}=(j_1,j_2,\ldots,j_{n-1})$ of $n-1$ that are $\succeq (1,1,\ldots,1)$.
\end{corollary}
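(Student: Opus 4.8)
The plan is to derive Corollary~\ref{cor:identitise s-trees} directly from the Lidskii lattice point formulas (Theorem~\ref{thm: lidskii lattice point formulas}) applied to the flow polytope $\fpol[\Gs]$, exactly mirroring the role that Corollary~\ref{cor:volume is number of trees} played for the volume. The left-hand side is $\#\settrees = \#\setperms = \prod_{i=1}^{n-1}(1+s_{n-i+1}+\cdots+s_n)$, which by Corollary~\ref{cor:volume is number of trees} equals $\vol\fpol[\Gs]$, but here I want to instead realize it as a count of integer flows so that I can feed it through Theorem~\ref{thm: lidskii lattice point formulas}. The natural choice is to take $G = \Gs$ (or its relabeled version, per Remark~\ref{rem:vertex_v-1}, so the vertex set is genuinely $\{v_0,\ldots,v_{n+1}\}$) and netflow $\mathbf{a} = \mathbf{d} = (0,0,s_n,s_{n-1},\ldots,s_2,-\sum_{i=2}^n s_i)$, the netflow appearing in Proposition~\ref{prop:bij_simplices_permutations}. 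By that proposition, $\#\mathcal{F}^{\mathbb{Z}}_{\Gs}(\mathbf{d}) = \#\setperms$, which is the left-hand side.

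First I would record the relevant graph statistics of $\Gs$ (after relabeling so the source is $v_0$): the number of edges is $m = |E(\Gs)| = \bigl(\sum_{i=1}^{n+1}(s_i-1)\bigr) + 2n = \sum_{i=1}^n s_i + n + 1$ using $s_{n+1}=2$, while $|V| = n+2$, so $m - |V| + 1 = \sum_i s_i = \dim\fpol[\Gs]$ as expected, and $m - (n+1) = \sum_{i=1}^n s_i + n + 1 - (n+1)$; one must be careful to track exactly which vertices carry which in- and out-degrees. The key computation is to identify, for each inner vertex, the quantities $t_i = \outdeg(v_i)-1$ and $d_i = \indeg(v_i)-1$. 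In $\Gs$ every inner vertex has exactly two outgoing edges (one bump, one dip), so $\outdeg = 2$ and $t_i = 1$ at every inner vertex — this is precisely why the sums in Corollary~\ref{cor:identitise s-trees} range over compositions $\mathbf{j} \succeq (1,1,\ldots,1)$ of the appropriate size. The in-degree of the vertex $v_{n+1-i}$ (the one where edge $e^i_{\bullet}$ lands) is $s_i + 1$ (namely $s_i - 1$ source-edges plus the bump and the dip), so $d_i = s_i$ at that vertex; matching this against the binomial factors $\binom{s_n+1}{j_1}, \binom{s_{n-1}+1}{j_2}, \ldots$ in \eqref{eq: first lidskii formula number of trees} (which use $a_i + t_i$ with $a_i$ the netflow entry and $t_i=1$) and $\bbinom{s_n+1}{j_1}, \bbinom{s_{n-1}-1}{j_2},\ldots$ in \eqref{eq: second lidskii formula number of trees} (which use $a_i - d_i$) pins down the correspondence. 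The asymmetry between the first binomial $\bbinom{s_n+1}{j_1}$ and the rest $\bbinom{s_r-1}{j}$ in \eqref{eq: second lidskii formula number of trees} comes from the netflow entry $s_n$ at the first relevant inner vertex versus netflow $0$ elsewhere: $a - d$ gives $s_n - (-1)$ or $s_n - s_n + \text{something}$ at the first vertex versus $0 - s_r$ type terms later — I would chase this sign bookkeeping carefully.

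The other ingredient is the factor $\#\mathcal{F}^{\mathbb{Z}}_G(\mathbf{j}-\mathbf{t})$ appearing on the right of both Lidskii formulas. Here $\mathbf{t} = (1,1,\ldots,1)$ restricted to inner vertices (padded with $0$ at source/sink as needed), and $\mathbf{j}-\mathbf{t}$ is a netflow of the form $(0, j_1 - 1, j_2 - 1, \ldots, -\sum(\ldots))$. For the oruga-type graph, counting integer flows with such a netflow is itself a product — this is exactly the content of (the proof technique behind) Corollary~\ref{cor:volume 10...0-1 case} / Remark~\ref{thm:bij_simplices_trees}: an integer flow on $\oruga$ with netflow $(0, c_1, \ldots, c_{n-1}, \ast)$ where $c_i \geq 0$ is determined by a free choice of bump-flow at each inner vertex within a range of size $1 + c_1 + \cdots + c_{i-1}$ (partial sums), giving $\prod_{i=1}^{n-1}(1 + c_1 + \cdots + c_{i-1})$ or equivalently, with the indexing in the statement, $\prod_{i=1}^{n-1}(j_1 + \cdots + j_i - i + 1)$ when $c_k = j_k - 1$. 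I would state this as a small lemma (or cite Remark~\ref{thm:bij_simplices_trees} adapted to general nonnegative netflows on the inner vertices) and verify that $j_1 + \cdots + j_i - i + 1 = 1 + \sum_{k=1}^{i}(j_k - 1) \geq 1$ precisely when $\mathbf{j} \succeq (1,\ldots,1)$, so the sum is over exactly the stated index set and all summands are positive. The main obstacle I anticipate is purely notational: reconciling the three different vertex-indexing conventions in play — the $v_{-1},\ldots,v_n$ labeling of $\Gs$, the relabeled $v_0,\ldots,v_{n+1}$ version, and the $v_0,\ldots,v_n$ indexing in Theorem~\ref{thm: lidskii lattice point formulas} — and making sure the order in which the compositions $\mathbf{j}$ are read (from source side or sink side) matches the order $\binom{s_n+1}{j_1}\binom{s_{n-1}+1}{j_2}\cdots$ in the statement; getting an off-by-one or a reversal here is the easy mistake. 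Once the dictionary between graph data and the formula's parameters is fixed, plugging into \eqref{eq: 1st lidskii lattice points} and \eqref{eq: 2nd lidskii lattice points} and simplifying the inner flow-count via the lemma yields \eqref{eq: first lidskii formula number of trees} and \eqref{eq: second lidskii formula number of trees} respectively.
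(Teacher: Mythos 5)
Your instinct---apply the Lidskii lattice point formulas of Theorem~\ref{thm: lidskii lattice point formulas} and evaluate the inner flow count $\#\mathcal{F}^{\mathbb{Z}}_G(\mathbf{j}-\mathbf{t})$ as a product of partial sums on the path graph---is exactly right, and your evaluation of that inner count is sound. But you apply Lidskii to the wrong graph. The paper applies the Lidskii formulas to the \emph{oruga} graph $\oruga$ (the subgraph on $\{v_1,\ldots,v_n\}$, with only the bump/dip edges and no source vertex $v_{-1}$, no source-edges), with netflow $(s_n,s_{n-1},\ldots,s_2,-\sum_{i\geq 2}s_i)$. You instead propose taking $G=\Gs$ with netflow $\mathbf{d}$, and this changes the Lidskii data completely: $\Gs$ has $|E(\Gs)|-(n+1)=\sum_i s_i$ rather than $n-1$, so your weak compositions $\mathbf{j}$ would sum to $|\s|$, not $n-1$; the shifted outdegree at the source $v_{-1}$ is $t_{-1}=\sum_i s_i-n$, giving an extra binomial $\binom{a_{-1}+t_{-1}}{j_{-1}}$ that does not appear in \eqref{eq: first lidskii formula number of trees}; and, most fatally, at each inner vertex $v_j$ of $\Gs$ (for $j\in[1,n-1]$) the netflow is $a_j=s_{n+1-j}$ while the shifted indegree is $d_j=s_{n+1-j}$ (since $\indeg(v_j)=s_{n+1-j}+1$, counting the $s_{n+1-j}-1$ source-edges plus the two bump/dip edges), so $a_j-d_j=0$ and the factor $\bbinom{0}{j_j}$ forces $j_j=0$ in the second Lidskii formula. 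That makes \eqref{eq: 2nd lidskii lattice points} on $\Gs$ collapse to a single trivial term, not the identity \eqref{eq: second lidskii formula number of trees}. Your own remark that $d_i=s_i$ should have been the red flag: to get $\bbinom{s_r-1}{j}$ you need $d=1$, i.e.\ indegree $2$, which is true in $\oruga$ but false in $\Gs$.

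Your proposal also drifts between the two graphs without noticing: you assert ``$\mathbf{t}=(1,1,\ldots,1)$ restricted to inner vertices, padded with $0$ at the source,'' which is correct for $\oruga$ but false for $\Gs$ (whose source has enormous outdegree), and you then evaluate $\#\mathcal{F}^{\mathbb{Z}}_G(\mathbf{j}-\mathbf{t})$ as if $G$ were the bare path $\oruga$ even though you fixed $G=\Gs$. The repair is simple and brings you to the paper's first proof: discard $v_{-1}$ and all source-edges before invoking Lidskii. Since the netflows at $v_{-1}$ and $v_0$ are both zero in $\mathbf{d}$, the flow on all source-edges and on the edges out of $v_0$ vanish (this is the observation in Remark~\ref{rem: connection volume and lattice point oruga}), so $\#\mathcal{F}^{\mathbb{Z}}_{\Gs}(\mathbf{d})=\#\mathcal{F}^{\mathbb{Z}}_{\oruga}(s_n,\ldots,s_2,-\sum_{i\geq 2}s_i)$ where $\oruga$ has $n$ vertices and $2(n-1)$ edges. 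On this graph $t_i=1$ at every source/inner vertex, $d_i=1$ at every inner vertex and $d_1=-1$ at the source, the $\mathbf{j}$'s sum to $n-1$, and the two Lidskii formulas give precisely \eqref{eq: first lidskii formula number of trees} and \eqref{eq: second lidskii formula number of trees} after plugging in your (correct) product formula for the inner flow count.
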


We give two proofs of this result, one geometric and the other combinatorial.

\begin{proof}[First proof]
For a weak composition $\s$, the flow polytope $\fpol[\oruga](s_n,s_{n-1},\ldots,s_2,-\sum_i s_i)$ is integrally equivalent to the box polytope  $\prod_{i=1}^{n-1}[0,\sum_{r=n-i+1}^n s_r]$. Moreover,  the number of lattice points of $\fpol[\oruga](s_n,s_{n-1},\ldots,s_2,-\sum_i s_i)$ coincides with the number of $\s$-decreasing trees from Equation~\eqref{eq: formula s decreasing trees}, so
\begin{equation} \label{eq: int flows box s-trees}
\#\fpol[\oruga]^{\mathbb{Z}}(s_n,s_{n-1},\ldots,s_2,-\sum_i s_i) = \prod_{i=1}^{n-1}(1+s_{n-i+1}+s_{n-i+2}+\cdots + s_n) = \# \settrees.
\end{equation}
Next, we use the Lidskii formulas in Theorem~\ref{thm: lidskii lattice point formulas} to calculate the LHS above.  
The graph $\oruga$ has shifted outdegrees $o_i=1$  and shifted indegrees $d_i=1$, for $i=1,\ldots,n$. By Equations~\eqref{eq: 1st lidskii lattice points} and \eqref{eq: 2nd lidskii lattice points} we have that 
\begin{align*}
\#\fpol[\oruga]^{\mathbb{Z}}&\Bigl(s_n,s_{n-1},\ldots,s_2,-\sum_i s_i\Bigr)\\
&=  \sum_{\bf j}  \binom{s_n+1}{j_1}\binom{s_{n-1}+1}{j_2} \cdots \binom{s_2+1}{j_{n-1}} \# \mathcal{F}^{\mathbb{Z}}_{\oruga}({\bf j}-{\bf 1}),\\
&=  \sum_{\bf j} \bbinom{s_n+1}{j_1}\bbinom{s_{n-1}-1}{j_2} \cdots \bbinom{s_2-1}{j_{n-1}} \#\mathcal{F}^{\mathbb{Z}}_{\oruga}({\bf j}-{\bf 1}),
\end{align*}
where the sums are over compositions ${\bf j}=(j_1,\ldots,j_{n-1})$ of $n-1$ that dominate $(1,\ldots,1)$. Lastly, we count the integer flows in  $\mathcal{F}^{\mathbb{Z}}_{\oruga}(j_1-1,j_2-1,\ldots,j_{n-1}-1)$. For such an integer flow,  the incoming flow to vertex $i=1,\ldots,n-1$ is $j_1+\cdots+j_{i-1}-(i-1)$. Since the netflow on vertex $i$ is $j_i-1$ then the outgoing flow is $j_1+\cdots+j_{i}-i$. Moreover, there are $j_1+\cdots+j_i-i+1$ possible outgoing integer flows on the two edges $(i,i+1)$. Since the graph $\oruga$ is a path then each such choice is independent. Thus, in total we have 
\[
\#\mathcal{F}^{\mathbb{Z}}_{\oruga}(j_1-1,j_2-1,\ldots,j_{n-1}-1) = \prod_{i=1}^{n-1} (j_1+\cdots + j_i-i+1),
\]
as desired and the result follows.
\end{proof}

\begin{proof}[Second proof]
These two formulas can also be obtained by purely combinatorial double counting of $\s$-decreasing trees or Stirling $\s$-permutations. 

The first formula \eqref{eq: first lidskii formula number of trees}, which is valid even if some $s_i$ are zero, can be obtained in the following way from building $\s$-decreasing trees. We begin with the node labelled $n$. Then, at step $1$ we choose which of its $s_n+1$ children will be nodes (as opposed to leaves). This gives a coefficient $\binom{s_n+1}{j_1}$, where $j_1\geq 1$. Among these $j_1$ nodes we choose one to carry the label $n-1$. At the beginning of step $i$, we have a partial $\s$-decreasing trees with $i$ nodes labelled $n$, $n-1$, $\ldots$, $n+1-i$ and $j_1 + \ldots + j_{i-1} - (i-1)$ nodes without label, where $j_k$ is the number of non-empty subtrees of the node $n+1-k$, whose positions were chosen at the step $k$. At step $i$ we choose which of the $s_{n+1-i}+1$ subtrees will be nodes and denote $j_i$ their number. Then we have $n_{i}:=j_1 + \ldots + j_{i} - (i-1)$ nodes without labels. We have to ensure that $n_i>0$, which means the $j_k$'s are such  that $\sum_{k=1}^i j_i \geq i$. Then we choose one of these $n_i$ nodes to carry the label $n-i$.
We stop after step $n-1$. 

Similarly, the second formula \eqref{eq: first lidskii formula number of trees} can be obtained by the following way of building a Stirling $\s$-permutation.
Recall that for a multipermutation $w$ and a letter $a$, the $a$-block of $w$ is the shortest substring of $w$ that contains all occurrences of $a$.
A \defn{block} of $w$ is any $a$-block for $a\in \{1, \ldots, n\}$. We say that an $a$-block covers a $b$-block in $w$ if $a$ is the smallest letter such that the $a$-block of $w$ contains the $b$-block of $w$. 
To build a Stirling $\s$-permutation, we can start with the $s_n$ occurrences of $n$ and choose a number $j_1$ of blocks that will be covered by the $n$-block or appear before or after it in the final multipermutation $w$. There are $\bbinom{s_n+1}{j_1}$ ways to arrange these blocks among the occurrences of $n$. Then we choose which of these blocks will be the $(n-1)$-block. There are $j_1$ possibilities. At the beginning of step $i>1$, we have a partial Stirling $\s$-permutation that contains all occurrences of the letters from $n$ to $n+1-i$ and $j_1 + \ldots + j_{i-1} - (i-1)$ unlabelled blocks, where $j_k$ is the number of blocks covered by the $(n+1-k)$-block, whose positions were chosen at the step $k$. At step $i$ we choose the number $j_i$ of blocks that will be covered by the $(n+1-i)$-block and one among the $\bbinom{s_{n+1-i}-1}{j_i}$ ways to arrange them between the first and the last occurrence of $n+1-i$. We choose one of the $n_{i}:=j_1 + \ldots + j_{i} - (i-1)$ unlabeled blocks to be the $(n-i)$-block.
We stop after step $n-1$. 
\end{proof}

\begin{example} \label{ex: case n=3 lidskii}
For $n=3$, Corollary~\ref{cor:identitise s-trees} yields 
\begin{align}
(1+s_3)(1+s_2+s_3) 
&= \binom{s_{3}+1}{1}\binom{s_{2}+1}{1}+ \binom{s_{3}+1}{2}\cdot 2
\label{eq: 1st lidskii n=3}\\
&=\binom{s_{3}+1}{1}\binom{s_2-1}{1}+\binom{s_{3}+2}{2}\cdot 2. 
\label{eq: 2nd lidskii n=3}
\end{align}
\end{example}

\begin{example}\label{ex: case n=4 lidskii}
For $n=4$, Corollary~\ref{cor:identitise s-trees} yields
\begin{align*}
    (1+&s_4)(1+s_3+s_4)(1+s_2+s_3+s_4) \\
    &=
\hbox{$\binom{s_{4}+1}{1}\binom{s_{3}+1}{1}\binom{s_2+1}{1}
 + \binom{s_4+1}{1}\binom{s_{3}+1}{2}\cdot 2 +\binom{s_{4}+1}{2} \binom{s_{2}+1}{1}\cdot 2+\binom{s_{4}+1}{2}\binom{s_3+1}{1}\cdot 4+\binom{s_{4}+1}{3}\cdot 6$}\\
&=
\hbox{$\binom{s_{4}+1}{1}\binom{s_{3}-1}{1}\binom{s_{2}-1}{1}
+\binom{s_{4}+1}{1} \binom{s_{3}}{2}\cdot 2
+ \binom{s_{4}+2}{2} \binom{s_{2}-1}{1}\cdot 2+\binom{s_{4}+2}{2} \binom{s_{3}-1}{1}\cdot 4+\binom{s_{4}+3}{3}\cdot 6$}.\nonumber
\end{align*}
\end{example}

\begin{remark} \label{rem: number of terms}
The weak compositions ${\bf j}=(j_1,\ldots,j_n)$ of $n-1$ that dominate $(1,\ldots,1)$ is one family of objects enumerated by the Catalan numbers~\cite[Problem 80]{CatBook}. There are $C_{n-1}:=\frac{1}{n}\binom{2n-2}{n-1}$ such compositions, and thus that many terms in \eqref{eq: 1st lidskii lattice points} and \eqref{eq: 2nd lidskii lattice points}, respectively. 
\end{remark}

\begin{remark} \label{rem: connection volume and lattice point oruga}
When $\s$ is a composition,  by Corollary~\ref{cor:volume is number of trees}, we have that the RHS of \eqref{eq: first lidskii formula number of trees} gives the volume of $\fpol[\Gs]$. Indeed in this case, 
by Corollary~\ref{cor:volume 10...0-1 case} we have that 
\begin{align}
\vol \fpol[\Gs](1,0,\ldots,0,-1) &= \#\fpol[\Gs]^{\mathbb{Z}}(0,s_n, s_{n-1},\ldots, s_2,-\sum_i s_i) \notag 
\\
&= \#\fpol[\oruga]^{\mathbb{Z}}(s_n,s_{n-1},\ldots,s_2,-\sum_i s_i),\label{eq: vol Gs as int flows in Ps}
\end{align}
where the second equality follows since the netflow on the first vertex of $\Gs$ is zero and so the edges of vertex $v_0$ do not contribute. Thus, we can view \eqref{eq: first lidskii formula number of trees} and \eqref{eq: second lidskii formula number of trees} as decomposition formulas for the volume. This is the approach used by Kapoor--M\'esz\'aros--Setiabrata in \cite{KMS} to prove Equation~\eqref{eq: 2nd lidskii lattice points}.
\end{remark}

\begin{remark}
Note that \eqref{eq: first lidskii formula number of trees} and \eqref{eq: second lidskii formula number of trees} hold when some $s_i$ are zero. Moreover, each term of the RHS of the first formula \eqref{eq: first lidskii formula number of trees} is nonnegative for $s_i\geq 0$, however terms of the RHS of the second formula can be negative. See Examples~\ref{ex: case n=3 lidskii} and~\ref{ex: case n=4 lidskii} for $\s=(1,0,1)$ and $\s=(0,1,0,2)$, respectively.   
\end{remark}

\subsection{Towards a Lidskii-type decomposition of the \texorpdfstring{$\s$}{s}-permutahedron}

In the previous section we used the Lidskii formulas to give an enumerative decomposition for the number of $\s$-decreasing trees, which index the vertices of the $\s$-permutahedron $\spermcombi$. 
We note that the enumerative decomposition~\eqref{eq: second lidskii formula number of trees} can be translated into a partition of the Hasse diagram of the $\s$-weak order into intervals. 
For every $\bf j$ there are $\prod_{i=1}^{n-1} (j_1+\dots+j_i-i+1)$ pieces that are products of the Gale orders on $\bbinom{[s_n+1]}{j_1}$, $\bbinom{[s_{n-1}-1]}{j_2}$, $\dots$, $\bbinom{[s_2-1]}{j_{n-1}}$, where $\bbinom{[m]}{k}$ denotes the collection of multisets of $[m]$ of size $k$. 

The \defn{Gale order} on $\bbinom{[m]}{k}$ is given by $A\leq B$ if  $a_i\leq b_i$ for all $i\in[k]$, where $a_1, \dots, a_k$ (respectively $b_1, \dots, b_k$) are the elements of $A$ (respectively $B$) ordered increasingly (see \cite{Gale}). We denote this by $\Gale$.

See Figure~\ref{fig:subdivision_trees} for the example of $s=(1,3,2)$. 
The central blue piece corresponds to ${\bf j} = (1,1)$, which is the product of $\Gale[1, 3]$ and $\Gale[1, 2]$ where each is a chain of respective sizes $3$ and $2$. 
The two side pieces each correspond to ${\bf j}=(2,0)$, and they are the Hasse diagrams of $\Gale[2,3]$.

\begin{figure}    \includegraphics[scale=0.7]{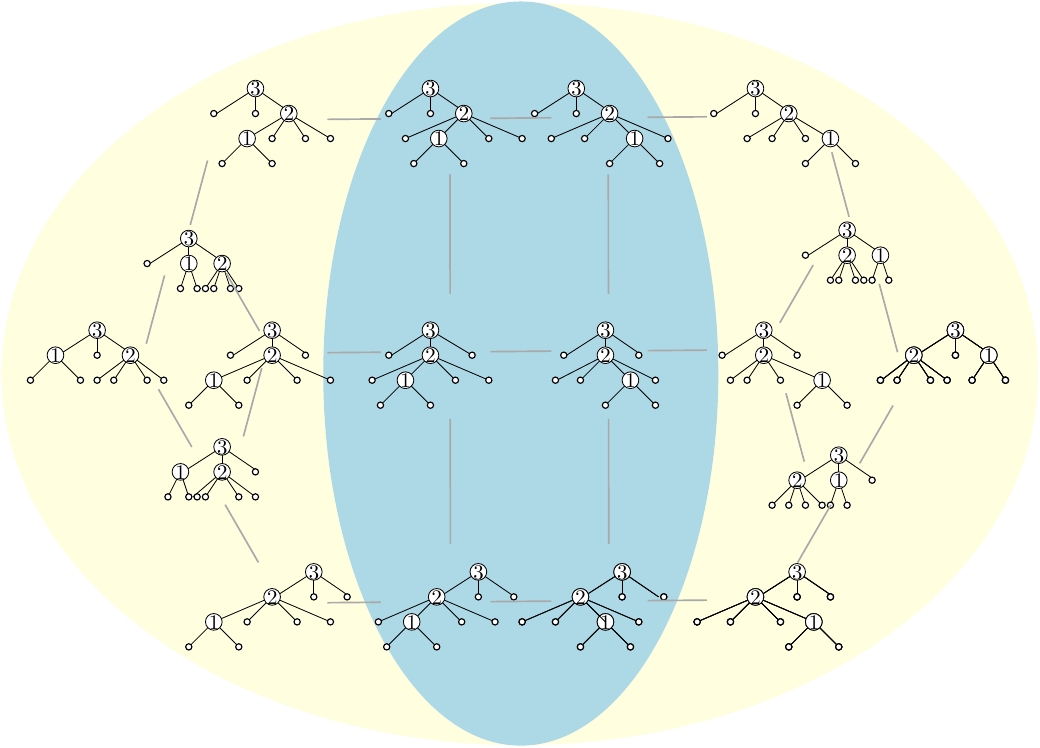}
\caption{Subdivision of the complex of $\s$-trees for $s=(1,3,2)$ that yields Equation~\eqref{eq: 2nd lidskii n=3}.}
    \label{fig:subdivision_trees}
\end{figure}

\section{Further directions}\label{sec:further}

\subsubsection*{Relation with the \texorpdfstring{$\s$}{s}-associahedron}

Ceballos and Pons also conjectured (\cite[Conjecture 2]{CP20}) that there exists a geometric realization of the $\s$-permutahedron (when $\s$ is a strict composition) such that the $\s$-associahedron can be obtained from it by removing certain facets. 
Our realizations seem very promising for providing a geometric relation between $\s$-permutahedra and $\s$-associahedra but this is still work in progress.

\subsubsection*{Other poset structures on \texorpdfstring{$\s$}{s}-decreasing trees}

In this article, we considered a specific triangulation of the flow polytope $\fpol[\Gs]$ induced by a fixed framing of $\Gs$ described in Definition~\ref{def:Gs}.
It would be interesting to study DKK triangulations of $\fpol[\Gs]$ arising from other framings. For example, Bell et al.~\cite{vBGDLMCY21} showed that both the $\s$-Tamari lattice and the principal order ideal $I(\nu)$ in Young's lattice, where $\nu_i=1+s_{n-i+1}+s_{n-i+2}+\cdots+s_n$, can be realized as the graph dual to DKK triangulations of a flow polytope of the $\nu$-caracol graph.
Do other interesting posets on $\s$-decreasing trees arise from other framed triangulations of $\fpol[\Gs]$?

\subsubsection*{The case where \texorpdfstring{$\s$}{s} is a weak composition}\label{subsec:further_weak_composition}

The $\s$-weak order of Ceballos and Pons is defined for weak compositions but our realizations, which rely on a triangulation of the flow polytope $\mathcal{F}_{\Gs}$ (see Sections~\ref{subsec:realization_flowpolytopes}, \ref{subsec:realization_mixedsubdiv}, and \ref{subsec:realization_tropicalhypersurfaces}) hold only for strict compositions, since the graph $\Gs$ is not defined for weak compositions. 
Furthermore, although Corollary~\ref{cor:volume is number of trees} does not hold for weak compositions, Equation~\eqref{eq: int flows box s-trees} does, so it would be interesting to extend our story to the case of weak compositions.

\section{Acknowledgements}

This work began under the collaborative project MATH-AMSUD with code 22-MATH-01 ALGonCOMB and we are very grateful for their support.
We thank Viviane Pons for helpful comments and suggestions and for proposing this problem during the open problem session of the VIII Encuentro Colombiano de Combinatoria ECCO 2022. We extend our gratitude to all of the organizers of ECCO 2022. We also thank Cesar Ceballos, Balthazar Charles, Arnau Padrol,  Vincent Pilaud, Germain Poullot, Francisco Santos, Yannic Vargas, and the combinatorics team of LIGM for helpful comments and proofreading of previous versions of this manuscript. For this work we used the open-source software \texttt{Sage}~\cite{Sage}.
Rafael S. Gonz\'alez D'Le\'on is very grateful for the summer research stipend program of Loyola University Chicago since part of this work happened under their support.

\printbibliography

\end{document}